\newtheorem{theorem}{Theorem}
\newif\iflclip
\newif\ifbclip
\newif\ifrclip
\newif\iftclip
\def\CLIP{\dimexpr\fboxrule+.2pt\relax}
\def\nulclip{0pt}
\newcommand\partbox[2]{%
  \lclipfalse\bclipfalse\rclipfalse\tclipfalse%
  \let\lkern\relax\let\rkern\relax%
  \let\lclip\nulclip\let\bclip\nulclip\let\rclip\nulclip\let\tclip\nulclip%
  \parseclip#1\relax\relax%
  \iflclip\def\lkern{\kern\CLIP}\def\lclip{\CLIP}\fi
  \ifbclip\def\bclip{\CLIP}\fi
  \ifrclip\def\rkern{\kern\CLIP}\def\rclip{\CLIP}\fi
  \iftclip\def\tclip{\CLIP}\fi
  \lkern\clipbox{\lclip{} \bclip{} \rclip{} \tclip}{\fbox{#2}}\rkern%
}
\def\parseclip#1#2\relax{%
  \ifx l#1\lcliptrue\else
  \ifx b#1\bcliptrue\else
  \ifx r#1\rcliptrue\else
  \ifx t#1\tcliptrue\else
  \fi\fi\fi\fi
  \ifx\relax#2\relax\else\parseclip#2\relax\fi
}
\theoremstyle{definition}
\theoremstyle{remark}
\newtheorem{remark}[theorem]{Remark}
\numberwithin{equation}{section}
\newcommand\reallywidehat[1]{%
\savestack{\tmpbox}{\stretchto{%
  \scaleto{%
    \scalerel*[\widthof{\ensuremath{#1}}]{\kern-.6pt\bigwedge\kern-.6pt}%
    {\rule[-\textheight/2]{1ex}{\textheight}}
  }{\textheight}%
}{0.5ex}}%
\stackon[1pt]{#1}{\tmpbox}%
}
\newcommand{\R}{\mathbb{R}}
\newcommand{\Z}{\mathbb{Z}}
\newcommand{\N}{\mathbb{N}}
\newcommand{\C}{\mathbb{C}}
\newcommand{\br}{\langle}
\newcommand{\kt}{\rangle}
\theoremstyle{definition}
\newtheorem{thm}{Theorem}[section]
\newtheorem{prop}[thm]{Proposition}
\newtheorem{lm}[thm]{Lemma}
\newtheorem{defn}[thm]{Definition}
\newtheorem{rem}[thm]{Remark}
\newtheorem{q}[thm]{Question}
\newtheorem{eg}[thm]{Example}
\newtheorem{claim}{Claim}[thm]
\newtheorem{cor}[thm]{Corollary}
\newtheorem{ansatz}[thm]{Ansatz}
\newenvironment{customthm}[1]
  {\innercustomthm}
  {\endinnercustomthm}
\newtheoremstyle{exercise}{}{}{\itshape}{}{\bfseries}{:}{.5em}{\thmname{#1} \thmnumber{#2}\thmnote{(#3)}}
\theoremstyle{exercise}
\DeclareFontFamily{U}{wncy}{}
\DeclareFontShape{U}{wncy}{m}{n}{<->wncyr10}{}
\DeclareSymbolFont{mcy}{U}{wncy}{m}{n}
\DeclareMathSymbol{\sha}{\mathord}{mcy}{"58}
\newcommand{\f}[2]{\frac{#1}{#2}}
\renewcommand{\d}[2]
{\frac{d#1}{d#2}}
\newcommand{\p}[2]
{\frac{\partial#1}{\partial#2}}
\newcommand{\nc}{\newcommand}
\nc {\Prod}{(\underset{I}\ph\, r_I^{n_I})}
\nc {\wlw}{\wedge\ldots\wedge}
\nc {\olo}{\otimes\ldots\otimes}
\nc{\bea}{\begin{eqnarray*}}
\nc{\eea}{\end{eqnarray*}}
\nc {\e}{\varepsilon}
\nc{\de}{\delta}
\nc{\A}{\hat a}
\nc{\Ad}{\hat a^\dag}
\nc{\grad}{\nabla}
\nc{\nlim}{\underset{n\to\infty}\lim}
\nc{\ilim}{\underset{i\to\infty}\lim}
\nc{\isum}{\sum\limits_{i=1}^N}
\nc{\xx}{\underset{x\to x_0}\lim}
\nc{\Bnrm}{\Big |\Big|}
\nc{\sym}{\text{Sym}}
\nc{\inte}{\overset{\circ}}
\nc{\del}{\partial}
\nc{\plp}{+\ldots+}
\nc{\lre}{\longrightarrow}
\nc{\be}{\begin{equation}}
\nc{\ee}{\end{equation}}
\nc{\CP}{\mathbb{CP}}
\nc{\End}{\text{End}}
\nc{\mf}{\mathfrak}
\nc{\Hom}{\text{Hom}}
\nc{\spec}{\text{Spec}}
\nc{\sub}{\subseteq}
\nc{\weakto}{\rightharpoonup}
\nc{\ph}{\varphi}
\nc{\leqc}{\lesssim}
\nc{\delbar}{\overline \del}
\nc{\Tr}{\text{Tr}}
\nc{\Lhe}{\mathcal L_{(\Phi^{h_\e}, A^{h_\e})}}
\newcommand{\Yminus}{Y\mathord{-}}
\title{}
\author{Gregory J. Parker\\  }
\date{}
\tikzset{
  photon/.style={decorate, decoration={snake}, draw=black},
  gluon/.style={decorate, decoration={snake}, draw=black},
  scalar/.style={dotted, draw=black, postaction={decorate},decoration={markings,mark=at position .55 with {\arrow[ultra thick]{>}}}},
  ghost/.style={dotted, draw=black, postaction={decorate},decoration={markings,mark=at position .55 with {\arrow[ultra thick]{stealth}}}},
  fermion/.style={thick, draw=black, postaction={decorate},decoration={markings,mark=at position .55 with {\arrow[ultra thick]{stealth}}}},
  vertex/.style={draw,shape=circle,fill=black,minimum size=3pt,inner sep=0pt},
}
\numberwithin{equation}{section}
\begin{document}
\title{Concentrating Local Solutions of the Two-Spinor Seiberg-Witten Equations on 3-Manifolds}

\maketitle
\begin{abstract} 

Given a compact 3-manifold $Y$ and a $\Z_2$-harmonic spinor $(\mathcal Z_0, A_0,\Phi_0)$ with singular set $\mathcal Z_0$, this article constructs a family of local solutions to the two-spinor Seiberg-Witten equations parameterized by $\e \in (0,\e_0)$ on tubular neighborhoods of $\mathcal Z_0$. These solutions concentrate in the sense that the $L^2$-norm of the curvature near $\mathcal Z_0$ diverges as $\e\to 0$, and after renormalization they converge locally to the original $\Z_2$-harmonic spinor. In a sequel to this article, these model solutions are used in a gluing construction showing that any $\Z_2$-harmonic spinor satisfying some mild assumptions arises as the limit of a family of two-spinor Seiberg-Witten solutions on $Y$.        

\end{abstract}

\setcounter{tocdepth}{2}
\tableofcontents

\maketitle

\section{Introduction}
\label{section1}

Equations of generalized Seiberg-Witten type are conjectured to have deep connections to the geometry and topology of manifolds. Examples include the Vafa-Witten equations \cite{VWOriginalPaper,TT1,TT2} and the Kapustin-Witten equations \cite{KWOriginal, WittenKhovanovGaugeTheory,WittenFivebranesKnots}, which are predicted to connect 3 and 4-dimensional topology to other areas. Another example is the ADHM Seiberg-Witten equations, which are expected to play a key role in Donaldson-Segal's program to construct invariants of manifolds with special holonomy in dimensions 6,7, and 8  \cite{DonaldsonSegal, DWAssociatives, HaydysG2SW}. In contrast to the standard Seiberg-Witten (SW) equations, the moduli spaces of solutions to generalized Seiberg-Witten equations may not be compact, and the lack of well-understood compactifications is one of the main barriers in the study of these equations. 

Pioneering work of Taubes \cite{Taubes3dSL2C, Taubes4dSL2C, TaubesVW, TaubesKWNahmPole, TaubesU1SW, TaubesZeroLoci}, Haydys-Walpuski \cite{HWCompactness}, and Walpuski-Zhang \cite{WZCompactness} has shown that sequences of solutions to generalized SW equations on a manifold $Y$ can diverge, but after renormalization must converge to {\it $\Z_2$-harmonic spinors} or more general types of {\it Fueter sections}---solutions of a different, in general non-linear PDE on $Y\mathrm{-}\mathcal Z$ where $\mathcal Z$ is a codimension 2 singular set. On a compact Riemannian 3-manifold $(Y,g_0)$, the first of these is defined for our purposes as follows. Given an embedded submanifold $\mathcal Z\subset Y$ of dimension 1 and a spinor bundle $S_0\to Y$, fix a real line bundle $\ell \to Y-\mathcal Z$ equipped with its unique flat connection $A_0$ with holonomy in $\Z_2$. A {\bf $\Z_2$-harmonic spinor} consists of a triple $(\mathcal Z, A_0,\Phi)$ where $\Phi \in \Gamma(S_0\otimes_\R \ell)$ is a spinor satisfying \be \slashed D_{A_0}\Phi=0  \ \ \text{ on } \ \ \Yminus\mathcal Z_0 \hspace{1.5cm} \text{and} \hspace{1.5cm} \int_{Y-\mathcal Z} |\nabla_{A_0}\Phi|^2 < \infty,\label{Z2harmonicpreliminary}\ee
where $\slashed D_{A_0}$ denotes the Dirac operator twisted by the connection $A_0$ on $\ell$. Said more simply, a $\Z_2$-harmonic spinor is a harmonic spinor on the open manifold $Y\mathrm{-}\mathcal Z$ in a spin structure which does not necessarily extend over $\mathcal Z$ and whose covariant derivative is $L^2$. 

The above convergence results suggest that the moduli spaces of solutions to a specific generalized Seiberg-Witten equation should admit natural compactifications obtained by including $\Z_2$-harmonic spinors or, more generally, the type of Fueter section arising for that equation as boundary strata. Constructing these compactifications requires addressing the converse to the convergence question: 

\begin{q}{\it Which $\Z_2$-harmonic spinors or Fueter sections arise as the limit of a sequence of Seiberg-Witten solutions}?
\label{question1}
\end{q} 
\noindent In terms of PDE, this question is a gluing problem, requiring the patching together of solutions of the two different equations. When the singular set $\mathcal Z=\emptyset$ is empty, this gluing problem was solved in a general setting by Doan-Walpuski \cite{DWDeformations}, though in this case the ``gluing'' is straightforward because no model solutions are required. Indeed, they instead refer to this case as a ``deformation'' problem. In the situation that $\mathcal Z\neq \emptyset $, which is a stable condition under perturbations \cite{DWExistence,RyosukeThesis,PartII}, the gluing problem requires model solutions near the singular set.  
 
 The present work provides two crucial steps towards solving the gluing problem in the case of the two-spinor Seiberg-Witten equations on a closed 3-manifold. In this case, the $\Z_2$-harmonic spinors that arise are as defined in (\refeq{Z2harmonicpreliminary}). Given a $\Z_2$-harmonic spinor $(\mathcal Z_0, A_0,\Phi_0)$ satisfying some mild assumptions, the first step accomplished in this article is to construct a 1-parameter family of model solutions to the two-spinor Seiberg-Witten equations in a neighborhood of the singular set $\mathcal Z_0$ which converge locally to $(A_0,\Phi_0)$. The second step is to analyze the linearized equations at this family of model solutions whose limiting linearization at $(A_0,\Phi_0)$ is a degenerate elliptic operator whose symbol vanishes along $\mathcal Z_0$.    
In a sequel to this article \cite{PartIII}, these model solutions are used in a gluing construction which gives affirmative answer to Question \ref{question1} in this setting.   
 \subsection{Main Results}

To state the main results, let us first describe the set-up briefly. Additional details are given in Section \ref{section2}. Let $(Y,g_0)$ denote a closed, oriented Riemannian 3-manifold, and let $S\to Y$ be the spinor bundle associated to a $\text{Spin}^c$ structure. Furthermore, let $E\to Y$ be a rank 2 complex vector bundle with structure group $SU(2)$ endowed with a fixed connection $B_0$. The {\bf two-spinor Seiberg-Witten equations} are the following system of equations for a pair $(\Psi,A)\in \Gamma(S\otimes_\C E) \times \mathcal A_{U(1)}$ of an $E$-valued spinor and a $U(1)$-connection lifted from $\text{det}(S)$: 
   
\begin{eqnarray}
\slashed D_{A}\Psi&=&0 \label{SW1intro}\\ 
\star F_A +\tfrac{1}{2}\mu(\Psi, \Psi)& =& 0
\label{SW2intro}
\end{eqnarray}
\smallskip

\noindent where $\slashed D_{A}$ is the Dirac operator on $S\otimes E$ twisted by $A$ and the fixed connection $B_0$ on $E$, $F_A$ is the curvature of $A$, and $\mu$ a point-wise quadratic map. The equations are invariant under $U(1)$-gauge transformations.

As mentioned above, there may be sequences $(\Psi_i, A_i)$ of solutions to (\refeq{SW1intro})-(\refeq{SW2intro}) that have no convergent sub-sequences modulo gauge. For such sequences, a straightforward argument using the Weitzenb\"ock formula \cite[Prop 1.16]{WZCompactness} shows that the $L^2$ norm $\|\Psi_i\|_{L^2}\to \infty$ must diverge. To highlight the role of the $L^2$ norm for these sequences, one can renormalize the spinor by setting 
    
    $$ \e:= \frac{1}{\|\Psi\|_{L^2}} \hspace{3cm} \Phi:= \e \Psi $$ 

\noindent and instead consider the equations (\refeq{SW1intro})-(\refeq{SW2intro}) for the pair $(\tfrac{\Phi}{\e}, A)$. The results of Haydys-Walpuski in \cite{HWCompactness} show that if the sequence $(\Psi_i, A_i)$ has no subsequences for which $\|\Psi_i\|$ remains bounded, then the renormalized sequence $(\Phi_i, A_i)$ converges subsequentially to a $\Z_2$-harmonic spinor as $\e\to 0$ modulo gauge transformations on the complement of the singular set $\mathcal Z$ (see Theorem \ref{compactness} in Section \ref{section2.1} for a precise statement). At present, it is not known that the singular set of a $\Z_2$-harmonic spinor arising in this way  necessarily has more regularity than being a closed, rectifiable subset of Hausdorff codimension 2. We do not attempt to address these regularity issues here, and consider only that case that $\mathcal Z$ is a smooth embedded submanifold.

Reversing the convergence statement to address the gluing question, let $(\mathcal Z_0, A_0, \Phi_0)$ be a $\Z_2$-harmonic spinor on $(Y,g_0)$ with respect to a perturbation induced by $B_0$. We assume that it satisfies the following.

\begin{defn} A $\Z_2$-harmonic spinor $(\mathcal Z_0, A_0, \Phi_0)$ is said to be {\bf regular} if it obeys the following three assumptions. 
\begin{customthm}{1}\label{assumption1}(Smoothness) the singular set $\mathcal Z_0\subseteq Y$ is a smooth, embedded link. \label{assumption1} 
\end{customthm}

\begin{customthm}{2}\label{assumption2} (Non-degeneracy)
the spinor $\Phi_0$ has non-vanishing leading-order, i.e. there is a constant $c_1$ such that $$|\Phi_0| \geq c_1\text{dist}(-,\mathcal Z_0)^{1/2}.$$  \label{assumption2} 
\end{customthm}
\begin{customthm}{3}\label{assumption3} (Isolated)
$\Phi_0$ is the unique $\Z_2$-harmonic spinor with respect to $(\mathcal Z_0, A_0,g_0,B_0)$ up to scaling and sign.   \label{assumption3} 
\end{customthm}

\end{defn}

We remark that analysis of the local polyhomogeneous expansion of $\Phi_0$ near $\mathcal Z_0$ (cf. Definition \ref{polyhomogeneous} and \cite{MazzeoEdgeOperators}) shows that non-degeneracy (Assumption \ref{assumption2}) implies that $A_0$ has holonomy $-1$ around the meridian of each component of $\mathcal Z_0$. By the Riemann-Hilbert correspondence, the set of flat connections with holonomy in $\Z_2$ is in bijection with the flat real line bundles $\ell \to \Yminus\mathcal Z_0$, thus with  $\text{{C}ech}$ cocycles in $H^1(\Yminus\mathcal Z_0; \Z_2)$. From this perspective, non-degeneracy implies that the corresponding line bundle $\ell$ restricts to the M\"obius bundle on every small disk transverse to $\mathcal Z_0$, and in particular does not extend to $Y$.

\noindent

\bigskip 

The main result is the following construction of model solutions: 

\begin{thm} \label{main}  Given a regular $\Z_2$-harmonic spinor $(\mathcal Z_0, A_0,\Phi_0)$ as in (\refeq{Z2harmonicpreliminary}) and an orientation on $\mathcal Z_0$, there exists a $\text{Spin}^c$ structure $S$ on $Y$ such that the following hold.

\begin{enumerate}

\item[{ (i)}]  $S$ extends $S_0\otimes_\R \ell$ to $Y$ in the sense that $S |_{\Yminus\mathcal Z_0}\simeq  S_0\otimes_\R \ell$, and $S$ is determined by a complex line bundle $\mathcal L\to Y$ such that $$ S=S_0\otimes_\C \mathcal L \hspace{1.5cm} \text{ and } \hspace{1.5cm} c_1(\det S)= -\text{PD}[\mathcal Z_0].$$

Additionally, $\Phi_0$ is naturally a section of a rank 4 subbundle of $(S\otimes_\C E)|_{\Yminus\mathcal Z_0}$.

\item[ (ii)] For the $\text{Spin}^c$ structure $S$, there is an $\e_0>0$ such that for $\e\in (0, \e_0)$ there exist model solutions $(\Phi_\e, A_\e)$ on the tubular neighborhood $N_{\lambda}(\mathcal Z_0)$ of radius $\lambda=\tfrac{1}{2}\e^{1/2}$ satisfying the two-spinor Seiberg-Witten equations 
\begin{eqnarray}
\slashed D_{A_\e}\Phi_\e &=& 0 \\
\star F_{A_\e} + \tfrac{1}{2}\tfrac{\mu(\Phi_\e,\Phi_\e)}{\e^2} &=&0
\end{eqnarray} 

\noindent with respect to the restrictions of $(g_0, B_0)$ to $N_\lambda(\mathcal Z_0)$. 

\item[{(iii)}] $\tfrac{\Phi_\e}{\e}$ extends via a cut-off function to a smooth section of $S\otimes_\C E$ on $Y$ that is equal to $\tfrac{\Phi_0}{\e}$ away from $N_{\lambda/2}(\mathcal Z_0)$ and has $L^2$ norm $\tfrac{1}{\e}+ O(\e^{-1/4})$ on $Y$.

\end{enumerate}

\label{main}
\end{thm}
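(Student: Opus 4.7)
\medskip

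\textbf{Proof proposal.}

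For part (i), I would first produce the complex line bundle $\mathcal L$ directly from the homological data: since $\mathcal Z_0$ is an oriented link in $Y$, its Poincar\'e dual lies in $H^2(Y;\Z)$ and hence is the first Chern class of some complex line bundle, which I take to be $\mathcal L^{-1}$. To identify $\mathcal L|_{Y-\mathcal Z_0}$ with the complexification of $\ell$, I would note that $\mathcal L$ admits a section $s$ vanishing transversely along $\mathcal Z_0$, so $\mathcal L^2|_{Y-\mathcal Z_0}$ is trivialized by $s^{\otimes 2}/|s|^2$, which makes $\mathcal L|_{Y-\mathcal Z_0}$ a square root of the trivial complex line bundle, i.e.\ $\mathcal L|_{Y-\mathcal Z_0}\simeq \ell\otimes_\R \C$ (both are classified by the same nontrivial element of $H^1(Y-\mathcal Z_0;\Z_2)$, namely $\text{PD}[\mathcal Z_0]\bmod 2$). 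The $\text{Spin}^c$ structure is then defined by $S\ce S_0\otimes_\C \mathcal L$, and its restriction to $Y-\mathcal Z_0$ is canonically $S_0\otimes_\R \ell$. The rank $4$ real subbundle of $(S\otimes_\C E)|_{Y-\mathcal Z_0}$ containing $\Phi_0$ is the fixed locus of the real structure $\sigma\otimes j_E$, where $\sigma$ is the tautological real structure on $S_0\otimes_\R \ell\otimes_\R\C$ and $j_E$ is the quaternionic structure on $E$.

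For part (ii), the strategy is to build model solutions via a scaled ansatz and the implicit function theorem. Working in Fermi coordinates $(t,r,\theta)$ on a tubular neighborhood of each component of $\mathcal Z_0$, the leading order of $\Phi_0$ is determined by Assumption~\ref{assumption2} and has the form $\Phi_0 \sim r^{1/2}\psi_0(t,\theta) + O(r^{3/2})$ for a canonical ``half-Fourier'' section $\psi_0$. I would introduce the rescaling $\rho=r/\e^{2/3}$ (so that the natural scale is $\e^{2/3}\ll \e^{1/2}$, placing the concentration region well inside $N_\lambda$) and expand $(\Phi_\e,A_\e) = (\Phi_0,A_0) + (\phi_\e,a_\e)$, where $(\phi_\e,a_\e)$ is rapidly decaying on the rescaled scale. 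The rescaled equations reduce at leading order to a fiberwise $2$-dimensional abelian vortex problem on $\R^2$ with prescribed $r^{1/2}$ asymptotics, whose solvability and decay are governed by $\psi_0$. Patching the fiberwise solution against the background via cut-offs in $t$ produces an approximate solution $(\Phi_\e^{\text{app}},A_\e^{\text{app}})$ with error estimable in a weighted norm adapted to the anisotropy of the problem (weights of the form $\e^\alpha(\e^{2/3}+r)^\beta$).

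The remaining task is to correct the approximate solution to an exact one. I would set up a weighted Sobolev framework in which the linearized two-spinor Seiberg-Witten operator at $(\Phi_\e^{\text{app}},A_\e^{\text{app}})$ is invertible with a bound on its inverse that is polynomial in $\e$, then close a Banach fixed-point / Newton iteration for the nonlinear Seiberg-Witten map. Here the nondegeneracy Assumption~\ref{assumption2} guarantees that the leading-order model operator has trivial kernel, while Assumption~\ref{assumption3} rules out cokernel obstructions coming from ``slow'' modes along $\mathcal Z_0$. Part (iii) is then a cut-off argument: define the global section to equal $\tfrac{\Phi_\e}{\e}$ on $N_{\lambda/2}(\mathcal Z_0)$ and $\tfrac{\Phi_0}{\e}$ outside $N_\lambda(\mathcal Z_0)$, interpolating with a smooth cut-off; the $L^2$ norm on $Y$ is $\e^{-1}\|\Phi_0\|_{L^2} + O(\e^{-1/4})$, the error coming from the concentration region of size $\e^{1/2}$ and the rescaled fiberwise integral of $|\phi_\e|^2$.

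The main obstacle is step (ii): producing a quantitative right inverse for the linearization at the approximate solution. The limiting linearization at $(\Phi_0,A_0)$ has a symbol that vanishes along $\mathcal Z_0$, so standard elliptic estimates fail; one has to combine the edge/$b$-calculus type decomposition transverse to $\mathcal Z_0$ with a precise understanding of the fiberwise vortex linearization, and then glue these local inverses into a global parametrix whose remainder is controlled by $\e^\gamma$ for some $\gamma>0$. Verifying that Assumptions~\ref{assumption1}--\ref{assumption3} suffice to give both a uniform lower bound on the fiberwise operator and a global Fredholm framework is the central technical difficulty.
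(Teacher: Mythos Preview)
Your overall architecture for part~(ii) is correct and matches the paper's: build de-singularized approximate solutions from a fiberwise two-dimensional model (the Painlev\'e-type ODE from the Hitchin setting), estimate the error, invert the linearization in weighted spaces, and close by the Implicit Function Theorem. However, there is a genuine gap at the heart of your linear analysis.

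You write that ``Assumption~\ref{assumption2} guarantees that the leading-order model operator has trivial kernel.'' This is false, and the failure is precisely the central difficulty the paper confronts. The scale-invariant normal operator $\widehat{\mathcal N}_t$ on each normal plane, with any reasonable APS-type boundary condition, has a real two-dimensional kernel that \emph{cannot} be removed by shifting boundary conditions without destroying any hope of uniform estimates (see the discussion after Proposition~6.1 and Remark~6.11). The kernel arises from the limiting cokernel elements $\sim r^{-1/2}$ of the singular operator $\slashed D_{A_0}^\C$; since these decay toward the boundary, cutting them off produces configurations on which $\widehat{\mathcal N}_t$ is arbitrarily small, violating any uniform bound under index-0 boundary conditions. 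The paper therefore imposes index-2 boundary conditions on $\widehat{\mathcal N}_t$ and projects away the kernel explicitly. In three dimensions this kernel assembles into a ``kernel subbundle'' $K(\mathcal N_t)\to S^1$ whose low Fourier modes $e^{i\ell t}\beta_t$ for $|\ell|\lesssim \e^{-1/2}$ are again approximate kernel elements of $\mathcal L^{h_\e}$. Handling this forces the elaborate \emph{mixed boundary and projection conditions} $\Pi^{\mathcal L}$ (Section~7.3): one relaxes certain boundary modes for low $\ell$ while simultaneously imposing interior $L^2$-orthogonality to $e^{i\ell t}\beta_t$ for those same $\ell$, balancing the count to keep index~0. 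The resulting inverse is \emph{not} uniformly bounded---it blows up like $\e^{-1/12}$---and the paper constructs explicit sequences showing this is unavoidable. Your proposal that a parametrix patching of ``local inverses'' with remainder $O(\e^\gamma)$ suffices does not engage with this mechanism; without the kernel-bundle projections the linearization is simply not invertible in a way compatible with the error size.

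Two smaller remarks. First, Assumption~\ref{assumption3} does not play the role you assign it; the local invertibility in Theorem~7.1 does not invoke it, and the ``slow modes along $\mathcal Z_0$'' are handled by the mixed boundary/projection machinery, not by the isolation hypothesis. Second, for part~(i) your top-down construction of $\mathcal L$ from $-\text{PD}[\mathcal Z_0]$ is a legitimate alternative, but the paper goes the other way: it builds $\mathcal L$ by extending explicit local trivializations across $\mathcal Z_0$ (so that the local analytic expressions for $A_0$ and $\Phi_0$ are preserved), and then computes $c_1(\mathcal L^2)$ by observing that $\det(\Phi_0)$ is a section of $\mathcal L^2$ vanishing transversely along $\mathcal Z_0$ with the correct orientation. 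This has the advantage that the $\text{Spin}^c$ structure comes equipped with the trivializations needed for the subsequent analysis.
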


\bigskip 
\smallskip 

The second main result shows that these model solutions approach $(\Phi_0, A_0)$ as $\e\to 0$. This follows from applying the main results of \cite{ConcentratingDirac} to the model solutions constructed in Theorem \ref{main}.

\begin{cor}
\label{mainb}

The model solutions $(\Phi_\e, A_\e)$ converge to $(\Phi_0, A_0)$ in the following sense. Fix a family of compact subsets  $K_\e \Subset  N_{\lambda}(\mathcal Z_0)-\mathcal Z_0$ such that $\text{dist}(K_\e, \mathcal Z_0) \geq c_1 \e^{2/3}$ for a positive constant $c_1$. Then there are constants $C,c$ independent of $\e$ such that the un-renormalized difference \be (\ph_\e, a_\e)=\left(\frac{\Phi_\e}{\e}, A_\e\right)-\left(\frac{\Phi_0}{\e}, A_0\right)\label{unrenormalizeddiff}\ee obeys the following properties.

\begin{enumerate}
\item[{ (i)}] There is a half-dimensional subbundle $S^\text{Im}\subseteq S\otimes_\C E$ such that the components of the spinor in $S^\text{Im}$ and the connection decay to $(\tfrac{\Phi_0}{\e},A_0)$ exponentially on $K_\e$.  That is,
$$\|(\ph^\text{Im}_\e, a_\e)\|_{C^0(K_\e)} \ \leq \  \frac{C}{|\text{dist}(K_\e, \mathcal Z)|^{3/2}\e}\ \text{Exp}\left(-\frac{c}{\e}\text{dist}(K_\e,\mathcal Z)^{3/2}\right).$$

\item[{ (ii)}] The remaining spinor components decay to $(\tfrac{\Phi_0}{\e}, A_0)$ like  $\text{dist}(-,\mathcal Z_0)^{-\nu}$ for any $0<\nu<\frac{1}{4}$; i.e. there is an $\e$-independent constant $\gamma'<<1$ such that 

$$\|\text{dist}(-,\mathcal Z_0)^\nu\ph_\e\|_{L^{1,2}(K_\e)} \ \leq \ C_\nu\, \e^{1/12-\gamma'}.$$ 
\end{enumerate}
\end{cor}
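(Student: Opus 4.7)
The plan is to derive the linear elliptic equation satisfied by the un-renormalized difference $(\ph_\e, a_\e)$ in \eqref{unrenormalizeddiff} and then apply the main decay theorems for concentrating Dirac-type operators from \cite{ConcentratingDirac}. Subtracting the two-spinor Seiberg-Witten equations satisfied by $(\tfrac{\Phi_\e}{\e}, A_\e)$ on $N_\lambda(\mathcal Z_0)$ from those satisfied by $(\tfrac{\Phi_0}{\e}, A_0)$ on $Y-\mathcal Z_0$ yields a system of the schematic form
\[\mathcal L_{(\Phi_0,A_0)/\e}(\ph_\e, a_\e) \;=\; \mathcal Q(\ph_\e,a_\e)+\mathcal E_\e,\]
where $\mathcal L_{(\Phi_0,A_0)/\e}$ is the Seiberg-Witten linearization at $(\tfrac{\Phi_0}{\e},A_0)$, $\mathcal Q$ is a pointwise quadratic remainder, and $\mathcal E_\e$ collects the small errors built into the construction of $(\Phi_\e, A_\e)$ in Theorem \ref{main}.

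Using the rank-4 real subbundle supplied by Theorem \ref{main}(i), I would split $S\otimes_\C E = S^{\text{Re}}\oplus S^{\text{Im}}$ and correspondingly decompose $\ph_\e = \ph_\e^{\text{Re}}+\ph_\e^{\text{Im}}$. Relative to this splitting, the Clifford and moment-map couplings force $a_\e$ to pair with $\ph_\e^{\text{Im}}$ through a zeroth order term of pointwise size $|\Phi_0|/\e \sim \text{dist}(-,\mathcal Z_0)^{1/2}/\e$, which behaves as a spatially varying mass, while the restriction of $\mathcal L$ to $\ph_\e^{\text{Re}}$ is mass-free. This massive/massless dichotomy is exactly the structure analyzed in \cite{ConcentratingDirac}: for the massive block $(\ph_\e^{\text{Im}}, a_\e)$, Agmon-type estimates produce an exponential weight obtained by integrating the mass along geodesics normal to $\mathcal Z_0$,
\[\int_0^d \frac{r^{1/2}}{\e}\,dr \;=\; \tfrac{2}{3}\,\tfrac{d^{3/2}}{\e},\]
which is precisely the decay rate appearing in (i), with the $\text{dist}^{-3/2}/\e$ prefactor encoding the local rescaling used to invert the elliptic operator at distance scale $d$; for the massless block $\ph_\e^{\text{Re}}$, only weighted Sobolev control is available, and the exponent $\tfrac{1}{12}-\gamma'$ in (ii) emerges from balancing the weighted norms of $\mathcal Q(\ph_\e,a_\e)+\mathcal E_\e$ against the inverse linearization on the annular region $K_\e$ trapped between outer radius $\e^{1/2}$ and inner cutoff $\e^{2/3}$.

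The main obstacle will be verifying quantitatively that the hypotheses of the general theorems of \cite{ConcentratingDirac} are met uniformly in $\e$. Specifically, one must show: (a) that the discrepancy between the actual linearization at $(\Phi_\e, A_\e)$ and the model operator $\mathcal L_{(\Phi_0,A_0)/\e}$ is subordinate to the mass term, so the Agmon weight is preserved under perturbation; (b) that Assumption \ref{assumption2} propagates through the construction, so that $|\Phi_\e|/\e$ retains its $\text{dist}^{1/2}$ lower bound on $K_\e$---the inner threshold $\text{dist}(K_\e, \mathcal Z_0)\geq c_1\e^{2/3}$ appearing precisely as the scale at which the Agmon weight begins to dominate the operator norm of the inverse linearization; and (c) that $\mathcal Q$ and $\mathcal E_\e$ are small in the relevant weighted spaces so that a bootstrap based on the linear estimates closes and produces the rate $\e^{1/12-\gamma'}$.
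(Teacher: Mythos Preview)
Your proposal follows the same overall strategy as the paper: split $S\otimes_\C E = S^{\text{Re}}\oplus S^{\text{Im}}$, observe that the $(\ph^{\text{Im}},a)$ block sees a mass of size $|\Phi_0|/\e\sim r^{1/2}/\e$, and invoke the Agmon-type results of \cite{ConcentratingDirac} to get the exponential rate in (i). That much is correct.

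However, you misidentify where the actual work lies. For part (ii), there is nothing to prove by balancing weighted norms against inverse linearizations as you suggest: the bound $\|(\ph_\e,a_\e)\|_{H^1_{\e,\nu}}\leq C\e^{1/12-\gamma'}$ is already produced by the Implicit Function Theorem application in the proof of Theorem~\ref{main} (the correction from the de-singularized configuration has this size, and on $K_\e$ the de-singularized configuration differs from $(\Phi_0/\e,A_0)$ only by exponentially small terms). The weighted $L^{1,2}$ bound in (ii) is then immediate from the definition of the $H^1_{\e,\nu}$-norm. For part (i), the paper does not verify your abstract conditions (a)--(c); rather, the concrete hypotheses of the result quoted from \cite{ConcentratingDirac} are that $\e\|\ph^{\text{Re}}\|_{L^{1,6}}\to 0$ and $\e^{2/3}\|\ph^{\text{Re}}\|_{C^0}\to 0$ on slightly enlarged compacta. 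The substance of the argument is a careful elliptic bootstrap---starting from the $H^1_\e$ bound, passing through $L^6$, then $L^{1,12/5}$, $L^{12}$, $L^{1,6}$, and $L^{1,3+\delta}\hookrightarrow C^0$ on a sequence of nested annuli---where each step is tracked against powers of $\e$ so that one does not accumulate more than $\e^{-1}$ in $L^{1,6}$ and $\e^{-2/3}$ in $C^0$. Your condition (b) about propagating the lower bound on $|\Phi_\e|/\e$ is not what is needed; the input to \cite{ConcentratingDirac} is a size bound on the \emph{real} spinor perturbation, not a non-degeneracy bound on the full spinor.
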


\noindent Note that $\text{dist}(-,\mathcal Z_0)$ is a function on $Y$, whereas $\text{dist}(K_\e,\mathcal Z)= \inf_{y\in K_\e} \text{dist}(y,\mathcal Z_0)$ is a constant depending on $\e$. In particular, if $\text{dist}(K_\e,\mathcal Z_0)\geq \e^{2/3-\gamma_1}$ for some $\gamma_1>0$, then $(\ph^\text{Im}_\e,a_\e)$ decays to 0 faster than any polynomial on $K_\e$ as $\e\to 0$. 

\bigskip 

The third and final main result is about the linearization of the Seiberg-Witten equations at the model solutions. Since in the eventual gluing construction these model solutions are pasted onto the manifold using a cut-off function to form global approximate solutions on $Y$, the statement of the orem is given for these. Let $\chi(r)$ denote a cut-off function supported on $N_{\lambda}(\mathcal Z_0)$ equal to 1 for radii $r\leq \lambda/2$. The {\bf Approximate solutions} are defined as 

 \be \left(\frac{\Phi^\text{\text{App}}_\e}{\e}, A^\text{\text{App}}_\e\right):= \left(\frac{\Phi_0}{\e}, A_0\right) + \chi (\ph_\e, a_\e)\label{approx},\ee

 \noindent where $(\ph_\e, a_\e)$ is the un-renormalized difference defined in  \refeq{unrenormalizeddiff}. 
 
 Let $\mathcal L_\e$ denote the extended, gauge-fixed linearized Seiberg-Witten equations at these approximate solutions, which are defined precisely in Section \ref{section2}. This linear equation is viewed as a first-order boundary value problem on a tubular neighborhood $N_{2\lambda}(\mathcal Z_0)$ with $\lambda$ as in Theorem \ref{main}(ii) by introducing a Hilbert space $H$ and a projection \be \Pi^{\mathcal L}: L^{1,2}(N_{2\lambda}(\mathcal Z_0))\to H \label{bdorthogonalityconstraints}\ee
    
\noindent so that $\ker(\Pi^\mathcal L)$ is the subspace of sections satisfying certain boundary and orthogonality conditions. The precise definitions of $H$ and  $\Pi^\mathcal L$ are given in Section \ref{section7}. The statement also references certain weighted norms $\| -\|_{H^1_{\e,\nu}}$ and $\|-\|_{L^2_{\e, \nu}}$ defined in Section \ref{section5}; these are equivalent (not uniformly) to the standard norms on $L^{1,2}(N_{2\lambda}(\mathcal Z_0))$ and $L^{2}(N_{2\lambda}(\mathcal Z_0))$ respectively.

 \begin{thm}
\label{mainc}
Subject to the boundary and orthogonality conditions defined by \refeq{bdorthogonalityconstraints}, the extended gauge-fixed linearization of the two-spinor Seiberg-Witten equations at the approximate solutions \refeq{approx}  

\be \mathcal L_\e: \ker(\Pi^\mathcal L)\subseteq L^{1,2}(N_{2\lambda}(\mathcal Z_0)) \lre L^2(N_{2\lambda}(\mathcal Z_0))\ee

\noindent is Fredholm of Index 0. Additionally, there is an $\e_0>0$ such that for $\e<\e_0$,  $\mathcal L_\e$ is invertible, and there are positive constants $C,\gamma^\text{in}\!<\!<1$ independent of $\e$ such that  the bound

\smallskip
\be \|(\ph,a)\|_{H^1_{\e,\nu}} \leq  \frac{C_\nu}{\e^{1/12+\gamma^\text{in}}} \  \|\mathcal L_{\e}(\ph,a)\|_{L^2_{\e,\nu}}\label{Linvertibleboundmainc}\ee
\smallskip
\noindent holds.
\end{thm}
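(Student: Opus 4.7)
The plan is to establish the Fredholm property and vanishing of the index by standard methods first, and then to derive the uniform bound \eqref{Linvertibleboundmainc}; invertibility for all sufficiently small $\e$ then follows automatically. For the Fredholm statement, $\mathcal L_\e$ is an elliptic first-order system on the compact manifold-with-boundary $N_{2\lambda}(\mathcal Z_0)$, so the task reduces to verifying that the boundary and orthogonality conditions cut out by $\Pi^{\mathcal L}$ satisfy the Lopatinsky--Shapiro condition—this is a symbol computation directly from the definitions of $H$ and $\Pi^{\mathcal L}$ in Section \ref{section7}. Index $0$ then follows by deforming $\mathcal L_\e$ through a continuous family of elliptic boundary problems to a formally self-adjoint model, whose kernel and cokernel are matched in dimension.

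The bulk of the argument is the estimate \eqref{Linvertibleboundmainc}. I would split $N_{2\lambda}(\mathcal Z_0)$ into an inner region $r \lesssim \e^{2/3}$, where the model solutions $(\Phi_\e, A_\e)$ differ substantially from $(\Phi_0, A_0)/\e$, and an outer annulus $\e^{2/3} \lesssim r \leq 2\lambda = \e^{1/2}$, and combine the resulting estimates via a partition of unity. On the outer region, Corollary \ref{mainb} ensures that $(\Phi^{\text{App}}_\e, A^{\text{App}}_\e)$ is polynomially—and in the $S^{\text{Im}}$ directions, exponentially—close to $(\Phi_0/\e, A_0)$, so $\mathcal L_\e$ is a small perturbation of the extended gauge-fixed linearization at the $\Z_2$-harmonic spinor. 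The weighted elliptic estimates developed in \cite{ConcentratingDirac} for this degenerate-elliptic limit apply directly in the norms $H^1_{\e,\nu}$ and $L^2_{\e,\nu}$. On the inner region, I would rescale the normal coordinates by $\e^{2/3}$; in these rescaled coordinates $(\Phi_\e, A_\e)/\e$ converges to a canonical translation-invariant model configuration on $\C \times \R$, and $\mathcal L_\e$ to its linearization. The central analytic claim is that this inner model operator is invertible on the relevant weighted spaces modulo a finite-dimensional obstruction space of geometric deformation modes (translations of $\mathcal Z_0$, dilations of $\e$, and normal-frame rotations), and that the orthogonality conditions built into $\Pi^{\mathcal L}$ are tailored precisely to eliminate this obstruction space.

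Assembling the two regional estimates proceeds by the standard cutoff argument: the commutators $[\mathcal L_\e, \chi]$ produce error terms supported in the transition annulus at scale $\e^{2/3}$, which are absorbed by the weighted norms once these are calibrated to the inner and outer scales. The explicit loss $\e^{-1/12-\gamma^{\text{in}}}$ in \eqref{Linvertibleboundmainc} reflects two contributions: the Jacobian of the inner rescaling, which converts the unit-scale model estimate into a small negative power of $\e$, and the $O(\e^{1/12-\gamma'})$ distance between the approximate solutions and their limiting models recorded in Corollary \ref{mainb}(ii), which propagates through the inhomogeneous bound.

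The main obstacle I anticipate is the treatment of the near-cokernel of the inner model operator. The rescaled model linearization has a genuinely non-trivial cokernel of geometric origin, and both identifying it explicitly and verifying that $\Pi^{\mathcal L}$ eliminates the corresponding approximate cokernel of $\mathcal L_\e$ \emph{uniformly} in $\e$ is delicate—any mismatch would degrade the bound by a large power of $\e^{-1}$ rather than the small power $\e^{-1/12-\gamma^{\text{in}}}$. A sharp matching of the inner and outer resolvents through the transition annulus, together with a careful identification of the geometric obstructions absorbed by $\Pi^{\mathcal L}$, is really the heart of the theorem.
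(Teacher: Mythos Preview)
Your radial inner/outer decomposition misses the central difficulty and would not close. The outer model you propose---the linearization at $(\Phi_0/\e, A_0)$---is precisely the degenerate edge operator discussed in Section~\ref{Section1.2}, which has an \emph{infinite}-dimensional cokernel (Proposition~\ref{infcokernel}); there are no weighted elliptic estimates in \cite{ConcentratingDirac} that make it Fredholm, and the paper says so explicitly. Likewise, your inner model on $\C\times\R$ would, after Fourier transform in the $\R$-direction, produce a 2-dimensional kernel for \emph{every} Fourier mode, so the obstruction is again infinite-dimensional rather than the finite ``geometric deformation'' space you describe. Your identification of the obstruction modes as translations/dilations/frame-rotations is also off: the actual near-kernel consists of the elements $\beta_t\sim\rho^{-1/2}$ of Lemma~\ref{ALemma}, which are smoothed versions of the $L^2$-but-not-$L^{1,2}$ cokernel of $\slashed D_{A_0}^\C$, not symmetry modes.

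The paper's approach is orthogonal to yours: it slices along $\mathcal Z_0$ rather than radially. Writing $\mathcal L^{h_\e}=\sigma_t\del_t+\mathcal N_t$, the normal operator $\mathcal N_t$ on each disk $\{t\}\times D_\lambda$ has only a 2-dimensional kernel (Proposition~\ref{invertibleN}), and integration by parts (Section~\ref{intbyparts}) gives $\|\mathcal L\frak q\|^2\approx\|\del_t\frak q\|^2+\|\mathcal N_t\frak q\|^2$ plus manageable cross and boundary terms. The infinite-dimensional issue becomes the kernel bundle $K(\mathcal N_t)$ over $S^1$, and the crucial observation (Section~\ref{motfortwisted1}) is that sections $\eta(t)\beta_t$ with \emph{high} $t$-Fourier modes $|\ell|\gtrsim\e^{-1/2}$ are controlled by $\|\del_t\frak q\|^2$, while only the finitely many \emph{low} modes need orthogonality constraints. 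The projection $\Pi^{\mathcal L}$ in Definition~\ref{mixedbddef} implements exactly this: it relaxes the boundary condition in $1+2\e^{-1/2}/L_0$ low modes and imposes $P^{\text{low}}=0$ to compensate, keeping the index at zero. The $\e^{-1/12}$ loss comes from Corollary~\ref{l2projectionNbound}, tracking how the $L^2$ versus $H^1_\e$ norms of $\beta_t$ compare on a disk of radius $\e^{1/2}$. The actual proof of Theorem~\ref{mainc} (Section~\ref{section8}) is then a one-paragraph perturbation from Theorem~\ref{invertibleL}, since $\mathcal L_\e-\mathcal L^{h_\e}$ is small by the size of the IFT correction.
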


The model solutions constructed in part (ii) of Theorem \ref{main} will sometimes be referred to as ``fiducial solutions''. This terminology is taken from \cite{MWWW} where fiducial solutions of a similar nature were found for Hitchin's equations (see Section \ref{Section1.2} and Section \ref{section4} for further discussion). The model solutions of Theorem \ref{main} actually solve the {\it extended} Seiberg-Witten equations as defined in Section 2.4. That is, they include an auxiliary 0-form component $a_0$.  We make several remarks contextualizing the main results above. 

\begin{rem} The proof of Theorem \ref{main} relies only on the local form of $(\mathcal Z_0, A_0, \Phi_0)$, thus holds equally well in the case that $(Y,g_0)$ are non-compact provided that $\mathcal Z_0$ is compact and $\Phi_0$, and $\nabla \Phi_0 \in L^2_{loc}(Y)$. The compactness results for convergence to $\Z_2$-harmonic spinors \cite{HWCompactness, WZCompactness} assume that $Y$ is compact. 
\end{rem}

\begin{rem}It is conjectured that Assumption \ref{assumption1} holds generically within the set of $(g_0, B_0)$ that admit $\Z_2$-harmonic spinors. The genericity of the embedding condition is the subject of ongoing work by other authors \cite{MazzeoHaydysTakahashi}. This and other questions on the regularity of the singular set $\mathcal Z_0$ involve significant detours into geometric measure theory (see \cite{TaubesZeroLoci, ZhangRectifiability, Haydysblowup}) and are beyond the scope of the present article. This assumption could readily be weakened (e.g. $\mathcal Z_0$ is an $L^{k,2}$-embedding for  $k=3$), but the required analysis would distract the main goals of the present article.    
\end{rem}

\begin{rem}  The non-degeneracy Assumption \ref{assumption2} is expected to hold generically following similar results for $\Z_2$-harmonic 1-forms (\cite{SiqiZ3}). The results of \cite{PartII} show that non-degeneracy stable under perturbations of the metric and background connection $B_0$. It is likely that the non-degeneracy assumption can be weakened to construct model solutions for $\Z_2$-harmonic spinors with high-order vanishing along $\mathcal Z_0$. The results of \cite{PartII, PartIII}, however, do not extend to this case without significant alteration. Indeed, the leading order term $b(t,\theta)$ such that $\Phi_0=b(t,\theta)r^{1/2} + O(r^{3/2})$ where $(r,t,\theta)$ are geodesic normal coordinates on $N\mathcal Z_0$ (see Section \ref{section2.5.2}) appears as the symbol of an elliptic operator used in the gluing construction. The gluing construction relies heavily on this ellipticity. 

The isolated Asumption \ref{assumption3} is likewise expected to hold generically. The results of \cite{PartII} again show this property is stable under perturbations of the metric and background connection. 
\end{rem}

\begin{rem}  There is some freedom in the choice of $\lambda=\tfrac12 \e^{1/2}$ for the size of the neighborhood in Theorem \ref{main}(ii). It will be shown in Section \ref{section6} that dilation by $\e^{2/3}$ produces an invariant scale for the fiducial solutions. For the gluing construction of \cite{PartIII} to succeed, the model solutions must be constructed on a tubular neighborhood of radius large than the invariant scale and such that the decay of Corollary \ref{mainb} can be invoked. This requires $O(\e^{2/3+\delta})< \lambda < O(1)$ for some $\delta>0$. With sufficient alternation of the definition of $\Pi^\mathcal L$ of the boundary conditions, model solutions could be constructed on neighborhoods with other choices of $\lambda$. If $\lambda=\e^\beta$, the coefficient in the bound in Theorem \ref{mainc} is of size  $O(\e^{-(2-3\beta)/6-\gamma})$ for $\gamma<<1$. 

There are explicit counterexamples to a uniform bound in Theorem \ref{mainc}, which show this estimate is sharp without significant alterations of the function spaces used. 
\end{rem}

\subsection{Motivation for Approach}\label{Section1.2}

This section briefly motivates and summarizes the approach taken to the proofs of Theorem \ref{main} and Theorem \ref{mainc}.  

\medskip

\noindent { \it 1.2.1. Degenerating Linearizations}\label{sectiondegenerating}
\smallskip

The gluing problem for $\Z_2$-harmonic spinors does not fit into the standard framework used in many other gluing problems. These differences are due to the existence of the singular set $\mathcal Z_0$, near which the equations degenerate and standard elliptic theory breaks down. Indeed, under the assumption that $\mathcal Z_0=\emptyset$, Doan-Walpuski  \cite{DWDeformations} solved the gluing problem in great generality using standard elliptic theory. Unfortunately, none of their approach extends to the case that $\mathcal Z\neq \emptyset$.

To be more precise, the standard elliptic theory breaks down in the following way. The linearized Seiberg-Witten equations

  \be \mathcal L_{(\Phi_0, A_0)}: L^{1,2}(\Yminus\mathcal Z_0) \lre L^2(\Yminus\mathcal Z_0) \label{deglinearization}\ee 
  
  \vspace{.25cm} 
 \noindent at a $\Z_2$-harmonic spinor with $\mathcal Z_0\neq \emptyset$ are a degenerate elliptic system whose symbol vanishes along $\mathcal Z_0$. Operators with this type of degeneracy are known as {\bf elliptic ``edge'' operators}, and are well-studied in microlocal analysis \cite{MazzeoEdgeOperators, MazzeoEdgeOperatorsII,Melrosebcalculus,grieser2001basics}. For the edge operator $\mathcal L_{(\Phi_0, A_0)}$, there is no natural choice of function spaces on which it is  Fredholm;  in particular, (\refeq{deglinearization}) has an infinite-dimensional cokernel. For any family of model solutions $(\Phi_\e, A_\e)$, the resulting family of linearized equations 
 
 \be \mathcal L_{(\Phi_\e, A_\e)}\overset{\e\to 0} \lre \mathcal L_{(\Phi_0, A_0)}\label{deglinearizations2}\ee
 
   \vspace{.25cm} 
\noindent  is converging to this limiting operator with infinite-dimensional cokernel (in no precise sense, as the function spaces change in the limit). As a result,  one {\it cannot} expect the linearizations to be uniformly invertible in any reasonable sense. 
 
 The consequences of this are two-fold. In the present article, this manifests in the difficulty of proving Theorem \ref{mainc}, where the subspace limiting to the infinite-dimensional cokernel ruins any naive approach. The proof unavoidably requires delicate analysis of the degenerating family (\refeq{deglinearizations2}), which is carried out in Sections \ref{section6}-\ref{section7}. The second consequence is for the eventual gluing: even with Theorem \ref{mainc} in hand, the gluing problem still appears at first to have an infinite-dimensional obstruction coming from the cokernel of \ref{deglinearization}. This is addressed in \cite{PartIII} by considering deformations of the singular set $\mathcal Z_0$, which requires the study of the infinite-dimensional family of operators $\mathcal L_{\e}(\mathcal Z)$ parameterized by nearby singular sets $\mathcal Z$, though no more is said about this issue here.   \bigskip 
  
\noindent {\it 1.2.2. Relation to Limiting Configurations}
\smallskip

The gluing problem for $\Z_2$-harmonic spinors is effectively a generalization of the gluing problem that arises at the boundary of the moduli space of solutions to Hitchin's equations on a Riemann surface $\Sigma$, and this observation guides parts of our approach.

 The boundary objects in the Hitchin moduli space, known as {\it limiting configurations}, are singular Higgs fields whose  singular set $\mathcal Z_\Sigma \subset \Sigma$   is a finite collection of points.  Given a limiting configuration $\Phi_0$ and a singular point $z\in\mathcal Z_\Sigma$,  one makes   the ansatz that there are local model solutions which differ from $\Phi_0$ by a complex-gauge transformation $h_\e(r)$ which depends only on the distance $r$ from $z$. That is, are locally  of the form
\be \Phi_\e= e^{h_\e(r)}\cdot \Phi_0.\label{ansatzintro}\ee

\noindent This leads to an $\e$-parameterized family of ODEs for $h_\e(r)$ that can be solved to yield model solutions. These are then spliced onto $\Sigma$ and corrected to true solutions using methods that exploit the 
holomorphic structure of Hitchin's equations to circumvent the problem of the degenerating linearization (see \cite{MWWW,FredricksonSLnC} for details).

The relation of this case with the gluing problem for $\Z_2$-harmonic spinors is, essentially, that it is a dimensional reduction. More precisely, the gluing problem in the case of Hitchin's equations is the dimensional reduction of the gluing problem for the closely related (though more difficult) Kapustin-Witten equations. For a 3-manifold $Y=S^1\times \Sigma$, the limiting configurations at the boundary of the Hitchin moduli space on $\Sigma$ can be lifted to $\Z_2$-harmonic 1-forms (which are $\Z_2$-harmonic spinors for the Dirac-type operator $(d+d^\star)$) that are invariant in the $S^1$ direction. The singular set is lifted to $\mathcal Z= S^1 \times \mathcal Z_\Sigma$. Up to some minor differences between the equations, the three-dimensional gluing problem for the two-spinor Seiberg-Witten equations can be viewed as a generalizing of the construction for Hitchin's equation to the non-$S^1$-invariant case. 

Unfortunately, for the case of a ${\Bbb Z}_2$-harmonic spinor on a general  3-manifold $Y$,  the lack of a holomorphic structure on  $Y$ means virtually none of the techniques used for Hitchin's equations are applicable. First, there is no analogue of the holomorphic structure which can be exploited to circumvent the problem of the degenerating family of linearized operators, and this problem must be confronted. Even disregarding this issue with the linearization, there are several critical issues in extending the 2-dimensional approach to find 3-dimensional model solutions. For one, the holomorphic structure on $\Sigma$ allows one to choose local coordinates putting $\Phi_0$ in a standard form, thus the 2-dimensional model solution is unique up to coordinate change. In contrast, in three-dimensions the local form of $\Phi_0$ lies in an infinite-dimensional space of possibilities. Secondly, one cannot make an effective simplifying ansatz akin to \refeq{ansatzintro}. In addition to having to upgrade the ODE for $h_\e(r)$ to a PDE for $h_\e(r,t)$ depending also on $t$ the tangential coordinate $\mathcal Z$, the lack of a holomorphic structure, means the number of equations also increases. In combination, these features mean there is no analogue of the ansatz \refeq{ansatzintro} that will lead to a system of PDEs near $\mathcal Z$ that is meaningfully simpler than the full Seiberg-Witten equations. 

Despite these differences, our approach still relies heavily on a very close analogue of the two-dimensional model ODE solutions, as we now explain. 

\bigskip 

\noindent {\it 1.2.3. Our Approach}
\smallskip 

Given the above, one must abandon the hope of finding explicit model solutions and instead turn to abstract methods. One reliable abstract method is the Implicit Function Theorem (IFT), and in fact, as explained momentarily, any other method would be redundant. Our use of the IFT here relies on the following observation: although the solution to the local PDE near the singular set cannot be found explicitly, its leading order term must be given by the $t$-parameterized family of 2-dimensional model solutions on the normal planes. These are not solutions, and in fact the error from being a solution does not approach 0 in $L^2$ as $\e\to 0$. Yet, surprisingly, it is sufficiently small that with the correctly weighted function spaces the IFT can correct these to true model solutions. Of course, applying the IFT requires analyzing the linearization at these, which has the same shortcoming as described above: this family of linearizations degenerates to an operator which is not Fredholm. 

Thus our approach produces model solutions in two steps. The first is to first introduce a $t$-parameterized family of 2-dimensional model solutions which smoothes the $\Z_2$-harmonic spinor to a ``de-singularized'' pair $(\Phi^{h_\e}, A^{h_\e})$. After this, an application of the IFT corrects them to the desired 3-dimensional model solutions, proving Theorem \ref{main}. With these model solutions in hand, one forms approximate solutions on the closed manifold by introducing a cut-off function, and the global gluing argument proceeds from there. Schematically, the steps of the gluing are

\begin{center}
\tikzset{node distance=3.5cm, auto}
\begin{tikzpicture}[decoration=snake]
\node(A){$(\Phi_0,A_0)$};
\node(C)[right of=A]{$(\Phi^{h_\e}, A^{h_\e})$};
\node(D)[right of=C]{$(\Phi_\e^{\text{mod}}, A_\e^{\text{mod}})$};
\node(E)[right of=D]{$(\Phi_\e^{\text{app}}, A_\e^{\text{app}})$};
\node(F)[right of=E]{$(\Phi_\e, A_\e).$};
\draw[->, decorate] (A) to node {$\text{de-sing.}$} (C);
\draw[->, decorate] (C) to node {$\text{correct}$} (D);
\draw[->, decorate] (D) to node {$\text{cutoff}$} (E);
\draw[->, decorate] (E) to node {$\begin{matrix} \text{gluing} \\ \text{iteration}\end{matrix}$} (F);
\end{tikzpicture}

\end{center}
\noindent where the first two steps are accomplished in the present work, and the last two relegated to the sequel \cite{PartIII} as explained in the introduction. 

A key advantage of this approach is that it proves Theorems  \ref{main} and \ref{mainc} simultaneously.  The proofs both rely on the study of the degenerating family of linearizations at the de-singularized pair $(\Phi^{h_\e}, A^{h_\e})$. This study extends across Sections~\ref{section4}--\ref{section7} and culminates in Theorem~\ref{invertibleL}, which describes  the invertibility of this family of operators. Theorems \ref{main} and \ref{mainc} then follow immediately from Theorem~\ref{invertibleL}.   While other approaches to Theorem~1.2 (such as finding more explicit local solutions) might be possible, any such approach would be redundant, since Theorem~\ref{invertibleL} is needed anyway to establish Theorem~\ref{mainc}.

\subsection{Outline}

Section 2 introduces background material and provides an overlay of technical statements on what was said in the introduction. Section 2.1 gives the precise statement of the convergence theorem of Haydys-Walpuski. Section 2.2 give relevant linear algebra constructions, and Section 2.3 gives a more precise definition of $\Z_2$-harmonic spinors which arises from a version of the Haydys correspondence. Section 2.4 states the Weitzenb\"ock formula for the linearized equations, which is used later.  

\medskip

Section 3 covers some basic properties of the singular Dirac operator \refeq{Z2harmonicpreliminary}. Section 3.1 covers its semi-Fredholm properties, and Section 3.2 establishes local forms for $\Z_2$-harmonic spinors which are of key importance. Relying on these local forms, Section 3.3 is devoted to the topological question of how to reconstruct the $\text{Spin}^c$ structure in  part (i) of Theorem \ref{main}.  
 
 \medskip 
 

\medskip 

Section 4 constructs the de-singularized configurations and estimates their failure to be true solutions. Section 4.1 reviews the dimensionally reduced problem, which is essentially identical to the corresponding problem for Hitchin's equations found in \cite{MWWW}. Section 4.2 extends these to the parameterized ODE case, and Section 4.3 contains the error calculation. 

\medskip 

Section 5 begins the analysis of the linearized equations at the de-singularized configurations. Section 5.1 defines the relevant function spaces. Section 5.2 defines a model operator given by the situation where metric near $\mathcal Z_0$ is Euclidean. In Section 5.3 it is shown that the $\e$-parameterized family of model operators on the planes normal to $\mathcal Z$ are all re-scaling of a single $\e$-invariant operator $\mathcal N$ at the invariant scale $O(\e^{2/3})$. 

\medskip 

Section 6 begins the bulk of the technical analysis by studying the scale-invariant normal operator $\mathcal N$. This operator can be understood via complex geometry, and viewing it via this lens makes certain properties manifest. In this section, it is found that $\mathcal N$ naturally has a two (real) dimensional kernel that cannot be perturbed away, despite the fact that the Seiberg-Witten equations on a compact 3-manifold are index 0. This kernel is the first manifestation of the infinite-dimensional cokernel that arises as $\e\to 0$. Section 6.1 and Section 6.2 provide background and review the relevant standard Fredholm theory. Sections 6.3-6.5 study the normal operator $\mathcal N$ in its holomorphic guise, and Section 6.6 provides details on the aforementioned two-dimensional kernel. 

\medskip 

Section 7 generalizes the results of the previous section to the 3-dimensional case. This follows essentially from integration by parts and the observation that all the tangential derivatives along $\mathcal Z$ are comparatively mild. In this section, the kernel of the normal operator which is isomorphic to $\C$ is upgraded to a high-dimensional subspace which approaches $L^2(\mathcal Z;\C)$ as $\e\to 0$ to become the infinite-dimensional cokernel in the limit.  To make the integration by parts work involves setting up the quite intricate collection of boundary and projection conditions $\Pi^\mathcal L$ (cf. Theorem \ref{mainc}) for the linearized operator, which accounts for this section's length despite the simplicity of the underlying idea. Section 7.1 reviews some standard results about APS boundary conditions for Dirac operators. Section 7.2 discusses the high-dimensional subspace approaching the limiting cokernel, and Section 7.3 sets up the boundary conditions accounting for this. Sections 7.4 and 7.5 then carry out the integration by parts argument, which by that point becomes rather involved. Section 7.6 generalizes to the case of an arbitrary metric near $\mathcal Z$.    

\medskip 

Section 8 concludes the proofs of Theorems \ref{main} and \ref{mainc}, which after the analysis of the linearization in Sections 5-7 are essentially immediate. The Appendices cover some calculations that would disrupt the flow of the rest of the article.

\section*{Acknowledgements}

This article constitutes a portion of the author's Ph.D. thesis. The author is grateful to his advisors Clifford Taubes and Tomasz Mrowka for their insights and suggestions. The author would also like to thank Aleksander Doan, Andriy Haydys, Rafe Mazzeo, Rohil Prasad, and Thomas Walpuski for helpful discussions. The author is supported by a National Science Foundation Graduate Research Fellowship and by National Science Foundation Grant No. 2105512.    
\section{$\Z_2$-Harmonic Spinors and Compactness}

\label{section2}

Let $(Y, g_0)$ be a closed, oriented Riemannian 3-manifold. Choose a $\text{Spin}^c$ structure $\frak s$ on $Y$, and let $S\to Y$ be the associated spinor bundle. We denote Clifford multiplication by $\gamma: \Omega^1(Y)\to \text{End}(S)$. Since every 3-manifold is spin, we may alternatively specify a $\text{Spin}^c$ structure by choosing a spin structure $\frak s_0$ with spinor bundle $S_0$ and taking $S=S_0\otimes_\C \mathcal L$ where $\mathcal L$ is a complex line bundle. In this second description, the $\text{Spin}^c$ structure obtained depends on the choice of $\frak s_0$.   

The two-spinor Seiberg-Witten equations are an extension of the standard Seiberg-Witten equations (\cite{MorganSW, KM}) that instead consider spinors valued in two (possibly twisted) copies of $S$. Let $E\to Y$ denote an $SU(2)$-bundle equipped with a fixed smooth background connection $B_0$. Define $$S_E:= S\otimes_\C E,$$

\noindent and denote by $\br \ph, \psi\kt$ the real inner-product on sections of $S_E$ arising from the Hermitian inner-products on $S$ and $E$. Pairs $(\Psi, A) \in \Gamma(S_E)\otimes \mathcal A_{\mathcal L}$ consisting of a spinor in $S_E$ and a $U(1)$ connection on $\mathcal L$ are called {\bf configurations}. 

\begin{defn} The {\bf Two-Spinor Seiberg-Witten Equations} for configurations $(\Psi, A)$ are 

\begin{eqnarray}
\slashed D_{A}\Psi&=&0 \label{SW1}\\ 
\star F_A +\tfrac{1}{2}\mu(\Psi, \Psi)& =& 0
\label{SW2}
\end{eqnarray}

\noindent where $\slashed D_{A}$ is the Dirac operator on $S_E$ formed using the Spin connection on $S_0$, the background connection $B_0$ on $E$, and the connection $A$ on $\mathcal L$, and $\tfrac{1}{2}\mu$ is a pointwise quadratic {\bf moment map}. These equations are invariant under the action of the {\bf gauge group} $\mathcal G= \text{Maps}(Y; U(1))$.   
\end{defn}

\bigskip 

The moment map $\tfrac{1}{2}\mu: S_E\to \Omega^1(i\R)$ is given in a local orthonormal coframe $e^j$ by $$\frac{1}{2}\mu(\Psi, \Psi)=\sum_{j=1}^3\frac{i}{2} \br \gamma(ie^j)\Psi, \Psi\kt e^j.$$ 

\noindent In a local trivialization $E|_U\simeq \C^2\times U$, we may write $\Psi=(\Psi_1, \Psi_2)$ as a pair of spinors in $S$ in which case $\mu(\Psi,\Psi)= \mu_\circ(\Psi_1, \Psi_1) + \mu_\circ(\Psi_2, \Psi_2)$, where $\mu_\circ$ is the moment map in the standard Seiberg-Witten equations. 

\medskip

  \subsection{Compactness Theorem}
\label{section2.1} 

It is a well-known fact  that the moduli space of solutions to the standard Seiberg-Witten equations modulo the action of the gauge group $\mathcal G$ is compact (\cite{MorganSW}, Chapter 5). The proof of this relies on the pointwise equality   

\be \br\gamma( \mu(\Psi, \Psi)) \Psi, \Psi\kt= \tfrac{1}{4}|\Psi|^4,\label{mubound}\ee

\noindent which via the Weitzenb\"ock formula for $\slashed D_{A}$ leads to an {\it a priori} bound $$\|\Psi\|^2_{L^2}\leq \int_{Y} |s| \ dV$$ for the spinor component of solutions, where $s$ is the scalar curvature of $g$. Starting with this, the proof of compactness is a standard application of elliptic theory (\cite{MorganSW}, Sections 5.2-5.3). 

For the case of the two-spinor Seiberg-Witten equations (\refeq{SW1})-(\refeq{SW2}), there are non-zero spinors for which $\mu(\Psi,\Psi)=0$, thus no bound akin to (\refeq{mubound}) can hold. The consequence is that for the two-spinor Seiberg-Witten equations
$$\text{\it There may be sequences of solutions $(\Psi_i, A_i)$ such that } \|\Psi_i\|_{L^2}\to \infty.$$

\noindent Note that this $L^2$-norm is a gauge-invariant quantity. To understand the behavior of such sequences of solutions, one considers renormalizing by dividing by the $L^2$-norm. Equivalently, we ``blow-up'' the space of configurations by adding the sphere are infinity in $L^2(Y; S_E)$. 

Thus consider re-normalized spinors to replace configurations $(\Psi, A)$ with {\bf blown-up configurations} $(\Phi, A, \e)$ by setting $$ \Phi= \e \Psi \hspace{1cm}\text{ where }\hspace{1cm} \e = \frac{1}{\|\Psi\|_{L^2}}. $$

\begin{defn}
The {\bf blown-up Seiberg-Witten Equations} for a blown-up configuration $(\Phi, A,\e)\in \Gamma(S_E)\times \mathcal A(\mathcal L) \times [0, \infty)$ are 
\begin{eqnarray}
\slashed D_{A}\Phi&=&0 \label{SWBlownup1}\\ 
\star \e^2F_A +\tfrac{1}{2} \mu(\Phi, \Phi)& =& 0 \\
\label{SWBlownup2}
\|\Phi\|_{L^2}&=&1. 
\label{SWBlownup3}
\end{eqnarray}
As before, these equations are invariant under the action $\mathcal G=\text{Maps}(Y;U(1))$. Solutions with $\e\neq 0$ are solutions of the original equations (\refeq{SW1})-(\refeq{SW2}) where $\|\Psi\|_{L^2}=\tfrac{1}{\e}$.  
\label{blownupdef}
\end{defn}

\bigskip 
The upcoming theorem, due to Haydys-Walpuski \cite{HWCompactness} and building on the work of Taubes in \cite{Taubes3dSL2C}, describes the limiting behavior of sequences of solutions for which the $L^2$-norm diverges. Additional regularity results were proved by Taubes \cite{TaubesZeroLoci}, and Zhang \cite{ZhangRectifiability};  a more general approach to the original result was later given by Walpuski-Zhang in \cite{WZCompactness}. The precise statement of the orem is rather intricate, and it merits preliminary explanation.

One would naively expect that a sequence of solutions $(\Phi_i, A_i, \e_i)$ with $\e_i\to 0$ would converge to a solution of  (\refeq{SWBlownup1})-(\refeq{SWBlownup3}) with $\e=0$, i.e. a pair $(\Phi_0, A_0)$ solving 

\medskip 
 \be \slashed D_{A_0}\Phi_0=0     \ \ \ \  \ \text{ such that } \ \ \ \  \  \Phi_0 \in \mu^{-1}(0) \ \ \ \ \  \text{ and } \ \ \ \  \|\Phi_0\|_{L^2}=1.\label{e=0equation}\ee

\medskip 
\noindent A version of this statement is true, but there are several caveats. 

The first caveat arises from the fact that $\mu^{-1}(0)$ is not fiberwise a manifold; instead, it is singular at  the point $0\in \mu^{-1}(0)$; it is therefore unclear what it means for $\Phi_0$ to solve the equation $\slashed D_{A_0}\Phi_0=0$ at the singular locus $|\Phi|^{-1}(0)$. The second difficulty is describing the limiting process for the connection, since it no longer appears in the limiting $\e=0$ equations  (\refeq{e=0equation}). It turns out that the connection converges to a well-defined limit away from a second singular locus around which the energy density $|F_A|^2$ concentrates and becomes unbounded. The coupling of the equations dictates, however, that this concentration may only occur where the spinor hits the singular point of $\mu^{-1}(0)$ and {\it these two singular loci therefore coincide}. Consequently, the statement of the  convergence theorem makes reference to a singular set $\mathcal Z_0$ which plays the dual role of 
\vspace{.2cm}
\begin{enumerate}
\item The set of $y\in Y$ for which the limiting spinor  hits the singularity, i.e. has $\Phi_0(y)=0 \in \mu^{-1}(0)$.
\item The set of $y\in Y$ away from which $|F_{A_i}|^2$ remains bounded.   
\end{enumerate}

\vspace{.2cm}

The following theorem makes these ideas precise. The statement given here combines the result of Haydys-Walpuski, Taubes, and Zhang referenced above.

\begin{thm}{\bf (Haydys-Walpuski \cite{HWCompactness}, Taubes \cite{TaubesZeroLoci}, Zhang \cite{ZhangRectifiability})} \label{compactness} Let $(\Phi_i, A_i, \e_i) \in \Gamma(S_E)\times \mathcal A(\mathcal L)\times (0, \infty)$ denote a sequence of solutions to the blown-up Seiberg-Witten equations $$\slashed D_{A_i}\Phi_i=0 \hspace{1cm} \star \e_i^2 F_{A_i}+ \tfrac{1}{2}\mu(\Phi_i,\Phi_i)=0 \hspace{1cm} \|\Phi_i\|_{L^2}=1 $$ with respect to a sequence of converging metrics  $g_i\to g_0$ on $Y$ and connections $B_i\to B_0$ on $E$. Then, either 

\begin{enumerate}
\item[(i)] If $\limsup \e_i>0$, then $(\Phi_i, A_i, \e_i)$ converges subsequentially modulo gauge to a solution with $\e>0$. 
\smallskip 

\noindent \hspace{-1.1cm} OR

\smallskip

\item[(ii)] If $\limsup \e_i=0$, there exists a triple $(\mathcal Z_0,\Phi_0, A_0)$ where  
\begin{itemize}
\item $\mathcal Z_0 \subseteq Y$ is a closed rectifiable subset of Haudorff codimension at least 2. 
\item $\Phi_0$ is a spinor on $\Yminus\mathcal Z_0$ such that $|\Phi_0|$ extends as a continuous function to $Y$ with $\mathcal Z_0=|\Phi_0|^{-1}(0)$. 
\item $A_0$ is a flat connection on $\mathcal L|_{\Yminus\mathcal Z_0}$ with holonomy in $\Z_2$, 
\end{itemize}
\noindent such that $(\Phi_0, A_0)$ satisfies the $\e=0$ version of the blown-up Seiberg-Witten equations \refeq{e=0equation} on $\Yminus\mathcal Z_0$ with respect to the metric $g_0$ and the connection $B_0$ on $E$. Furthermore, there is an $\alpha>0$ such that and after passing to a subsequence and up to gauge transformations defined on $\Yminus\mathcal Z_0$, 
 
 \be
 \Phi_i \overset{L^{2,2}_{loc}}\lre \Phi_0 \hspace{1cm} A_i \overset{L^{1,2}_{loc}}\lre A_0 \hspace{1cm} |\Phi_i| \overset{C^{0,\alpha}}\lre |\Phi_0|  \label{convergencetolim}
 \ee 
 where local convergence means on compact subsets of $\Yminus\mathcal Z_0$. \end{enumerate}

\qed

\end{thm}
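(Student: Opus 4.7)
The plan is to prove the two alternatives independently, with case (ii) forming the bulk of the work. Throughout, the starting point is the Weitzenb\"ock identity for $\slashed D_A^2$ combined with the curvature equation $\e^2 \star F_A = -\tfrac{1}{2}\mu(\Phi,\Phi)$ and the pointwise positivity $\langle\gamma(\mu(\Phi,\Phi))\Phi,\Phi\rangle \geq 0$ coming from the structure of the moment map. Pairing $\slashed D_A^2\Phi = 0$ with $\Phi$ and rewriting the curvature coupling via the second equation produces a Bochner-type inequality of the form
$$\tfrac{1}{2}\Delta|\Phi|^2 + |\nabla_A\Phi|^2 + \tfrac{c}{\e^2}\langle\gamma(\mu(\Phi,\Phi))\Phi,\Phi\rangle \leq C|\Phi|^2.$$
Discarding the nonnegative third term and running Moser iteration yields a uniform bound $|\Phi_i|\leq C$ independent of $i$ and $\e_i$, which is the foundation for everything else.

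For case (i), when $\e_i \geq \e_0 > 0$, the $L^\infty$ bound on $\Phi_i$ immediately controls $F_{A_i}$ in $L^\infty$. Choose a smooth reference connection, apply Uhlenbeck Coulomb gauge fixing, extract weak $L^{1,2}$ limits, and bootstrap via elliptic regularity for the coupled first-order Seiberg--Witten system. Since $\e$ is bounded below the equations remain uniformly elliptic and the argument is the classical one from \cite{MorganSW}, producing a smooth limiting solution in the same stratum.

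For case (ii), define $\mathcal Z_0 := \{y \in Y : \liminf_i |\Phi_i(y)| = 0\}$. The Bochner inequality places $u_i = |\Phi_i|$ inside De Giorgi--Nash--Moser theory, yielding uniform $C^{0,\alpha}$ bounds and subsequential convergence $|\Phi_i| \to |\Phi_0|$ in $C^{0,\alpha}$ with $\mathcal Z_0 = |\Phi_0|^{-1}(0)$. On any compact $K \Subset Y - \mathcal Z_0$ the resulting lower bound $|\Phi_i|\geq c(K)>0$, combined with the curvature equation, forces $\mu(\Phi_i,\Phi_i) \to 0$ pointwise on $K$, so $\Phi_0$ is valued fiberwise in $\mu^{-1}(0)$. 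The Haydys correspondence identifies $\mu^{-1}(0) \setminus \{0\}$ with the $U(1)$-orbits of a real rank-four subbundle whose residual structure group is $\Z_2$; taking the gauge-invariant quotient realizes the limit as a $\Z_2$-harmonic spinor and pins the holonomy of $A_0$ to $\Z_2$. Coulomb gauge fixing on $K$ together with elliptic bootstrap for the linearized system upgrades the weak convergence to the $L^{2,2}_{loc}$ and $L^{1,2}_{loc}$ convergence asserted in the theorem.

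The hardest step, and the one where the three works \cite{HWCompactness}, \cite{TaubesZeroLoci}, \cite{ZhangRectifiability} must be combined, is the regularity of $\mathcal Z_0$ as a closed rectifiable set of Hausdorff codimension at least two. The main tool is an Almgren-type frequency function, schematically
$$N(y,r) = \frac{r\int_{B_r(y)}\bigl(|\nabla_A\Phi|^2 + \tfrac{1}{\e^2}|\mu(\Phi,\Phi)|^2\bigr)}{\int_{\partial B_r(y)}|\Phi|^2},$$
which one shows to be almost-monotone in $r$ using the Weitzenb\"ock identity and which scales compatibly as $\e \to 0$. At points $y\in\mathcal Z_0$ one extracts tangent maps of the limit $\Phi_0$, proves they are homogeneous and translation-invariant along the singular stratum, and runs Federer-style dimension reduction to exclude strata of codimension less than two. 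Upgrading the upper Minkowski-type estimate from the frequency bound to countable rectifiability is Zhang's contribution in \cite{ZhangRectifiability}. This frequency/tangent-map analysis is by far the most intricate part of the proof and is where I would expect to spend most of the effort.
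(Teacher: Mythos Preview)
The paper does not prove this theorem at all: it is stated as a known result, attributed in the theorem heading to Haydys--Walpuski, Taubes, and Zhang, and closed with a bare \qed. The surrounding text says explicitly that ``the statement given here combines the result of Haydys-Walpuski, Taubes, and Zhang referenced above,'' so there is no ``paper's own proof'' to compare against.

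That said, your sketch is a faithful outline of how the cited literature actually establishes the result, and the division of labor you describe is accurate: the $L^\infty$ bound on $\Phi$ via a Bochner inequality and Moser iteration, together with the $C^{0,\alpha}$ control on $|\Phi_i|$ and the identification of the limiting $\Z_2$-structure via the Haydys correspondence, is Haydys--Walpuski \cite{HWCompactness}; the frequency-function monotonicity and the codimension-two Hausdorff bound on $\mathcal Z_0$ is Taubes \cite{TaubesZeroLoci}; the upgrade to rectifiability is Zhang \cite{ZhangRectifiability}. A couple of small corrections to your outline: the singular set $\mathcal Z_0$ is not defined as $\{y:\liminf_i|\Phi_i(y)|=0\}$ but rather as the zero set of the $C^{0,\alpha}$ limit $|\Phi_0|$ after that limit has been extracted; and the pointwise identity the paper records is $\langle\gamma(\mu(\Phi,\Phi))\Phi,\Phi\rangle = |\mu(\Phi,\Phi)|^2 \geq 0$ (via the adjointness of $\gamma$ and $\mu$), which is what makes discarding that term legitimate in the Bochner step. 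Otherwise the architecture you lay out matches the literature.
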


As we will see in the next two subsections, the data of case (ii) is equivalent to that of a $\Z_2$-harmonic spinor. The main result of \cite{ConcentratingDirac} shows that the convergence (\refeq{convergencetolim}) in this Theorem is $C^\infty_{loc}$ on $\Yminus\mathcal Z_0$. 

\begin{rem}
Although $\Phi_0$ is a section of a bundle of dimension $>2$, solutions of the equations (\refeq{e=0equation}) are topologically constrained and do not behave generically. The stability of a singular set $\mathcal Z_0$ of Hausdorff codimension 2 follows from the main results of \cite{DWExistence, PartII, RyosukeThesis}.
\end{rem}

\vspace{5mm}

\subsection{The Hyperk\"ahler Quotient}
\label{section2.2} 

This section explicitly identifies $\mu^{-1}(0)$ in fibers of $S_E$ and gives important linear algebra constructions (see also \cite{DWExistence}, Appendix A).  

Consider the vector space $$V=\C^2\otimes_\C \mathbb H$$ 

\noindent equipped with its real inner product as a model for the fibers of $S_E$. It carries a pointwise action of $U(1)$ via the first factor, and a Clifford multiplication $\gamma: \Lambda^1(\R^3) \to \End(V)$ given by  $$\gamma(dt)= \begin{pmatrix} i & 0 \\ 0 & -i \end{pmatrix} \otimes Id\hspace{1cm}\gamma(dx)= \begin{pmatrix} 0 & -1 \\ 1 & 0 \end{pmatrix} \otimes Id\hspace{1cm}\gamma(dy)= \begin{pmatrix} 0 & i \\ i & 0  \end{pmatrix}\otimes Id. $$

\noindent where $\R^3$ is given coordinates $(t,x,y)$. A pointwise spinor $\Phi\in V$ may be written in the form \be\Phi= \begin{pmatrix} \alpha_1 \\ \beta_1 \end{pmatrix}\otimes 1 \ + \ \begin{pmatrix} \alpha_2 \\ \beta_2 \end{pmatrix}\otimes j.\label{Phiform}\ee

\noindent In this form the pointwise moment map is given by 

\begin{eqnarray}
\frac{1}{2}\mu(\Phi,\Phi) & = & \frac{i}{2}(|\beta_1|^2 + |\beta_2|^2 - |\alpha_1|^2 -|\alpha_2|^2) \ dt    \ \  \label{mut}\\
& +&   \frac{i}{2}\text{Re}( -\overline \alpha_1 \beta_1 - \overline \alpha_2 \beta_2) \ dx\label{mux}\\ 
& +&   \frac{i}{2}\text{Im}( -\overline \alpha_1 \beta_1 - \overline \alpha_2 \beta_2) \ \label{muy}dy.   
\end{eqnarray}

\noindent Notice that the sign convention here differs from many authors since we have written the Seiberg-Witten equations as $\star F_A + \mu=0$ rather than $\star F_A = \mu$. It is easy to check the under the identification $(\R^3)^* = \text{Im}\mathbb H$ given by $dt \mapsto I, dx\mapsto J, dy\mapsto K$, the map $\tfrac{1}{2}\mu$ is indeed the hyperk\"ahler moment map associated to the $U(1)$ action, justifying the name. 
\noindent 

We can identify $V\simeq \text{End}(\C^2; \C^2)$ so that \refeq{Phiform} is written as the matrix \be\Phi_0= \begin{pmatrix} \alpha_1 & \alpha_2  \\ \beta_1 & \beta_2 \end{pmatrix}.\label{matrixPhi1}\ee

\begin{lm} Under the above isomorphism, $$\mu^{-1}(0) \ \simeq \ \text{Cone}(U(2)).$$

\noindent In particular, it is a smooth 5-dimensional manifold away from $0\in V$. 

\end{lm}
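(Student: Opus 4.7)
The plan is to show that the moment map conditions $\mu(\Phi,\Phi)=0$ are equivalent to a simple matrix identity under the isomorphism $V \simeq M_2(\C) = \End(\C^2;\C^2)$, from which the cone description is immediate.

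First I would compute $\Phi\Phi^*$ directly using the matrix form \refeq{matrixPhi1}:
\begin{equation*}
\Phi\Phi^* \;=\; \begin{pmatrix} \alpha_1 & \alpha_2 \\ \beta_1 & \beta_2 \end{pmatrix}\begin{pmatrix} \overline{\alpha_1} & \overline{\beta_1} \\ \overline{\alpha_2} & \overline{\beta_2} \end{pmatrix} \;=\; \begin{pmatrix} |\alpha_1|^2+|\alpha_2|^2 & \alpha_1\overline{\beta_1}+\alpha_2\overline{\beta_2} \\ \overline{\alpha_1\overline{\beta_1}+\alpha_2\overline{\beta_2}} & |\beta_1|^2+|\beta_2|^2 \end{pmatrix}.
\end{equation*}
Comparing this with the explicit formulas \refeq{mut}--\refeq{muy} for the three components of $\mu$, I would observe that the $dt$-component vanishes precisely when the two diagonal entries of $\Phi\Phi^*$ agree, while the $dx$- and $dy$-components vanish precisely when the off-diagonal entry $\alpha_1\overline{\beta_1}+\alpha_2\overline{\beta_2}$ has zero real and imaginary parts respectively. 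Thus
\begin{equation*}
\mu(\Phi,\Phi)=0 \quad\Longleftrightarrow\quad \Phi\Phi^* \;=\; \lambda \,\mathrm{Id}_{\C^2} \quad \text{for some } \lambda \geq 0.
\end{equation*}

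With this identity in hand, the cone structure is immediate. If $\lambda = 0$ then $\Phi\Phi^* = 0$ forces $\Phi = 0$, the cone point. If $\lambda > 0$, setting $U = \lambda^{-1/2}\Phi$ gives $UU^* = \mathrm{Id}$, so $U \in U(2)$ and $\Phi = \sqrt{\lambda}\,U$. This defines the homeomorphism $\mu^{-1}(0) \to \mathrm{Cone}(U(2)) = \bigl([0,\infty)\times U(2)\bigr)/(\{0\}\times U(2))$ sending $\Phi$ to $(\sqrt{\lambda},U)$, with smooth inverse $(r,U)\mapsto rU$ away from the apex. Since $U(2)$ is a smooth 4-dimensional manifold, the open cone $(0,\infty)\times U(2)$ is smooth and 5-dimensional, establishing smoothness away from $0\in V$.

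There is no real obstacle here beyond carefully unpacking the sign conventions in \refeq{mut}--\refeq{muy} and matching them to $\Phi\Phi^*$; the content of the lemma is really just the identification of $\mu$ as (a multiple of) $\Phi\Phi^* - \tfrac{1}{2}\mathrm{tr}(\Phi\Phi^*)\mathrm{Id}$ together with the trace piece, which is the standard realization of the $U(1)$ hyperk\"ahler moment map on $M_2(\C)$ with the $U(1)$-action on the left factor of $\C^2 \otimes_\C \mathbb H$.
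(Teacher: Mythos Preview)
Your proof is correct and essentially the same as the paper's: both observe that the moment map conditions force $\Phi$ to be a nonnegative scalar multiple of a unitary matrix. The paper phrases this as ``columns orthogonal, rows of equal norm'' while you package it as $\Phi\Phi^* = \lambda\,\mathrm{Id}$, but these are the same observation.
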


\begin{proof}
In terms of the matrix (\refeq{matrixPhi1}), the second and third moment map equations  (\refeq{mux})-(\refeq{muy}) show that the columns are orthogonal in the Hermitian metric, and the first equation (\refeq{mut}) requires that the rows have the same norm. Thus the matrix is a possibly 0 multiple of a unitary matrix.  
\end{proof}

Next, we establish the form of the hyperk\"ahler quotient orbifold $\mu^{-1}(0)/U(1)$. To do this, we construct slices for the $U(1)$ action. We will show, in fact, that there is a global slice for the action up to a stabilizer of $\Z_2=\{\pm 1\}$. To begin, each factor of $V=\C^2 \otimes_\C \mathbb H$ carries a complex anti-linear involution, denoted by $J: \C^2 \to \C^2$ and $j:\mathbb H\to \mathbb H$ respectively, such that $J^2 = j^2=-1$. Explicitly, these are 

\be
J\begin{pmatrix}\alpha \\ \beta \end{pmatrix}: = \begin{pmatrix}-\overline \beta \\ \overline \alpha \end{pmatrix}\hspace{3cm} j(q):= qj.
\label{Jdef}
\ee

\noindent Together these give rise to a real structure $\tau:V\to V$ satisfying $\tau^2=Id$ given by $$\tau:= J\otimes j.$$
We denote by $V^\text{Re}$ and $V^\text{Im}$ the $+1$ and $-1$ eigenspaces of $\tau$ respectively. So that 
\begin{eqnarray}
V^\text{Re}&=& \{ \psi\otimes 1 + J\psi \otimes j \ | \ \psi \in \C^2 \}\label{SRe} \\ 
V^\text{Im}&=& \{\psi\otimes 1 - J\psi \otimes j \ | \ \psi \in \C^2 \}.
\label{SIm}
\end{eqnarray}

\begin{lm}\label{slicelemma}
The subspace $V^\text{Re}$ provides a global slice for the $U(1)$ action up to a $\Z_2$-stabilizer. That is, $$V^\text{Re}\subset \mu^{-1}(0)$$
and each $U(1)$-orbit intersects $V^\text{Re}$ in two point which differ by multiplication by $-1$. Consequently, the hyperk\"ahler quotient is given by $$\mu^{-1}(0)/U(1) \simeq \mathbb H/\Z_2.$$
\end{lm}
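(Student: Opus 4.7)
The proof has two components: (a) verify $V^\text{Re}\subset\mu^{-1}(0)$ and show each $U(1)$-orbit in $\mu^{-1}(0)$ meets $V^\text{Re}$ in exactly two antipodal points; (b) deduce that the quotient is $\mathbb H/\Z_2$. Both parts become essentially tautological once the parametrization \refeq{SRe} is converted into the matrix form \refeq{matrixPhi1}.

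\textbf{Step 1 (containment).} Write $\psi=(\alpha,\beta)^t\in\C^2$, so that an arbitrary element of $V^\text{Re}$ has $\alpha_1=\alpha,\ \beta_1=\beta,\ \alpha_2=-\overline\beta,\ \beta_2=\overline\alpha$. Plugging directly into \refeq{mut}--\refeq{muy}, the $dt$-coefficient collapses as $|\beta|^2+|\alpha|^2-|\alpha|^2-|\beta|^2=0$, and the $dx,dy$-coefficients use $-\overline{\alpha_1}\beta_1-\overline{\alpha_2}\beta_2=-\overline\alpha\beta+\beta\overline\alpha$, which is purely imaginary with vanishing real part and its imaginary part is already seen to cancel against its conjugate (the symmetry $\alpha\leftrightarrow\overline\alpha$, $\beta\leftrightarrow\overline\beta$ in the two summands kills the remaining piece). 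Thus $\mu\equiv 0$ on $V^\text{Re}$. In matrix form, the slice reads
\[
\Phi^\text{Re}=\begin{pmatrix}\alpha & -\overline\beta\\ \beta & \overline\alpha\end{pmatrix},
\]
which is precisely $\sqrt{|\alpha|^2+|\beta|^2}$ times an element of $SU(2)$.

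\textbf{Step 2 (the slice property).} By the previous lemma, $\mu^{-1}(0)=\text{Cone}(U(2))$, so every nonzero $\Phi\in\mu^{-1}(0)$ is of the form $r\cdot U$ with $r>0$ and $U\in U(2)$. Since $U(1)$ acts by left multiplication on the first factor $\C^2$, i.e. as left multiplication by $e^{i\theta}$ on the matrix \refeq{matrixPhi1}, the orbit of $\Phi$ is $\{e^{i\theta}r U\}$. Writing $\det U=e^{i\varphi}$, the two square roots $\pm e^{i\varphi/2}$ give two gauge transformations $e^{i\theta}=\pm e^{-i\varphi/2}$ for which $e^{i\theta}U\in SU(2)$, and by Step~1 the resulting configurations lie in $V^\text{Re}$. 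No other value of $\theta$ sends $U$ into $SU(2)$, since $\det(e^{i\theta}U)=e^{2i\theta}\det U$ pins down $\theta\bmod\pi$. Hence each orbit meets $V^\text{Re}$ in exactly the pair $\pm\Phi^\text{Re}$, proving the slice claim with stabilizer $\Z_2=\{\pm 1\}$. The $\Phi=0$ orbit is handled trivially.

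\textbf{Step 3 (the quotient).} The assignment $\psi\mapsto\psi\otimes 1+J\psi\otimes j$ identifies $V^\text{Re}\cong\C^2$ as real vector spaces, and under the standard identification $\C^2\cong\mathbb H$ (via $(\alpha,\beta)\mapsto\alpha+\beta j$) the induced $\Z_2$-action on $V^\text{Re}$ becomes $q\mapsto -q$ on $\mathbb H$. Combining with Step~2,
\[
\mu^{-1}(0)/U(1)\ \cong\ V^\text{Re}/\Z_2\ \cong\ \mathbb H/\Z_2,
\]
as claimed.

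\textbf{Anticipated difficulty.} There is no real analytic obstacle here; the only place to be careful is the $\Z_2$ bookkeeping in Step~2, specifically that the two square roots of $\det U\in U(1)$ correspond to the stabilizer $\{\pm 1\}\subset U(1)$ acting on $V^\text{Re}$, and that this action coincides with the antipodal map under the identification $V^\text{Re}\cong\mathbb H$ of Step~3. Both are matters of tracking signs through the definition \refeq{Jdef} of $J$ and $j$.
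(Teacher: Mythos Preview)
Your proof is correct and close in spirit to the paper's, but the argument for the slice property (your Step~2) takes a slightly different route. The paper shows the stabilizer claim by a direct computation in the tensor presentation: assuming $\Phi\in V^\text{Re}$ and $e^{i\phi}\Phi\in V^\text{Re}$, it expands $e^{i\phi}\Phi$ and compares against the required form \refeq{Sre} to force $e^{2i\phi}=1$. You instead pass through the matrix identification $\mu^{-1}(0)\simeq\text{Cone}(U(2))$ from the previous lemma and use the determinant, which pins down $\theta\bmod\pi$. Your version has the mild advantage of simultaneously proving \emph{existence} (that every orbit actually meets $V^\text{Re}$), a point the paper's proof leaves implicit. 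One cosmetic note: in Step~1 the expression $-\overline\alpha\beta+\beta\overline\alpha$ vanishes simply because complex multiplication commutes; your phrasing there is slightly roundabout but the conclusion is fine.
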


\begin{proof}
A pointwise spinor $\Phi \in V^\text{Re}$ has the form \be \Phi= \begin{pmatrix}\alpha_1 \\ \beta_1 \end{pmatrix}\otimes 1 +  \begin{pmatrix}-\overline \beta_1 \\ \overline \alpha_1  \end{pmatrix}\otimes j \label{Sre}\ee

\noindent which automatically satisfies (\refeq{mut})-(\refeq{muy}). Moreover, if $e^{i\phi}\Phi\in V^\text{Re}$ is another element in the same $U(1)$ orbit in $V^\text{Re}$ it must also be of the form (\refeq{Sre}). Since $$e^{i\phi}\Phi= e^{i\phi}\begin{pmatrix}\alpha_1 \\ \beta_1 \end{pmatrix}\otimes 1 +e^{i\phi}  \begin{pmatrix}-\overline \beta_1 \\ \overline \alpha_1  \end{pmatrix}\otimes j =\begin{pmatrix}e^{i\phi}\alpha_1 \\ e^{i\phi}\beta_1 \end{pmatrix}\otimes 1 +e^{2i\phi}  \begin{pmatrix}-\overline {e^{i\phi}\beta_1} \\ \overline {e^{i\phi}\alpha_1}  \end{pmatrix}\otimes j$$
\noindent this implies $e^{2i\phi}=1$ hence $e^{i\ph}=\pm 1$.  
\end{proof}
\noindent 

\medskip

To complete this section we note one more simple lemma, which is central to many arguments in this paper.  Let \be V=V^\text{Re}\oplus V^\text{Im}\label{Vsplitting}\ee

\noindent denote the decomposition from (\refeq{SRe})-(\refeq{SIm}). The linearization of $\mu$ at $\Phi$ is given by its polarization, which we denote $\mu(-,\Phi)$ . We may extend this to a map $\mu(-,\Phi): V\to (\Lambda^0\oplus \Lambda^1)(i\R)$ by redefining \be \mu(\Psi,\Phi) \Rightarrow (-i\br i\Psi  ,  \Phi\kt \ , \ \mu(\Psi, \Phi))\label{extendedmu}\ee

\noindent where the previous definition now constitutes the 1-form component. Notice polarizing cancels the factor of $\tfrac{1}{2}$. Similarly, we extend Clifford multiplication to $(\Lambda^0\oplus \Lambda^1)(\R^3)$ by scalar multiplication in the first factor.

\begin{lm}
\label{splittinglem}
The following statements hold: 

\begin{enumerate}
\item[(A)]  Clifford multiplication by real and purely imaginary forms $$\gamma: \Lambda^0(\R)\oplus \Lambda^1(\R) \to \text{End}(V) \hspace{2cm}\gamma: \Lambda(i\R)\oplus \Lambda^1(i\R) \to \text{End}(V)$$
respectively preserve and reverse the splitting (\refeq{Vsplitting}).

\item[(B)] If $\Phi \in V^\text{Re}$ is non-zero, then $$V^\text{Re}=\{\gamma(b)\Phi \ | \ b \in \Lambda^0(\R)\oplus \Lambda^1(\R)\} \hspace{1cm} \text{and}\hspace{1cm} V^\text{Im}=\{\gamma(a)\Phi \ | \ a \in \Lambda^0(i\R)\oplus \Lambda^1(i\R)\}$$

\item[(C)] If $\Phi \in V^\text{Re}$ then $$\ker(\mu(-,\Phi))= V^\text{Im}$$
\noindent and the reverse for $\Phi\in V^\text{Im}$.  
\end{enumerate}
\end{lm}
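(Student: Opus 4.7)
The proof reduces to linear algebra on the model fiber $V = \C^2 \otimes_\C \mathbb{H}$, and the three parts are proved in order, each using the previous.

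For (A), the question is how $\gamma$ and scalar multiplication on $V$ interact with $\tau = J \otimes j$. Since $J$ and $j$ are both complex anti-linear, so is $\tau$; thus multiplication by the scalar $i$ anti-commutes with $\tau$, while any real scalar commutes with it. For a real $1$-form $e^k$, Clifford multiplication factors as $\gamma(e^k) = M_k \otimes \mathrm{Id}$ on $V$, and since $\mathrm{Id}$ commutes with $j$ on $\mathbb{H}$, the assertion reduces to the matrix identity $M_k J = JM_k$ on $\C^2$, a direct check against the explicit definitions of $\gamma(dt), \gamma(dx), \gamma(dy)$. Combining these, Clifford multiplication by any element of $\Lambda^0(\R) \oplus \Lambda^1(\R)$ commutes with $\tau$ and hence preserves the $\tau$-eigenspace decomposition, while Clifford multiplication by an element of $\Lambda^0(i\R) \oplus \Lambda^1(i\R)$ picks up an extra factor of $i$ and so anti-commutes with $\tau$, reversing the decomposition.

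For (B), part (A) places the image of $b \mapsto \gamma(b)\Phi$ in the correct summand. Both the source $\Lambda^0(\R) \oplus \Lambda^1(\R)$ and the target $V^\text{Re}$ are real $4$-dimensional (similarly for the imaginary case with $V^\text{Im}$), so it suffices to prove injectivity. Writing $b = (b_0, b_1)$, a direct expansion gives
\begin{equation*}
|\gamma(b)\Phi|^2 = b_0^2|\Phi|^2 + 2 b_0 \br \Phi, \gamma(b_1)\Phi \kt + |b_1|^2 |\Phi|^2,
\end{equation*}
and the cross term vanishes because $\gamma(b_1)$ is skew-Hermitian for a real $1$-form $b_1$, forcing the real inner product $\br\Phi, \gamma(b_1)\Phi\kt$ to equal zero. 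Thus $|\gamma(b)\Phi|^2 = |b|^2|\Phi|^2$, which is nonzero for nonzero $b$ and $\Phi$. The analogous identity for imaginary $b$ follows in the same way, since the extra factor of $i$ still leaves the key pairing with vanishing real part. Injectivity plus the dimension count yields the claimed isomorphisms.

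For (C), use (B) to decompose any $\Psi \in V$ uniquely as $\Psi = \gamma(a)\Phi + \gamma(b)\Phi$ with $a$ imaginary and $b$ real; the two summands lie in $V^\text{Im}$ and $V^\text{Re}$ respectively. The map $\mu(-,\Phi)$ splits accordingly, and the content of (C) is that one summand maps isomorphically onto the target $(\Lambda^0 \oplus \Lambda^1)(i\R)$, while the other is annihilated. Both claims reduce to evaluating the explicit moment map formulas (\refeq{mut})--(\refeq{muy}) on elements of the form $\gamma(b)\Phi$ and $\gamma(a)\Phi$ and invoking Clifford identities, together with the vanishing $\mu(\Phi, \Phi) = 0$ guaranteed by $\Phi \in V^\text{Re} \subseteq \mu^{-1}(0)$. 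The extended $0$-form component $-i\br i\Psi, \Phi\kt$ is precisely what makes the map surjective in the scalar direction, so that the dimension count then forces the kernel to equal the annihilated summand. The main obstacle is the bookkeeping at this step: tracking sign conventions for $\mu$, distinguishing Hermitian from real inner products, and managing the role of the extended $0$-form component all require care so that the two identities come out with the correct signs, but parts (A) and (B) are routine once the explicit matrix expressions for $\gamma$ are in hand.
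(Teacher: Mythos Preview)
Your arguments for (A) and (B) are essentially the paper's. For (A) both observe that real Clifford multiplication commutes with $J$ (hence with $\tau$) while the factor $i$ anti-commutes by anti-linearity. For (B) the paper phrases the key point as orthogonality of $\gamma(e^j)\Phi$ and $\gamma(e^k)\Phi$ for $j\neq k$ in the real inner product; your norm identity $|\gamma(b)\Phi|^2 = |b|^2|\Phi|^2$ is the same fact repackaged, and both conclude by the dimension count.

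For (C) the approaches diverge. The paper's proof is a single observation: $\mu(-,\Phi)$ and $\gamma(-)\Phi$ are fiberwise adjoints, so the kernel of the former is the orthogonal complement of the image of the latter, and (B) has already identified that image. This bypasses all explicit computation. Your route via decomposing $\Psi = \gamma(a)\Phi + \gamma(b)\Phi$ and evaluating the moment map formulas directly would also succeed (the point is that in three dimensions $\gamma(ie^k)\gamma(e^j) = \pm\gamma(ie^l)$ for distinct $j,k$, which reduces the components of $\mu(\gamma(e^j)\Phi,\Phi)$ back to components of $\mu(\Phi,\Phi)=0$), but you leave this computation undone and flag it as bookkeeping. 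The adjoint observation is what makes (C) a one-liner and is worth internalizing; it recurs later in the paper, for instance in the proof of Corollary~\ref{L2Weitzenbock}.
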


\begin{proof} For (A), simply note that Clifford multiplication by real forms commutes with $J$, hence with $\tau$ and preserves the splitting. For purely-imaginary forms, it anti-commutes by the anti-linearity of $J$. This implies (B) since $\gamma(e^j)\Phi$ is orthogonal to $\gamma(e^k)\Phi$ in the real inner product for $j\neq k$ ranging over $j,k=0,1,2,3$. (C) follows in turn from (B) since $\mu(-,\Phi)$ and $\gamma(-)\Phi$ are adjoints.    
\end{proof}

\medskip 

\subsection{The Haydys Gauge}
\label{section2.3}

In the $\e=0$ limit of
the blown-up Seiberg-Witten equations (\refeq{SWBlownup1})-(\refeq{SWBlownup2}) the variables are $\Phi_0$ and $A_0$, but $A_0$ no longer satisfies an elliptic equation. There is a special choice of gauge, however, that effectively eliminates $A_0$ as a variable. This allows the limiting $\e=0$ equation to be reinterpreted as an equation for only $\Phi_0$ that is elliptic on $\Yminus\mathcal Z_0$ (though not uniformly so). This gauge is a key part of the Haydys correspondence (\cite{HaydysCorrespondence,DWExistence}) in the case that $\mathcal Z_0=\emptyset$.

Since the limiting connection $A_0$ in Theorem \ref{compactness} is flat with holonomy in $\Z_2$, it follows that \be (\mathcal L|_{\Yminus\mathcal Z_0})^2 \simeq \underline \C\label{L^2triv}\ee is trivial. Indeed, it carries a flat connection whose holonomy is trivial, this being the one induced by $A_0$. It follows also that $\mathcal L|_{\Yminus\mathcal Z_0}$ admits a reduction of structure group to a real line bundle. More precisely, 

\begin{lm} Suppose $\mathcal L|_{\Yminus\mathcal Z_0}$ admits a flat connection $A_0$ with holonomy in $\Z_2$. Then 

\begin{enumerate}
\item[(A)] The first chern class $c_1(\mathcal L|_{\Yminus\mathcal Z_0})$ is 2-torsion. 
\item[(B)] There exists a real line bundle $\ell\to \Yminus\mathcal Z_0$ such that $$\mathcal L|_{\Yminus\mathcal Z_0}\simeq \ell \otimes_\R \underline \C.$$
\item[(C)]The set of gauge equivalence classes of connections $A_0$ on $\mathcal L|_{\Yminus\mathcal Z_0}$ is a torsor on the kernel of the integral Bockstein homomorphism $$\beta: H^1(\Yminus\mathcal Z_0; \Z_2)\to H^2(\Yminus\mathcal Z_0; \Z).$$
\end{enumerate}
\label{reallinebundles}
\end{lm}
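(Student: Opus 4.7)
\emph{Proof plan.} The plan is to argue via the holonomy representation and the classifying-space description of line bundles. Write $Y' := Y - \mathcal Z_0$ for brevity, and let $\rho_0 : \pi_1(Y') \to \{\pm 1\} \subseteq U(1)$ denote the holonomy representation of $A_0$.

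For (A), the connection induced by $A_0$ on $(\mathcal L|_{Y'})^{\otimes 2}$ is flat with holonomy $\rho_0^2 \equiv 1$. Hence $(\mathcal L|_{Y'})^{\otimes 2}$ is smoothly trivial, so $2\,c_1(\mathcal L|_{Y'}) = 0$.

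For (B), I would construct $\ell$ directly as the associated real line bundle $\ell := \widetilde{Y'} \times_{\rho_0} \R$, using the identification $\{\pm 1\} = O(1)$, where $\widetilde{Y'}$ is the universal cover of $Y'$. This $\ell$ carries a canonical flat real connection whose complexification is a flat $U(1)$-bundle with holonomy $\rho_0$. Since flat $U(1)$-bundles are classified up to isomorphism by their holonomy representation, one obtains a smooth bundle isomorphism $\mathcal L|_{Y'} \simeq \ell \otimes_\R \underline{\C}$ intertwining the flat structures.

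For (C), the plan is to establish a bijection between gauge equivalence classes of flat $\Z_2$-connections on $\mathcal L|_{Y'}$ and isomorphism classes of real line bundles $\ell$ satisfying $\ell \otimes_\R \underline{\C} \simeq \mathcal L|_{Y'}$. Given $A_0$, the associated $\ell$ is as in (B); conversely, any such $\ell$ has a canonical flat real connection whose complexification yields a flat $\Z_2$-connection on $\mathcal L|_{Y'}$. Because $U(1)$ is abelian, two flat $U(1)$-connections on the same bundle are gauge equivalent iff their holonomy representations coincide, which translates into the corresponding real line bundles being isomorphic. Thus the gauge equivalence classes are in bijection with the fiber over $c_1(\mathcal L|_{Y'}) \in H^2(Y';\Z)$ of the map
$$H^1(Y';\Z_2) \longrightarrow H^2(Y';\Z), \qquad w_1(\ell) \longmapsto c_1(\ell \otimes_\R \underline{\C}).$$
It remains to identify this map with the integral Bockstein $\beta$ of the coefficient sequence $0 \to \Z \xrightarrow{\cdot 2} \Z \to \Z_2 \to 0$. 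This is standard: the classifying map $\R P^\infty = BO(1) \to BU(1) = \CP^\infty$ pulls back the universal first Chern class to $\beta$ applied to the universal $w_1$. Granting this, the fiber $\beta^{-1}(c_1(\mathcal L|_{Y'}))$ is non-empty thanks to (B) and is consequently a torsor over $\ker(\beta)$, as claimed.

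The only non-formal point --- rather than a true obstacle --- is the identification of $w_1(\ell) \mapsto c_1(\ell \otimes_\R \underline{\C})$ with $\beta$; everything else is bookkeeping via the classifying-space dictionary for flat line bundles and basic gauge theory, with no analytic input required.
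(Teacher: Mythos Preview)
Your proposal is correct and follows essentially the same approach as the paper: both arguments pass through the holonomy representation to build $\ell$, use the bijection between real line bundles and $H^1(Y';\Z_2)$, and identify the map $w_1(\ell)\mapsto c_1(\ell\otimes_\R\underline{\C})$ with the integral Bockstein. The only cosmetic difference is that the paper phrases the last identification via the long exact sequence and the \v{C}ech description of $\beta$, whereas you phrase it via the classifying map $BO(1)\to BU(1)$; these are equivalent viewpoints.
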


\begin{proof}
The short exact sequence $\Z\overset{\times 2}\to \Z\to \Z_2$ induces the long exact sequence 
$$\ldots \lre H^1(\Yminus\mathcal Z_0;\Z)\lre H^1(\Yminus\mathcal Z_0;\Z_2)\overset{\beta}\lre H^2(\Yminus\mathcal Z_0;\Z)\overset{\times 2}\lre H^2(\Yminus\mathcal Z_0;\Z)\lre \ldots$$
and \refeq{L^2triv} shows that $c_1(\mathcal L|_{\Yminus\mathcal Z_0})$ is in the kernel of $\times 2$, which is (A). Exactness implies $c_1(\mathcal L|_{\Yminus\mathcal Z_0})$ the image of a class in $H^1(\Yminus\mathcal Z_0;\Z_2)$. For (B)-(C), note that flat  connections with holonomy in $\Z_2$ up to gauge (on $\Yminus\mathcal Z_0$) are in one-to-one correspondence with $\Z_2$-valued representations in $Hom(\pi_1(\Yminus\mathcal Z_0); \Z_2)\simeq H^1(\Yminus\mathcal Z_0;\Z_2)$, thus with real line bundles $\ell$ via $w_1(\ell)=\text{hol}_{A_0}$ under this isomorphism.  
The complex line bundle whose first chern class is $c_1(\mathcal L|_{\Yminus\mathcal Z_0})=\beta(w_1(\ell))$ is simply $\mathcal L|_{\Yminus\mathcal Z_0} =\ell \otimes \C$ (this is the $\check{\text{C}}\text{ech}$ description of $\beta$). (B) therefore holds by exactness, and the set of flat $A_0$ with $\Z_2$-holonomy on a given isomorphism class of complex line bundle is in one-to-one correspondence with the fiber of $\beta$ over its first chern class, which gives (C).

\end{proof}

Given the above lemma, we may fix an isomorphism \be\sigma: \mathcal L|_{\Yminus\mathcal Z_0}\ \simeq \  \ell \otimes_\R \underline\C,\label{sigmadef}\ee

\noindent where $\ell$ is the real line bundle specified by the holonomy representation of $A_0$. Such a choice is only determined up to gauge transformations on $\Yminus\mathcal Z_0$, as composing with a gauge transformation (now thought of as acting on the $\underline \C$ factor) gives another such choice. A choice of $\sigma$ gives a reduction of structure group of the  spinor bundle $S=S_0\otimes_\C(\ell \otimes_\R \C)$ from $\text{Spin}^c$ to $SU(2)$. The auxiliary bundle $E$ has structure group $SU(2)$ by definition, hence there are global versions of the maps \be J: S|_{\Yminus\mathcal Z_0}\to S|_{\Yminus\mathcal Z_0} \hspace{2cm} j: E\to E\label{Jdefglobal}\ee
from the previous subsection (which depend on the choice of $\sigma$). They are given in local trivializations by the same expressions \refeq{Jdef} which commute with the action $SU(2)$. It follows that there is a global splitting \be S_E|_{\Yminus\mathcal Z_0}= S^\text{Re}\oplus S^\text{Im}\label{ReImSplitting}\ee 

\noindent which is  determined by $\sigma$. Thus this splitting is specified up to gauge transformations on $\Yminus\mathcal Z_0$, and the gauge equivalence classes are given by the gauge equivalence classes of $A_0$ as in Lemma \ref{reallinebundles}. 

The following lemma gives the special gauge choice advertised at the beginning of the section. 
\begin{lm}\label{Haydysgauge} Suppose that $(\mathcal Z_0, A_0, \Phi_0)$ is a triple as in Theorem \ref{compactness}. There exists a choice of a gauge $u\in \mathcal G|_{\Yminus\mathcal Z_0}$ such that the following equivalent conditions hold: 

\begin{enumerate}
\item[{\bf (1)}] After replacing the isomorphism $\sigma$ by $u\circ \sigma$, $$\sigma: \mathcal L|_{\Yminus\mathcal Z_0} \ \simeq  \ \ell \otimes_\R \underline \C$$ sends \be A_0 \ \ \mapsto \nabla^\text{flat}\otimes 1 + 1\otimes \text{d}\label{A_0def}\ee
where $\nabla^\text{flat}$ is the unique flat connection with holonomy in $\Z_2$ on $\ell$.  
\item[{\bf (2)}] In the splitting \refeq{ReImSplitting} determined by the new $\sigma$, one has $$\Phi_0 \in \Gamma(S^\text{Re}).$$
\end{enumerate}

\noindent This choice of gauge is referred to as the {\bf Haydys gauge}. It is unique up to the action of $\Z_2\subseteq \mathcal G|_{\Yminus\mathcal Z_0}$. 
\end{lm}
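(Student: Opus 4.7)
The approach I would take is to produce the gauge by realizing condition~(2) directly from the pointwise orbit structure of $\mu^{-1}(0)$, and then to observe that~(1) follows automatically, with the equivalence of the two conditions on an arbitrary $\sigma$ handled via the decomposition of the Dirac equation under the real structure.

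First I would construct $\sigma$ realizing~(2) as follows. On $Y-\mathcal Z_0$ the spinor $\Phi_0$ is nowhere zero (since $\mathcal Z_0=|\Phi_0|^{-1}(0)$) and pointwise lies in $\mu^{-1}(0)$, so by Lemma~\ref{slicelemma} the $U(1)$-orbit through each $\Phi_0(y)$ meets $V^{\text{Re}}$ in exactly a $\pm$-pair. These pairs glue into a principal $\Z_2$-subbundle $P_\Phi$ of the $U(1)$-frame bundle of $\mathcal L|_{Y-\mathcal Z_0}$; let $\ell_\Phi$ denote the associated real line bundle. Because parallel transport by the flat, $\Z_2$-holonomy connection $A_0$ is complex-linear and preserves $\mu^{-1}(0)$, it also preserves $P_\Phi$, hence induces on $\ell_\Phi$ a flat connection whose holonomy equals that of $A_0$. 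By Lemma~\ref{reallinebundles}, this $\Z_2$-representation determines $\ell_\Phi$ up to isomorphism, so $\ell_\Phi\simeq \ell$. A choice of such an isomorphism produces $\sigma:\mathcal L|_{Y-\mathcal Z_0}\simeq \ell\otimes_\R \underline\C$, for which $\Phi_0\in\Gamma(S^{\text{Re}})$ by construction, giving~(2). Moreover, $\sigma$ identifies the connection induced on $\ell_\Phi$ with $\nabla^{\text{flat}}$ on $\ell$, and since the real subbundle $\ell\otimes 1\subset \ell\otimes\underline\C$ is $A_0$-parallel by construction of $P_\Phi$, $A_0$ must equal the product connection $\nabla^{\text{flat}}\otimes 1+1\otimes d$, giving~(1).

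The equivalence (2)~$\Rightarrow$~(1) on an arbitrary $\sigma$ is a direct computation using the Dirac equation. Given any $\sigma$ with $\Phi_0\in\Gamma(S^{\text{Re}})$, write $A_0=\nabla^{\text{flat}}\otimes 1+1\otimes(d+\eta)$ with $\eta\in\Omega^1(Y-\mathcal Z_0;i\R)$. Plugging into $\slashed D_{A_0}\Phi_0=0$ and applying Lemma~\ref{splittinglem}(A), the product Dirac operator preserves $S^{\text{Re}}$ while $\gamma(\eta)$ maps $S^{\text{Re}}$ to $S^{\text{Im}}$; separating components yields $\gamma(\eta)\Phi_0=0$, and the injectivity in Lemma~\ref{splittinglem}(B)---valid because $\Phi_0$ is nonvanishing on $Y-\mathcal Z_0$---forces $\eta=0$. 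Conversely (1)~$\Rightarrow$~(2): under~(1) the splitting $S_E=S^{\text{Re}}\oplus S^{\text{Im}}$ is $A_0$-parallel, and locally Lemma~\ref{slicelemma} writes $\Phi_0=e^{i\theta}\widetilde\Phi_0$ with $\widetilde\Phi_0$ a local section of $S^{\text{Re}}$; the same Dirac decomposition then gives $d\theta=0$, so $\theta$ is locally constant, hence globally constant on the connected set $Y-\mathcal Z_0$ and absorbable into a further constant gauge preserving~(1). The uniqueness up to $\Z_2$ then follows by intersecting the stabilizer of~(1)---constant $U(1)$---with that of~(2)---$\{\pm 1\}$ by Lemma~\ref{slicelemma}.

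The step I expect to be the main obstacle is the construction of $P_\Phi$ and the verification $\ell_\Phi\simeq \ell$: one must confirm that the pointwise $\pm$-pairs in $V^{\text{Re}}$ assemble into a smooth principal $\Z_2$-bundle (using elliptic regularity for $\Phi_0$ away from $\mathcal Z_0$) and that the flat connection induced on $\ell_\Phi$ has holonomy matching the $\Z_2$-representation recorded by Lemma~\ref{reallinebundles}. The remaining uniqueness and equivalence statements are then a direct unwinding of the algebraic structures developed in Section~\ref{section2.2}.
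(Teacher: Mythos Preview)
Your overall strategy matches the paper's: realize~(2) first by using the slice lemma pointwise to produce a $\Z_2$-reduction of the $U(1)$-frame bundle (the paper does this via local trivializations and \v{C}ech cocycles; your $P_\Phi$ is the global rephrasing of the same construction), and then derive~(1) from~(2) via the Dirac equation together with Lemma~\ref{splittinglem}. Your second paragraph's (2)$\Rightarrow$(1) computation is essentially identical to the paper's.

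There is, however, a genuine gap in your first paragraph. The assertion that ``parallel transport by the flat, $\Z_2$-holonomy connection $A_0$ \ldots\ preserves $P_\Phi$'' is not justified, and in fact it is the content of the lemma rather than an input to it. The fiber $(P_\Phi)_y$ consists of those frames $e\in\mathcal L_y$ for which $\Phi_0(y)$, read through $e$, lands in $V^{\mathrm{Re}}$; whether $A_0$-parallel transport of such a frame along a path again lands in $P_\Phi$ depends on how $\Phi_0$ varies along that path, and $\Phi_0$ is not $A_0$-parallel. The only relation you have between $A_0$ and $\Phi_0$ is the Dirac equation, so this step cannot be established without invoking it. Put differently, there are \emph{a priori} two $\Z_2$-reductions of $\mathcal L|_{Y-\mathcal Z_0}$---one coming from the holonomy of $A_0$ and one coming from the slice through $\Phi_0$---and the equality of the two is precisely the equivalence (1)$\Leftrightarrow$(2) you are asked to prove.

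The fix is easy and you already have the ingredients: drop that sentence and the conclusion of~(1) drawn from it. Your construction of $P_\Phi$ already yields $\sigma:\mathcal L|_{Y-\mathcal Z_0}\simeq \ell_\Phi\otimes_\R\underline\C$ with $\Phi_0\in\Gamma(S^{\mathrm{Re}})$, and then your Dirac-equation argument in the second paragraph gives~(1) for this $\sigma$ with $\ell=\ell_\Phi$. That $\ell_\Phi$ coincides with the $\ell$ determined by the holonomy of $A_0$ is then a consequence of~(1), not a prerequisite. This is exactly how the paper proceeds.
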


\begin{proof}
We construct $u$ so that {\bf (2)} holds and show this implies {\bf (1)}. Let $\{U_\alpha\}$ be a finite open cover of $\Yminus\mathcal Z_0$ obtained by restricting a finite open cover on the compact $Y$ to the subspace topology. We may assume that on each $U_\alpha$ there is a trivialization $$g_\alpha \times f_\alpha: (S_{0}\otimes_\C  \mathcal L \otimes_\C E)|_{U_\alpha}\simeq U_\alpha \times (\C^2 \otimes_\C \C \otimes_\C \mathbb H) $$

\noindent with transition functions \bea g_{\alpha\beta}: U_\alpha\cap U_\beta & \to& SU(2)\times \{1\}\times  SU(2)) \\ f_{\alpha \beta}& \to & \{1\}\times U(1)\times \{1\}\eea

\noindent for $S_0\otimes E$ and $\mathcal L$ respectively. We may assume that $g_{\alpha\beta}$ are induced by trivializations that extend to $Y$. 

Let $\nabla$ denote the connection induced by the spin connection and $B$ and the product connection $\text{d}$ on the middle factor. Thus on $U_\alpha$ each we may write $$\nabla_{A_0}= \nabla + \gamma(i a_\alpha) $$
for a connection form $a_\alpha\in \Omega^1(U_\alpha; \R)$. Locally, in each trivialization there is a real structure given by $$\tau_\alpha:= J \otimes 1 \otimes j$$

\noindent where $J,j$ are as in \refeq{Jdef}. $\tau_\alpha$ do not {\it a priori} give rise to a global real structure $\tau$ as the transition functions $g_{\alpha\beta}f_{\alpha\beta}$ do not respect the $SU(2)$ structure.

By Lemma \refeq{slicelemma}, we may choose local gauge transformations $u_\alpha$ on each $U_\alpha$ unique up to a $\Z_2$ factor such that \be u_\alpha\Phi_0|_{U_\alpha}\in \text{Re}(\C^2\otimes_\C \C\otimes_\C  \mathbb H)\label{inslice}\ee
since $\Phi_0\in \mu^{-1}{(0)}$ In the new trivializations $u_{\alpha} f_\alpha: \mathcal L|_{\Yminus\mathcal Z_0}\to U_\alpha\times \C $, the transition functions $$f_{\alpha\beta}'= u_\alpha^{-1} u_\beta f_{\alpha\beta}$$ preserve the condition (\refeq{inslice}) thus by Lemma  \refeq{slicelemma}, we must have that $f_{\alpha\beta}'\in \{\pm 1\}.$ Let $\ell$ be the real line bundle determined by $f_{\alpha\beta}': U_\alpha \cap U_{\beta}\to \Z_2$. Since $f_{\alpha\beta}$ and $f_{\alpha\beta}'$ differ by a $\check{\text{C}}\text{ech}$ coboundary, we obtain an isomorphism $\sigma':  \mathcal L|_{\Yminus\mathcal Z_0}\simeq \ell\otimes_\R \underline{\C}$, and the gauge transformations $u_\alpha$ patch to form a global gauge transformation $\sigma' \circ u = \sigma$. This yields {\bf (2)}. Moreover, in this gauge $J,j$ are respected by the transition functions, hence $\tau_\alpha= \tau|_{U_\alpha}$ agrees with the global structure defined by (\refeq{Jdefglobal}) using the trivialization $\sigma'$.

Now we show that in this trivialization $\sigma'$, item {\bf (1)} holds. In the local trivialization on each $U_\alpha$ write $\nabla$ to be connection formed from $\nabla^\text{Spin}$ on $S_0$ and $B_0$ on $E$. We claim that in this trivialization, $u_\alpha A_0= \nabla$, i.e. $u_{\alpha}\cdot  (\text{d}+ia_\alpha)=\text{d}$ is the product connection on the $\C$ factor. To see this, write the Dirac equation $$\slashed D_{A_0}\Phi_0=\left(\sum_{j=1}^3\gamma(e^j) \nabla_j + \gamma(iu_\alpha\cdot a_\alpha)\right)  u_\alpha\Phi_0=0$$

\noindent and $\nabla, \gamma$ preserve $\text{Re}(\C^2\otimes_\C \C\otimes_\C \mathbb H)$, while $\gamma(ia_\alpha)$ exchanges it with $\text{Im}(\C^2\otimes_\C \mathbb H)\otimes \C$ by part (A of Lemma \refeq{splittinglem} and the fact that $\nabla$ is an $SU(2)\times SU(2)$ connection hence respects $J,j$. It follows that   

\bea
\sum_{j=1}^3\gamma(e^j) \nabla_j   (u_{\alpha}\Phi_0) &\in& \text{Re}(\C^2\otimes_\C \mathbb H)\otimes \C\\
\gamma(iu_\alpha \cdot a_\alpha)  u_{\alpha}\Phi_0 &\in& \text{Im}(\C^2\otimes_\C \mathbb H)\otimes \C\\
\eea
must individually vanish, implying $a_\alpha=0$ since $\Phi_0(y)\neq 0$ for $y\in \Yminus\mathcal Z_0$. Thus in the trivializations $f_{\alpha}'$, $A_0$ is the product connection, so globally it patches to the connection $\nabla^\text{flat}\otimes 1 + 1\otimes \text{d}$ on $\ell\otimes_\R \underline \C$ in the trivialization $\sigma'$. This shows {\bf (1)}. In fact, since there is always a unique gauge transformation up to constants so that {\bf (1)} holds, the two statements are equivalent up to constant gauge transformations.



\end{proof}

From now on, we fix the association  (\refeq{sigmadef}) to be one of the two determined by the Haydys gauge defined by the previous lemma. This choice subsequently fixes the splitting (\refeq{ReImSplitting}).

\begin{lm} The splitting $S_E=S^\text{Re} \oplus S^\text{Im}$ determined by the Haydys gauge satisfies the following. 

\begin{enumerate}
\item[(A)] The conclusions of Lemma \ref{splittinglem} hold globally. 
\item[(B)] The splitting is parallel with respect to $\nabla_{A_0}$. In particular, the Dirac operator splits as $$\slashed D_{A_0}^\text{Re}: \Gamma(S^\text{Re})\to \Gamma(S^\text{Re}) \hspace{3cm}\slashed D_{A_0}^\text{Im}: \Gamma(S^\text{Im})\to \Gamma(S^\text{Im})$$
\end{enumerate}
\end{lm}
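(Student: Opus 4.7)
The plan is to reduce both claims to the pointwise statements of Lemma \ref{splittinglem} together with the explicit structure of the Haydys gauge established in Lemma \ref{Haydysgauge}. The key observation is that in the Haydys gauge the trivializations of $S_E|_{Y-\mathcal Z_0}$ produced during the proof of Lemma \ref{Haydysgauge} have transition functions lying in $SU(2)\times \Z_2\times SU(2)$, acting on $S_0$, $\ell$, and $E$ respectively. Each of these factors commutes with the pointwise involutions $J$ on $\C^2$ and $j$ on $\mathbb H$ (the $SU(2)$ factors preserve the quaternionic structures, and multiplication by $\pm 1$ on the $\ell$ factor commutes with complex conjugation). Consequently the fiberwise involutions $\tau_\alpha = J\otimes 1\otimes j$ patch to a globally defined real structure $\tau$ on $S_E|_{Y-\mathcal Z_0}$, whose $\pm 1$-eigenbundles are precisely $S^\text{Re}$ and $S^\text{Im}$.

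Given this, statement (A) is immediate: the three conclusions of Lemma \ref{splittinglem} are pointwise assertions about $V=\C^2\otimes_\C\mathbb H$, its splitting $V=V^\text{Re}\oplus V^\text{Im}$, Clifford multiplication by real/imaginary forms, and the moment map, all of which make sense fiberwise in any trivialization of the above type. Since the transition functions preserve all of these structures, the pointwise statements assemble to global statements about $S^\text{Re}\oplus S^\text{Im}$ and the bundle maps $\gamma$ and $\mu(-,\Phi_0)$.

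For (B), I would use the explicit form of $A_0$ in the Haydys gauge from part (1) of Lemma \ref{Haydysgauge}. Under the isomorphism $\sigma$, the connection $\nabla_{A_0}$ on $S_E=(S_0\otimes_\C \ell\otimes_\R \underline\C)\otimes_\C E$ is built from the spin connection on $S_0$, the flat $\Z_2$-connection $\nabla^\text{flat}$ on $\ell$, the product connection $\text{d}$ on $\underline\C$, and the fixed $SU(2)$-connection $B_0$ on $E$. Each of these has structure group contained in $SU(2)$ or $O(1)$, and each such structure group commutes with the corresponding factor of $\tau$. Therefore $\nabla_{A_0}\tau = 0$, and consequently $\nabla_{A_0}$ preserves the eigenbundle decomposition $S_E=S^\text{Re}\oplus S^\text{Im}$.

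The splitting of the Dirac operator then follows from combining what has just been shown: writing $\slashed D_{A_0}\psi = \sum_j \gamma(e^j)\,\nabla_{A_0,e^j}\psi$, the covariant derivative preserves the splitting by the preceding paragraph, and real Clifford multiplication preserves it by the globalized part (A) of Lemma \ref{splittinglem}. Hence $\slashed D_{A_0}$ restricts to operators $\slashed D^\text{Re}_{A_0}:\Gamma(S^\text{Re})\to\Gamma(S^\text{Re})$ and $\slashed D^\text{Im}_{A_0}:\Gamma(S^\text{Im})\to\Gamma(S^\text{Im})$. The only subtle point is checking that $J$ descends from the cover to a globally defined bundle map on $S_0\otimes_\C(\ell\otimes_\R\underline\C)$ across the $\Z_2$ transition functions of $\ell$, which works precisely because $\pm 1\in O(1)$ commutes with complex conjugation; beyond this, everything is bookkeeping.
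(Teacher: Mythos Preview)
Your proof is correct and follows essentially the same approach as the paper's. The paper's proof is extremely terse: for (A) it simply says this ``is immediate from the pointwise version Lemma \ref{splittinglem},'' and for (B) it notes that in the Haydys gauge the connection formed from $A_0$ and the spin connection respects the $SU(2)$ structure and hence commutes with $J$, while $B_0$ being an $SU(2)$ connection commutes with $j$, so $\nabla_{A_0}$ commutes with $\tau=J\otimes j$. You have spelled out the same logic in more detail, making explicit why the transition functions and connection forms commute with the involutions; this is helpful elaboration but not a different argument.
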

\begin{proof} (A) is immediate from the pointwise version Lemma \ref{splittinglem}. For (B), note that in the Haydys gauge of Lemma \ref{Haydysgauge}, the connection formed from $A_0$ and the spin connection respects the $SU(2)$ structure, hence commutes with $J$. The connection $B_0$ on $E$ is an $SU(2)$ connection hence automatically commutes with $j$, thus $\nabla_{A_0}$ commutes with $\tau=J\otimes j$.  
\end{proof}

\bigskip 

With the above preparation, we may give a more precise definition of $\Z_2$-harmonic spinors which refines Definition \ref{Z2harmonicpreliminary} in the introduction.  The upcoming Proposition \ref{asymptoticexpansion} in Section 3 implies that the continuous extension in item (iii) is equivalent to the integrability condition in \ref{Z2harmonicpreliminary}. 

\begin{defn}\label{Z2harmonicdef} Let $\mathcal Z_0\subseteq Y$ be a smooth, embedded link. Fix a real line bundle $\ell\to \Yminus\mathcal Z_0$, and set $\mathcal L_0:= \ell \otimes_\R \underline\C$.  Denote by $A_0$ the connection (\refeq{A_0def}) formed from $\nabla^\text{flat}$ on $\ell$ and the product connection as in (\refeq{A_0def}).  An (unoriented) {\bf $\Z_2$-harmonic Spinor} is a triple $(\mathcal Z_0, A_0, \Phi_0)$ where $\Phi_0 \in \Gamma(S^\text{Re})$ satisfies

\begin{enumerate}
\item[(i)] $\|\Phi_0\|_{L^2}=1$ 
\item[(ii)]$\slashed D_{A_0}^\text{Re}\Phi_0=0 \hspace{1cm} \text{on }\hspace{.3cm} \Yminus\mathcal Z_0.$
\item[(iii)] $|\Phi_0|$ extends continuously to $Y$ with $\mathcal Z_0= |\Phi_0|^{-1}(0)$. 
\end{enumerate} 
\smallskip 
Such triples are considered up to the action of $\Z_2=\{\pm 1\}$. When $\mathcal Z_0$ is equipped with an orientation, the parenthetical descriptor is removed. 
\end{defn}

Notice that, although we have reached it in a circuitous way, this definition makes no reference to the Seiberg-Witten equations. The bundle $S=S_0\otimes_\C \mathcal L_0\simeq S_0\otimes_\R \ell$ is simply the spinor bundle associated to another spin structure on $Y-\mathcal Z$, which need not extend over $\mathcal Z_0$. Conversely, given a spinor bundle $S_1\to \Yminus\mathcal Z_0$ we form $S^\text{Re}\subset S_1\otimes_\C E$ as before. In fact,  it is straightforward to show that since an $SU(2)$ bundle $E$ on a 3-manifold is necessarily trivial, that $S^\text{Re}\simeq S_1$ and the only effect of introducing $E$ is a perturbation to the spin Dirac operator of $S_1$ arising from $B_0$. 

Another key point is that Definition \ref{Z2harmonicdef} makes no reference to a complex line bundle $\mathcal L$ such that $\mathcal L|_{\Yminus\mathcal Z_0}=\mathcal L_0$: the information about the isomorphism class of $\mathcal L$ is lost in the limit $\e\to 0$. There are many choices of extensions $\mathcal L\to Y$ whose restriction to $\Yminus\mathcal Z_0$ is isomorphic to $\mathcal L_0$, and before beginning any analysis of the gluing question one must first answer the topological question of {\it which} $\text{Spin}^c$ structure the $\Z_2$-harmonic spinor should be glued into. This is addressed in Section \ref{section3new} in the setting where Assumptions \ref{assumption1}-\ref{assumption3} hold.

\begin{rem} Definition \ref{Z2harmonicdef} makes sense if $\mathcal Z_0$ is simply a closed, rectifiable subset of Hausdorff codimension 2. The extension of the definition of an unoriented $\Z_2$-harmonic spinor is trivial; the oriented case requires some geometric measure theory arguments (see \cite{Haydysblowup}). Note that other authors generally do not assume that the definition includes an orientation of $\mathcal Z_0$. The results of \cite{Haydysblowup}, however, show that when a $\Z_2$-harmonic spinor arises as a limit of solutions to the Seiberg-Witten equations it carries a preferred orientation. 

\end{rem}

\subsection{The Weitzenb\"ock Formula}
\label{section2.4}

This section derives the Weitzenb\"ock formula for the gauge-fixed Seiberg-Witten equations with two spinors linearized at a possibly singular configuration. This formula is the two-spinor version of the one appearing in \cite{TaubesWeinstein} Equation 5.21.

In dimension 3, it is standard (\cite{KM}) to supplement the equations (\refeq{SW1})-(\refeq{SW2}) by an auxiliary 0-form field $a_0\in \Omega^0(i\R)$; this extends them to an elliptic system modulo gauge. Extend Clifford multiplication to a map $\gamma: (\Omega^0\oplus \Omega^1)\to \text{End}(S)$ denoted by the same letter. The {\bf extended (Two-Spinor) Seiberg-Witten Equations} for a configuration $(\Psi, A, a_0)\in \Gamma(S_E)\times \mathcal A(\mathcal L)\times \Omega^0(Y;i\R)$ are  
\begin{eqnarray}
\slashed D_{A}\Psi + \gamma(a_0)\Psi&=&0\label{SWextended1} \\ 
\star F_A - da_0 +\tfrac{1}{2}\mu(\Psi, \Psi)& =& 0.  
\label{SWextended2}
\end{eqnarray}  

\noindent This system is again invariant under the action of the gauge group $\mathcal G$ (which acts trivially on $a_0$). For irreducible configurations ($\Psi$ not identically 0), integration by parts shows that $a_0=0$, thus irreducible solutions of (\refeq{SWextended1})-(\refeq{SWextended2}) are the same as irreducible solutions of the original equations (\refeq{SW1})-(\refeq{SW2}). For the purposes of the eventual gluing result, it suffices to only consider irreducible solutions. The {\bf extended blown-up Seiberg-Witten equations} are defined analogously with the addition of the auxiliary $0$-form $a_0$ and the term $\e^2 da_0$ in the second equation.

Let $(\tfrac{\Phi}{\e}, A)\in C^\infty(Y; S_E)\times (\mathcal A(\mathcal L)\otimes \Omega^0(i\R))$ denote a smooth configuration with $\|\Phi\|_{L^2}=1$. Here, we have condensed the notation by replacing $A$ with $A+a_0$.   Differentiating a 1-parameter family of nearby configurations $(\frac{\Phi}{\e},A)+ s(\ph, a)$ shows that the linearization of the equations at $(\tfrac{\Phi}{\e},A)$ acting on the variation $(\ph,a)$ is given by 

$$\d{}{s}\Big |_{s=0} SW(\tfrac{\Phi}{\e}+ s\ph, A+ sA)=\begin{pmatrix} \slashed D_A\ph + \gamma(a)\tfrac{\Phi}{\e} \vspace{.15cm}\\ \tfrac{\mu(\ph, \Phi)}{\e}+( \star d -d)a\end{pmatrix}.$$

\noindent Supplementing the pair $(\Phi,A)$ with an auxiliary $0$-form $a_0\in \Omega^0(i\R)$ and the gauge-fixing condition 
$$-d^\star a -{ i\text{Re}\br i \ph,\tfrac{\Phi}{\e} \kt}=0$$
extends the linearization to the elliptic system 

$$\mathcal L_{(\Phi, A,\e)}\begin{pmatrix} \ph \\ a \end{pmatrix}= \begin{pmatrix}\slashed D_A & \gamma(\_)\tfrac{\Phi}{\e} \vspace{.15cm}\\ \tfrac{\mu(\_,\Phi)}{\e} & \bold d \end{pmatrix} \begin{pmatrix} \ph   \vspace{.15cm} \\ a \end{pmatrix} \hspace{2cm}\text{where}\hspace{1cm}\bold da= \begin{pmatrix} 0 & -d^\star \\ -d & \star d \end{pmatrix}\begin{pmatrix}a_0 \\ a_1\end{pmatrix}.$$

\noindent and $a=(a_0, a_1)\in (\Omega^0\oplus \Omega^1)(i\R)$. The moment map is extended as in (\refeq{extendedmu}) to $\mu=(-i \br i\ph, \Phi\kt, \mu^1)$ with $\mu^1$ being the 1-form components. The reader is cautioned that because of the singular nature of the connection $A_0$, the linearization at a $\Z_2$-harmonic spinor $(\Phi_0, A_0)$ is not a bounded operator on $L^2(\Yminus\mathcal Z_0)$.  

The Weitzenb\"ock formula is given below. Notice that in this formula something rather miraculous has occurred: {\it a priori} one would expect the Weitzenb\"ock formula to contain first order terms in $(\ph, a)$. The fact that these terms cancel is a special property of the Seiberg-Witten equations. 

\begin{prop}{\bf (Weitzenb\"ock Formula)} \label{Weitzenb\"ock} Let $\mathcal Z_0\subseteq Y$ be a closed subset and $(\Phi_0, A_0)$ a configuration smooth on $\Yminus\mathcal Z_0$. Then on $\Yminus\mathcal Z_0$ the operator $\mathcal L_{({\Phi_0}, A_0,\e)}$ satisfies 
\bigskip 

$$ \mathcal L^\star \mathcal L_{(\Phi_0, A_0,\e)} (\ph, a)= \begin{pmatrix} \slashed D_{A_0}\slashed D_{A_0} \ph  \vspace{.1cm}\\   \bold d\bold d a \end{pmatrix}+\frac{1}{\e^2}\begin{pmatrix} \gamma(\mu(\ph, \Phi_0))\Phi_0 \vspace{.1cm} \\  \mu(\gamma(a)\Phi_0, \Phi_0)\end{pmatrix}+\frac{1}{\e}\mathfrak B (\ph,a)$$
 
 \bigskip
 
\noindent where  the latter is the off-diagonal zeroth-order term 

$$\mathfrak B(\ph , a )=\begin{pmatrix} 0 & \gamma(\epsilon \_)\slashed D_{A_0}\Phi_0 - 2\_ \cdot \nabla_{A_0} \Phi_0 \\  \epsilon \mu(\_ ,\slashed D_{A_0}\Phi_0) + 2i \br i \_ , \nabla_{A_0} \Phi_0 \kt  & 0 \end{pmatrix} \begin{pmatrix}\ph \\ a \end{pmatrix}. $$  Here, $a\cdot \nabla\Phi_0$ denotes the contraction of 1-form indices, $2i \br i\ph, \nabla \Phi_0\kt$ is the contraction of spinor components (yielding a 1-form), and $\epsilon$ acts by $(-1)^k$ on $k$-forms. 

\end{prop}

\begin{proof} The operator $\mathcal L$ is formally self-adjoint. Expanding the expression and abbreviating $\slashed D= \slashed D_{A_0}$ and $\nabla=\nabla_{A_0}$,  
\bea
 \mathcal L^\star \mathcal L_{(\Phi_0, A_0,\e)}(\ph, a)&=&  \begin{pmatrix} \slashed D & \mathcal \gamma(\_)\tfrac{\Phi_0}{\e} \\ \tfrac{\mu(\_, \Phi_0)}{\e} & \bold d\end{pmatrix}\begin{pmatrix} \slashed D & \mathcal \gamma(\_)\tfrac{\Phi_0}{\e} \\ \tfrac{\mu(\_,\Phi_0)}{\e} & \bold d\end{pmatrix}\begin{pmatrix}\ph \\ a \end{pmatrix} \\
 &=&  \begin{pmatrix}\slashed D\slashed D \ph + \tfrac{\gamma(\mu(\ph, \Phi_0))\Phi_0}{\e^2} \vspace{.1cm}  \\ \bold d \bold d  +\tfrac{ \mu((\gamma(a)\Phi_0), \Phi_0)}{\e^2} \end{pmatrix} + \frac{1}{\e}\begin{pmatrix}    \slashed D( \gamma(a){\Phi_0}) + \gamma(\bold d a){\Phi_0}\vspace{.1cm} \\ \bold d \mu(\ph, \Phi_0)+ {\mu(\slashed D\ph, \Phi_0)}  \end{pmatrix}
\eea

\noindent Next, we use the following identities, which are proved in \cite{ConcentratingDirac}: 
\begin{eqnarray}
\slashed D(\gamma(a)\Phi_0) & =& - \gamma(\bold d a)\Phi_0 - \gamma((-1)^{\deg}a)\slashed D\Phi_0 - 2a \cdot \nabla\Phi_0\label{appendix1identities1}\\
\bold d\mu(\ph ,\Phi_0)&=& -\mu((\slashed D\ph), \Phi_0)  +(-1)^{\deg} \mu(\ph,  (\slashed D\Phi_0)) + 2i \br i \ph  \ , \ \nabla \Phi_0 \kt.
\label{appendix1identities}
\end{eqnarray}
Substituting these yields the formula. 
\end{proof}
In the case that $\mathcal Z_0$ is empty, i.e. if the configuration $(\Phi, A)$ is smooth, integration by parts and the above yields immediately yields the 
the following $L^2$ version of the Weitzenb\"ock formula will be used in later sections. This formula does not apply to the linearization at the singular configuration $(\Phi_0, A_0)$. 
\begin{cor} \label{L2Weitzenb\"ock} If $(\Phi, A)$ is a smooth configuration, then $\mathcal L_{({\Phi_0}, A_0,\e)}$ is formally self-adjoint and 
\bea \|\mathcal L_{(\Phi,A,\e)}(\ph, a)\|^2_{L^2(Y)} =   \|\slashed D_{A}\ph\|^2_{L^2(Y)} + \|\bold d a\|^2_{L^2(Y)} + \frac{1}{\e^2} \|\gamma(a)\Phi\|^2_{L^2(Y)}+\frac{1}{\e^2} \|\mu(\ph,\Phi)\|^2_{L^2(Y)} \\ + \frac{1}{\e} \br (\ph ,a ) \ , \ \mathfrak B (\ph, a)\kt \eea 
where the inner product is in $L^2$ and $\mathfrak B$ is as above. An equivalent statement holds in the case that $(Y,\del Y)$ is a manifold with boundary, up to the addition of a boundary term.  
\end{cor}

\begin{proof}
 The formal self-adjointness follows from the fact that  $\mu,\gamma$ are fiberwise adjoints. This also implies 
  $$ \br b , \mu(\gamma(b)\Phi, \Phi) ) \kt= \br \gamma(b)\Phi, \gamma(b)\Phi\kt= |b|^2 |\Phi_0|^2$$
for all $b$ and 
$$ \br \ph  , \gamma(\mu(\ph, \Phi)\Phi ) \kt= \br \mu(\ph,\Phi),\mu(\ph,\Phi)\kt= |\mu(\ph,\Phi)|^2$$
for all $\Phi$. The expression for the $\tfrac{1}{\e^2}$ terms follows.
\end{proof}

\begin{rem}
Notice that this cancellation of the first-order terms does {\it not} hold for the $\e$-version of the blown-up Seiberg equations unless $\e=1$. The unequal renormalization of the spinor and connection components in the blown-up equations disrupts the cancellation. It is for this reason that we prefer to work with the un-renormalized equation wherever possible. 
\end{rem}


\section{The Singular Dirac Operator}

\label{section3new}
Locally near a component of $\mathcal Z_0$, the Dirac operator $\slashed D_{A_0}$ takes the form 

\be \slashed D_{A_0}= \slashed D \  + \  O\left(\frac{1}{r}\right) \label{singularDirac} \ee

\noindent where $\slashed D$ is the Dirac operator on $S_E$ formed using a smooth background connection that extends over $\mathcal Z_0$, and $r$ denotes the distance to $\mathcal Z_0$. In particular, the zeroth order term is unbounded on $L^2$. An equivalent viewpoint is to consider $r\slashed D_{A_0}= r\slashed D+ O(1)$ in which case the zeroth order term is bounded, but the symbol degenerates along $\mathcal Z_0$. Elliptic operators of this type are known as {\bf elliptic edge operators}, and have been studied extensively in microlocal analysis dating back to the 1980s. Authoritative sources on similar operators include \cite{MazzeoEdgeOperators, MazzeoEdgeOperatorsII,Melrosebcalculus,grieser2001basics} and the references therein.

 This section gives some necessary results about the singular Dirac operator. The results are stated here without proof, and the reader is referred to \cite{MazzeoHaydysTakahashi,MazzeoEdgeOperators, MazzeoEdgeOperatorsII} for proofs, as well as \cite{PartII} which provides more detailed analysis in this particular case.  The final subsection uses these to address the topological problem of reconstructing the $\text{Spin}^c$ structure explained at the end of Section \ref{section2}. It is instructive in understanding the operator (\refeq{singularDirac}) to first consider the following example. 
 \begin{eg} \label{euclideanexample}
 Consider $Y=S^1\times D^2$ with coordinates $(t,x,y)$ and take $\mathcal Z= S^1\times \{0\}$. Consider the trivial bundle $\underline \C^2 \to Y$ of rank 2, and let $\ell \to Y-\mathcal Z$ be the real line bundle that restricts to the mobius bundle on $\{t\}\times \R^2$ equipped with its unique flat connection $A_0$. The spinor bundle $\underline \C^2\otimes_\R \ell$ is globally trivial (its determinant is trivial and $H^2(Y;\Z)$ has no 2-torsion here). In fact, 
 
 \bea
  \underline \C^2 &\to& \underline \C^2 \otimes_\R \ell \\
 \psi &\mapsto& e^{i\theta/2}\psi
 \eea  
 provides an explicit trivialization. Indeed, $e^{i\theta/2}$ provides a nowhere-vanishing section of each factor $\underline \C \otimes \ell$ with the proper monodromy condition. In this trivialization, we may write $$\nabla_{A_0}= \text{d}+ \frac{i}{2} d\theta= \text{d}+ \frac{1}{4}\left(\frac{dz}{z}-\frac{d\overline z}{\overline z}\right) \hspace{2.5cm} \slashed D_{A_0}= \begin{pmatrix}i\del_t & 2\del \\ 2 \delbar & -i\del_t \end{pmatrix}+\frac{1}{4}\gamma\left(\frac{dz}{z}-\frac{d\overline z}{\overline z}\right)$$  
 where $(r,\theta)$ are polar coordinates on the $\R^2$ factor and $z=x+iy$ and $z=x-iy$ complex ones. After decomposing a spinor in Fourier series $$\psi= e^{ik\theta}e^{i\ell t} \begin{pmatrix}\psi^+_{k\ell}(r) \\ \psi^-_{k\ell}(r)\end{pmatrix}$$
 the Dirac operator becomes a decoupled family of ODEs which may be solved using Bessel functions (see \cite{MazzeoHaydysTakahashi,RyosukeThesis}). One finds that \be\psi_\ell^\text{Euc}= e^{-i\theta/2} \sqrt{|\ell|} e^{i\ell t} e^{-|\ell| r} \begin{pmatrix} \tfrac{1}{\sqrt{z}} \\ \tfrac{\text{sgn}(\ell)}{\sqrt{\overline z}}\end{pmatrix}\label{euclideancokernel}\ee
 are an $L^2$-orthonormalized set of solutions parameterized by $\ell \in \Z^{\neq 0}$.  
 \end{eg}
 
 \medskip 
 
 This example is the local model of the infinite-dimensional cokernel alluded to in the introduction (section \ref{sectiondegenerating}). Notice that this phenomenon is not an artifact of the non-compactness of $Y$ as $r\to\infty$~\ --- \ these solutions concentrate exponentially near $\mathcal Z$ for large $|\ell|$.  
 
 The family of solutions (\refeq{euclideancokernel}) display the two following key properties: 
 
 \begin{enumerate}
 \item[(1)] $\psi_\ell^\text{Euc} \in L^2$ but $\nabla_{A_0} \psi_\ell^\text{Euc} \notin L^2$, thus these are {\it not} $\Z_2$-harmonic spinors as defined in (\refeq{Z2harmonicpreliminary}). 
 \item[(2)] $\psi_\ell^\text{Euc}$ do not extend smoothly across $r=0$; instead they have asymptotic expansions  with half-integer powers of $r$. \label{listitem2}
 \end{enumerate}
 In this setting there are no $\Z_2$-harmonic spinors because all solutions whose derivative is $L^2$ along $\mathcal Z$ are not $L^2$ as $r\to \infty$. These are the key properties which generalize to the case of a general 3-manifold. 
\subsection{(Semi)-Fredholm Theory}
\label{section2.5.1}

Returning to the setting of a general closed 3-manifold $(Y,g_0)$, let $r: Y \to \R^{\geq 0}$ denote a weight function equal to $\text{dist}(-, \mathcal Z_0)$ in a neighborhood of $\mathcal Z_0$ and bounded away from it. Consider the weighted Sobolev spaces defined by: 

\bea  rH^1_e(\Yminus\mathcal Z_0 ; S^\text{Re})&:=& \left\{\ph  \ \ \Big | \ \ \int_{Y\setminus\mathcal Z_0} |\nabla_{A_0} \ph|^2 + \frac{|\ph|^2}{r^2} \ dV   <\infty\right\}\\ 
L^{2}(\Yminus\mathcal Z_0 ; S^\text{Re})&:=& \left\{\psi \ \ \Big | \ \ \int_{Y\setminus\mathcal Z_0}|\psi|^2 \ dV  <\infty\right\}. \eea

\noindent Here, the subscript $e$ stands for ``edge''.  

The next proposition follows from the general theory of \cite{MazzeoEdgeOperators}. It is also proved using elementary methods in Section 2 of \cite{PartII}.

\begin{prop}
The operator $$\slashed D_{A_0}: rH^1_e(\Yminus\mathcal Z_0 ; S^\text{Re}) \lre L^2(\Yminus\mathcal Z_0 ; S^\text{Re}).$$  is (left) Semi-Fredholm, i.e. 
\begin{itemize}\item $\ker(\slashed D_{A_0})$ is finite-dimensional, and
\item  $\text{Range}(\slashed D_{A_0})$ is closed. \end{itemize}

\label{semifredholm}\qed 
\end{prop}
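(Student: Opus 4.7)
The plan is to establish the G{\aa}rding-type estimate
$$\|\varphi\|_{rH^1_e} \leq C\bigl(\|\slashed D_{A_0}\varphi\|_{L^2} + \|\varphi\|_{L^2}\bigr) \qquad \text{for all } \varphi \in rH^1_e,$$
combined with compactness of the inclusion $rH^1_e(Y\setminus\mathcal Z_0) \hookrightarrow L^2(Y\setminus\mathcal Z_0)$. A standard Peetre-type argument then produces a finite-dimensional kernel and closed range, i.e.\ left semi-Fredholmness.

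The heart of the argument is a Hardy inequality adapted to the $\mathbb Z_2$-holonomy of $A_0$. In a tubular neighborhood $N_\lambda(\mathcal Z_0)$ with Fermi coordinates $(t,r,\theta)$, fix a local trivialization of $\ell$ away from a branch cut; sections of $S^{\text{Re}}$ then lift to anti-equivariant sections on the branched double cover, and so decompose into half-integer angular Fourier modes $\varphi = \sum_{k\in\mathbb Z}\varphi_k(t,r)\, e^{i(k+1/2)\theta}$. Since $\partial_\theta$ acts on the $k$-th mode by multiplication by $i(k+1/2)$, Parseval on each normal circle gives
$$\int_{\{t\}\times D^2_\lambda} \frac{|\varphi|^2}{r^2}\, dA \;\leq\; \frac{1}{\inf_k (k+1/2)^2}\int_{\{t\}\times D^2_\lambda}\frac{|\partial_\theta \varphi|^2}{r^2}\, dA \;\leq\; 4\int_{\{t\}\times D^2_\lambda}|\nabla_{A_0}\varphi|^2\, dA,$$
with the constant $4$ coming from the lowest modes $|k+1/2|=1/2$. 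Integrating in $t$ yields $\|\varphi/r\|_{L^2(N_\lambda)} \leq C\|\nabla_{A_0}\varphi\|_{L^2(N_\lambda)}$. Note that the integer Fourier modes, which would make the constant infinite, are absent precisely because of Assumption~\ref{assumption1}; the analogous estimate fails for any smoothly-extending connection.

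Combining this Hardy bound with the Lichnerowicz--Weitzenb\"ock formula $\slashed D_{A_0}^{\,2} = \nabla_{A_0}^*\nabla_{A_0} + \mathcal R$ --- where $\mathcal R$ is a bounded endomorphism on $Y\setminus\mathcal Z_0$ receiving contributions only from the scalar curvature of $g_0$ and from $F_{B_0}$ (the flat connection $A_0$ contributes nothing) --- and integration by parts on $Y\setminus\mathcal Z_0$ yields
$$\|\slashed D_{A_0}\varphi\|^2_{L^2} = \|\nabla_{A_0}\varphi\|^2_{L^2} + \langle \mathcal R\,\varphi,\varphi\rangle_{L^2},$$
which, upon absorbing the $\mathcal R$-term into $\|\varphi\|_{L^2}^2$ and feeding in the Hardy bound, delivers the desired G{\aa}rding estimate. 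The integration by parts must be justified by exhausting $Y\setminus \mathcal Z_0$ by $\{r\geq\delta\}$ and sending $\delta\to 0$; the Hardy estimate itself is what controls the boundary contributions on $\{r=\delta\}$ and forces them to vanish in the limit. This limiting integration by parts is the main technical obstacle of the proof.

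Compactness of $rH^1_e \hookrightarrow L^2$ follows from the usual Rellich embedding on compact subsets of $Y\setminus\mathcal Z_0$ together with the elementary bound $\|\varphi\|_{L^2(\{r<\delta\})} \leq \delta\,\|\varphi/r\|_{L^2}$, which renders contributions from shrinking tubular neighborhoods of $\mathcal Z_0$ arbitrarily small uniformly in $\|\varphi\|_{rH^1_e}$. A standard diagonal argument then yields the full compactness, and the statement follows.
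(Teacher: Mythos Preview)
The paper does not prove this proposition in-text; it merely cites Mazzeo's general edge-operator theory and the companion paper \cite{PartII} for an elementary argument, and closes with a bare \qed. Your proposal is precisely the sort of elementary proof the paper alludes to, and it is correct in outline: the half-integer angular spectrum forced by the $\mathbb Z_2$ holonomy gives the Hardy inequality, the Lichnerowicz formula upgrades this to the G{\aa}rding estimate, and Rellich plus uniform smallness near $\mathcal Z_0$ gives compactness of the embedding.

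One remark on execution: the step you flag as the ``main technical obstacle'' --- controlling boundary terms on $\{r=\delta\}$ as $\delta\to 0$ --- is more cleanly handled by first proving that $C^\infty_c(Y\setminus\mathcal Z_0)$ is dense in $rH^1_e$, and then establishing the G{\aa}rding estimate on that dense subspace (where there are no boundary terms at all) and extending by continuity. Density itself follows from the very Hardy bound you have already proved: with a logarithmic cutoff $\chi_\delta$ satisfying $|d\chi_\delta|\lesssim 1/r$, both $\nabla_{A_0}(\chi_\delta\varphi)-\nabla_{A_0}\varphi$ and $\slashed D_{A_0}(\chi_\delta\varphi)-\slashed D_{A_0}\varphi$ are controlled by $\|\varphi/r\|_{L^2(\{\delta<r<2\delta\})}\to 0$. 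This avoids having to estimate the boundary integrands $\langle\gamma(\nu)\slashed D_{A_0}\varphi,\varphi\rangle$ and $\langle\nabla_\nu\varphi,\varphi\rangle$ directly, which is awkward because $\slashed D_{A_0}\varphi$ has no a priori weighted control. Also, your attribution of the half-integer shift to Assumption~1 is right, but it is specifically the \emph{non-extension} clause of that assumption (nontrivial meridional holonomy) that is doing the work, not the smoothness of $\mathcal Z_0$.
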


\noindent Notice that elements $\Phi\in \ker(\slashed D_{A_0})$ satisfy the integrability condition of Definition \ref{Z2harmonicpreliminary}, thus this space constitutes the $\Z_2$-harmonic spinors (integrability $|\nabla_{A_0}\Phi|^2$ implies the weighted $L^2$-term is finite as well which gives the reverse inclusion). Assumption \ref{assumption3} imposes the requirement that this finite-dimensional space is 1-dimensional and spanned by $\Phi_0$. 

The next proposition is not explicitly needed, but it is at the heart of the analysis for the question of gluing $\Z_2$-harmonic spinors (recall \ref{sectiondegenerating}). It states that the cokernel of the singular Dirac operator $\slashed D_{A_0}$ is a small perturbation of the case of Example \ref{euclideanexample}; in particular, it is infinite-dimensional and concentrates strongly near $\mathcal Z_0$. This proposition is proved in Section 4 of \cite{PartII}. 

\begin{prop} \label{infcokernel}There is a bounded linear isomorphism 

$$ \bigoplus_{\pi_0(\mathcal Z_0)} L^2(S^1;\C) \ \oplus \ \ker(\slashed D_{A_0}|_{rH^1_e}) \ \overset{\simeq} \lre \ \text{Coker}(\slashed D_{A_0}) $$
where the direct sum is over components of $\mathcal Z_0$. It is given by the inclusion on $\ker(\slashed D_{A_0}|_{rH^1_e}) $, and on the summand corresponding to a component $\mathcal Z_j$, by the linear extension of  

$$ e^{i \ell t} \ \mapsto \ \psi_{j,\ell}^\text{Re}  \ + \ \xi_{j,\ell}$$

\noindent where  

\begin{itemize} \item $\psi_{j,\ell}^\text{Re}$ is given in a local trivialization of $S_E\simeq \underline \C^2 \otimes_\C{\underline{ \mathbb H}}$ extending across $\mathcal Z_j$ by  $$ \psi_{j,\ell}^\text{Re}= \pi^\text{Re} \left(\chi\psi^\text{Euc}_\ell\otimes 1 \right)$$
\noindent where $\psi_\ell^\text{Euc}$ is given by Equation (\refeq{euclideancokernel}), $\pi^\text{Re}$ is the projection to $S^\text{Re}$ and $\chi$ is a cut-off function supported on a neighborhood of $\mathcal Z_j$. 
\item $\xi_\ell$ is a perturbation satisfying $\|\xi_{j,\ell}\|_{L^2}\leq \tfrac{C}{|\ell|}$. 
\end{itemize} 
\qed
\end{prop}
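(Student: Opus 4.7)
The plan is to identify $\text{Coker}(\slashed D_{A_0})$ with the weak $L^2$-kernel of the operator (via formal self-adjointness together with closed range from Proposition \ref{semifredholm}), and then to exhibit an exhausting family of such weak solutions by localizing the explicit Euclidean model solutions from Example \ref{euclideanexample}. Concretely, the orthogonal decomposition $L^2 = \text{Range}(\slashed D_{A_0}) \oplus \text{Coker}(\slashed D_{A_0})$ gives
\[
\text{Coker}(\slashed D_{A_0}) = \{\Psi \in L^2 : \langle \Psi, \slashed D_{A_0} \varphi\rangle = 0 \ \text{for all}\ \varphi \in rH^1_e\},
\]
and interior elliptic regularity makes any such $\Psi$ smooth on $Y-\mathcal Z_0$. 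The inclusion $\ker(\slashed D_{A_0}|_{rH^1_e}) \hookrightarrow \text{Coker}(\slashed D_{A_0})$ accounts for one summand of the claimed isomorphism.

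For the other summand, I would fix a component $\mathcal Z_j$ and a tubular chart in which $\slashed D_{A_0}$ differs from the Euclidean model $\slashed D^{\text{Euc}}$ of Example \ref{euclideanexample} by a first-order perturbation whose coefficients vanish along $\mathcal Z_0$ (coming from smooth deviations of the metric and $B_0$ from the Euclidean data). With $\chi$ a cutoff equal to $1$ near $\mathcal Z_j$ and supported in this chart, define $\psi_{j,\ell}^{\text{Re}} := \pi^{\text{Re}}(\chi \psi_\ell^{\text{Euc}} \otimes 1)$. The error $\slashed D_{A_0}\psi_{j,\ell}^{\text{Re}}$ decomposes into (i) a commutator $[\slashed D_{A_0},\chi]\psi_\ell^{\text{Euc}}$ supported on $\text{supp}(\nabla\chi)$, which is exponentially small in $|\ell|$ thanks to the pointwise decay $\psi_\ell^{\text{Euc}} \sim e^{-|\ell|r}$, plus (ii) a perturbation term $(\slashed D_{A_0} - \slashed D^{\text{Euc}})(\chi \psi_\ell^{\text{Euc}})$ whose extra factor of $r$ combines with the concentration $|\psi_\ell^{\text{Euc}}|^2 \sim e^{-2|\ell|r}$ to give an $L^2$ bound decaying in $|\ell|$.

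Next, define $\xi_{j,\ell} := -P_{\text{Range}}(\psi_{j,\ell}^{\text{Re}})$, so that $\psi_{j,\ell}^{\text{Re}} + \xi_{j,\ell} = P_{\text{Coker}}(\psi_{j,\ell}^{\text{Re}})$ lies automatically in $\text{Coker}(\slashed D_{A_0})$. The required bound $\|\xi_{j,\ell}\|_{L^2} \leq C/|\ell|$ follows from a duality argument: write $\xi_{j,\ell} = \slashed D_{A_0} w_{j,\ell}$ with $w_{j,\ell} \in rH^1_e$ controlled by the bounded inverse of $\slashed D_{A_0}$ on its range modulo the kernel. Then orthogonality of $\xi_{j,\ell}$ to $\text{Coker}$ yields
\[
\|\xi_{j,\ell}\|_{L^2}^2 = -\langle \xi_{j,\ell}, \psi_{j,\ell}^{\text{Re}}\rangle = -\langle w_{j,\ell}, \slashed D_{A_0}\psi_{j,\ell}^{\text{Re}}\rangle,
\]
the second equality by integration by parts, whose boundary term at $\mathcal Z_0$ vanishes since the weighted decay of $w_{j,\ell} \in rH^1_e$ defeats the $r^{-1/2}$ leading behavior of $\psi_{j,\ell}^{\text{Re}}$. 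Cauchy--Schwarz together with careful $r$-weighted tracking of the error estimate from the previous step then closes the bound, and summing against $L^2(S^1)$ weights establishes continuity of the map.

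It remains to verify bijectivity. Injectivity is the easy direction: the $\psi_{j,\ell}^{\text{Re}}$ contribute linearly independent angular Fourier modes to the $r^{-1/2}$ leading term of the asymptotic expansion near $\mathcal Z_j$, while the correctors $\xi_{j,\ell}$ and any element of $\ker(\slashed D_{A_0}|_{rH^1_e})$ contribute strictly milder leading behavior. The crux of the argument, and the main obstacle, is surjectivity. This requires the asymptotic expansion theorem stating that every $\Psi \in \text{Coker}(\slashed D_{A_0})$ admits near each $\mathcal Z_j$ a leading expansion of the form $r^{-1/2} F^{(j)}(t,\theta) + (\text{lower order})$, with the leading-order angular data determined by an $L^2(S^1;\C)$ function in the tangential Fourier variable. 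Once this is in hand, subtracting the matching combination $\sum_\ell c_\ell^{(j)}(\psi_{j,\ell}^{\text{Re}} + \xi_{j,\ell})$ from $\Psi$ kills the singular term and produces a weak solution lying in $rH^1_e$, hence in $\ker(\slashed D_{A_0}|_{rH^1_e})$. The expansion result itself rests on edge-operator microlocal analysis, the indicial roots $\pm 1/2$ of $\slashed D_{A_0}$ at $\mathcal Z_0$ governing the admissible leading orders; this is the content of Proposition \ref{asymptoticexpansion} and the theory developed in \cite{MazzeoEdgeOperators,PartII}.
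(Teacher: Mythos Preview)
The paper does not prove this proposition here; it cites Section~4 of \cite{PartII} and marks the statement with a \qed. So there is no in-paper argument to compare against. Your outline is a reasonable sketch of how such a proof proceeds, and the overall strategy---transplant the explicit Euclidean solutions, project onto the cokernel, bound the correction by duality using the closed-range property of Proposition~\ref{semifredholm}, and appeal to edge-operator expansions for surjectivity---is the standard one.

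Two points deserve tightening. First, the perturbation $\slashed D_{A_0}-\slashed D^{\text{Euc}}$ is not purely ``first-order with coefficients vanishing along $\mathcal Z_0$'': by Lemma~\ref{localtrivialization} it includes a bounded zeroth-order piece $\frak d_0$ (from $B_0$, the Christoffel symbols, and the $\epsilon_j$ term) which does not vanish on $\mathcal Z_0$. Your $r$-weighted duality argument still absorbs this, since pairing against $w_{j,\ell}/r\in L^2$ picks up $\|r\,\frak d_0\,\psi_\ell^{\text{Euc}}\|_{L^2}\lesssim\|r\,\psi_\ell^{\text{Euc}}\|_{L^2}\lesssim |\ell|^{-1}$ from the concentration scale $r\sim|\ell|^{-1}$; but the statement of the error term should be corrected. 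Second, the surjectivity step needs an asymptotic expansion for $L^2$ weak solutions beginning at order $r^{-1/2}$, which is genuinely stronger than Proposition~\ref{asymptoticexpansion} (stated only for $rH^1_e$). It follows from the same edge machinery once one identifies the indicial roots as $\pm\tfrac12$ and notes that $L^2$ membership allows the $-\tfrac12$ root, but this is a nontrivial additional input and is presumably why the full proof is deferred to \cite{PartII}. A minor further detail: the formula \eqref{euclideancokernel} is written only for $\ell\in\Z^{\neq 0}$, so the constant mode of $L^2(S^1;\C)$ requires a separate (two-dimensional) construction.
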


\subsection{Local Forms}
\label{section2.5.2}

Because of the effective degeneracy of the symbol, standard elliptic regularity fails for operators of the form ( \refeq{singularDirac}). The proper replacement of elliptic regularity is, as suggested by the form of (\refeq{euclideancokernel}), the existence of asymptotic expansions near $\mathcal Z_0$ that generalize Taylor expansions by allowing non-integral powers. Before writing these local expansions, let us choose local coordinates and express the Dirac operator using these coordinates and an appropriate trivialization. 

 We endow a tubular neighborhood diffeomorphic to a solid torus $N_{r_0}(\mathcal Z_j)$ of a component $\mathcal Z_j$ of $\mathcal Z_0$ with local coordinates as follows. Let $\gamma: S^1\to \mathcal Z_0$ denote an arclength parameterization of the component $\mathcal Z_j$ whose length is denoted $|\mathcal Z_j|$. When $\mathcal Z_0$ is oriented, it is assumed that $\gamma$ is chosen respecting its orientation. Next, choose a global orthonormal frame $\{n_1, n_2\}$ of the pullback $\gamma^*N\mathcal Z_0$ of the normal bundle to $\mathcal Z_0$. We require that $\{\dot \gamma, n_1, n_2\}$ is an oriented frame with respect to the orientation on $Y$.

\begin{defn}
\label{geodesicnormal}
A system of {\bf geodesic normal coordinates} for $r_0 < r_{\text{inj}}$ where $r_{\text{inj}}$ is the injectivity radius of $Y$ is the diffeomorphism $S^1\times D_{r_0}\simeq N_{r_0}(\mathcal Z_j)$ for a chosen component of $\mathcal Z_0$ defined by $$(t,x,y)\mapsto \text{Exp}_{\gamma(t)}(xn_1 + yn_2).$$

\noindent Here $t$ is the coordinate on the $S^1$ factor, which has radius normalized so that $t\in [0,|\mathcal Z_j|)$. In these coordinates the Riemannian metric $g$ can be written $$g=dt^2 + dx^2 + dy^2 +\  [2x\frak m_x(t)  +2y\frak m_y(t) ] dt^2\  + \ [ \mu(t) y ]  dt dx \  +\  [-\mu(t)x ] dt dy \ + \ O(r^2)$$

\noindent where $\mu(t), \frak m_x(t), \frak m_y(t)$ are defined by  
\bea
\mu(t)&=& \br \nabla_{\dot \gamma} n_x, n_y\kt= -\br \nabla_{\dot \gamma} n_y, n_x\kt\\
\frak m_\alpha(t) &=& \br \nabla_{\dot \gamma}\dot \gamma , n_\alpha \kt
\eea
for $\alpha=x,y$. Given such a coordinate system, $(t,r,\theta)$ are used to denote the corresponding cylindrical coordinates, and $(t,z,\overline z)$ the complex ones on the $D_{r_0}$ factor. 
\end{defn}

We also have the following trivialization. Recall that as in Definition \ref{Z2harmonicdef}, we denote $\mathcal L_0=\ell\otimes_\R \underline \C$.

\begin{lm}\label{localtrivialization} For each component $\mathcal Z_j$ of $\mathcal Z_0$, there exists a local trivialization $$\sigma_j: (S_0 \otimes \mathcal L_0 \otimes E)|_{N(\mathcal Z_j)-\mathcal Z_j} \simeq (N_{r_0}(\mathcal Z_j) -\mathcal Z_j ) \ \times \ (\C^2 \otimes_\C \C) \otimes_\C \mathbb H$$
in which 

\begin{itemize}
\item The connection $A_0$ on the middle factor is given by $$A_0 \ = \ \text{d} + \frac{i}{2}d\theta +\epsilon_j \tfrac{i}{2}dt$$ 
\noindent where $\epsilon_j=0$ or $1$ depending on $\ell$ and $S_0$. 

\item The Dirac operator may be written 
\be \slashed D_{A_0}= \slashed D + \frac{i}{2}\gamma(d\theta) + \frak d_1 + \frak d_0\label{Diraclocal}\ee 
\noindent where $\slashed D$ is the standard Euclidean Dirac operator, and $\frak d_1,\frak d_0$ are respectively a first order and zeroth order term satisfying 

$$|\frak d_1 \psi| \leq r |\nabla \psi| \hspace{3cm} |\frak d_2 \psi| \leq C |\psi|.$$

\item The anti-linear involution $J$ defined in \refeq{Jdefglobal} is given by $ e^{-i\epsilon_j t/2}e^{-i\theta} J_0 $ where $J_0$ is given by the expression \refeq{Jdef}. Consequently, a spinor $\Phi\in \Gamma(S^\text{Re})$ takes the form \be \Phi_0 = \begin{pmatrix} \alpha \\ \beta \end{pmatrix}\otimes 1 + e^{-i\theta} e^{-i\epsilon_j t}  \begin{pmatrix}-\overline \beta \\ \overline \alpha \end{pmatrix}\otimes j.\label{realformlocal}\ee

\end{itemize}
\end{lm}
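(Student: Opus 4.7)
The plan is to construct the trivialization $\sigma_j$ factor-by-factor using the geodesic normal coordinates of Definition~\ref{geodesicnormal}, in which the metric is Euclidean to leading order with explicit $O(r)$ corrections. First, I would trivialize $E$ by radial parallel transport of a fixed orthonormal frame along $\mathcal Z_j$; since $E$ is an $SU(2)$-bundle (hence globally trivial on any 3-manifold) and $B_0$ extends smoothly across $\mathcal Z_j$, this produces only bounded, zeroth-order contributions. Next, trivialize $S_0$ by spin-lifting the frame $\{\dot\gamma, n_1, n_2\}$ along $\mathcal Z_j$ (a choice that uses the spin structure of $S_0$ restricted to $\mathcal Z_j$) and extending radially by parallel transport along geodesics. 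In this synchronous spin trivialization, the radial components of the spin connection vanish to $O(r)$ while tangential components are bounded zeroth-order.

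The key step is the trivialization of $\mathcal L_0 = \ell\otimes_\R \underline\C$. By Assumption~\ref{assumption1}, $A_0$ does not extend across $\mathcal Z_j$, so $\ell$ has nontrivial monodromy around each meridian and restricts to the M\"obius bundle on every normal disk minus the origin. Consequently $e^{i\theta/2}$ furnishes a nowhere-vanishing section of $\mathcal L_0$ over $N_{r_0}(\mathcal Z_j)-\mathcal Z_j$, but whether it closes up coherently as $t$ traverses the $S^1$ direction depends on an additional $\Z_2$-datum: the spin structure on $S^1\simeq \mathcal Z_j$ induced by the combined bundle $S_0\otimes \mathcal L_0$ (bounding vs.\ non-bounding). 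The invariant $\epsilon_j\in\{0,1\}$ records this parity. Using the trivializing section $e^{i\theta/2}e^{i\epsilon_j t/2}$ on the $\mathcal L_0$ factor, the flat connection $\nabla^{\mathrm{flat}}\otimes 1 + 1\otimes \mathrm{d}$ gauge-transforms precisely to $\mathrm{d}+\tfrac{i}{2}d\theta+\epsilon_j\tfrac{i}{2}dt$.

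Substituting these three trivializations into $\slashed D_{A_0}$, the leading term is the Euclidean Dirac operator $\slashed D$, and the $\tfrac{i}{2}d\theta$ piece of the connection contributes $\tfrac{i}{2}\gamma(d\theta)$ by Clifford action. The remaining contributions split as follows: the $O(r)$ metric perturbation from Definition~\ref{geodesicnormal} produces a first-order operator whose symbol vanishes like $r$, yielding the $\mathfrak d_1$ with the stated bound, while the bounded zeroth-order residuals---namely $\epsilon_j\tfrac{i}{2}\gamma(dt)$, the tangential spin-connection terms, the connection $B_0$ on $E$, and the subleading pieces in the expansion of $\slashed D$ in these coordinates---combine to form $\mathfrak d_0$. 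The claimed form of $J$ is then immediate: since the $\mathcal L_0$ factor is trivialized by multiplication by $e^{i\theta/2}e^{i\epsilon_j t/2}$ and $J$ is complex anti-linear on this factor, in the new frame $J$ acts as $e^{-i\theta/2}e^{-i\epsilon_j t/2}\cdot\overline{(e^{i\theta/2}e^{i\epsilon_j t/2})}^{-1}\cdot J_0 = e^{-i\theta}e^{-i\epsilon_j t}J_0$, and substituting this phase into (\ref{Sre}) produces the expression (\ref{realformlocal}).

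The main obstacle I expect is the topological identification of $\epsilon_j$: it requires coupling the meridional $\Z_2$-holonomy of $A_0$ (encoded by $\ell$) with the longitudinal spin structure on $\mathcal Z_j$ inherited from $S_0$, and verifying that $e^{i\epsilon_j t/2}$ is exactly the correction required to make the trivializing section single-valued on $N_{r_0}(\mathcal Z_j)-\mathcal Z_j$. A secondary, purely computational check is that no $O(r^{-1})$ terms contaminate $\mathfrak d_1$ beyond the $\tfrac{i}{2}\gamma(d\theta)$ that has been made explicit; this holds because every other factor in the trivialization is regular across $\mathcal Z_j$.
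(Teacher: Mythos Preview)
Your approach is essentially the same as the paper's: trivialize $S_0$ and $E$ by frames extending smoothly across $\mathcal Z_j$, trivialize $\mathcal L_0$ via the section $e^{i\theta/2}e^{i\epsilon_j t/2}$ (with $\epsilon_j$ recording the longitudinal $\Z_2$-holonomy), and read off the Dirac operator and the form of $J$ from the resulting connection form. The identification of $\mathfrak d_1$ with the $O(r)$ metric correction and $\mathfrak d_0$ with the collected bounded zeroth-order pieces matches the paper exactly.

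One small slip: your displayed formula for how $J$ transforms is not quite right. If the change of trivialization multiplies sections by $g=e^{i\theta/2}e^{i\epsilon_j t/2}$, then anti-linearity gives $J = g^{-1}J_0\,g = g^{-1}\bar g\,J_0$, not $g^{-1}(\bar g)^{-1}J_0$; the expression you wrote literally evaluates to $J_0$. The correct computation $g^{-1}\bar g = e^{-i\theta}e^{-i\epsilon_j t}$ recovers the phase you state, so your conclusion (and the resulting form \eqref{realformlocal}) is correct.
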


\begin{proof}
First, we trivialize the middle factor $ \ell \otimes_\R\underline \C$. Fix a vector $s_0 \in (\ell \otimes_\R\underline \C )_{(0,r_0/2,0)}$ in the fiber above the point $(0, r_0/2,0) \in~N(\mathcal Z_j)$. Parallel transport using $A_0$ in the $+\theta$ and $+t$ directions defines a section $s$ with monodromy $-1$ around the meridian of $\mathcal Z_j$ and monodromy $\epsilon_j=0$ or $1$ around the longitude. The latter is determined by the line bundle $\ell$ and by the choice of spin structure $S_0$. The section $e^{i\theta/2} e^{i\epsilon_jt /2}s$ therefore defines a global nowhere-vanishing section of $\underline \C\otimes \ell$. Since $s$ is parallel by construction, in the trivialization 
\begin{eqnarray}
\underline \C & \mapsto & \underline \C\otimes_\R \ell\\
f  & \mapsto & e^{i\theta/2} e^{i\epsilon_jt /2}s f\label{trivialization3}
\end{eqnarray}     
\noindent the connection becomes $${A_0}=\text{d} + \frac{i}{2}d\theta + \epsilon_j\tfrac{i}{2}dt.$$
\noindent The first bullet point follows. 

The second bullet point, we extend the above trivialization to $\sigma_j$ by choosing local trivializations of $S_0$ and $E$ that extend across $\mathcal Z_j$. We may additionally specify that in the trivialization of $S_0$, the two factors of $\underline \C$ are the $\pm i$ eigenspaces of $\gamma(e^1)$ where $\{e^1, e^2, e^3\}$ is the orthonormal co-frame on $N(\mathcal Z_j)$ extending $dt,dx,dy$. In this trivialization, the connection on $S_0 \otimes \mathcal L_0 \otimes E$ is given by 

$$\nabla_{A_0}=\text{d} + \frac{i}{2}d\theta + \text{b}_0 + \Gamma+ \epsilon_j\tfrac{i}{2}dt$$

\noindent where $\text{b}_0$ and $\Gamma$ are zeroth order terms arising from the connection $B_0$ and the Christoffel symbols of $g_0$ respectively. The second bullet point is then immediate from the first, where $\frak d_1$ arises from the $O(r)$ failure of $dt, dx, dy$ to be an orthonormal frame for $g_0$, and $\frak d_1$ arises from a combination of $\text{b}_0, \Gamma$ and $\epsilon_j$.

For the third bullet point, recall that $J$ is given by the local expression \refeq{Jdef} in local trivialization respecting the $SU(2)$ structure on $\underline \C^2 \otimes_\C (\ell \otimes_\R \underline \C)$ (which $\sigma_j$ is not).  In a system of local trivializations on a a contractible open sets $U_\alpha\subset N(\mathcal Z_j)-\mathcal Z_j$ respecting the $SU(2)$ structure (i.e. one in which the transition functions on $\ell\otimes_\R \underline \C$ are simply), $A_0$ is given in each of these trivializations simply by $\text{d}$. These differ from the trivialization \refeq{trivialization3} by transition functions $e^{i\theta/2}e^{i\epsilon_j t/2}$, hence in the trivialization $\sigma_j$ constructed using  \refeq{trivialization3}, $J$ is given by $$J=(e^{-i\theta/2} e^{-i\epsilon_jt /2})  J_0(e^{i\theta/2} e^{i\epsilon_jt /2}) $$
and the third bullet point follows from the complex anti-linearity of $J$. 
\end{proof}

\medskip 

Using these local coordinates and local trivialization, the Dirac operator \refeq{Diraclocal} takes the local form \refeq{singularDirac} and the general regularity theory of \cite{MazzeoEdgeOperators} applies to give local asymptotic expansions. We consider the following type of asymptotic expansion.

\begin{defn} A spinor $\psi\in L^2(\Yminus\mathcal Z_0; S_E)$ is said to admit a {\bf Polyhomogenous Expansion} with index set $\Z+\tfrac{1}{2}$ if $$\psi\sim \sum_{ k\geq 0  }\sum_{m+n=k} r^{1/2} \begin{pmatrix} c_{k,m,n}(t)  z^m \overline {z}^n  \ \ \ \ \  \\ d_{k,m,n}(t)e^{-i\theta} z^m \overline {z}^n\end{pmatrix} $$

\noindent where $c_{k,m,n}(t), d_{k,m,n}(t)\in C^\infty(S^1;\mathbb H)$, and where $\sim$ denotes convergence in the following sense: for every $N\in \N$, the partial sums $$\psi_N=   \sum_{ k\leq N  }\sum_{m+n=k} r^{1/2}\begin{pmatrix} c_{k,m,n}(t)  z^m \overline {z}^n  \ \ \ \ \  \\ d_{k,m,n}(t)e^{-i\theta} z^m \overline {z}^n\end{pmatrix}$$ satisfies the pointwise bounds 

\begin{eqnarray} |\psi-\psi_N| &\leq& C_{N} r^{N+1}\label{polyhom1} \\ |\nabla_t^{\alpha}\nabla^{\beta}_{A_0}(\psi-\psi_N) |&\leq& C_{N,\alpha,\beta} r^{N+1-|\beta|}\label{polyhom2}\end{eqnarray}

\noindent for constants $C_{N,\alpha,\beta}$ determined by the background data and choice of local coordinates and trivialization. Here, $\beta$ is a multi-index of derivatives in the normal directions.

 \label{polyhomogeneous}

\end{defn}

The appropriate version of elliptic regularity for $\Z_2$-harmonic spinors, which follows from \cite{MazzeoEdgeOperators}, is the following. 

\begin{prop} Suppose that $\Phi_0\in rH^1_e(\Yminus\mathcal Z_0; S^\text{Re})$ is a $\Z_2$-harmonic spinor. Then $\Phi_0$ admits a polyhomogenous expansion. Thus in the trivialization of Lemma \refeq{localtrivialization}, $\Phi_0$ has a local expression 

\begin{eqnarray} \Phi_0&\sim &\begin{pmatrix} c(t) r^{1/2}  \ \ \ \ \  \\ d(t)r^{1/2}e^{-i\theta} \end{pmatrix}\otimes 1+\begin{pmatrix} -\overline d(t) r^{1/2}  \ \ \ \ \  \\ \overline c(t)r^{1/2}e^{-i\theta} \end{pmatrix} \otimes j \label{polyhomogeneous1} \\ & &  +   \sum_{ \underset{m+n=k}{k\geq 1}  }r^{1/2}\begin{pmatrix} c_{m,n}(t)  z^m \overline {z}^n  \ \ \ \ \  \\ d_{m,n}(t)e^{-i\theta} z^m \overline {z}^n\end{pmatrix}  \otimes 1  +r^{1/2}\begin{pmatrix}-\overline d_{m,n}(t)  z^m \overline {z}^n  \ \ \ \ \  \\ \overline c_{m,n}(t)e^{-i\theta} z^m \overline {z}^n\end{pmatrix} \otimes j  \label{polyhomogeneous2}\end{eqnarray}   
 
\noindent where $c(t), d(t),c_{k,m,n}(t), d_{k,m,n}(t) \in C^\infty(S^1;\C)$. The terms on the second line of the expression will often be abbreviated by adding $O(r^{3/2})$ to the leading order term, with the understanding that this notation refers to a collection of bounds as in (\refeq{polyhom1})-(\ref{polyhom2}). 
\label{asymptoticexpansion}
\end{prop}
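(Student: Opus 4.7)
The strategy is to reduce the problem to the standard edge-operator analysis of Mazzeo applied to the local model operator from Lemma \ref{localtrivialization}. In the geodesic normal coordinates and trivialization $\sigma_j$ of that lemma, the Dirac operator takes the form
$$\slashed D_{A_0} = \slashed D_0 + \tfrac{i}{2}\gamma(d\theta) + \mathfrak{d}_1 + \mathfrak{d}_0$$
and the leading part $\mathcal D_{\text{mod}} := \slashed D_0 + \tfrac{i}{2}\gamma(d\theta)$ is precisely the Euclidean model from Example \ref{euclideanexample}. Multiplying by $r$ we get an elliptic edge operator whose indicial operator (acting on functions of $(r,\theta)$ with $t$ frozen) is the $2$-dimensional Dirac operator twisted by the half-monodromy connection. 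The key structural fact to extract is that the indicial roots of this model operator at $r=0$ lie in $\tfrac{1}{2}+\Z$, and among these only roots $\geq \tfrac{1}{2}$ are compatible with the edge-Sobolev membership $\Phi_0 \in rH^1_e$.

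To make this concrete, first decompose $\Phi_0$ into Fourier modes in $\theta$. In the trivialization where the connection carries an explicit $\tfrac{i}{2}d\theta$ twist, the angular Fourier modes of a section of $S^{\text{Re}}$ correspond to half-integer weights. Solving $\mathcal D_{\text{mod}} \psi = 0$ mode-by-mode with $t$ held fixed, the Bessel-type ODE structure used already in Example \ref{euclideanexample} shows that the admissible asymptotic behavior at $r=0$ consists of two families: $r^{1/2+k}$ times smooth angular modes (regular) and $r^{-1/2-k}$ (singular) for $k \in \Z_{\geq 0}$. The requirement $\int r^{-2}|\Phi_0|^2 < \infty$ excludes the singular branch, pinning down $r^{1/2}$ as the leading order. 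Writing out the two lowest angular modes compatible with the $S^{\text{Re}}$ constraint and the twisted connection $A_0 = \text{d} + \tfrac{i}{2}d\theta + \tfrac{i\epsilon_j}{2}dt$ yields exactly the leading-order form $(c(t)r^{1/2}, d(t)r^{1/2}e^{-i\theta})$ in the first line of \refeq{polyhomogeneous1}, and the real-structure form \refeq{realformlocal} of Lemma \ref{localtrivialization} automatically produces its $\tau$-conjugate on the $\otimes j$ side.

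The next step is the iterative bootstrap that produces the full polyhomogeneous expansion. Given a partial sum $\psi_N$ whose coefficients are determined inductively by requiring $\mathcal D_{\text{mod}}\psi_N$ to cancel the leading contribution of $(\mathfrak{d}_1 + \mathfrak{d}_0)\psi_{N-1}$ (together with metric corrections from the $O(r^2)$ terms in $g$ recorded in Definition \ref{geodesicnormal}), the remainder $\rho_N := \Phi_0 - \psi_N$ satisfies $\slashed D_{A_0}\rho_N = f_N$ with $f_N = O(r^{N-1/2})$ in a pointwise sense. One then applies the edge-elliptic estimates of \cite{MazzeoEdgeOperators}, together with the semi-Fredholm theory recorded in Proposition \ref{semifredholm}, to improve $\rho_N$ from $rH^1_e$ control to the pointwise and weighted derivative bounds \refeq{polyhom1}--\refeq{polyhom2}. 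The indicial roots being separated by integers guarantees that at each step the inductive construction produces exactly the next coefficient $c_{k,m,n}(t), d_{k,m,n}(t)$, with smooth dependence on $t$ coming from standard $t$-elliptic regularity along $\mathcal Z_j$ (tangential derivatives behave like ordinary derivatives at the model level).

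The principal obstacle is the bootstrap step in full generality. Two issues must be handled carefully: (i) one must verify that no resonant indicial roots obstruct the inductive solution, i.e., that the coefficient ODE determined by $\mathcal D_{\text{mod}}$ at each order actually admits a smooth solution in $t$; since the indicial roots for our weight lie strictly in $\tfrac{1}{2}+\Z$ and are non-resonant for the prescribed boundary conditions, this is fine. (ii) One must propagate the pointwise bounds \refeq{polyhom2} uniformly under the tangential and normal derivative hierarchy; this is the step where one genuinely needs the edge-calculus machinery rather than elementary integration by parts, and this is the reason the proposition is stated as an application of Mazzeo's results rather than proved from scratch. With these two points in hand, the expansion \refeq{polyhomogeneous1}--\refeq{polyhomogeneous2} follows, and the $\tau$-paired structure of each summand is inherited from the real-structure constraint applied term by term.
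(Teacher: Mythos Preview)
Your proposal is correct and follows the same approach as the paper: both invoke Mazzeo's edge-operator regularity theory \cite{MazzeoEdgeOperators, MazzeoHaydysTakahashi} for the existence of the polyhomogeneous expansion, and both deduce the $\tau$-paired structure of the $\otimes 1$ and $\otimes j$ components from the real-structure form of $S^{\text{Re}}$ in Lemma~\ref{localtrivialization}. The paper's proof is a two-sentence citation, while yours is an expanded sketch of what that machinery actually does (indicial root computation, weight selection via $rH^1_e$ membership, iterative bootstrap); the content is the same.
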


\begin{proof}
The existence of such an expansion is a consequence of the regularity theory in \cite{MazzeoEdgeOperators, MazzeoHaydysTakahashi}. The relation between the two components follows from the form of $S^\text{Re}$ Lemma \refeq{localtrivialization}. 
\end{proof}

\begin{rem}
The smooth functions $c(t), d(t), c_{m,n}(t), d_{m,n}(t)$ depend on the choice of frame $\{n_x, n_y\}$ made to define local coordinates in Definition \ref{geodesicnormal}. More invariantly, these are sections of $(N\mathcal Z_0)^{\otimes(1/2+m-n)}$ where $m=n=0$ for $c(t),d(t)$. 
\end{rem}
\subsection{Reconstructing $Spin^c$ Structures}
\label{section2.5.3}

In this subsection, the above local forms are used to show that the $\text{Spin}^c$ structure arising from the line bundle $\mathcal L_0\to \Yminus\mathcal Z_0$ can be extended to one on $Y$ satisfying the conclusion of Theorem \ref{main}. This section therefore finishes the topological portion of Theorems \ref{main}-\ref{mainc}.

Since $\Phi_0$ behaves like $r^{1/2}$ along $\mathcal Z_0$, it does not extend to the closed manifold as a smooth section. A version of $\Phi_0$ squared does, however. For any section $\Phi$, let $\det \Phi \in \Gamma(\Yminus\mathcal Z_0; \mathcal L_0^2)$ be the section defined as follows. 
The identification used in (\refeq{matrixPhi1}) induces a bundle isomorphism $S_E\simeq \text{Hom}(E^*, S)$ so that $\Phi$ can be regarded as a bundle map $E^*\to S$; $\det(\Phi)$ is then a section of $\det(E^*)^{-1}\otimes \det(S)=\mathcal L_0^{2}$. As in (\refeq{matrixPhi1}), there are local trivialization in which $\det(\Phi)$ is given by the determinant of the matrix  

 \be\Phi_0 = \begin{pmatrix} \alpha_1 & \alpha_2  \\ \beta_1 & \beta_2 \end{pmatrix}\label{matrixPhi}.\ee

\begin{lm} \label{detminant}The following inequality holds pointwise for any $\Phi$: 

$$\tfrac{1}{4}|\Phi|^4 \leq |\mu(\Phi,\Phi)|^2 + |\det\Phi|^2.$$

\noindent In particular, $\det(\Phi_0)$ vanishes nowhere on $\Yminus\mathcal Z_0$. 

\end{lm}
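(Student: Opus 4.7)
The plan is to reduce the inequality to a purely algebraic identity in a local trivialization, and then deduce non-vanishing of $\det\Phi_0$ from the Haydys-gauge property that $\Phi_0$ lies pointwise in $\mu^{-1}(0)$.

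First I would fix a local trivialization in which $\Phi$ is represented by the matrix in \eqref{matrixPhi}, and introduce shorthands $a=|\alpha_1|^2+|\alpha_2|^2$, $b=|\beta_1|^2+|\beta_2|^2$, $c=\overline{\alpha_1}\beta_1+\overline{\alpha_2}\beta_2$, so that $|\Phi|^2=a+b$. Using the explicit form of the moment map in \eqref{mut}--\eqref{muy} (remembering that what is displayed is $\tfrac12\mu$, so the components of $\mu$ itself are twice those expressions) a direct computation gives
\[
|\mu(\Phi,\Phi)|^2 \;=\; (b-a)^2 + |c|^2 .
\]

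The next and main algebraic step is the polarization-type identity
\[
|c|^2 + |\det\Phi|^2 \;=\; ab .
\]
This is a short calculation: expanding both $|c|^2=|\overline{\alpha_1}\beta_1+\overline{\alpha_2}\beta_2|^2$ and $|\det\Phi|^2=|\alpha_1\beta_2-\alpha_2\beta_1|^2$, the mixed terms $2\,\mathrm{Re}(\alpha_1\overline{\beta_1}\,\overline{\alpha_2}\beta_2)$ and $-2\,\mathrm{Re}(\alpha_1\beta_2\,\overline{\alpha_2}\,\overline{\beta_1})$ are equal up to sign and cancel, leaving exactly $|\alpha_1|^2|\beta_1|^2+|\alpha_1|^2|\beta_2|^2+|\alpha_2|^2|\beta_1|^2+|\alpha_2|^2|\beta_2|^2=ab$. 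Combining gives
\[
|\mu(\Phi,\Phi)|^2 + |\det\Phi|^2 \;=\; (b-a)^2 + ab \;=\; a^2-ab+b^2 ,
\]
and since $\tfrac14|\Phi|^4 = \tfrac14(a+b)^2 = \tfrac14(a^2+2ab+b^2)$, the claimed bound reduces to
\[
a^2 - ab + b^2 - \tfrac14(a+b)^2 \;=\; \tfrac34(a-b)^2 \;\geq\; 0,
\]
which in fact upgrades the inequality to the sharper identity $|\mu|^2+|\det\Phi|^2 = \tfrac14|\Phi|^4 + \tfrac34(a-b)^2$.

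For the last sentence, I would apply the inequality to $\Phi=\Phi_0$. By Lemma~\ref{Haydysgauge}, in the Haydys gauge $\Phi_0\in\Gamma(S^{\mathrm{Re}})$, and by Lemma~\ref{slicelemma} such sections lie pointwise in the zero set of the moment map, so $\mu(\Phi_0,\Phi_0)\equiv 0$ on $Y-\mathcal Z_0$. The inequality then becomes $|\det\Phi_0|^2 \geq \tfrac14|\Phi_0|^4$, and since $|\Phi_0|^{-1}(0)=\mathcal Z_0$ by item (iii) of Definition~\ref{Z2harmonicdef}, we conclude $|\det\Phi_0|>0$ on $Y-\mathcal Z_0$. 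The only non-routine ingredient is the cancellation in Step~2, but this is immediate once the two squares are expanded; no step here is really an obstacle.
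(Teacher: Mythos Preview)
Your proof is correct and follows essentially the same route as the paper: both arguments compute $|\mu_\R|^2$, $|\mu_\C|^2$, and $|\det\Phi|^2$ explicitly, use the same cross-term cancellation to obtain $|\mu_\C|^2+|\det\Phi|^2=ab$, and finish with elementary algebra. Your write-up is slightly tighter in that you package the result as the identity $|\mu|^2+|\det\Phi|^2=\tfrac14|\Phi|^4+\tfrac34(a-b)^2$, whereas the paper instead derives $|\Phi|^4=|\mu_\R|^2+4|\mu_\C|^2+4|\det\Phi|^2$ and then bounds $\tfrac14|\mu_\R|^2\le|\mu_\R|^2$; but these are cosmetic rearrangements of the same computation.
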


\begin{proof}
Suppose that $\Phi$ has the above form, and write  $\alpha=(\alpha_1, \alpha_2)$ and $\beta=(\beta_1, \beta_2)$. Using the expressions for the moment map (\refeq{mut}-\refeq{muy}) denote the $dt$ component by $\mu_\R$ and the $dx+idy$ component by $\mu_\C$. Using the Hermitian inner product on $\mu_\C$ and $\det(\Phi)$ shows: 

\bea
|\mu_\R(\Phi,\Phi)|^2&=& (|\alpha_1|^2 +|\alpha_2|^2 - |\beta_1|^2 +|\beta_2|^2)^2 \\
 &=& |\alpha|^4 +|\beta|^4 - 2|\alpha|^2 |\beta|^2 \\
 |\mu_\C(\Phi,\Phi)|^2 &=&  |\overline \alpha_1 \beta_1 +\overline \alpha_2\beta_2|^2\\
 &=& |\alpha_1|^2 |\beta_1|^2 +|\alpha_2|^2 |\beta_2|^2 + 2 \text{Re}(\overline \alpha_1 \beta_1 \alpha_2 \overline \beta_2)\\
 |\det\Phi|^2 &=&|\alpha_1 \beta_2 - \alpha_2 \beta_1|^2 \\
&=&  |\alpha_1|^2 |\beta_2|^2 + |\alpha_2|^2|\beta_1|^2 - 2 \text{Re}(\alpha_1 \beta_2 \overline \alpha_2 \overline \beta_1)
\eea

\noindent and the two real parts are negatives after conjugating. Adding these yields $$|\Phi|^4= |\alpha|^4 + |\beta|^4 + 2|\alpha|^2 |\beta|^2 \leq |\mu_\R(\Phi,\Phi)|^2 + 4 |\alpha|^2|\beta|^2 \leq  |\mu_\R(\Phi,\Phi)|^2 + 4|\mu_\C(\Phi,\Phi)|^2 + 4|\det(\Phi)|^2.$$ 

The second statement follows directly from this inequality applied to $\Phi_0$; one has $\mu(\Phi_0, \Phi_0)=0$ by definition of a $\Z_2$-harmonic spinor, hence $\det(\Phi_0)$ vanishing on $\Yminus\mathcal Z_0$ would imply that $\Phi_0$ vanished there as well, which is forbidden by Assumption \ref{assumption2}. 
\end{proof}

Using $\det(\Phi_0)$, we can now construct the $\text{Spin}^c$ structure used in the gluing. 

\begin{prop}\label{spinc}There exists a unique $\text{Spin}^c$ structure with spinor bundle $S=S_0\otimes_\C \mathcal L$ where $\mathcal L \to Y$ is a complex line bundle such that
\begin{enumerate}
\item[(i)] $\mathcal L|_{\Yminus\mathcal Z_0}\simeq (\ell \otimes_\R \underline \C)$.
\item[(ii)] The trivialization of Lemma \ref{localtrivialization} extend to local trivialization of $\mathcal L$ where the conclusion of Lemma \ref{localtrivialization} continue to hold. 
\item[(iii)] The determinant $\det(S)=\mathcal L^2$ satisfies \be c_1(\mathcal L^2)= - \text{PD}[\mathcal Z_0].\label{c1constraint}\ee
\end{enumerate} 
\end{prop}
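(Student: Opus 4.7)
The strategy I would follow is to first construct $\mathcal L^{\otimes 2}$ by using the global section $\det(\Phi_0)$ from Lemma~\ref{detminant}, and then extract $\mathcal L$ itself as a compatible square root. Substituting the asymptotic expansion of Proposition~\ref{asymptoticexpansion} into the matrix form of $\det(\Phi)$, a short computation in the trivialization of Lemma~\ref{localtrivialization} yields the leading-order behavior
\[
\det(\Phi_0)\ =\ \bar z\,\bigl(|c(t)|^2+|d(t)|^2\bigr)\ +\ O(r^2),
\]
and $|c(t)|^2+|d(t)|^2>0$ along $\mathcal Z_0$ by Assumption~\ref{assumption2}. Thus $\det(\Phi_0)$ is a nowhere-vanishing section of $\mathcal L_0^{\otimes 2}$ on $Y-\mathcal Z_0$ which extends continuously across $\mathcal Z_0$ with a simple transverse zero on each component.

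For existence, I define $\mathcal M:=\mathcal L^{\otimes 2}$ on $Y$ by gluing. On $U_0:=Y-\mathcal Z_0$ take $\mathcal M:=\mathcal L_0^{\otimes 2}$; on each tubular neighborhood $U_j:=N_{r_0}(\mathcal Z_j)$ take $\mathcal M$ to be trivial with trivializing section $\tau_j$, and define the transition on $U_0\cap U_j$ so that $\det(\Phi_0)$ is identified with its local expression $\bar z\,h_j(t)+O(r^2)$ in the $\tau_j$-trivialization. The transition function is nonvanishing on the overlap, so this gives a smooth complex line bundle $\mathcal M\to Y$ equipped with a global section extending $\det(\Phi_0)$ and vanishing to first order along $\mathcal Z_0$. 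Because the leading transition term is $\bar z$ rather than $z$, the winding of the transition around any meridional circle of $\mathcal Z_j$ is $-1$, yielding $c_1(\mathcal M)=-\text{PD}[\mathcal Z_0]$; this verifies (iii). To recover $\mathcal L$ from $\mathcal M$, I would observe that specifying a complex line bundle $\mathcal L$ with $\mathcal L^{\otimes 2}\simeq \mathcal M$ is the same data as a $\text{Spin}^c$ structure on $Y$ with determinant $\mathcal M$: this requires $c_1(\mathcal M)\equiv w_2(Y)\pmod 2$, which holds since $Y$ is orientable ($w_2=0$) and $\text{PD}[\mathcal Z_0]\equiv \beta(w_1(\ell))$ is the image of a $\Z_2$-class under the integral Bockstein, hence even, by Lemma~\ref{reallinebundles}. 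The prescribed isomorphism with $\mathcal L_0$ on $Y-\mathcal Z_0$ fixes the restriction of the square root to the complement, while condition (ii) dictates the extension across each $\mathcal Z_j$.

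The main obstacle I expect is the uniqueness clause. Extensions of $\mathcal L_0$ from $Y-\mathcal Z_0$ to $Y$ differ by tensoring with bundles of the form $\mathcal O\!\bigl(\sum n_j\mathcal Z_j\bigr)$, so two extensions with the same $c_1(\mathcal L^{\otimes 2})$ can still differ by a $2$-torsion line bundle supported on the components of $\mathcal Z_0$. The content of condition (ii) is precisely to rigidify this ambiguity: requiring that the trivializations of Lemma~\ref{localtrivialization}—together with the local forms of $A_0$ and of the splitting $S^{\text{Re}}\oplus S^{\text{Im}}$—extend to the chosen $\mathcal L$ forces the transition functions on each $U_0\cap U_j$ to be determined up to a unique bundle isomorphism. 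Carefully matching both the section $\det(\Phi_0)$ and the trivializing frames $\sigma_j$ of Lemma~\ref{localtrivialization}, and pinning down the sign conventions so that the orientation of $\mathcal Z_0$ produces the minus sign in (iii), comprise the bulk of the technical bookkeeping.
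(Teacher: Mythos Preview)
Your computation of $\det(\Phi_0)$ and its use to establish $c_1(\mathcal L^2)=-\text{PD}[\mathcal Z_0]$ is correct and matches the paper. However, your overall route is inverted relative to the paper's, and this inversion introduces a genuine error.

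The paper does not construct $\mathcal M=\mathcal L^{\otimes 2}$ first and then extract a square root. Instead it defines $\mathcal L$ directly: on $Y-\mathcal Z_0$ it is $\mathcal L_0=\ell\otimes_\R\underline{\C}$, and on each tube $N_{r_0}(\mathcal Z_j)$ it is declared to be the trivial bundle, glued to $\mathcal L_0$ on the overlap via the trivialization $\sigma_j$ already produced in Lemma~\ref{localtrivialization}. With this definition, (i) and (ii) are tautologies, uniqueness is immediate (the restriction to $Y-\mathcal Z_0$ and the extension across each $\mathcal Z_j$ are both prescribed), and the section $\det(\Phi_0)$ is used only afterward to read off $c_1(\mathcal L^2)$. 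No square-root obstruction ever needs to be checked.

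Your route requires showing that $c_1(\mathcal M)=-\text{PD}[\mathcal Z_0]$ is even in $H^2(Y;\Z)$, and your justification for this is incorrect. You write that $\text{PD}[\mathcal Z_0]\equiv\beta(w_1(\ell))$ is ``the image of a $\Z_2$-class under the integral Bockstein, hence even.'' But Lemma~\ref{reallinebundles} concerns $H^2(Y-\mathcal Z_0;\Z)$, not $H^2(Y;\Z)$, so the identification with $\beta(w_1(\ell))$ is not available on $Y$; and being in the image of the Bockstein means being $2$-\emph{torsion}, which is not the same as being divisible by $2$. (In $H^2(Y;\Z)\cong H_1(Y;\Z)$ these conditions genuinely differ.) The statement that $\text{PD}[\mathcal Z_0]\equiv 0\pmod 2$ is in fact true here---it follows from the long exact sequence of the pair $(Y,Y-\mathcal Z_0)$ with $\Z_2$ coefficients, using that $w_1(\ell)$ evaluates to $1$ on every meridian---but your argument does not establish it. Even after repairing this, you would still have to single out the correct square root among the $H^1(Y;\Z_2)$-torsor of choices and verify (i)--(ii) for it, which is precisely the work the paper's direct construction makes automatic.
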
 

\begin{proof} Define $\mathcal L$ by extending the trivializations of Lemma \ref{localtrivialization} on $N_{r_0}(\mathcal Z_j)$ across $\mathcal Z_j$ for every component of $\mathcal Z_0$. Thus items (i) and (ii) hold automatically. Note, however, that while the expressions for the connection form $id\theta$ and the involution $J=e^{-i\theta}J_0$ they are not smooth across the origin. 

Thus it suffices to show (iii). The local trivializations for $\mathcal L$ as defined induce local trivializations $\mathcal L^2|_{N(\mathcal Z_j)}\simeq N(\mathcal Z_j)\times \C$ of $\mathcal L^2$. In these, the local expression (\refeq{polyhomogeneous1})-(\refeq{polyhomogeneous2}) show that locally $$\det(\Phi_0)=(|c(t)|^2 + |d(t)|^2) \overline z  \ + \ \overline zf(z,\overline z).$$
Assumption \ref{assumption2} implies that $|c(t)|^2 + |d(t)|^2>0$, thus using Lemma \ref{detminant} it follows that $\det(\Phi_0)$ is a smooth section of $\mathcal L^2$ vanishing transversely along $|\det \Phi_0|^{-1}(0)= \mathcal Z_0$. Since $\overline z$ is orientation reversing, and the coordinates for the local expressions respect the orientation of $\mathcal Z_0$ by construction, (iii) follows.  

\end{proof}

\begin{rem}Notice that the orientation convention here is opposite that in \cite{Haydysblowup}. As observed in \cite{Haydysblowup}, the condition (\refeq{c1constraint}) places topological restrictions on the homology class represented by $\mathcal Z_0$. We emphasize that these restrictions follow from the existence of a $\Z_2$-harmonic spinor and do not require an additional assumption.  
\end{rem}

\subsection*{Restatement of Assumptions} \label{restatementsection}
The above proposition completes any global topological statements required for the proofs of Theorems \ref{main}-\ref{mainc}. The remainder of the article is analytic in nature and works exclusively in the local coordinates and trivializations constructed using Lemma \ref{localtrivialization} and Proposition \ref{spinc}. To summarize briefly, the starting point of the local analysis is the following local expressions for the pair $(\Phi_0, A_0)$: 

\begin{eqnarray}
 \Phi_0&=& \begin{pmatrix} c(t) r^{1/2}  \ \ \ \ \  \\ d(t)r^{1/2}e^{-i\theta} \end{pmatrix}\otimes 1+\begin{pmatrix} -\overline d(t) r^{1/2}  \ \ \ \ \  \\ \ \overline c(t)r^{1/2}e^{-i\theta} \end{pmatrix} \otimes j+O(r^{3/2}) \label{finallocalforms1})\\
 A_0 &=& \frac{i}{2}d\theta\label{finallocalforms2}
\end{eqnarray}

\noindent and Assumption \ref{assumption2} in its pragmatic form is the statement that the quantity $K(t)=\tfrac{2}{3}(|c(t)|^2+ |d(t)|^2)$ is nowhere vanishing. Additionally, in a slight abuse of notation we have switched from letting $A_0$ denote the connection to letting it denote the connection form in the local trivialization. Additionally, for notational simplicity we will assume that $\mathcal Z_0$ consists of a single component and that $\epsilon_j=0$ and $B_0$ is the product connection in the trivialization for this component. It is a trivial matter at the end of Section \ref{section8} to eliminate these restrictions.

\section{De-Singularized Configurations}
\label{section4}

In this section we begin main portion of the analysis required for the proof of Theorem \ref{main} and Theorem \ref{mainc}. As explained in the introduction, the construction requires several steps, the first of which is the ``de-singularization'' step 

\begin{center}
\tikzset{node distance=3.6cm, auto}
\begin{tikzpicture}[decoration=snake]
\node(A){$(\Phi_0,A_0)$};
\node(C)[right of=A]{$(\Phi^{h_\e}, A^{h_\e})$};
\draw[->, decorate] (A) to node {$\text{de-sing.}$} (C);
\end{tikzpicture}

\end{center}  of the limiting $\Z_2$-harmonic spinor to an $\e$-parameterized family of nearby smooth configuration, which we undertake in the current section.

The de-singularized configurations are a  family of solutions to an ODE parameterized by $(t,\e)\in S^1\times [0,\e_0)$. More specifically, they are the $S^1$-parameterized family of the two-dimensional fiducial solutions on planes normal to $\mathcal Z_0$ for the leading order term of $\Phi_0$. These two-dimensional fiducial solutions are exact solutions in the Euclidean metric, and first appeared in the context of Hitchin's Equations in \cite{MWWW,FredricksonSLnC}, though their existence may have been known to physicists before that. They are obtained from the limiting $\Z_2$-harmonic spinor by applying a singular complex gauge transformation which solves a degenerate second order ODE. Section 4.1 gives the construction of these two-dimensional fiducial solutions, and follows \cite{MWWW} quite closely. Section 4.2 departs from the approach of \cite{MWWW} and from the holomorphic setting to introduce the parameterized version which yields the de-singularized configurations. Section 4.3 calculates the size of the error term by which these fail to be true solutions. 

As explained at the end of the previous section, we now fix local coordinates on $N_{r_0}(\mathcal Z_0)$ and a trivialization of $S_E|_{N_{r_0}(\mathcal Z_0)}$ in which the local expressions are given by (\refeq{finallocalforms1}-(\refeq{finallocalforms2}). 

\subsection{Dimensional Reduction}
\label{section4.1}
This subsection constructs fiducial solutions on the complex plane. Let $(\C,g_0)$ denote the complex plane equipped with the flat Euclidean metric. The spinor bundle $S_{g_0}\simeq \C \times \C^2$ is identified with the trivial $\C^2$-bundle, and $E \simeq \C \times  {\mathbb H}$ with the trivial quaternionic line bundle. Assume in this case that $B_0$ is the product connection.  We may write a configuration $(\Phi, A)\in \Gamma(S_E)\times \mathcal A_{U(1)}$ as 
\bea
\Phi&=& \begin{pmatrix}\alpha \\ \beta \end{pmatrix}=  \begin{pmatrix}\alpha_1 \\ \beta_1 \end{pmatrix}\otimes 1+  \begin{pmatrix}\alpha_2 \\ \beta_2 \end{pmatrix}\otimes j \\ 
A &=& \tfrac{1}{2}\left(a dz - \overline a d\overline z  \right)
\eea 
where $\alpha,\beta$ are $\mathbb H$-valued functions, and $\alpha_i, \beta_i, a$ are complex-valued functions, and where we associate a connection form $A$ with the connection $d+A$ in the given trivialization. To convert from the complex coordinates to the real ones, we use the isomorphism 
\bea
\Omega^1(i\R)&\overset{\simeq}\lre& \Omega^{1,0}(\C) \\
i(a_x dx + a_y dy)& \mapsto & (a_y + i a_x)dz
\eea
on $1$-forms. Under this association (see also Section 6.1 and Section 3 of \cite{DoanThesisCh4}), the dimensionally-reduced blown-up Seiberg-Witten equations become 

\begin{eqnarray}
\begin{pmatrix} 0 & -2\del_A \\ 2\delbar_A & 0 \end{pmatrix}\begin{pmatrix}\alpha \\ \beta \end{pmatrix}&=& 0 \label{1stSWdr}\\
\mu_\C(\Phi)&=&0\label{2ndSWdr} \\
 F_A + \frac{\mu_\R(\Phi)}{\e^2}&=& 0\label{3rdSWdr}
\end{eqnarray}

\noindent where $(\mu_\R, \mu_\C)=\star_3 \mu$ under the isomorphism $\R\omega \oplus \Omega^{0,1}\simeq i T^* (S^1\times \C)$ so that, explicitly, 

\bea
\mu_\C(\Phi)&=&\left(- \frac{1}{2} \sum_{j=1,2} \overline \alpha_j \beta_j\right) d\overline z\\
\mu_\R(\Phi)&=&\left(- \frac{1}{2} \sum_{j=1,2} |\alpha_i|^2-  |\beta_i|^2\right) i dx\wedge dy. 
\eea

\noindent  Note also that we do not assume that $\|\Phi\|_{L^2}=1$ (in fact we won't even assume it is in $L^2$). As we are looking for local solutions which will later be transferred to the closed manifold $Y$ where the normalization is global, this is irrelevant for our immediate purposes. 

The configuration space of pairs $(\Phi, A) \in \Gamma(C; S_E \oplus \Omega^{1}({i\R}))$ carries an action of the complex gauge group $$\mathcal G^\C:= \{ e^h \ | \ h: \C \to \C\}$$  
by 
$$
e^h\cdot (\alpha,\beta,A)\mapsto (e^h \alpha, e^{-\overline h}\beta, A+ \del \overline h -\delbar h).
$$
\noindent The first two Seiberg-Witten equations (\refeq{1stSWdr} - \refeq{2ndSWdr}) are invariant under the action of $\mathcal G^\C$, while the third (\refeq{3rdSWdr}) is invariant only under the action of the real gauge group ($h\in i\R$).

Consider a $\Z_2$-harmonic spinor which is equal to the leading order term in its asymptotic expansion, so that
\bea
\Phi_0&=&\begin{pmatrix} cr^{1/2}  \ \  \ \  \ \\ d r^{1/2}e^{-i\theta}\end{pmatrix}\otimes 1+ \begin{pmatrix} -\overline dr^{1/2}  \ \  \ \ \  \ \\ \overline cr^{1/2}e^{-i\theta}\end{pmatrix}\otimes j  \\
A_0 &=& \frac{1}{4}\left(\frac{dz}{z}-\frac{d\overline z}{\overline z}\right)
\eea  
where, $|c|^2 + |d|^2>0$. Later we will take $c=c(t)$ and $d=d(t)$ for each fixed $t\in S^1$. We look for an $\e$-parameterized family of solutions satisfying the following ansatz: 

\begin{ansatz}
Assume that there is an $\e$-parameterized family of solutions is complex gauge equivalent to $(\Phi_0, A_0)$ via a complex gauge transformation $h_\e$ for every $\e$. Assume additionally that $h_\e=h_\e(r)$ is rotationally invariant, so that $$(\Phi^{h_\e}, A^{h_\e}):= e^{h_\e(r)}\cdot (\Phi_0, A_0).$$

\noindent We do not assume that $h_\e(r)$ is smooth and allow the possibility that $h_\e(r)\to \infty$ as $r\to 0$. \label{ansatz1}
\end{ansatz}

\begin{prop}
Let $(\Phi^{h_\e}, A^{h_\e})$ be configurations satisfying Ansatz \ref{ansatz1} above. Then $(\Phi^{h_\e}, A^{h_\e})$ satisfy the dimensionally reduced blown-up Seiberg-Witten equations (\refeq{1stSWdr}-\refeq{3rdSWdr}) on $\C$  if and only if the gauge transformations $h_\e(r)$ satisfy the $\e$-parameterized family of ODEs 

\be r^2 \Delta_r h_\e(r)= \frac{9}{4}\frac{K^2 r^3}{2\e^2}\sinh(2h_\e(r)) \label{ODEfamily}\ee 
where $\Delta_r$ is the radial part of the Laplacian $\Delta_r=\tfrac{1}{r}\del_r(r\del_r)$ and $K^2=\tfrac{2}{3}(|c|^2+ |d|^2)$.
\end{prop}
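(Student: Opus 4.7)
The plan is to observe that the first two dimensionally-reduced Seiberg-Witten equations automatically hold under Ansatz \ref{ansatz1}, so the entire compatibility of $(\Phi^{h_\e}, A^{h_\e})$ with \refeq{1stSWdr}--\refeq{3rdSWdr} is encoded in the curvature equation. First I would verify directly that $(\Phi_0, A_0)$ satisfies all three equations on $\C \setminus \{0\}$: the Dirac equation $\slashed D_{A_0}\Phi_0 = 0$ follows by computing $\delbar_{A_0}(cr^{1/2}) = 0$ and $\del_{A_0}(dr^{1/2}e^{-i\theta}) = 0$ directly using $A_0 = \tfrac{i}{2}d\theta$; the symmetry $\Phi_0 \in S^{\text{Re}}$ together with the form \refeq{finallocalforms1} gives $|\alpha_1|^2+|\alpha_2|^2 = |\beta_1|^2 + |\beta_2|^2$ and $\bar\alpha_1\beta_1 + \bar\alpha_2\beta_2 = 0$, so $\mu_\R(\Phi_0) = \mu_\C(\Phi_0) = 0$; and $F_{A_0} = 0$ since $A_0$ is flat on $\C\setminus \{0\}$.

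Second I would invoke the $\mathcal G^{\C}$-invariance of the first two equations. Writing $A^{h_\e} = A_0 + \del\overline{h_\e} - \delbar h_\e$, the intertwining relations
$\delbar_{A^{h_\e}}(e^{h_\e}\alpha) = e^{h_\e}\delbar_{A_0}\alpha$ and $\del_{A^{h_\e}}(e^{-\overline{h_\e}}\beta) = e^{-\overline{h_\e}}\del_{A_0}\beta$ hold because the shifts $-\delbar h_\e$ and $+\del\overline{h_\e}$ in the $(0,1)$ and $(1,0)$ parts of the new connection exactly cancel the derivatives of $e^{\pm h_\e}$. Similarly $\mu_\C = -\tfrac{1}{2}\bar\alpha\beta\,d\bar z$ is pointwise invariant since $\bar\alpha\beta \mapsto e^{\overline{h_\e}-\overline{h_\e}}\bar\alpha\beta$. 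Consequently $(\Phi^{h_\e}, A^{h_\e})$ satisfies \refeq{1stSWdr} and \refeq{2ndSWdr} for every rotationally invariant $h_\e$, and the problem reduces to verifying \refeq{3rdSWdr}.

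Third I would compute both sides of the curvature equation. Using flatness of $A_0$,
$F_{A^{h_\e}} = d(\del\overline{h_\e}-\delbar h_\e) = -2\del\delbar(\text{Re}\,h_\e)$,
which in real coordinates equals $i\Delta(\text{Re}\,h_\e)\,dx\wedge dy$. Since only the real part of $h_\e$ enters here or in $\mu_\R$, the imaginary part is pure $U(1)$-gauge and may be set to zero, whereupon rotational symmetry reduces $\Delta$ to $\Delta_r$. On the spinor side, the action $\alpha\mapsto e^{h_\e}\alpha$, $\beta\mapsto e^{-h_\e}\beta$ scales $|\alpha|^2$ and $|\beta|^2$ by $e^{\pm 2h_\e}$, and the identity $|\alpha_1|^2+|\alpha_2|^2 = |\beta_1|^2+|\beta_2|^2 = (|c|^2+|d|^2)r$ yields
\[
\mu_\R(\Phi^{h_\e}) \;=\; -i(|c|^2+|d|^2)\, r\, \sinh(2h_\e)\,dx\wedge dy.
\]
Equating the $dx\wedge dy$-coefficients in \refeq{3rdSWdr}, substituting $|c|^2+|d|^2$ as a multiple of $K^2$, and multiplying by $r^2$ produces exactly the ODE \refeq{ODEfamily}.

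The argument is essentially mechanical once the $\mathcal G^{\C}$-invariance observation is in place; the main obstacle is purely bookkeeping the constants correctly through the identifications between $\mu$ and the 3D moment map via $\star_3$ and between $(R^1(i\R))$ and $\Omega^{1,0}(\C)$. Both directions of the ``if and only if'' follow simultaneously from this computation: the forward direction reads the ODE off the $dx\wedge dy$-component of \refeq{3rdSWdr}, while the converse combines the same computation with $\mathcal G^{\C}$-invariance to recover solutions of all three equations from a solution of the ODE.
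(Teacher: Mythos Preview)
Your proposal is correct and follows essentially the same approach as the paper: both reduce the problem to the curvature equation after noting that \eqref{1stSWdr}--\eqref{2ndSWdr} are preserved by the complex gauge action. The only cosmetic difference is that the paper verifies the Dirac equation by direct computation (extracting the auxiliary relation $f_\e(r)=\tfrac{1}{4}+\tfrac{1}{2}r\del_r h_\e$ and then feeding it into the curvature), whereas you invoke $\mathcal G^\C$-invariance for the Dirac equation as well and compute $F_{A^{h_\e}}=i\Delta_r h_\e\,dx\wedge dy$ in one step; the two routes are equivalent and yield the same ODE.
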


\begin{proof}  To begin, recall the polar coordinate expressions 

\begin{equation}
\delbar_z = \f{1}{2}e^{i\theta}(\del_r+\f{i}{r}\del_\theta)\hspace{1cm}
\del_z = \f{1}{2}e^{-i\theta}(\del_r-\f{i}{r}\del_\theta).
\label{delbarpolar}
\end{equation}

\noindent The gauge-transformed configurations may be written as follows, where the second expression is the definition of the function $f_\e(r)$.   
\bea
\Phi^{h_\e}&=& \begin{pmatrix} e^{h_\e(r)} c r^{1/2} \ \ \ \  \\ e^{-h_\e(r)}dr^{1/2}e^{-i\theta}\end{pmatrix} \otimes 1+ \begin{pmatrix} -e^{h_\e(r)} \overline d r^{1/2} \ \  \ \ \  \  \\ e^{-h_\e(r)}\overline cr^{1/2}e^{-i\theta}\end{pmatrix} \otimes j \\ A^{h_\e}&=&A_0+ \del \overline h_\e(r) -\delbar h_\e(r):=  f_\e(r) \left(\frac{dz}{z}-\frac{d\overline z}{\overline z}\right).
\eea

We may now substitute these expressions into the Seiberg-Witten equations (\refeq{3rdSWdr}). By complex gauge invariance, $\Phi^{h_\e}$ automatically satisfies the $\mu_\C=0$ equation. For the Dirac equation,  the $\otimes 1$ and $\otimes j$ components behave identically so it suffices to calculate the first. This equation becomes

\bea
\slashed D_{A^{h_\e}}\Phi^{h_\e}&=& \begin{pmatrix}0 & -2(\del + (A^{h_\e})_z)\\ 2(\delbar+ (A^{h_\e})_{\overline z})& 0 \end{pmatrix} \begin{pmatrix}ce^{h_\e(r)}r^{1/2}  \ \ \  \\ de^{-h_\e(r)}r^{1/2}e^{-i\theta} \end{pmatrix}\otimes 1 + \ldots \otimes j. 
\eea

\noindent Focusing on the first component, this becomes

\bea
-\left[e^{-i\theta}\left(\del_r-\tfrac{i}{r}\del_\theta \right)+\frac{2f_\e(r)e^{-i\theta}}{r}\right]de^{-h_\e(r)}r^{1/2} e^{-i\theta} &=& \frac{e^{i\theta}e^{h_\e(r)}}{2r^{1/2}}\Big( -2{f_\e(r)}+(r\del_r)h_\e(r)+\tfrac{1}{2}\Big) 
\eea

\noindent and the first factor is non-zero, hence $f_\e(r)$ must satisfy $$f_\e(r)=\tfrac{1}{4} +\tfrac{1}{2}r\del_r h_\e(r).$$
and the second component and the $\otimes j$ term give the same equation.

 For the third equation (\refeq{3rdSWdr}), we compute 
\bea
\mu_\R&=&\left(-\frac{i}{2}\sum_j |\alpha_j|^2-|\beta_j|^2\right) dx \wedge dy \\
&=& -\frac{i}{2}(r|c|^2 e^{2h_\e(r)}-r|d|^2 e^{-2h_\e(r)} +r |d|^2 e^{2h_\e(r)}-r|c|^2e^{-2h_\e(r)})dx \wedge dy \\
&=& -\frac{i9}{4}\frac{K^2 r}{ 2}  \sinh(2h_\e(r)) dx \wedge dy\\
F_{A^{h_\e}}&=& \left((\delbar f) \tfrac{d\overline z\wedge dz}{z}- (\del f) \tfrac{d z\wedge d\overline z}{\overline z} \right) \\
&=& \left(- \tfrac{1}{2}e^{i\theta}\tfrac{1}{z}\del_r f  -\tfrac{1}{2}e^{-i\theta}\tfrac{1}{\overline z}\del_r f \right) dz\wedge d\overline z \\ 
&=& \left(-\tfrac{1}{r}\del_r f\right) dz\wedge d\overline z=  \left(\tfrac{1}{r}\del_r f\right) 2i dx\wedge dy.
\eea
Combining these we obtain the system of ODEs 
\begin{eqnarray}
f_\e(r)&=&\frac{1}{4} +\frac{1}{2}r\del_r h_\e(r)\\
\tfrac{2}{r}\del_r f_\e(r) &=&\frac{9}{4}\frac{K^2 r }{2\e^2}\sinh(2h_\e(r))
\end{eqnarray}
and substitution the first into the second then multiplying by $r^2$ yields the proposition. 
\end{proof}

Up to a constant factor, Equation (\refeq{ODEfamily}) is the same equation obtained for the corresponding situation using Hitchin's equations. It is of Painlev\'e type and it is solved in \cite{MWWW}, Section 3 via the following substitution. It turns out that all the $\e$-parameterized family of solutions are all re-scalings of a single invariant solution. 

Let $\tau= \frac{K}{\e} r^{3/2}$ and $h_\e(r)=G(\tau)$ so that $$r\del_r = \frac{3}{2} \tau \del_\tau$$ \noindent and Equation (\refeq{ODEfamily}) becomes

\be
(\tau \del_\tau)^2 G= \frac{1}{2}\tau^2 \sinh (2G). 
\label{scaledODE}
\ee

This equation admits a distinguished solution which is defined by the two conditions that $h$ has an asymptote at $\tau=0$ and decays to $0$ as $\tau\to \infty$. The next below proposition collects the essential properties of this solution, which we do not prove and instead refer the reader to \cite{MWWW} (see Equation (25) and the accompanying discussion).

First, we change variables once more. It turns out that it is quite confusing to use the variable $\tau$ which depends non-linearly on $r$. We will instead opt for a linear scaling by replacing $\tau$ by $\rho=\tau^{2/3}$.  
 
 \begin{defn}
 Define the {\bf $\e$-invariant length} by $$\rho:= \left(\frac{K}{\e}\right)^{2/3}r.$$
 
 \noindent Then define $H(\rho):=G(\rho^{3/2})$ so that the $\e$-parameterized family of solutions are given by $$h_\e(r)= H(\rho).$$
 \end{defn}

The essential properties of the solution are now expressed in terms of $H(\rho)$. For the proof of these properties, see Lemma 3.3 of \cite{MWWW} (performing the above substitution for $\rho$ into their results).

\begin{prop} \label{Hrhoproperties}
There exists a unique $\e$-parameterized family of solutions $h_\e(r)$ to (\refeq{ODEfamily}) such that $h_\e(r)$ has an asymptote at $r=0$ and decays to $0$ as $r\to \infty$. This family $h_\e(r)= H(\rho)$ are all dilations of a single $\e$-independent function such that $H(\rho)=G(\tau)$ solves  (\refeq{scaledODE}). It satisfies the following properties

\begin{enumerate}
\item$H(\rho)$ is strictly positive and monotonically decreasing. 
\item $H(\rho)$ decays faster than exponentially as $\rho\to \infty$. More specifically, there are constant $C,c, \rho_0$ such that for $\rho \geq \rho_0$ $$H(\rho)\leq C \text{Exp} (-c\rho^{3/2})$$
and similarly for the derivatives of $H(\rho)$. 
\item At $\rho=0$ there is an asymptotic expansion of the form \be H(\rho)=-\log\left(\rho^{1/2}\sum_{j=0}^\infty a_j \rho^{2j}\right)\label{Hrhoexpression}\ee
in which $a_0\neq 0$. In particular, up to leading order $H(\rho)= \frac{}{}\log(\rho^{-1/2})$ so that $$ e^{H(\rho)}= \rho^{-1/2}+ O(1). $$
\item The function $f_\e(r)=\tfrac{1}{4} + \tfrac{1}{2}\rho \del_\rho H(\rho)$ vanishes to second order at $r=0$, and increases monotonically to its limiting value of $\tfrac{1}{4}$. Additionally, $|\tfrac{f_\e(r)}{r} |\leq C \e^{-2/3}$ for all $r$, and the difference of $f_\e(r)$ and $f_\e'(r)$ from their limiting values decreases exponentially, i.e. 
\bea
|f_\e(r)-\tfrac{1}{4}|&\leq& C\rho\text{Exp}(-c\rho^{3/2}) \\ 
|f_\e'(r)| & \leq & C \rho\text{Exp}(-c\rho^{3/2})
\eea  

\noindent and similarly for the higher derivatives.    

\end{enumerate}

\end{prop}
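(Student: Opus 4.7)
I would organize the proof around three stages: reduction to a single universal ODE, existence and uniqueness of the distinguished solution, and derivation of the qualitative properties.

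\emph{Stage 1: Reduction to a universal equation.} First I would verify the stated change of variables. Writing $\tau = (K/\e) r^{3/2}$ gives $r\del_r = \tfrac{3}{2}\tau\del_\tau$, hence $r^2 \Delta_r = r\del_r(r\del_r) = \tfrac{9}{4}(\tau\del_\tau)^2$, while the right-hand side of the ODE becomes $\tfrac{9}{4}\cdot\tfrac{\tau^2}{2}\sinh(2G)$, yielding the scale-invariant equation \refeq{scaledODE} for $G(\tau)$. The further substitution $\rho = \tau^{2/3}$ gives $\rho\del_\rho = \tfrac{2}{3}\tau\del_\tau$, and this confirms both the $\e$-independence of $H(\rho)=G(\rho^{3/2})$ and the dilation identity $h_\e(r) = H((K/\e)^{2/3} r)$. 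So everything reduces to constructing a single function $H$ and verifying its properties.

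\emph{Stage 2: Existence and uniqueness.} Passing to $s=\log\tau$ turns the equation into $\ddot G = \tfrac{1}{2} e^{2s} \sinh(2G)$. Linearizing near $G=0$ for large $s$ yields $\ddot G \approx e^{2s} G$, whose decaying solutions behave like the modified Bessel function $K_0(e^s)\sim e^{-e^s}/\sqrt{e^s}$, i.e.\ $\sim\text{Exp}(-\tau)$. This exhibits a one-dimensional stable manifold of the origin at $s\to+\infty$. Following orbits on this manifold backwards in $s$, I would use an energy functional such as $E(s) = \tfrac{1}{2}\dot G^2 - \tfrac{1}{4}e^{2s}\cosh(2G)$ to control the dynamics and show each such orbit must blow up at $s\to-\infty$; matching the blow-up rate to the required $e^G\sim \rho^{-1/2} = \tau^{-1/3}$ (equivalently $G \sim -\tfrac{1}{3}\log\tau$) singles out a unique orbit. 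This is precisely the analysis of \cite{MWWW}, Lemma~3.3, for the Hitchin-equations analogue; the only work here is to reinterpret their result under the change of variables from Stage~1.

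\emph{Stage 3: Qualitative properties.} Positivity of $H$ is a maximum-principle argument: if $G(s)<0$ at some interior point, then at its negative minimum $s_0$ one has $\ddot G(s_0)\geq 0$, but the equation forces $\ddot G(s_0)=\tfrac{1}{2}e^{2s_0}\sinh(2G(s_0))<0$, a contradiction. Since $G>0$ everywhere, the equation further gives $\ddot G>0$ throughout, so $G$ is strictly convex in $s$; combined with $G\to 0$ at $s\to+\infty$ and $G\to+\infty$ at $s\to-\infty$, convexity forces $\dot G<0$, i.e.\ strict monotone decrease. The super-exponential decay at infinity follows from the Bessel-type linearized asymptotics above, translated via $\rho^{3/2}=\tau$. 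For the expansion at $\rho=0$, setting $w=e^{-G} = \rho^{1/2}\sum a_j\rho^{2j}$ and substituting into the $\rho$-ODE gives a recursion with $a_0$ determined by Stage~2; the $\rho\to-\rho$ symmetry of the resulting equation for $w$ forces only even powers of $\rho$. Finally, $f_\e=\tfrac{1}{4}+\tfrac{1}{2}\rho\del_\rho H$: the expansion at $\rho=0$ gives $\rho\del_\rho H = -\tfrac{1}{2}+O(\rho^2)$ (the $-\tfrac{1}{2}$ cancelling $\tfrac{1}{4}$), yielding $f_\e=O(\rho^2)$ and the uniform bound $|f_\e/r|\leq C\e^{-2/3}$; the decay of $f_\e-\tfrac{1}{4}$ and $f'_\e$ at infinity follows by differentiating the super-exponential bound on $H$.

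The main obstacle is Stage~2: making the shooting rigorous, and in particular extracting the precise blow-up rate $G\sim -\tfrac{1}{3}\log\tau$ as $\tau\to 0^+$ needed to match the prescribed $\rho^{-1/2}$ leading order. In practice this step largely reduces to invoking the Painlev\'e-III-type analysis of \cite{MWWW}, Lemma~3.3, after performing the reduction in Stage~1.
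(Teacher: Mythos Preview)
Your proposal is correct and in fact substantially more detailed than the paper's own proof. The paper simply cites \cite{MWWW}: items (1)--(3) are attributed directly to Equation~(27) there, and the first statements of item (4) to parts (a)--(c) of Lemma~3.3 in \cite{MWWW}; only the final exponential bounds on $f_\e-\tfrac14$ and $f_\e'$ are derived in-house, via the relation $\tfrac{2}{r}\del_r f_\e = \tfrac{9}{4}\tfrac{K^2 r}{2\e^2}\sinh(2h_\e)$ and the fundamental theorem of calculus. Your Stage~1 reduction is exactly the paper's, and your Stage~2 acknowledgment that the existence/uniqueness ultimately rests on the Painlev\'e-III analysis of \cite{MWWW} matches the paper's stance. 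Where you go further is Stage~3: your maximum-principle argument for positivity, convexity argument for monotonicity, and Bessel-type linearization for the decay are self-contained sketches of what lies inside the cited lemma, which the paper does not reproduce. One small remark: your $\rho\to-\rho$ symmetry argument for the even-power expansion is cleanest after writing $e^{-H}=\rho^{1/2}v(\rho)$ rather than $w=e^{-H}$ directly, since it is the equation for $v$ that is manifestly invariant.
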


\begin{proof}
The first statement is immediate from the re-scaling above. The first through third items follow immediately from Equation (27) in \cite{MWWW}. The first two statement of the fourth bullet point follow from parts (a)-(c) of Lemma 3.3 in \cite{MWWW}. The exponential bound on $f'_\e(r)$ follows from that on $H(\rho)$ by the equation $\tfrac{2}{r}\del_r f_\e(r) =\tfrac{9}{4}\tfrac{K^2 r }{2\e^2}\sinh(2h_\e(r))$, and the one on $f'_\e(r)$ by the fundamental theorem of calculus. 

\end{proof}

The properties of the function $h_\e(r), f_\e(r)$ above translate into similar properties of the two-dimensional fiducial solutions $(\Phi^{h_\e}, A^{h_\e})$. We state these momentarily for the 3-dimensional case. The radial profiles of the two-dimensional fiducial solutions are plotted below with those of the limiting $\Z_2$-harmonic spinor:

\begin{figure}
\begin{center}
\begin{picture}(200,250)
\put(-120,-1.3){\includegraphics[scale=1.6]{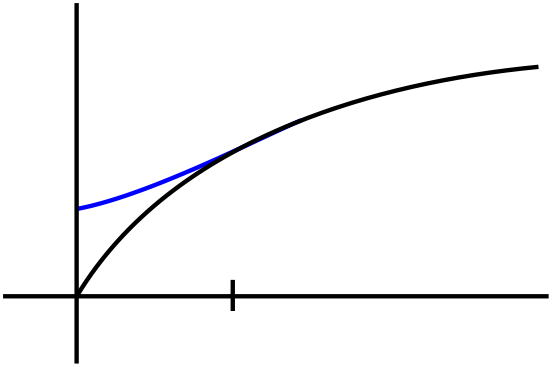}}
\put(115,0){\includegraphics[scale=1.6]{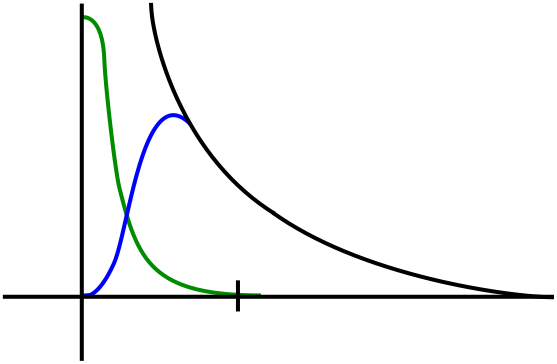}}
\put(20,85){\Large $|\Phi_0|$}
\put(215,69){\Large $|A_0|$}
\put(143,148){\color{OliveGreen}\large $|F_{A^{h_\e}}|$}
\put(170,60){\color{blue}\Large $|{A^{h_\e}}|$}
\put(-85,75){\color{blue}\Large $|\Phi^{h_\e}|$}
\put(-45,8){$\large O(\e^{2/3})$}
\put(190,8){$\large O(\e^{2/3})$}
\put(-125,58){$\large O(\e^{\tfrac{1}{3}})$}
\put(112,90){$\large O(\e^{-\tfrac{2}{3}})$}
\put(112,132){$\large O(\e^{-\tfrac{4}{3}})$}
\put(70,12){$r\to \infty$}
\put(300,12){$r\to \infty$}
\label{Fig1}
\end{picture}
\caption{The radial profiles of the de-singularized configurations and limiting $\Z_2$-harmonic spinor.}
\end{center}
\end{figure}
\bigskip 

\subsection{De-Singularization on $Y$}
This subsection introduces the de-singularized configurations on the closed manifold $Y$, which are a $t$-parameterized of the 2-dimensional fiducial solutions of the previous subsection on each plane normal to $\mathcal Z_0$. Here, we work in local coordinates on a tubular neighborhood $N_{\lambda}(\mathcal Z_0)$ of radius $\lambda$ possibly depending on $\e$. 

Returning to the case of full generality when $\Phi_0$ may have higher order terms, write 

\bea
\Phi_0&=&\begin{pmatrix} c(t)r^{1/2}  \ \  \ \  \ \\ d(t) r^{1/2}e^{-i\theta}\end{pmatrix}\otimes 1+ \begin{pmatrix} -\overline d(t)r^{1/2}  \ \  \ \ \  \ \\ \overline c(t) r^{1/2}e^{-i\theta}\end{pmatrix}\otimes j  \ + \ O(r^{3/2})\\
A_0 &=& \frac{i}{2}d\theta = \frac{1}{4}\left(\frac{dz}{z}-\frac{d\overline z}{\overline z}\right)
\eea  

\noindent as before, and let $$K^2(t):=\frac{2}{3}( |c(t)|^2+ |d(t)|^2).$$

\noindent Assumption \ref{assumption2} requires that $K(t)$ is bounded below by a constant greater than 0 depending only on $\Phi_0$.

\begin{defn}
Define the {\bf $t$-dependent  $\e$-invariant length} by $$\rho_t:= \left(\frac{K(t)}{\e}\right)^{2/3}r,$$
and the {\bf de-singularized configurations} by $$(\Phi^{h_\e}, A^{h_\e}):= e^{\chi_\e(r)h_\e(r,t)}\cdot (\Phi_0, A_0)$$

\noindent where $h_\e(r,t)= H(\rho_t)$ and where $\chi_\e(r)$ is a cutoff function equal to 1 on a neighborhood of $r\leq \lambda(\e)$ around $\mathcal Z_0$. In the right hand side, $\cdot$ still denotes the action of the complex gauge group on the normal planes. 
\label{scaleinvariantrho}
\end{defn}

Notice that  since for $r\geq \lambda$, the function $h_\e(r,t)$ is exponentially small in $\e$ (provided $\lambda\geq \e^{2/3}$), the cutoff function changes it in a very minor way. With the cutoff, the de-singularized configurations extend to all of $Y$ by setting them equal to $(\Phi_0, A_0)$ outside the tubular neighborhood $N_\lambda(\mathcal Z_0)$.

 The following properties are retained from the 2-dimensional version: 
 
\begin{prop}
The de-singularized configurations satisfy the following properties: 

\begin{itemize}
\item The configuration $(\Phi^{h_\e}, A^{h_\e})$ is smooth. 
\item The pair converges to the limiting configuration $(\Phi_0, A_0)$ in $C^\infty_{loc}(N_\lambda(\mathcal Z_0))$ exponentially quickly in the sense that there is a constant $c_0$ such that for $r\geq c_0 \e^{2/3}$, one has $$\|\Phi^{h_\e}-\Phi_0\|_{C^k} \leq C_k \e^{-2k/3}\text{Exp}(-\frac{cr^{3/2}}{\e}) \hspace{1cm}\| A^{h_\e}- A_0\|_{C^k}\leq C_k r\e^{-2(k+1)/3} \text{Exp}(-\frac{cr^{3/2}}{\e})$$  
\item There are pointwise bounds $$\tfrac{|\Phi^{h_\e}|}{\e}\geq c \e^{-2/3} \hspace{1cm}  |A^{h_\e}| \leq C \e^{-2/3} \hspace{1cm}|\nabla_{A_{h_\e}}\Phi^{h_\e}|\leq \frac{C}{r^{1/2}}.$$ 
and $|\Phi^{h_\e}|$ is monotonically increasing in $r$ for small $r$. 
\end{itemize}
\label{desingularizedproperties}
\end{prop}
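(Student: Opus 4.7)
\textit{Proof proposal.} The plan is to reduce each claim to a property of the radial profile $H(\rho)$ recorded in Proposition~\ref{Hrhoproperties}, using that $h_\e(r,t) = H(\rho_t)$ depends on $t$ only through the smooth, uniformly positive function $K(t)$ of Assumption~\ref{assumption2}. All resulting constants then depend only on $\min_t K$ and finitely many $C^k$-norms of $K$. The main obstacle is smoothness across $\mathcal Z_0$: one must verify that the singularities of $A_0$ and of $\del h_\e - \delbar h_\e$ cancel exactly; the remaining claims become direct corollaries of Proposition~\ref{Hrhoproperties}.

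For smoothness, on $N_\lambda(\mathcal Z_0)\setminus \mathcal Z_0$ everything is manifestly smooth. To handle $\mathcal Z_0$ itself I would substitute the expansion in Proposition~\ref{Hrhoproperties}(3) to write $e^{H(\rho)} = \rho^{-1/2} F(\rho^2)$ and $e^{-H(\rho)} = \rho^{1/2} G(\rho^2)$ for smooth $F,G$ with $F(0)G(0) = 1$. Since $\rho_t^2$ is a smooth, nonvanishing multiple of $z\overline z$, the identities $e^{h_\e} c r^{1/2} = c(t)(K/\e)^{-1/3} F(\rho_t^2)$ and $e^{-h_\e} d r^{1/2} e^{-i\theta} = d(t)(K/\e)^{1/3} G(\rho_t^2)\,\overline z$ (and their $\otimes j$ analogues) extend smoothly across $\mathcal Z_0$. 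For the connection, the derivation that produced the ODE above gives $A^{h_\e} = f_\e(r,t)\bigl(dz/z - d\overline z/\overline z\bigr) = (f_\e/r^2)(\overline z\,dz - z\,d\overline z)$ wherever $\chi_\e \equiv 1$, and Proposition~\ref{Hrhoproperties}(4) states that $f_\e$ vanishes to second order at $r=0$, so $f_\e/r^2$ is smooth in $\rho_t^2$. The underlying mechanism is that the $-\tfrac12 \log \rho_t$ leading term of $H$ contributes exactly $-A_0$ under $\del - \delbar$, providing the required cancellation.

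Exponential convergence on $\{r \geq c_0 \e^{2/3}\}$ follows at once from Proposition~\ref{Hrhoproperties}(2) and (4): in this range $\rho_t \geq c_0 (\min K)^{2/3}$, so both $H(\rho_t)$ and $f_\e - \tfrac14$ are $O(\text{Exp}(-cKr^{3/2}/\e))$. Each $r$- or $t$-derivative costs at most a factor $(K/\e)^{2/3}$ from $\nabla \rho_t$ together with bounded derivatives of $K$, producing the stated $\e^{-2k/3}$ prefactor; the cutoff $\chi_\e$ is identically $1$ on the relevant region by construction.

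The pointwise bounds are direct corollaries. Computing $|\Phi^{h_\e}|^2 = 3K^2 r \cosh(2h_\e)$ to leading order and using $r e^{2h_\e} = (K/\e)^{-2/3} F(\rho_t^2)^2$ bounded below gives $|\Phi^{h_\e}|/\e \geq c\e^{-2/3}$. Monotonicity for small $r$ follows from
$$\tfrac{d}{dr}(r\cosh 2h_\e) = \cosh(2h_\e) + (4f_\e - 1)\sinh(2h_\e) \geq e^{-2h_\e} > 0,$$
where we used Proposition~\ref{Hrhoproperties}(4) in the form $2\rho_t H'(\rho_t) = 4f_\e - 1 \in [-1,0]$, with the $O(r^{3/2})$ corrections to $\Phi_0$ providing a harmless perturbation for small $r$. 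The connection bound $|A^{h_\e}| \leq C|f_\e|/r \leq C\e^{-2/3}$ is Proposition~\ref{Hrhoproperties}(4) verbatim. Finally, $|\nabla_{A^{h_\e}}\Phi^{h_\e}| \leq C r^{-1/2}$ is checked by splitting at $r \sim \e^{2/3}$: for larger $r$ it reduces to the standard bound $|\nabla_{A_0}\Phi_0|\leq Cr^{-1/2}$ plus exponentially small corrections, while for smaller $r$ the smooth expressions above yield $|\nabla \Phi^{h_\e}| \leq C(K/\e)^{1/3} \leq Cr^{-1/2}$ uniformly throughout that region.
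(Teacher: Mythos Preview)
Your proposal is correct and follows essentially the same approach as the paper: both reduce each assertion to the properties of $H(\rho)$ recorded in Proposition~\ref{Hrhoproperties}, using the even-power expansion near $\rho=0$ for smoothness and the exponential decay for the convergence and pointwise bounds. Your treatment is in fact slightly more explicit than the paper's in two places --- the monotonicity computation $\tfrac{d}{dr}(r\cosh 2h_\e) \geq e^{-2h_\e}>0$ via $4f_\e-1\in[-1,0]$, and the case split at $r\sim\e^{2/3}$ for the gradient bound --- whereas the paper simply asserts that the second and third bullets ``follow directly from rescaling.''
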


\begin{proof} For smoothness in the normal directions, notice the expansion of (\ref{Hrhoexpression}) from Proposition \ref{Hrhoproperties} shows $H(\rho)=\log(a_0 \rho^{1/2}(1+O(\rho)^2))$ where $O(\rho^2)$ contains only even powers and is therefore smooth. It follows that $e^{h_\e}= r^{1/2}(1+O(r^2))$ where the $O(r^2)$ is also smooth, thus the leading order term of $\Phi^{h_\e}$ is $$\begin{pmatrix}e^{h_\e(r)} c(t) r^{1/2} \\ e^{-h_\e(r)}d(t)r^{1/2}e^{-i\theta}  \end{pmatrix}\otimes 1 + \begin{pmatrix}-e^{h_\e(r)} \overline d(t) r^{1/2} \\ e^{-h_\e(r)}\overline c(t)r^{1/2}e^{-i\theta}  \end{pmatrix}\otimes j $$ 

\noindent is smooth and constant at $r=0$ in the top component, and vanishes like $\overline z$ at $r=0$ in the second. The same applies to the higher order terms from Proposition \ref{asymptoticexpansion}  which only contain additional factors of $z^m\overline z^n$.  Similar considerations show that $\rho \sinh(2H(\rho))$ is smooth and vanishes to second order at the origin, which implies the same for $f_\e(r)$ thus $A^{h_\e}=f_\e(r) \left(\frac{dz}{z}- \frac{d\overline z}{\overline z}\right)$ is smooth and vanishes to first order. For smoothness in the $t$-directions, notice that   

\be \d{}{t} e^{H(\rho_t)}= e^{H(\rho_t)} \d{H}{\rho_t} \d{\rho_t}{t}= e^{H(\rho_t)} \d{H}{\rho_t} \frac{2K'(t)}{3K(t)}\rho_t \label{rhotderiv}\ee

\noindent is again smooth across the origin since $\rho_t\tfrac{\del H}{\del \rho_t}\sim \text{const}$ is also smooth across the origin. 

The second and third bullet points follow directly from rescaling the corresponding properties of $H(\rho)$ from Proposition \ref{Hrhoproperties}, and using the expression (\ref{rhotderiv}) to bound the $t$-derivatives.  
 
\end{proof}
\subsection{Calculation of Error}
Denote by $E^{(0)}_\e$ the error by which the de-singularized configurations fail to solve the (un-renormalized) Seiberg-Witten equations. That is, 

$$SW(\tfrac{\Phi^{h_\e}}{\e}, A^{h_\e})= E^{(0)}_\e.$$

\noindent The superscript is present to indicate that this is initial error in an eventual iteration process. 
\begin{lm}

Let $\gamma<<1$ be a small positive constant. There is an $\e_0$ such that for $\e<\e_0$, the error $E^{(0)}_\e$ satisfies $$\|E^{(0)}_\e\|_{L^2(Y)}\leq C\e^{-\gamma},$$

\noindent where $C$ is constant independent of $\e$. Moreover, the error is exponentially concentrated along $\mathcal Z_0$ in the sense that there is a constant $c_0$ such that for $r\geq c_0\e^{2/3}$ there is a pointwise bound $$\|E^{(0)}_\e\|_{C^0}\leq \frac{C}{\e^2}\text{Exp}(-\frac{cr^{3/2}}{\e}).$$
\label{errorcalculation}
\end{lm}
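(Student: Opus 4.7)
The plan is to decompose $E^{(0)}_\e$ into contributions from the Dirac equation and from the curvature/moment-map equation, and estimate each in an inner region $\{r \lesssim \e^{2/3}\}$ (where $h_\e$ is significant) and an outer region $\{r \gtrsim \e^{2/3}\}$ (where $H(\rho)$ decays exponentially by Proposition 4.4). The reduction that drives everything is: $(\Phi_0, A_0)$ is an exact solution on $Y-\mathcal{Z}_0$ (the connection is flat and $\mu(\Phi_0,\Phi_0) = 0$), and on each fixed normal plane the 2D fiducial pair solves the dimensionally reduced equations exactly at leading order in $\Phi_0$ by Proposition 4.4. So the only sources of error in $E^{(0)}_\e$ are (a) tangential derivatives $\partial_t$ along $\mathcal{Z}_0$, (b) higher-order terms of $\Phi_0$ beyond the leading $r^{1/2}$ piece, (c) non-Euclidean metric corrections from Definition 3.7, and (d) the cutoff $\chi_\e$.

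For the Dirac contribution, I would first invoke the complex-gauge intertwining $\slashed{D}^\perp_{A^{h_\e}}\Phi^{h_\e} = \mathrm{diag}(e^{-h_\e},e^{h_\e})\,\slashed{D}^\perp_{A_0}\Phi_0$ on each normal plane, and then use the 3D Dirac equation $\slashed{D}_{A_0}\Phi_0=0$ to rewrite $\slashed{D}^\perp_{A_0}\Phi_0 = -\gamma(dt)\partial_t\Phi_0$. Combining with the $\gamma(dt)\partial_t\Phi^{h_\e}$ piece shows that $\slashed{D}_{A^{h_\e}}\Phi^{h_\e}$ reduces entirely to tangential-derivative terms of the form $(\partial_t h_\e) e^{\pm h_\e}\alpha$ and $(e^{h_\e}-e^{-h_\e})\partial_t \alpha$. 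The uniform bound $|\partial_t h_\e| = |\tfrac{2K'}{3K}\rho H'(\rho)|\leq C$ from the asymptotic behavior of $H$ in Proposition 4.4(iii), combined with $|e^{h_\e}r^{1/2}|\leq C\e^{1/3}$ in the inner region, controls these pointwise by $C\e^{-2/3}$ after dividing by $\e$. For the curvature equation, the key observation is that $\mu(\Phi_0,\Phi_0)=0$ forces $|\alpha_0|=|\beta_0|$, so $\mu_\R(\Phi^{h_\e}) = -i|\alpha_0|^2\sinh(2h_\e)$. The $dt$-component of the curvature error is then controlled by the higher-order difference $|\alpha_0|^2-|\alpha_0^{\text{lead}}|^2 = O(r^2)$ multiplied by $e^{2h_\e}/\e^2$, which is again $O(\e^{-2/3})$ in the inner region, while the $dx, dy$-components pick up $\partial_t A^{h_\e}$ terms which a direct calculation using $\partial_t(\rho H'(\rho))$ bounded shows are of the same scale.

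The $L^2$ integration then proceeds in the scale-invariant variable $\rho = (K(t)/\e)^{2/3}r$, under which $r\,dr = (\e/K)^{4/3}\rho\,d\rho$. In the inner region the uniform $O(\e^{-2/3})$ pointwise bound integrates against this volume form to $\e^{-4/3}\cdot \e^{4/3} = O(1)$. In the outer region, Proposition 4.4(ii) and (iv) give $h_\e, \partial_t h_\e, \partial_r h_\e$ all decaying like $\text{Exp}(-c\rho^{3/2})$ up to polynomial prefactors in $\e^{-2/3}$; since $(\Phi_0, A_0)$ is an exact solution off $\mathcal{Z}_0$, the deviation $(\Phi^{h_\e}-\Phi_0, A^{h_\e}-A_0)$ carries this full exponential decay, which yields the stated pointwise estimate $|E^{(0)}_\e|\leq C\e^{-2}\text{Exp}(-cr^{3/2}/\e)$ and an exponentially small $L^2$ tail. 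The cutoff $\chi_\e$ is located at $r\sim \lambda=\e^{1/2}/2\gg \e^{2/3}$, deep inside this exponentially small regime, and the metric perturbations from Definition 3.7 introduce only $O(r)$ and $O(r^2)$ corrections to the Euclidean model, both subordinate to the dominant contributions. The main technical obstacle will be the inner-region bookkeeping, where several independent contributions (tangential Dirac, $\partial_t A^{h_\e}$, higher-order moment-map error, and metric correction) all scale pointwise as $\e^{-2/3}$, and one must verify that none of them picks up an additional $\e^{-1}$ factor when integrated. The slack $\e^{-\gamma}$ in the final bound is built in to absorb any sub-polynomial loss coming from the $O(r^2)$ subleading spinor terms and the metric expansion.
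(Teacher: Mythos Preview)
Your proposal is correct and follows essentially the same strategy as the paper: the same inner/outer split at scale $r\sim\e^{2/3}$, the same reduction to tangential and higher-order contributions via exactness of the 2D fiducial at leading order, and the same rescaling to the invariant coordinate $\rho$ to see that the $L^2$ integral is $O(1)$ up to an $\e^{-\gamma}$ loss. Your use of the complex-gauge intertwining $\slashed D^\C_{A^{h_\e}}\Phi^{h_\e}=\mathrm{diag}(e^{-h_\e},e^{h_\e})\slashed D^\C_{A_0}\Phi_0$ combined with the full 3D equation $\slashed D_{A_0}\Phi_0=0$ is a mild streamlining over the paper, which instead splits off the leading-order part of $\Phi_0$ (on which the 2D Dirac vanishes) and then bounds $\slashed D^\C_{A^{h_\e}}(\Phi^{h_\e})^{h.o.}$ directly; your route handles the higher-order terms automatically at the cost of carrying the metric-correction terms $\mathfrak d_1,\mathfrak d_0$ from Lemma~3.8 through the identity, which you correctly defer to the end.
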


\begin{rem}
One should think of, say, $\gamma=10^{-6}$. The purpose of this small constant is so that in the region where $r\geq \e^{2/3-\gamma}$, the difference of the de-singularized configurations from the limiting $\Z_2$-harmonic spinor $(\Phi_0, A_0)$ is exponentially small. It could just as easily be a power of $\log(\e)$, and is in fact probably not necessary at all, but the proof is more intricate.  
\end{rem}
\begin{proof}
The second statement is immediate from the exponential decay properties in the second bullet point of (\refeq{desingularizedproperties}) and the fact that $(\Phi_0, A_0)$ solves the Seiberg-Witten equations in the region $r\geq c_0 \e^{2/3}$. 

Write $E_\e^{(0)}= (E_\e', E_\e'')$ for the spinor and form components of the error respectively, so that 

\bea
\slashed D_{A^{h_\e}} \tfrac{\Phi^{h_\e}}{\e} &=& E_\e' \\ 
\star F_{A^{h_\e}} + \frac{\mu(\Phi^{h_\e}, \Phi^{h_\e})}{\e^2}&=& E_\e''.
\eea

\noindent We calculate the error in two regions, the ``interior'' region $\text{Int}=\{r\leq \e^{2/3-\gamma'}\}$ where $\gamma'=\gamma/10$ for $\gamma$ as in the statement of the lemma, and the ``exterior'' region  $Y-\text{Int}$ where $r\geq \e^{2/3-\gamma'}$, so that $$\|E_{\e}^{(0)}\|_{L^2}\leq \|E_{\e}^{(0)}\|_{L^2(\text{Int})}+ \|E_{\e}^{(0)}\|_{L^2(Y-\text{Int})}.$$

The boundedness in the exterior region is immediate given the exponential decay from the first sentence of the proof. For the interior region, the triangle inequality implies that: 

\bea
\|E_\e'\|_{L^2(\text{Int})} & =& \|\slashed D_{A^{h_\e}}\tfrac{\Phi^{h_\e}}{\e}\|_{L^2(\text{Int})}\\ &\leq & \| \slashed D^0_{A^{h_\e}}\tfrac{\Phi^{h_\e}}{\e}\|_{L^2(\text{Int})}+  \| (\slashed D-\slashed D^0)\tfrac{\Phi^{h_\e}}{\e}\|_{L^2(\text{Int})}+\| \text{cl}(B_0)\tfrac{\Phi^{h_\e}}{\e}\|_{L^2(\text{Int})} +  \| (\text{cl}-\text{cl}^0)A^{h_\e}\tfrac{\Phi^{h_\e}}{\e}\|_{L^2(\text{Int})} \\ 
\|E_\e''\|_{L^2(\text{Int})} &=& \| \star F_{A^{h_\e}} + \tfrac{\mu(\Phi^{h_\e}, \Phi^{h_\e})}{\e^2}\|_{L^2(\text{Int})}  \\ & \leq & \| \star_0 F_{A^{h_\e}} + \tfrac{\mu^0(\Phi^{h_\e}, \Phi^{h_\e})}{\e^2}\|_{L^2(\text{Int})}+   \| (\star-\star_0) F_{A^{h_\e}} \|_{L^2(\text{Int})} +\|  \tfrac{(\mu-\mu^0)(\Phi^{h_\e}, \Phi^{h_\e})}{\e^2}\|_{L^2(\text{Int})}
\eea
\noindent where $\slashed D^0, \star_0, \mu^0, \text{cl}^0$ denote the Dirac operator, hodge star, moment map, and Clifford multiplication in the product metric.

Investing the first term, recall that by definition of the de-singularized solutions these solve the leading order term in the product metric for the 2-dimensional Dirac operator, hence  

\bea
 \| \slashed D^0_{A^{h_\e}}\tfrac{\Phi^{h_\e}}{\e}\|_{L^2(\text{Int})} &\leq & \|\sigma_t \del_t \tfrac{(\Phi^{h_\e})}{\e}\|_{L^2(\text{Int})}+  \|(\slashed D^0_{A^{h_\e}})^\C \tfrac{(\Phi^{h_\e})^{h.o.}}{\e}\|_{L^2(\text{Int})}\\\
 \eea
 where $(\Phi^{h_\e})^{h.o.}$ denotes the desingularization of the $O(r^{3/2})$ terms, and $\slashed D^\C$ denotes the two-dimensional Dirac operator (\refeq{1stSWdr}).  Since there are bounds $|\Phi|, |\del_t \Phi_0|< C r^{1/2}$,  the first of these terms is bounded by 
 \bea 
  &\lesssim & \frac{1}{\e}\left(\int_{S^1}\left(\int  (|\del_t e^{H(\rho_t)}| + |\del_t e^{-H(\rho_t)}|)^2 |\Phi_0|^2  + (e^{H(\rho_t)}+ e^{-H(\rho_t)})^2  |\del_t \Phi_0|^2 ) r dr d\theta \right) dt \right)^{1/2}\\
 &\lesssim & \frac{1}{\e}\left(\int_{S^1}\left(\int  (|\del_t e^{H(\rho_t)}| + |\del_t e^{-H(\rho_t)}|)^2r  + (e^{H(\rho_t)}+ e^{-H(\rho_t)})^2  r ) r dr d\theta \right) dt \right)^{1/2}\\
 &\lesssim &  \frac{1}{\e}\left(\int_{S^1} \left(\frac{\e}{K}\right)^{2}\left(\int_{\rho_t\leq C\e^{-\gamma'}}  (e^{H(\rho_t)}+ e^{-H(\rho_t)})^2   \Big | \d{H}{\rho_t}\frac{2\dot K(t)}{3K(t)}\rho_t\Big |^2\rho_t^2 d\rho_t d\theta \right) dt \right)^{1/2}\\
 &\lesssim& \text{Const}
\eea
since the integrand is a bounded function of $\rho_t$ at the origin (by the same reasoning as in the proof of Proposition \ref{desingularizedproperties}) and decays exponentially as $\rho_t\to \infty$. For the second term, using complex gauge-invariance, one has 

\bea
&\lesssim&\frac{1}{\e}\left( \int_{S^1} \left(\int_{r \leq \e^{2/3-\gamma'}} | e^{H(\rho_t)} +e^{-H(\rho_t)}|^2 |\nabla_{A_0} (\Phi_0)^{h.o.}|^2 rdr d\theta \right)dt \right)^{1/2} \\
& \lesssim & \frac{1}{\e}\left( \int_{S^1} \left(\frac{\e}{K(t)}\right)^{2} \left(\int_{\rho_t\leq \e^{-\gamma'}} | e^{H(\rho_t)} +e^{-H(\rho_t)}|^2 \rho_t^2  d \rho_t d\theta \right)dt \right)^{1/2} \\
&\lesssim& C \e^{-\gamma}
\eea
since $|\nabla_{A_0}(\Phi_0)^{h.o.}|\leq C r^{1/2}$.

The other terms are similar using the fact that $g-g^0=O(r)$ where $g^0$ is the product metric: for each, one rescales to the $\rho_t$ coordinates, collects powers of $\e$ and observes that the rescaled integrand is a fixed integrable function of $\rho_t$. 
\end{proof}
\section{Re-scaling}

\label{section5}

To prove Theorem \ref{main} we must correct the de-singularized configurations to fiducial solutions on a tubular neighborhood $N_{\lambda(\e)}(\mathcal Z_0)$ we must solve the non-linear equation  \be (\mathcal L_{(\Phi^{h_\e}, A^{h_\e}, \e)} + Q)(\ph_\e, a_\e) = E^{(0)}_\e.\label{non-lineartosolve}\ee

\noindent The solvability of this equation---and therefore the conclusion of Theorem \ref{main}---follows from a standard application of the Implicit Function Theorem after showing a version of the following statement: 

\bigskip 

\begin{minipage}{5.8in}
{\it Theorem: In the proper context, the linearization of the Seiberg-Witten Equations at the de-singularized configurations $\mathcal L_{(\Phi^{h_\e}, A^{h_\e}, \e)}$ is invertible, and the norm of the inverse is suitably controlled as $\e\to 0$.} 
\end{minipage} 

\bigskip 

\noindent The precise version of this theorem, wherein the meaning of ``proper context'' and ``suitably controlled'' are clarified, is given in Theorem \ref{invertibleL} in Section 7. 

The remainder of the paper is devoted to the set-up and proof of Theorem \ref{invertibleL}. In the present section, we define weighted Sobolev spaces which provide the functional-analytic setting for the orem, and begin the study of the  linearization in the model case that the metric on $Y$ is a product near $\mathcal Z_0$.  

\subsection{Function Spaces} 
\label{subsection5.1}
Let $N_{\lambda}(\mathcal Z_0)$ be the tubular neighborhood of $\mathcal Z_0$ of radius $\lambda$. Eventually, $\lambda=\lambda(\e)$ will depend on the parameter $\e$ as in the statements of Theorem \ref{main}-\ref{mainc}. .

We now define a family of weighted Sobolev spaces, which naturally arise from the Weitzenb\"ock formula in Section \ref{section2.4}. To this end, let $R_\e$ denote a weight function given by \be R_\e= \sqrt{\kappa^2 \e^{4/3} + r^2}\label{Redef}\ee

\noindent where $r=\text{dist}(-, \mathcal Z_0)$ and  $$\kappa= \frac{1}{\underset{t\in S^1}{\min}  \  K(t)^{2/3}}$$
 on a tubular neighborhood $N_{\lambda_0}(\mathcal Z_0)$ for some $\e$-independent constant $\lambda_0$, and smoothing off so that $R_\e= \text{constant }$ outside this neighborhood. This weight function is approximately equal to $r$ for $r>O(\e^{2/3})$, and for $r\sim \e^{2/3}$ it levels off so that it is globally bounded below by a constant times $\e^{2/3}$.   In the invariant scale, this leveling off occurs at $\rho_t=O(1)$. Furthermore, taking the minimum over $S^1$ ensures the weights satisfy $$\left(\frac{\e}{K(t)}\right)^{4/3}\frac{1}{R_\e^2}\leq \frac{1}{{1+\rho_t^2}}$$ for every $t$.

Consider the norms on sections of $(\ph,a)\in \Gamma(S_E) \oplus (\Omega^0\oplus \Omega^1)(i\R)$ given by 

\begin{eqnarray}
\|(\ph,a)\|_{H^1_{\e}}&:=& \left(\int_{N_\lambda(\mathcal Z_0)} | \nabla \ph|^2 + |\nabla a|^2 + \frac{|\ph|^2}{R_\e^2}+ \frac{|\mu(\ph, \Phi^{h_\e})|^2}{\e^2} + \frac{|a|^2 |\Phi^{h_\e}|^2}{\e^2} \ dV \right)^{1/2}\label{H1norm}\\
\|(\ph,a)\|_{L^2}&:=& \left(\int_{N_\lambda(\mathcal Z_0)} |\ph|^2 + |a|^2 \ dV \right)^{1/2}\label{L^2norm}
\end{eqnarray}
 
\noindent in which $\nabla$ and $dV$ denote the connection formed from the Levi-Civita connection, the spin connection, and the connection $B$ on $E$, and $dV$ the Riemannian volume form.

We then define 

\begin{defn}
Let \bea H^1_{\e}(N_\lambda(\mathcal Z_0))&=& \{(\ph,a) \ | \ \|(\ph,a)\|_{H^1_{\e}(N_\lambda(\mathcal Z_0))}<\infty \} \\ 
L^2(N_\lambda(\mathcal Z_0))&=& \{(\ph,a) \ | \ \|(\ph,a)\|_{L^2(N_\lambda(\mathcal Z_0))}<\infty \} \eea 
denote the Hilbert spaces of sections on which the above norms are finite, equipped with the inner products arising from the polarizations of the respective norms (\refeq{H1norm}-\refeq{L^2norm}). When it is clear from the context, we will omit the reference to the domain $N_\lambda(\mathcal Z_0)$ from the notation. 
\end{defn}

Note that the operator $\mathcal L_{(\Phi^{h_\e}, A^{h_\e},\e)}$ is uniformly bounded (in $\e$) on these spaces. These norms are natural in the sense all but the middle term of the norm arise from the positive terms of the Weitzenb\"ock formula for $\mathcal L_{(\Phi^{h_\e}, A^{h_\e}, \e)}$ by omitting the cross term $\tfrac{1}{\e}\mathfrak B$. Since $\Phi^{h_\e}\sim \Phi_0$ outside a $\rho_t\sim 1$ neighborhood of $\mathcal Z_0$, the fourth term gives the $\ph^{\text{Im}}$ component a stronger weight that $\ph^\text{Re}$.  

We also have the following weighted versions of the above spaces: let $\nu \in \R$ be a real number. Then we define the $(\e,\nu)$-weighted norms by

\bea
\|(\ph,a)\|_{H^1_{\e,\nu}}&:=& \left(\int_{N_\lambda(\mathcal Z_0)} \left(| \nabla \ph|^2 + |\nabla a|^2 + \frac{|\ph|^2}{R_\e^2}+ \frac{|\mu(\ph, \Phi^{h_\e})|^2}{\e^2} + \frac{|a|^2 |\Phi^{h_\e}|^2}{\e^2}\right) R_\e^{2\nu} \ dV \right)^{1/2}\\
\|(\ph,a)\|_{L^2_\nu}&:=& \left(\int_{N_\lambda(\mathcal Z_0)}\left( |\ph|^2 + |a|^2\right) R_\e^{2\nu} \ dV \right)^{1/2}
\eea
and 

\begin{defn}
 \bea H^1_{\e,\nu}(N_\lambda(\mathcal Z_0))&=& \{(\ph,a) \ | \ \|(\ph,a)\|_{H^1_{\e,\nu}(N_\lambda(\mathcal Z_0))}<\infty \} \\ 
L^2_\nu(N_\lambda(\mathcal Z_0))&=& \{(\ph,a) \ | \ \|(\ph,a)\|_{L^2_\nu(N_\lambda(\mathcal Z_0))}<\infty \} \eea 
to be the spaces of sections on on which these norms are finite. 
\end{defn}

\begin{rem}
Since $N_\lambda(\mathcal Z_0)$ is compact, there is an equivalence of norms showing $H^1_\e= L^{1,2}(N_\lambda(\mathcal Z_0))$ for every $\e$, just not uniformly in $\e$. 
\end{rem}
\subsection{The Model Operator}
The operator $\mathcal L_{(\Phi^{h_\e}, A^{h_\e},\e)}$ can be treated as a small perturbation of the  operator in the case that the metric near $\mathcal Z_0$ is a product and $\Phi_0$ has only leading order terms. In this section we begin the study of this model case. The general case is deduced easily from this one in Section  \ref{genmetricssection}.

Thus assume from here until Section \ref{genmetricssection}, that for some $\e$-independent constant $\lambda_0$, the pair $(g,B)$ and $\Z_2$-harmonic spinor $(\Phi_0, A_0)$ are given by 

$$g= dt^2 + dx^2 + dy^2 \hspace{2cm} B_0=\text{d} \text{ is the product connection on $\mathbb H$}$$\noindent and $$\Phi_0=\begin{pmatrix} c(t)r^{1/2} \\ d(t)r^{1/2}e^{-\theta}\end{pmatrix}\otimes 1+ \begin{pmatrix}- \overline d(t)r^{1/2} \\ \overline c(t)r^{1/2}e^{-\theta}\end{pmatrix}\otimes j \hspace{2.4cm} A_0 = \frac{i}{2}d\theta$$

\noindent in the geodesic normal coordinates and trivialization of $S_E$ on $N_{\lambda_0}(\mathcal Z_0)$. Moreover, we assume that $\chi_\e(r)h_\e(r)=h_\e(r)$ on $N_{\lambda_0}(\mathcal Z_0)$.

\begin{defn}
The operator

$$\mathcal L^{h_\e}=\begin{pmatrix} \slashed D_{A^{h_\e}}  & \mathcal \gamma(\_)\tfrac{\Phi^{h_\e}}{\e} \\ \tfrac{\mu (\_, \Phi^{h_\e})}{\e} & \bold d\end{pmatrix} $$
defined using the above data is referred to as the  {\bf model Linearization} at the de-singularized configurations. It may be written as $$\mathcal L^{h_\e}= \sigma(dt) \del_t + \mathcal N_t$$
where $\sigma$ is the principal symbol and $\mathcal N_t$ is the {\bf Normal Operator } defined by

\begin{equation}
\mathcal N_t=\begin{pmatrix} \slashed D^\C_{A^{h_\e}}  & \mathcal \gamma(\_)\tfrac{\Phi^{h_\e}}{\e} \\ \tfrac{\mu(\_, \Phi^{h_\e})}{\e} & \bold d^\C\end{pmatrix}
\label{normaloperator}
\end{equation} 
where $\slashed D^\C, \bold d^\C$ are the operators on the normal disks $\{t\}\times D_{\lambda_{0}}$. Explicitly, writing $a=(a_0, a_tdt + a_x dx + a_ydy)$,  

$$\slashed D_{A^{h_\e}}^\C=\begin{pmatrix} 0 & -2\del_{A^{h_\e}} \\ 2\delbar_{A^{h_\e}} & 0 \end{pmatrix}  \hspace{1cm} \bold d^{\C}=\begin{pmatrix} 0 &0 & -d^{\star}  \\ 0 &0 & d \\  -d & -\star d &  0 
\end{pmatrix}\begin{pmatrix} a_0 \\ a_t  \\   a_xdx+ a_ydy \end{pmatrix}\label{dsigma}$$

\noindent where $d, \star$ now denote the operators on $D_{\lambda_0}$. 
\label{modeldef}
\end{defn}

Without changing notation, we continue to use $H^1_{\e,\nu}$ and $L^2_\nu$ to refer to the Hilbert spaces defined using the data in the model case. 

\subsection{Re-Scaling}

Since the $\e$-paramterized family of de-singularizing complex gauge transformations $h_\e(r)=H(\rho)$ for fixed $t$ are all dilations of a single $\e$-invariant function, the $\e$-parameterized family of normal operators $\mathcal N_t$ are likewise all scalings of a single $\e$-independent operator. In this subsection, we rescale the normal disks to $\mathcal Z_0$ to express $\mathcal N_t$ in terms of the scale-invariant coordinate $\rho_t$ of Definition \ref{scaleinvariantrho}. Throughout this subsection, we omit the $t$-dependence from the notation, and use $\widehat{\mathcal N}$ to denote the rescaled version of $\mathcal N$ and similarly for other structures.  

Let \be\bold r:= \left( \frac{K}{\e}\right)^{2/3}\lambda\label{rescaledradius}\ee

\noindent so that the scaling \bea \Upsilon_{\e}: D_\bold r &\to &D_\lambda \\ \rho &\mapsto&  r =\left(\frac{\e}{K}\right)^{2/3}\rho\eea
defines an isomorphism between the disk in the rescaled coordinate $\rho$ and that in the original coordinate $r$. We consider the re-scaled disk $D_\bold r$ equipped with polar coordinates $(\rho,\theta)$, with accompanying euclidean and complex coordinates $(x', y')$ and $(w, \overline w)$ respectively. It is considered with the Euclidean metric $(dx')^2 + (dy')^2$ (as opposed to the pullback of the Euclidean metric on $D_\lambda$). Since only the normal coordinates are scaled, the $dx, dy$ components of $a$ scale as $1$-forms, while the remaining components and spinor scale as functions. Explicitly, the pullback of forms and spinors from $D_\lambda$ are related by

\bea
\Upsilon^*_\e(dx)= \left(\frac{\e}{K}\right)^{2/3}dx' \  \hspace{1cm}\Upsilon^*_\e(\ph(r))&=& \ph(\rho(r)).  \\  \ \Upsilon^*_\e(dy)= \left(\frac{\e}{K}\right)^{2/3} dy'  \hspace{1cm} \Upsilon^*_\e(a_0(r))&=& a_0(\rho(r))\\
  \hspace{1cm} \Upsilon^*_\e(a_t(r))&=& a_t(\rho(r)).
\eea

\begin{defn} We define the {\bf Scale-invariant Configurations} by

\bea
\Phi^H:= \frac{1}{K}  \left(\frac{K}{\e}\right)^{1/3} \Upsilon_\e^* \Phi^{h_\e}  &=&\frac{1}{K}\left(\begin{pmatrix} e^{H}c \rho^{1/2} \\ e^{-H}d \rho^{1/2}e^{-i\theta}\end{pmatrix}\otimes 1+\begin{pmatrix} -e^{H}\overline d \rho^{1/2} \\ e^{-H}\overline c \rho^{1/2}e^{-i\theta}\end{pmatrix}\otimes j\right)\\
A^{H}= \Upsilon^*(A^{h_\e}) &=& \left(\frac{1}{4}+\frac{1}{2}\rho\del_\rho H(\rho)\right)\left(\frac{dw}{w}-\frac{d\overline w}{\overline w}\right), 
\eea

\noindent And the {\bf Scale-invariant Normal Operator} by 

\begin{equation}
\widehat {\mathcal N}_t:= \begin{pmatrix}\begin{pmatrix} 0 & -2\del_{A^H} \\ 2\delbar_{A^H}&0 \end{pmatrix} & \gamma(\_)\Phi^H \\ \mu(\_, \Phi^H) &
\bold d^\C \end{pmatrix}
\end{equation}
where $\del_{A^H}, \gamma, \mu, \bold d^\C$ are defined using the Euclidean metric in $(x',y')$ coordinates. 
\end{defn}

The rescaling $\Upsilon_\e$ extends to a map of sections, which is weighted to be a pointwise isometry:   
$$\overline \Upsilon_\e: \Gamma( D_\lambda;  S_E \oplus (\Lambda^0 \oplus \Lambda^0\oplus \Lambda^1)(i\R))\to \Gamma( D_\bold r;  S_E \oplus (\Lambda^0 \oplus \Lambda^0\oplus \Lambda^1)(i\R))$$
by $$\overline \Upsilon_\e(\ph, a_0, a_t, a_1):= \left(\Upsilon_\e^* \ph, \Upsilon_\e^* (a_0, a_t), \left( \frac{K}{\e}\right)^{2/3} \Upsilon^*_\e a_1\right)$$

\noindent so that, explicitly 
\bea
\ph(r) &\mapsto& \ph(\rho)\\
(a_0(r), a_t(r)) &\mapsto& (a_0(\rho), a_t(\rho) ) \\ a_x dx + a_y dy &\mapsto & a_x(\rho) dx' + a_y(\rho) dy'.
\eea
This map preserves the pointwise norms since $dx, dx'$ are unit norm in the Euclidean metrics on $D_\lambda, D_\bold r$ respectively. 
\begin{remark}
It's equivalent to use $\Upsilon_\e^*$ in place of $\overline \Upsilon_\e^*$ and define the Scale-invariant Normal Operator using the pullback metric in place of the Euclidean metric $g_\rho$. Re-scaling the 1-form components by hand, however, makes the operator manifestly $\e$-independent. 
\end{remark}
\begin{prop}
The Normal operator and the Scale-invariant Normal operator are related by 
$$\overline \Upsilon_\e\mathcal N_t(\ph, a)= \left(\frac{K(t)}{\e}\right)^{2/3} \widehat{\mathcal N}_t(\overline \Upsilon_\e(\ph,a)).$$
\label{scalingprop}
\end{prop}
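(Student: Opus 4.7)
The strategy is a direct entry-by-entry verification that the diffeomorphism $\Upsilon_\e$ combined with the $1$-form rescaling encoded in $\overline{\Upsilon}_\e$ intertwines $\mathcal N_t$ with $\widehat{\mathcal N}_t$ up to the overall scalar $(K/\e)^{2/3}$. The key computational inputs are (a) the chain rule $\del_x = (K/\e)^{2/3}\del_{x'}$, $\del_y = (K/\e)^{2/3}\del_{y'}$ for normal derivatives; (b) the scaling of $1$-forms $\Upsilon_\e^* dx = (\e/K)^{2/3}dx'$, etc., which is compensated by the factor $(K/\e)^{2/3}$ built into $\overline{\Upsilon}_\e$ on the $\Lambda^1$-component; (c) the identity $\Upsilon_\e^*(A^{h_\e}) = A^H$ arising because $dz/z = dw/w$ is scale-invariant, i.e.\ the one-form $f_\e(r)(dz/z - d\overline z/\overline z)$ pulls back to itself in primed coordinates; and (d) the defining relation $\Upsilon_\e^*(\Phi^{h_\e}/\e) = (K/\e)^{2/3}\Phi^H$, which follows from $\Phi^H = K^{-1}(K/\e)^{1/3}\Upsilon_\e^*\Phi^{h_\e}$.

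For the upper-left Dirac block, since $A^{h_\e}$ pulls back to $A^H$ and the operators $\del_{A^{h_\e}}$, $\delbar_{A^{h_\e}}$ pick up a factor $(K/\e)^{2/3}$ from the chain rule, one has $\Upsilon_\e^*\bigl(\slashed D^\C_{A^{h_\e}}\ph\bigr) = (K/\e)^{2/3}\slashed D^\C_{A^H}(\Upsilon_\e^*\ph)$, and both sides are spinors, so $\overline{\Upsilon}_\e$ acts trivially on the form degree. For the off-diagonal coupling blocks $\gamma(\_)\Phi^{h_\e}/\e$ and $\mu(\_,\Phi^{h_\e})/\e$, the Clifford multiplication and moment map are pointwise algebraic, so the factor $(K/\e)^{2/3}$ comes entirely from item~(d) above; one must only verify that the $1$-form rescaling in $\overline{\Upsilon}_\e$ on the input $a_1$ and output $\mu^1$ cancels correctly, which it does because $\gamma$ and $\mu$ pair $1$-forms with spinors in a way that is invariant under the simultaneous rescalings.

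The most fiddly block is $\bold d^\C$, but it reduces to three elementary scalings. For $d\colon \Omega^0\to \Omega^1$, the composition $\overline{\Upsilon}_\e\circ d$ gives $(K/\e)^{2/3}(\del_{x'}a_0\,dx' + \del_{y'}a_0\,dy')$ after combining the chain rule with the $(\e/K)^{2/3}$ from pulling back $dx,dy$ and the prefactor $(K/\e)^{2/3}$ from $\overline{\Upsilon}_\e$. For $d^\star\colon \Omega^1\to \Omega^0$ and $\star d\colon \Omega^1\to \Omega^0$, the output is a scalar so $\overline{\Upsilon}_\e$ contributes no extra rescaling, but the chain rule produces $(K/\e)^{2/3}$ while the $1$-form prefactor on the \emph{input} $a_1$ cancels the $(\e/K)^{2/3}$ coming from $\Upsilon_\e^* dx$, yielding the required $(K/\e)^{2/3}$ overall. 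Assembling the four blocks then gives the identity.

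No step should present a genuine obstacle; the only risk is a bookkeeping error in keeping track of which components of $(\ph,a_0,a_t,a_1)$ are rescaled by $\overline{\Upsilon}_\e$ and which are not. This will be managed by working one matrix entry at a time and always writing forms in the basis $dx',dy'$ (respectively $dx,dy$) rather than in the invariant $dz/z$ form, so that the powers of $(K/\e)^{2/3}$ are visible at every step.
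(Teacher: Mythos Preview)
Your proposal is correct and takes essentially the same approach as the paper: a direct entry-by-entry verification using the chain rule $\del_x = (K/\e)^{2/3}\del_{x'}$, the scale-invariance of $dz/z$, and the defining relation $\Phi^{h_\e}/\e = (K/\e)^{2/3}\Phi^H$. Your treatment of the $\bold d^\C$ block is in fact more explicit than the paper's, which handles the diagonal terms in a single sentence.
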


\begin{proof} Changing $\nabla_{r}$ derivatives to $\nabla_{\rho}$ one has $$\nabla_x= \left(\frac{K}{\e}\right)^{2/3}\nabla_{x'}$$
and similarly for $y$. Likewise, for the connection, Clifford multiplication by the unit forms $dw$ and $dz$ is identical in the two Euclidean metrics, and $$\frac{1}{z}= \left(\frac{K}{\e}\right)^{2/3} \frac{1}{w}$$ and identically for $\overline w$. Thus the diagonal terms scale as claimed. For the off-diagonal terms, one has 

\bea \frac{\Phi^{h_\e}}{\e}&=& \frac{1}{\e}\left(\begin{pmatrix} e^{H}c r^{1/2} \\ e^{-H}d r^{1/2}e^{-i\theta}\end{pmatrix}\otimes 1+\begin{pmatrix} -e^{H}\overline d r^{1/2} \\ e^{-H}\overline c r^{1/2}e^{-i\theta}\end{pmatrix}\otimes j\right) \\ & =& \frac{1}{\e} \left(\frac{\e}{K}\right)^{1/3}\left(\begin{pmatrix} e^{H}c \rho^{1/2} \\ e^{-H}d \rho^{1/2}e^{-i\theta}\end{pmatrix}\otimes 1+\begin{pmatrix} -e^{H}\overline d r^{1/2} \\ e^{-H}\overline c \rho^{1/2}e^{-i\theta}\end{pmatrix}\otimes j\right) \\
&=& \left(\frac{K}{\e}\right)^{2/3}\Phi^H \eea
by definition of $\Phi^H$, hence the off-diagonal terms scale in the same way.  
\end{proof}

\subsubsection*{Scale-Invariant Hilbert Spaces}

We also define a scale-invariant version of the Hilbert space $ H^1_{\e}$. Let $\bold r>>0$ continue denote the re-scaled radius as in (\refeq{rescaledradius}) (eventually, we will take $\bold r=O(\e^{-1/6}))$. Let $R=\sqrt{1+\rho^2}$ denote a weight function 

\begin{defn} The {\bf scale-invariant norm} on sections of $S_E\oplus (\Omega^0\oplus \Omega^1)$ on $D_\bold r$  is given by 
\medskip
\begin{equation}
\|(\ph, a)\|_{\widehat{ H}^1(D_\bold r)}:=\left( \int_{D_\bold r} |\nabla \ph|^2 +|\nabla a|^2 + \frac{|\ph|^2}{R^2}+ |\mu(\ph,\Phi^H)|^2 + |a|^2 |\Phi^H|^2  \ dV \right)^{1/2}
\label{Hhatnorm}
\end{equation}
and the {\bf scale-invariant $L^2$ norm} by 

\begin{equation}
\|(\ph, a)\|_{L^2(D_\bold r)}:=\left( \int_{D_\bold r} |\ph|^2 + |a|^2  \ dV \right)^{1/2}
\label{L2hatnorm}
\end{equation}
\medskip

\noindent where, $dV$ denotes the Euclidean volume form and $\nabla$ the product connection induced by the chosen trivialization defined using structures defined by the scale invariant coordinate $\rho$. As in the unscaled case, there is an equivalence of norms so that $H^1(D_\bold r)= L^{1,2}(D_\bold r)$ for every $\bold r$, just not uniformly so. 
\end{defn}

\medskip 
The scale-invariant norm is the two-dimensional and  scale-invariant version of the $\e$-weighted norm of $ H^1_{\e}$ in the following sense. The two-dimensional version of the $H^1_\e$-norm, i.e. assuming that all configurations and $\Phi_0$ are $t$-invariant, is given by the positive square root of 
	
	\begin{equation}\int_{D_\lambda}  |\nabla\ph|^2 + |\nabla a|^2 + \frac{|\ph|^2}{R_\e^2} + \frac{|\mu(\ph,\Phi^{h_\e})|^2}{\e^2} + \frac{|a|^2|\Phi^{h_\e}|^2}{\e^2} \ dV.\label{2dimhnorm}\end{equation} \noindent Then $\overline \Upsilon_\e$ provides an equivalence (of $t$-invariant configurations)

$$\|\overline \Upsilon_\e (\ph,a)\|_{\widehat { H}^1}\simeq \|(\ph,a)\|_{ { H}^1_\e}.$$
i.e. the two are bounded by a universal constant times the other once $\e<<1$. The equivalence is only not an isometry because $R_\e$ was defined in Equation (\refeq{Redef}) to be a minimum over $t$: if we had defined $R_\e$ to be a $t$-dependent weight then the above expression for the norms is an equality. 

The $L^2$ norm is not scale-invariant: instead one has $$\|\overline \Upsilon_\e(\ph,a)\|_{L^2}= \left(\tfrac{K(t)}{\e}\right)^{2/3} \|(\ph,a)\|_{L^2}. $$

\noindent Combining this with the relation 

$$ \overline{\Upsilon}_\e \mathcal N(\ph,a)=\left(\tfrac{K(t)}{\e}\right)^{2/3} \widehat{\mathcal N}(\overline\Upsilon_\e(\ph,a)) $$
from Proposition \ref{scalingprop}, we see the diagram 
\begin{center}
\tikzset{node distance=2.5cm, auto}
\begin{tikzpicture}
\node(A){$ H^1_\e(D_\lambda)$};
\node(B)[below of=A]{$\widehat{ H}^1(D_\bold r)$};
\node(C)[right of=A]{$L^2(D_\lambda, dV_r)$};
\node(D)[right of=B]{$L^2(D_\bold r, dV_\rho)$};
\draw[->] (A) to node {$\overline{\Upsilon}_\e$} (B);
\draw[->] (A) to node {$\mathcal N$} (C);
\draw[->] (B) to node {$\widehat{\mathcal N}$} (D);
\draw[->] (C) to node {$\overline{\Upsilon}_\e$} (D);
\label{diagramA}\end{tikzpicture}\end{center}
commutes. 

\section{The Normal Operator}
\label{section6}

\label{normaloperator}
This section analyzes the scale-invariant normal operator $\widehat{\mathcal N_t}$ as a boundary value problem on disks in $\R^2$. Explicitly, writing a pair $(\ph,a)$ as $\ph=(\alpha,\beta)$ and $a=(a_0, a_tdt + a_x d\hat x + a_y d\hat y)$, the scale invariant Normal operator is given by 

\begin{equation}\widehat {\mathcal N_t}(\alpha,\beta,a)= \begin{pmatrix} \begin{pmatrix} 0 & -2\del_{A^H} \\ 2\overline \del_{A^H} & 0 \end{pmatrix} & \gamma( \_ \ )\Phi^H  \\ \begin{pmatrix}\mu_\R( \_ , \Phi^H) \\ \mu_\C( \_ , \Phi^H)\end{pmatrix} & \begin{pmatrix} 0 &  -d^\star +d  \\  -d -\star d   & 0  \end{pmatrix}
\end{pmatrix} \begin{pmatrix}\alpha \\ \beta \\ a_0 + a_t dt \\ a_x d\widehat x + a_y d\widehat y \end{pmatrix}   \end{equation}
\smallskip

\noindent where all structures are defined using the Euclidean metric on $\R^2$.  
 The main result is the below Proposition~\ref{invertibleN}, which identifies the kernel of $\widehat{\mathcal N}$ and shows that on its orthogonal complement the inverse is bounded uniformly in the scale-invariant norms. The proof of Proposition \ref{invertibleN} requires several steps, and parts of the proof are somewhat subtle. 
 
The two key ingredients of the proof are a holomorphic description of $\widehat {\mathcal N}$ in the case that $a_0=a_t=0$, which gives control of the operator on disks of fixed radius, and the Weitzenb\"ock formula, which gives control of the operator for large radii. The subtlety of the proof lies in making these ideas work in congress. 
This Section is organized as follows. Section 6.1 provides some brief set-up and gives the precise statement for the properties of $\widehat{\mathcal N}$. Section 6.2 is devoted to a review of the relevant Fredholm theory for first order boundary value problems with Atiyah-Patodi-Singer boundary conditions and for polynomially weighted Sobolev spaces, and Section 6.3 gives the precise boundary conditions. Section 6.4 contains the proof in the case that $a_0=a_t=0$, and Section 6.5 completes the general case. Finally, in Section 6.6, different projection operators for the kernel of $\widehat{\mathcal N}$ are discussed to be used in the next section.

\subsection{Set-up}
\label{section6.1}

We will impose Atiyah-Patodi-Singer boundary conditions on pairs $(\ph,a)$ on $D_\bold r$ to make $\widehat{\mathcal N}$ a Fredholm operator. The boundary conditions are that pairs $(\ph,a)$ lie in the kernel of a certain boundary projection 

\begin{equation}
\widehat{\Pi}^{H^1}(D_\bold r): L^{1,2}(D_\bold r)\lre H^+\subseteq L^{1/2,2}(S^1_\bold r \ ; \ S_E \oplus (\Omega^0\oplus \Omega^1))
\end{equation}
where $H^+$ is a ``half-dimensional'' spectral subspace of boundary values. The precise definition of $\widehat{\Pi}^{H^1}$ will be given at the end of Section \ref{APSbdsection} in Definition \ref{NBV1}. Define the following Hilbert Spaces:  

\begin{eqnarray}
\widehat{H^1}(D_\bold r)&:=&\{(\ph,a) \ | \  \ \|(\ph,a)\|_{H^1(D_\bold r)}< \infty \ \text{ and } \ \widehat{\Pi}^{H^1} (\ph,a)=0\}\label{H^12ddef} \\
  L(D_\bold r) &:=& L^2 (D_\bold r \ ; \ S_E \oplus (\Omega^0\oplus \Omega^1))
\end{eqnarray}
 where the first is equipped with the inner product resulting arising from the polarization of the norm (\refeq{Hhatnorm}), and the latter with the standard $L^2$ product. We may now state the main result of Section 6.

\begin{prop}
The operator 
 \begin{equation}\widehat{\mathcal N_t}: \widehat{ H}^1(D_\bold r)\lre  L^2(D_\bold r)\end{equation}

\noindent is a bounded Fredholm operator of (real) Index 2. For $\bold r$ sufficiently large, it is surjective with a kernel of real dimension 2 and the inverse on the complement of the kernel is uniformly bounded. That is, 
 
\begin{equation}
\|(\ph,a)\|_{\widehat { H}^1} \leq C( \|\widehat{\mathcal N_t}(\ph,a)\|_{L^2}  + \|p^{\ker} (\ph,a)\|_2)
\end{equation}
\smallskip
\noindent holds for $C$ independent of $\bold r, t$ and $p^{\ker}$ is a projection operator to $\C$.  \begin{flushright}$\diamondsuit$\end{flushright}
\label{invertibleN}
\end{prop}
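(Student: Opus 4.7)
The plan is to combine four ingredients: standard APS Fredholm theory for the first-order elliptic boundary value problem, a holomorphic reformulation of $\widehat{\mathcal N}$ that makes a canonical two-dimensional kernel manifest, the Weitzenböck formula from Corollary~\ref{L2Weitzenbock} applied to the Euclidean model, and a blowup/concentration-compactness argument to upgrade the pointwise-in-$\bold r$ invertibility to a uniform bound. Fredholmness itself is straightforward: $\widehat{\mathcal N}$ is a first-order elliptic operator on the compact disk $D_\bold r$ with smooth coefficients (after the rescaling of Section~\ref{section5} the coefficients are $\bold r$-independent away from the boundary), so once the APS boundary projection $\widehat\Pi^{H^1}$ reviewed in the upcoming Section~6.2 is in place, the operator $\widehat{\mathcal N}\colon \widehat H^1(D_\bold r)\to L^2(D_\bold r)$ is automatically bounded Fredholm.

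To identify the kernel I would exploit the complex gauge group action recalled in Section~\ref{section4.1}. The scale-invariant pair $(\Phi^H, A^H)$ was built by applying the complex gauge transformation $e^{H(\rho)}$ to the leading-order $\Z_2$-harmonic spinor, and the whole family of complex gauge transformations by constants $z\in\C$ preserves the dimensionally reduced holomorphic equations $\delbar_{A^H}\beta=0$, $\del_{A^H}\alpha=0$, $\mu_\C(\Phi,\Phi)=0$ identically. Differentiating this $\C$-action at $z=0$ and coupling in the matching change in $(a_0,a_t,a_1)$ produces an explicit real two-dimensional subspace of $\ker(\widehat{\mathcal N})$. To see that there is no other kernel and that $\operatorname{coker}(\widehat{\mathcal N})=0$, I would apply the $L^2$ Weitzenböck formula of Corollary~\ref{L2Weitzenbock}: because $(\Phi^H,A^H)$ is an \emph{exact} solution of the 2D Seiberg-Witten equations in the Euclidean metric, the cross term $\mathfrak B$ from Proposition~\ref{weitzenbock} simplifies dramatically (the terms proportional to $\slashed D\Phi^H$ vanish and $\nabla_{A^H}\Phi^H$ contributes only through boundary integrals). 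Combined with the coercive terms $|\mu(\ph,\Phi^H)|^2+|a|^2|\Phi^H|^2$, this forces any kernel element to be purely in the complex-gauge direction. A deformation argument (shrinking the off-diagonal coupling $\Phi^H$ to $0$ while preserving ellipticity of the boundary symbol) shows the real index equals $2$, so the cokernel must be trivial once $\bold r$ is large enough that the two obvious kernel elements actually lie in $\widehat H^1$.

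The main obstacle is the uniform bound in $\bold r$, and I would handle it by contradiction. Suppose there were a sequence $\bold r_n\to\infty$ and $(\ph_n,a_n)\in \widehat H^1(D_{\bold r_n})$ with $\|(\ph_n,a_n)\|_{\widehat H^1}=1$, with $p^{\ker}(\ph_n,a_n)=0$, and with $\widehat{\mathcal N}(\ph_n,a_n)\to 0$ in $L^2$. Applying the Weitzenböck identity together with the fact that $|\Phi^H|^2\sim K^{-2}\rho$ for large $\rho$ (since $H(\rho)$ decays faster than exponentially by Proposition~\ref{Hrhoproperties}) shows that the sequence cannot concentrate near $\rho=\bold r_n$: the terms $|a_n|^2|\Phi^H|^2$ and $|\mu(\ph_n,\Phi^H)|^2$ provide strong confinement. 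Passing to a subsequence and a weak limit $(\ph_\infty,a_\infty)\in \widehat H^1(\R^2)$ on compact sets, the limit satisfies $\widehat{\mathcal N}(\ph_\infty,a_\infty)=0$ on all of $\R^2$ with the appropriate decay at infinity. The global classification of kernel elements on $\R^2$ (again via Weitzenböck and the complex-gauge description) shows the $\R^2$-kernel is still the two-dimensional complex-gauge orbit; but $(\ph_\infty,a_\infty)$ inherits orthogonality to this orbit from the $p^{\ker}(\ph_n,a_n)=0$ condition, so $(\ph_\infty,a_\infty)\equiv 0$, and a careful accounting shows the norm must in fact decay to zero uniformly on compact sets, contradicting $\|(\ph_n,a_n)\|_{\widehat H^1}=1$ (after ruling out mass escape to infinity by the confinement estimate). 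This contradiction establishes the uniform bound, and the two-dimensional kernel $\simeq \C$ identified above is then the $p^{\ker}$ referenced in the statement.
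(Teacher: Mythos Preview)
Your identification of the kernel is incorrect. Differentiating the constant complex gauge action $e^{z}\cdot(\Phi^H,A^H)$ at $z=0$ gives the tangent vector $(z\alpha^H,-\bar z\beta^H,0,0)$: this annihilates $\slashed D^\C_{A^H}$ and $\mu_\C(-,\Phi^H)$, but $\mu_\R(z\cdot\Phi^H,\Phi^H)=-z|\Phi^H|^2\neq 0$, and since constant $z$ leaves $A^H$ fixed there is no ``matching change'' in the form components to cancel it. Worse, such tangent vectors grow like $|\Phi^H|\sim\rho^{1/2}$ and are not in $\widehat H^1$. The actual kernel (Lemma~\ref{ALemma} and Section~\ref{generalcaseN}) is spanned by $\beta_i^\circ+h_i\cdot\Phi^H$ where $\beta_i^\circ=(0,e^{-H}\rho^{-1/2})$ are Dirac-harmonic spinors \emph{decaying} like $\rho^{-1/2}$---not gauge directions---and $h_i$ is a correction solving $(-\Delta-|\Phi^H|^2)h_i=-\mu_\R(\beta_i^\circ,\Phi^H)$. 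Your Weitzenb\"ock claim that ``$\nabla_{A^H}\Phi^H$ contributes only through boundary integrals'' is also false: the term $-2a\cdot\nabla_{A^H}\Phi^H$ in $\mathfrak B$ is a bulk zeroth-order term, handled in Claim~\ref{crossterm} by a pointwise bound $|\nabla_{A^H}\Phi^H|^2\lesssim R^{-1}$ against the confining $|\Phi^H|^2\gtrsim R$.

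Your concentration-compactness argument for the uniform bound has a gap precisely where the difficulty lies. The confinement term $|\mu(\ph,\Phi^H)|^2$ vanishes on a half-dimensional subspace of spinors (Lemma~\ref{splittinglem}(C)); for those components only $|\ph|^2/R^2$ provides control, and since $R^{-2}\to 0$ this does not prevent mass escape to $\rho\to\infty$. After absorbing $\mathfrak B$, the Weitzenb\"ock identity yields only Lemma~\ref{hypothesisii}, which still has the uncontrolled $\|K(\ph,a)\|_{L^2}=\|\ph/R\|_{L^2}$ on the right; classifying the $\R^2$-kernel and ruling out escape both require controlling this term, which is circular in your scheme. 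The paper closes this not by compactness but by the complex-gauge slice decomposition $(\ph,\omega)=(\psi+h\cdot\Phi^H,2\delbar h)$ of Section~\ref{holomorphicinterpretation}, which converts $\widehat{\mathcal N}^\C$ to a lower-triangular operator with diagonal blocks $\slashed D^\C_{A^H}$ and $-\Delta-|\Phi^H|^2$. Uniform bounds on these blocks come from explicit polynomially-weighted Fredholm theory for $\del,\delbar$ on large disks (Proposition~\ref{delbarAHprop}, Lemma~\ref{CLemma}), and Lemma~\ref{hypothesisi} then bounds $\|\ph/R\|$ directly in terms of $\|\widehat{\mathcal N}^\C(\ph,a)\|+\|p(\psi)\|$ by solving for $h$ and tracking weights.
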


The presence of a non-trivial kernel merits explanation. Indeed, its appearance may, at first, be surprising since the Seiberg-Witten equations are self-adjoint in 3-dimensions, and thus in most contexts have index 0. However, since the boundary conditions imposed here are somewhat immaterial (when pasting the fiducial solutions onto a 3-manifold, they are cut off near the boundary), we are free to choose any boundary conditions we wish without affecting any eventual gluing construction, and we could have, of course, selected boundary conditions of index zero. The subtelty is that such a choice will {\it never} result in a uniform bound on the inverse.

This is an essential consequence of the geometry, and is the first manifestation of the convergence of the linearization to a non-Fredholm limit discussed in the introduction. The limiting operator in the normal planes $\slashed D_{A_0}^\C$ has two-dimensional kernel on $L^2$ which consists of elements that decay like $O(r^{-1/2})$ away from $\mathcal Z$. Since $\slashed D_{A^{h_\e}}\to \slashed D_{A_0}$ (in no precise sense, since the difference in not bounded in $L^2$), there is a two-dimensional space of configurations approaching this limiting kernel, which have similar asymptotics. But because these elements decay toward the boundary, they {\it cannot} be excluded by disallowing their boundary values; cuttong off these elements with $r^{-1/2}$ decay towards the boundary will necessarily lead to a violation of any uniform bound on the inverse. Thus we cannot use the naive index 0 boundary condition, and must instead allow boundary modes capturing these kernel elements, and project to their orthogonal complement in the correct norm. This problem becomes quite subtle in the 3-dimensional case, when the limiting operator has $\slashed D_{A_0}$ has an infinite-dimensional kernel in $L^2$ as is discussed in Section 7. The two dimensional kernel of $\widehat{\mathcal N}_t$ therefore plays an essential role in this and the following section. It is identified explicitly over the course of the proof.

\medskip 
Proposition \ref{invertibleN} combined with the relation \ref{diagramA} from Proposition \ref{scalingprop} immediately implies the following result for the un-rescaled operator $\mathcal N_t$: 

\begin{cor}
For every fixed $t_0\in \mathcal Z$, on smooth configurations $(\ph,a)$ satisfying the un-rescaled version of the boundary conditions $\Pi^{H_\e^1}(\ph,a)=0$,  the following estimate on the normal disk $\{t\}\times D_\lambda$ holds uniformly in $t,\e$. 
\bea  \|(\ph,a)\|_{H^1_\e(\{t\}\times D_\lambda)}\leq C  \|\mathcal N_{t} (\ph,a)\|_{L^2(\{t\}\times D_\lambda)}+ \|\pi^{\ker}_{t}(\ph,a)\|_{2}\eea 

\noindent Here, the left side denotes (\refeq{2dimhnorm}) (and does not include $\nabla_t$ terms) formed using the Euclidean norm.  \qed 
\end{cor}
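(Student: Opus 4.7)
\smallskip

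\noindent\textbf{Proof Proposal.} The plan is to deduce the corollary directly from Proposition~\ref{invertibleN} by transferring the estimate through the rescaling isometry $\overline{\Upsilon}_\e$. First I would apply Proposition~\ref{invertibleN} to the rescaled pair $(\widetilde\ph,\widetilde a) := \overline{\Upsilon}_\e(\ph,a)$ on the disk $D_\bold r$ with $\bold r = (K(t)/\e)^{2/3}\lambda$, obtaining
\[
\|\overline{\Upsilon}_\e(\ph,a)\|_{\widehat H^1(D_\bold r)} \;\leq\; C\,\|\widehat{\mathcal N}_t\,\overline{\Upsilon}_\e(\ph,a)\|_{L^2(D_\bold r)} \;+\; C\,\|p^{\ker}\overline{\Upsilon}_\e(\ph,a)\|_2 ,
\]
once one checks that the boundary conditions $\widehat{\Pi}^{H^1}(\widetilde\ph,\widetilde a)=0$ on the rescaled side correspond exactly to the hypothesis $\Pi^{H^1_\e}(\ph,a)=0$ on the un-rescaled side. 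This is essentially by construction: the projection $\Pi^{H^1_\e}$ was designed as the pullback of $\widehat\Pi^{H^1}$ under $\overline{\Upsilon}_\e$ (this is what ensures $\overline{\Upsilon}_\e$ restricts to a bijection between the two domains).

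Next, I would convert each of the three norms back to the unscaled disk using the relations established in Section~5.3. For the left-hand side, the discussion following the commutative diagram~\ref{diagramA} gives an equivalence $\|\overline{\Upsilon}_\e(\ph,a)\|_{\widehat H^1} \simeq \|(\ph,a)\|_{H^1_\e}$, uniform in $t$ and $\e$, because the weight $R_\e$ differs from the $t$-dependent weight by at most the universal constant $\min_t K(t)^{2/3}$, and $K(t)$ is bounded above and below by Assumption~\ref{assumption2}. For the first term on the right, Proposition~\ref{scalingprop} gives $\widehat{\mathcal N}_t\overline{\Upsilon}_\e(\ph,a) = (K(t)/\e)^{-2/3}\,\overline{\Upsilon}_\e\,\mathcal N_t(\ph,a)$, while the $L^2$ scaling $\|\overline{\Upsilon}_\e(\cdot)\|_{L^2}=(K(t)/\e)^{2/3}\|\cdot\|_{L^2}$ cancels the factor precisely, yielding $\|\widehat{\mathcal N}_t\overline{\Upsilon}_\e(\ph,a)\|_{L^2}=\|\mathcal N_t(\ph,a)\|_{L^2}$. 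Defining $\pi^{\ker}_t := \overline{\Upsilon}_\e^{-1}\circ p^{\ker}\circ \overline{\Upsilon}_\e$ and noting that $p^{\ker}$ lands in a fixed finite-dimensional ($\cong \C$) subspace on which any two norms differ by at most a universal constant, the kernel term absorbs cleanly into $\|\pi^{\ker}_t(\ph,a)\|_2$.

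Combining these three steps yields the claimed inequality. The only point that requires any care is the uniformity in $t$ and $\e$: this follows because (i) $K(t)$ is pinched between positive constants by Assumption~\ref{assumption2}, (ii) the constant $C$ in Proposition~\ref{invertibleN} is independent of $\bold r$ provided $\bold r$ is large, which holds here since $\bold r = (K(t)/\e)^{2/3}\lambda\gg 1$ for $\e$ small, and (iii) the equivalences of norms used above involve only universal constants. There is no genuinely hard step, since the rescaling was designed precisely to reduce this $\e$-dependent estimate to the single $\e$-independent Proposition~\ref{invertibleN}.
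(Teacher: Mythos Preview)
Your proposal is correct and follows exactly the route the paper indicates: the corollary is stated with a \qed\ immediately after noting that it follows from Proposition~\ref{invertibleN} together with the scaling relation in Proposition~\ref{scalingprop} and the commutative diagram~(\ref{diagramA}). Your write-up simply fills in the details of that transfer, and the cancellation of the $(K(t)/\e)^{2/3}$ factors between $\widehat{\mathcal N}_t$ and the $L^2$-scaling is exactly the mechanism intended. One cosmetic remark: your definition $\pi^{\ker}_t := \overline\Upsilon_\e^{-1}\circ p^{\ker}\circ\overline\Upsilon_\e$ does not type-check, since $p^{\ker}$ already lands in $\C$; what you mean (and what the paper means) is simply $\pi^{\ker}_t := p^{\ker}\circ\overline\Upsilon_\e$, so that $\|\pi^{\ker}_t(\ph,a)\|_2 = \|p^{\ker}\overline\Upsilon_\e(\ph,a)\|_2$ with no rescaling needed on the finite-dimensional target.
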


\medskip 

The remainder of Section 6 is devoted to the proof of Proposition \ref{invertibleN}. Before beginning the proof,   it is convenient place the form components $( a_xd\widehat x+ a_yd \widehat y \ , \ a_0 + a_tdt)$ in a holomorphic context. There are isomorphisms 
\begin{eqnarray} 
 \Omega^{1}(i\R)&\overset{(1)}\to &\Omega^{0,1}(\C) \hspace{2.75cm}  \Omega^0\oplus \Omega^0(i\R)\overset{(3)} \to \Omega^{1,0}(\C)\\ 
i(a_x d\widehat x+ a_y d\widehat y)& \mapsto &   (a_y - ia_x)d\overline w\hspace{2cm} i(a_0+a_tdt)\mapsto (a_0+ia_t)dw \label{isomorphisms}\end{eqnarray}
on the domain, and 
\bea  
 \Omega^{0}\oplus \Omega^2(i\R)&\overset{(2)}\to &\Omega^{1,1}(\C) \hspace{3.7cm}  \Omega^1(i\R) \overset{(4)}\to \Omega^{1,1}(\C)\\ 
 (ih_2, ih_2 d\widehat x\wedge d\widehat y)& \mapsto &  (h_1-ih_2)dw\wedge d\overline w\hspace{.9cm} i(pd\widehat x + q\widehat dy)\mapsto (p+iq)dw\wedge d\overline w \eea
on the codomain. Setting 
\bea
\zeta&:=& (a_0+ia_t)dw \\
\omega&:=& (a_y -ia_x)d\overline w
\eea
 $\widehat{\mathcal N}_t$ may be considered as an operator $\Gamma( \C^2 \oplus \C^2 \oplus \Omega^{1,0}\oplus \Omega^{0,1}) \to \Gamma(\C^2 \oplus \C^2 \oplus \Omega^{1,1}\oplus \Omega^{1,1})$ 
now given by
$$\widehat {\mathcal N}_t(\alpha,\beta,\zeta  ,\omega )= \begin{pmatrix} \begin{pmatrix} 0 & -2\del_{A^H} \\ 2\overline \del_{A^H} &0 \end{pmatrix} & \gamma( \ \ )\Phi^H  \\ \begin{pmatrix}\mu_\R( \ , \Phi^H) \\ \mu_\C( \ , \Phi^H)\end{pmatrix} & \begin{pmatrix} 0 & 2\del  \\  -2\delbar & 0  \end{pmatrix}
\end{pmatrix} \begin{pmatrix}\alpha \\ \beta \\ \zeta \\ \omega \end{pmatrix}   $$

\noindent where Clifford multiplication becomes \begin{equation}\gamma(p dw,q d\overline w)= \begin{pmatrix} ip & -\overline q \\ -q &  i\overline p \end{pmatrix}\label{cliffordmult}\end{equation} and the moment map on $\psi=(\alpha,\beta)$ is \begin{eqnarray} \mu_\R(\psi,\Phi^H)&=& -\alpha_1\overline{\alpha}_1^H + \overline \beta_1 \beta_1^H -\alpha_2\overline{\alpha}_2^H + \overline \beta_2 \beta_2^H  \\ \mu_\C(\psi, \Phi^H)&=& -\overline \alpha_1 \beta^H_1 - \beta_1 \overline {\alpha}_1^H -\overline \alpha_2 \beta^H_2 - \beta_2 \overline {\alpha}_2^H \label{mumaps}  \end{eqnarray} its adjoint as before. We view $\widehat{\mathcal N}_t$ in this guise for the remainder of Section 6. We also leave the $t$-dependence implicit for the remainder of the section.

\subsection{Fredholm Theory}
\label{section6.2}
\label{APSbdsection}
This subsection discusses Fredholm theory for Dirac operators in two different contexts: 1) as boundary-value problems with Atiyah-Patodi-Singer boundary conditions, and 2) on non-compact domains with polynomially weighted Sobolev spaces. The proof of Proposition \ref{invertibleN} will require both of these perspectives, as the weighted spaces are needed for estimates to be uniform in the radius $\bold r$. Since it suffices for our purposes, the discussion here is limited to the relevant cases of the operators $\del,\delbar$ on the disk; the reader is referred to \cite{KM, MashallThesis,Toms155Notes} for more general discussions.

\subsubsection*{APS Boundary Conditions}
First, we consider boundary-value problems for $\del,\delbar$. Let $D\subseteq \C$ denote the unit disk, and $L^{k,2}(D; \C)$ the standard Sobolev spaces of complex-valued functions. The continuous restriction or trace map $$\Tr: L^{k,2}(D;\C)\to L^{k-1/2, 2}(\del D; \C)$$ 
gives functions well-defined boundary values for $k\geq 1$. Within the space of boundary values for $k=1$, we have the half-dimensional subspaces  
\bea
H^+_{[m]} &= & \{ u\in L^{1/2,2}  \ | \ u=\sum_{k \geq m} a_k e^{ik \theta} \} \subseteq L^{1/2,2}(\del D, \C)\\
H^-_{[m]} &= & \{ u\in L^{1/2,2}  \ | \ u=\sum_{k \leq m} a_k e^{ik \theta} \} \subseteq L^{1/2,2}(\del D, \C) 
\eea

\noindent of functions whose Fourier series have non-vanishing components only on the positive and negative sides of $m\in \Z$ (inclusive) respectively. We denote the projections to these spaces  by \be\Pi_{[m]}^\pm: L^{1/2,2}(\del D; \C)\to H^{\pm}_{[m]}\label{projections}\ee respectively. 

Now consider $\delbar$ on $D$. Its (infinite dimensional) kernel consists of holomorphic functions on the disk, whose boundary values lie in $H^+_{[0]}$. The following two propositions are standard results, whose proofs can be found in \cite[Pg. 85]{Toms155Notes}.

 \begin{prop}
The operator 

\begin{equation}
(\delbar, \Pi^+_{[0]}): L^{1,2}(D;\C)\to L^{2}(D;\C)\oplus H^{+}_{[0]}
\end{equation}
is invertible, and, {\it a fortiori}, Fredholm of Index 0.
\end{prop}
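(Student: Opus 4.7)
The plan is to reduce to Fourier analysis on concentric circles, where $\delbar$ decouples into a sequence of one-dimensional ODEs indexed by the Fourier mode. Write $u(r,\theta) = \sum_{k\in\Z} u_k(r) e^{ik\theta}$ and $f(r,\theta) = \sum_{k\in\Z} f_k(r) e^{ik\theta}$. Using the polar expression $\delbar = \tfrac12 e^{i\theta}(\del_r + \tfrac{i}{r}\del_\theta)$ recalled in \eqref{delbarpolar}, the equation $\delbar u = f$ becomes, for each $k$,
\begin{equation}
u_k'(r) - \tfrac{k}{r}\, u_k(r) \;=\; 2 f_{k+1}(r),
\label{modeODE}
\end{equation}
while the boundary constraint $\Pi_{[0]}^+ (u|_{\del D}) = g$ reads $u_k(1) = g_k$ for $k\geq 0$ (with $g = \sum_{k\geq 0} g_k e^{ik\theta}$), and places no restriction on the modes $k<0$.

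First I would prove injectivity directly: if $\delbar u = 0$ and $\Pi^+_{[0]} (u|_{\del D}) = 0$, then $u$ is holomorphic on $D$, so $u_k(r) = c_k r^k$ for $k\geq 0$ and $u_k\equiv 0$ for $k<0$ by $L^{1,2}$-regularity at the origin. The boundary projection annihilates exactly the modes $k\geq 0$, giving $c_k = 0$ for all $k\geq 0$ and hence $u\equiv 0$. For surjectivity, given $(f,g)$ I would solve \eqref{modeODE} mode by mode: for $k<0$ take the unique solution regular at $r=0$, namely
\begin{equation}
u_k(r) \;=\; 2 r^{k}\int_0^r s^{-k} f_{k+1}(s)\,ds,
\end{equation}
and for $k\geq 0$ take this same integral plus the homogeneous term $c_k r^k$ with $c_k$ chosen so that $u_k(1) = g_k$.

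The remaining task is to show that the resulting $u$ lies in $L^{1,2}(D)$ with norm controlled by $\|f\|_{L^2} + \|g\|_{H^{1/2}(\del D)}$. For $k<0$ the factor $r^k$ in the integral representation is balanced by the measure, and a Hardy-type inequality on $[0,1]$ gives a mode-independent bound on $\|u_k\|_{L^{1,2}([0,1];r\,dr)}$ by $\|f_{k+1}\|_{L^2([0,1];r\,dr)}$. For $k\geq 0$ the same estimate applies to the particular solution, and the homogeneous piece $c_k r^k$ contributes $|c_k|^2$ to both $\|u_k\|^2_{L^2}$ and $\|u_k\|^2_{L^{1,2}}$ up to universal constants (since $\|r^k\|^2_{L^{1,2}} \sim k$ is comparable to $\|e^{ik\theta}\|^2_{H^{1/2}}$ for $k\geq 1$, and trivially for $k=0$). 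Summing over $k$ via Plancherel on $\del D$ and Fubini on $D$ yields the required estimate, so the inverse is bounded.

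The main obstacle will be the bookkeeping in the uniform mode-wise Hardy estimate, especially matching the $H^{1/2}$ norm on the boundary to the $L^{1,2}$ norm in the interior for the homogeneous solutions $r^k e^{ik\theta}$; once this is done, boundedness of the inverse follows automatically. Note that everything here is self-contained and does not require invoking the general APS theory, although one could equivalently appeal to the standard fact that $\Pi_{[0]}^+$ is the positive spectral projector of the tangential operator $-i\del_\theta$ (shifted by the symbol), and that APS projections give Fredholm boundary conditions for Dirac operators; the content of this proposition is the sharper statement that the index is $0$ and the kernel/cokernel are both trivial for this particular choice of cut-off.
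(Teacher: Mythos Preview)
The paper does not supply a proof of this proposition; it simply cites a reference and marks it \qed. Your Fourier-mode argument is the standard direct proof and is essentially correct. One minor caveat: for $k\geq 1$ the particular solution $2r^k\int_0^r s^{-k}f_{k+1}(s)\,ds$ need not converge at the origin for general $f_{k+1}\in L^2([0,1];r\,dr)$, so for those modes you should integrate from $r=1$ rather than $r=0$. This changes your formula only by a multiple of the homogeneous solution $r^k$, which is then absorbed into the constant $c_k$ you are already choosing to hit the boundary datum $g_k$. With that adjustment, the Hardy-inequality bookkeeping and the matching of $\|r^k e^{ik\theta}\|_{L^{1,2}}^2\sim k$ with the $H^{1/2}$ boundary norm go through as you describe.
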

\qed 

More generally, 
\begin{prop} The operator
 
\begin{equation}
(\delbar, \Pi^+_{[m]}): L^{1,2}(D;\C)\to L^{2}(D;\C)\oplus H^{+}_{[m]}
\label{delbarm}
\end{equation}
has 
\begin{itemize}
\item (if $m>0$) empty cokernel and kernel of dimension $m$ spanned by $\{1, z,\ldots, z^{m-1}\}$. 
\item (if $m<0$) empty kernel and cokernel of dimension $-m$ spanned by $ \{(0, e^{-i\theta}), \ldots, (0, e^{-im\theta})\}. $
\end{itemize}
\label{fredholmdelbar}
\end{prop}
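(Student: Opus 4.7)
The plan is to leverage the $m=0$ case treated in the preceding proposition together with a direct Fourier analysis in the angular variable. In polar coordinates $\bar\partial = \tfrac12 e^{i\theta}(\partial_r + \tfrac{i}{r}\partial_\theta)$, so decomposing $f = \sum_{k\in\mathbb Z} a_k(r) e^{ik\theta}$ decouples both $\bar\partial f$ and the boundary projection $\Pi^+_{[m]}$ into independent first-order problems indexed by $k$.

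For the kernel, the homogeneous equation $\bar\partial f = 0$ forces $a_k(r) = c_k r^k$, which lies in $L^{1,2}(D)$ precisely when $k \geq 0$, recovering the familiar holomorphic subspace. Imposing $\Pi^+_{[m]} f|_{\partial D} = 0$ then kills all modes $k \geq m$, leaving $\mathrm{span}\{1, z, \ldots, z^{m-1}\}$ when $m > 0$ and the trivial subspace when $m < 0$.

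For surjectivity and cokernel I would bootstrap from the $m=0$ isomorphism. Given $(g,h) \in L^2(D) \oplus H^+_{[m]}$, let $f$ be the unique solution of $\bar\partial f = g$ and $\Pi^+_{[0]} f|_{\partial D} = \Pi^+_{[0]} h$ from the preceding proposition. Then $\Pi^+_{[m]} f|_{\partial D}$ automatically matches $h$ on all Fourier modes $k \geq 0$, but can potentially differ on the finitely many modes with $m \leq k \leq -1$, which occur only when $m < 0$. Hence for $m > 0$ this $f$ already solves the problem and surjectivity follows; for $m < 0$ the image is precisely cut out by the $-m$ linear constraints that the modes $m \leq k \leq -1$ of $f|_{\partial D}$ match those of $h$, which gives the cokernel dimension.

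To exhibit the explicit cokernel basis when $m < 0$, set $g = 0$ and $h = e^{ik\theta}$ for $m \leq k \leq -1$. Since $\Pi^+_{[0]} h = 0$, the candidate preimage supplied by Proposition~6.5 is $f = 0$, so $\Pi^+_{[m]} f|_{\partial D} = 0 \neq h$; thus $(0, e^{ik\theta})$ cannot lie in the image. These $-m$ pairs are manifestly independent and match the cokernel dimension, so they span a complement to the image, yielding the claimed basis $\{(0, e^{-i\theta}), \ldots, (0, e^{-im\theta})\}$. The only technical subtlety is careful bookkeeping between the boundary Fourier modes at $r=1$ and the regularity condition at $r=0$, which is what drives the $k\geq 0$ versus $k<0$ dichotomy throughout the argument.
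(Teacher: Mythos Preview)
Your argument is correct. The paper does not actually supply a proof of this proposition; it states the result and cites an external reference (\cite{Toms155Notes}, page 85), so there is no ``paper's approach'' to compare against. Your strategy of bootstrapping from the $m=0$ isomorphism and tracking the finitely many Fourier modes $m \leq k \leq -1$ that are gained or lost is the standard and natural one, and the details you give are sound. One small remark: in verifying that the elements $(0,e^{ik\theta})$ for $m \le k \le -1$ span a complement to the image (not merely lie outside it), you implicitly use that a nontrivial linear combination $(0,\sum c_k e^{ik\theta})$ cannot be in the image, since any holomorphic preimage would have only nonnegative boundary modes; this is immediate but worth making explicit.
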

\qed

The corresponding statement holds for the anti-holomorphic case
\begin{equation}
(\del, \Pi^-_{[m]}): L^{1,2}(D;\C)\to L^{2}(D;\C)\oplus H^{-}_{[m]},
\label{delm}
\end{equation}
and for the Sobolev spaces $L^{k,2}$ for $k>1$.

Alternatively, one may consider restricting to the space of functions on which the boundary values are 0. Denote the kernel of the projection by $$L^{1,2}_{m,+}:= \{u\in L^{1,2} \ | \ \Pi^+_{[m]}(u)=0\}$$ and similarly for $L^{1,2}_{m,-}$. To keep the notation clear, the reader may find it helpful to read $L^{1,2}_{m,+}$ as ``$L^{1,2}$ functions whose restriction to the boundary has vanishing Fourier components on the $+$ side of $m$ (inclusive)''.   Proposition \ref{fredholmdelbar} becomes the following statement.

\begin{prop} The operator
\bea 
\delbar: L^{1,2}_{m,+}(D;\C)& \to& L^2(D;\C)\\
\eea
is Fredholm with
\begin{itemize}
\item (if $m\geq 0$) empty cokernel and kernel of dimension $m$ spanned by $\{1, z, \ldots, z^{m-1}\}$. 
\item (if $m<0$) empty kernel and cokernel of dimension $-m$ spanned by $\{1, \overline z,\ldots, \overline z^{m-1}\}$. 
\end{itemize}
and similarly for the anti-holomorphic case.
\label{invertibledelbar} 
\end{prop}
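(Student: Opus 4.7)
The statement is a direct reformulation of Proposition \ref{fredholmdelbar}, and my plan is to derive it as such. The restriction of the operator $T := (\delbar, \Pi^+_{[m]})$ from Proposition \ref{fredholmdelbar} to the closed subspace $L^{1,2}_{m,+}(D) = \ker(\Pi^+_{[m]})$ coincides with $\delbar: L^{1,2}_{m,+} \to L^2$ in its first component, the projection component vanishing identically on the restricted domain. Since $\Pi^+_{[m]}: L^{1,2}(D) \to H^+_{[m]}$ is a bounded surjection admitting a bounded right inverse (harmonic/Poisson extension of the boundary Fourier series into $D$), the inclusion $L^{1,2}_{m,+} \hookrightarrow L^{1,2}$ fits into a split short exact sequence, from which Fredholmness and the index of $\delbar|_{L^{1,2}_{m,+}}$ can be read off from those of $T$.

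For the kernel, an element $u \in \ker(\delbar|_{L^{1,2}_{m,+}})$ is holomorphic on $D$ with boundary trace in $H^-_{[m-1]}$. As the boundary trace of an $L^{1,2}$-holomorphic function lies in $H^+_{[0]}$, one needs $u|_{\partial D} \in H^+_{[0]} \cap H^-_{[m-1]}$. When $m \geq 0$ this intersection is spanned by $\{1, e^{i\theta}, \ldots, e^{i(m-1)\theta}\}$, whose holomorphic extensions to $D$ are precisely $\{1, z, \ldots, z^{m-1}\}$; when $m < 0$ the intersection is trivial, so the kernel vanishes. For the cokernel when $m \geq 0$, surjectivity is immediate from Proposition \ref{fredholmdelbar}: any $f \in L^2$ is of the form $\delbar u$ for some $u \in L^{1,2}$ with $\Pi^+_{[m]}(u|_{\partial D}) = 0$, placing $u$ in $L^{1,2}_{m,+}$. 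For $m < 0$, I would compute the cokernel by $L^2$-duality: vanishing of $\int_D f\, \overline{\delbar u}\, dA$ for all $u \in L^{1,2}_{m,+}$ forces $\del f = 0$ (so $f$ is anti-holomorphic on $D$), together with a boundary constraint $\oint_{\partial D} f\, \overline{u}\, d\overline{z} = 0$; Fourier analysis of this constraint against the admissible modes $e^{ik\theta}$ with $k < m$ for $u|_{\partial D}$ leaves precisely $-m$ free modes, corresponding to the anti-holomorphic monomials of degree $0$ through $-m-1$.

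The anti-holomorphic statement for $\del: L^{1,2}_{m,-}(D) \to L^2$ follows by the parallel argument obtained via complex conjugation, which interchanges $\del \leftrightarrow \delbar$ and swaps $H^+_{[m]}$ with $H^-_{[-m]}$. I do not anticipate substantive obstacles: the entire Fredholm input is already packaged in Proposition \ref{fredholmdelbar}, and the only mildly technical step is the Fourier/integration-by-parts bookkeeping identifying the explicit cokernel basis when $m<0$. This is routine, and in any event the dimension count alone suffices for all subsequent uses of this proposition in the paper.
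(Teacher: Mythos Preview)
Your proposal is correct and follows essentially the same approach as the paper: the kernel is read off directly from Proposition~\ref{fredholmdelbar}, surjectivity for $m\geq 0$ is immediate, and the cokernel for $m<0$ is identified via the $L^2$-pairing and integration by parts, yielding $\del\ph=0$ together with a Fourier boundary constraint that leaves exactly $-m$ anti-holomorphic monomials. The only cosmetic difference is that the paper packages the boundary bookkeeping as the condition $\Pi^-_{[m]}(\ph)=0$ (noting the $e^{-i\theta}$ shift in the boundary integral) and then appeals to the kernel statement for $\del$, whereas you describe the same mode-counting more informally.
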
 

\begin{proof}
The statement about the kernels follows directly from Proposition \ref{fredholmdelbar}. To see the cokernel is as stated, let $m<0$ and $\ph \in \text{coker}(\delbar)$. Then intergration by parts shows that $\forall u\in L^{1,2}_{m,+}$. 
\bea
0&=& \int_{D} \br \delbar u , \ph \kt dz\wedge d\overline z = -\int_{D} \br u,\del \ph \kt dz\wedge d\overline z - \int_{\del D} \br u, \ph\kt re^{i\theta}d\theta.
\eea
Varying $u$ over compactly supported functions, we see $\del \ph =0$ on the interior of $D$. On the boundary, $$u|_{\del D}=....+u_{m-2} e^{i(m-2)\theta}+ u_{m-1}e^{i(m-1)\theta}$$ 
hence varying $u$ over functions with such boundary values shows that $\ph$ satisfies 
$$
\begin{cases}
\del \ph=0 \\
\Pi^-_{[m]}(\ph)=0.\
\end{cases}$$
Note the $+1$ shift in the boundary values resulting from the $e^{i\theta}$ factor in the boundary integral. The form of the cokernel then follows from the statement about the kernel for $\del$ with the above boundary conditions. 
\end{proof}

\subsubsection*{Polynomial Weights}

When considered on all of $\R^2$, the operators $\del,\delbar: L^{1,2}(\R^2)\to L^2(\R^2)$ are not Fredholm, as they have dense spectrum at zero, and therefore fail to have closed range. The same phenomenon prevents the inverse on finite disks from being uniformly bounded in the size of the disk. In order to get a Fredholm problem on the entire plane, one must use polynomially weighted spaces. These same weights make the required estimates uniform in the radius of the disk. Here again, we content ourselves with an exposition within the scope of our purposes. The general theory is that of elliptic operators on manifolds with cylindrical ends, which can be found in \cite{KM} (Chapter 17), or \cite{LockhartMcOwen,MashallThesis}. 

Let $R: \C \to \R^{\geq 0}$ be a positive monotonically increasing weight function equal to 1 near the origin and equal to $r$ far from the origin. We define weighted norms 

\bea
\|u\|_{L^{k,2}_{\nu}} &:=&\left(\int_{\R^2}\left(R^{2k}|\nabla^k  u|^2 + \ldots  + |u|^2\right) R^{2\nu} \ dV\right)^{1/2}
\eea
and 
\begin{defn}
The {\bf Polynomially Weighted Sobolev Spaces} $$L^{k,2}_{\nu}(\R^2)= \{ u \ | \ \|u\|_{L^{k,2}_\nu}<\infty\}$$
to be the completion of compactly supported smooth functions with respect to these norms. 
\end{defn}

 It is easy to check, using $0\leq \d{R}{r} \leq 1$ that:  

\begin{lm}
The map $$f\mapsto R^{-\mu + \nu} f$$
is an isomorphism $$L^{k,2}_{\nu}(\R^2) \to L^{k,2}_{\mu}(\R^2)$$inducing an equivalence of norms. 
\label{changingweights}
\end{lm}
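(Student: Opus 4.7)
The plan is to verify the norm equivalence term by term, using the explicit form of the weight $R$. Multiplication by $R^{\mu-\nu}$ gives a manifest two-sided inverse to the map $f \mapsto R^{-\mu+\nu}f$, so the content of the lemma is the bound
\[
C^{-1}\|f\|_{L^{k,2}_\nu} \;\leq\; \|R^{-\mu+\nu}f\|_{L^{k,2}_\mu} \;\leq\; C\|f\|_{L^{k,2}_\nu}
\]
for a constant $C=C(k,\mu,\nu)$ independent of $f$. Since $R$ is smooth, positive, and satisfies $|\nabla R|\leq 1$ everywhere with $R\geq 1$, the first step is to establish the pointwise derivative bound
\[
|\nabla^{j} R^{s}| \;\leq\; C_{j,s}\, R^{s-j} \qquad \text{for all } j\geq 0, \ s\in\R,
\]
by induction on $j$. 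The base case is immediate, and the inductive step follows since $\nabla R^{s} = s R^{s-1}\nabla R$, so one $\nabla$ either falls on $R^{s-1}$ (dropping the power by one more, by induction) or onto $\nabla R$, whose higher derivatives are uniformly bounded by constants depending only on the fixed profile of $R$ (all derivatives of $R$ are bounded on $\R^2$; for large $r$, $R=r$ and all higher derivatives vanish).

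The second step is to apply the Leibniz rule: for any $\ell\leq k$,
\[
\nabla^{\ell}\bigl(R^{-\mu+\nu}f\bigr) \;=\; \sum_{j=0}^{\ell} \binom{\ell}{j}\, \nabla^{j}R^{-\mu+\nu} \cdot \nabla^{\ell-j}f,
\]
so by the pointwise estimate above,
\[
\bigl|\nabla^{\ell}(R^{-\mu+\nu}f)\bigr| \;\leq\; C \sum_{j=0}^{\ell} R^{-\mu+\nu-j}\, |\nabla^{\ell-j}f|.
\]
Multiplying by $R^{\ell}R^{\mu}$ and squaring gives, schematically, a sum of terms of the form $R^{2(\ell-j)}|\nabla^{\ell-j}f|^{2} R^{2\nu}$ with uniform constants, each of which is one of the summands comprising the $L^{k,2}_\nu$ norm of $f$. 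Integrating and summing over $\ell\leq k$ produces the upper bound, and the lower bound follows identically by applying the same argument to the inverse map $g \mapsto R^{\mu-\nu}g$.

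The main (very mild) obstacle is bookkeeping: one must verify that all the intermediate derivative terms of $R^{-\mu+\nu}$ are indeed controlled by a single constant depending only on $k$, $\mu$, $\nu$, and the fixed profile of $R$, and that every term on the right-hand side above is dominated by a summand already appearing in $\|f\|_{L^{k,2}_\nu}$. Since $R$ is chosen with a fixed smooth profile and $R\geq 1$, no boundary behavior as $r\to 0$ causes difficulty, and the argument is a clean induction. The zeroth order ($k=0$) case is literally an isometry up to the constant from $R$ being truncated near the origin, which motivates and controls the general case.
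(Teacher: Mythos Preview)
Your proposal is correct and is precisely the argument the paper intends: the paper does not actually write out a proof, but simply remarks that the lemma is ``easy to check, using $0\leq \frac{dR}{dr}\leq 1$,'' which is exactly the input driving your inductive bound $|\nabla^{j}R^{s}|\leq C_{j,s}R^{s-j}$ and the subsequent Leibniz-rule computation.
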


The following result summarizes the Fredholm theory for $\del,\delbar$ in the non-compact setting \cite{LockhartMcOwen}: 

\begin{prop}
The operators $$\delbar,\del: L^{1,2}_{\nu-1}(\R^2;\C)\to L^2_{\nu}(\R^2;\C)$$ are Fredholm for $\nu \notin \Z$. Specifically, 

\begin{itemize}
\item for $\nu \in (0,1)$ they are isomorphisms. 
\item for $\nu \in (-n-1, -n)$ they are surjective with kernel of dimension $n$ spanned by $\{1, z, \ldots, z^{n-1}\}$ (resp. $\overline z$).
\item for $\nu \in (n,n+1)$ they are injective with cokernel of dimension $n$ spanned by $\{1,\overline z, \ldots,\overline z^{n-1}\}$ (resp. $z$).
\end{itemize}
\label{FredholmonR^2}
\end{prop}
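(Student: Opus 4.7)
The plan is to reduce the Fredholm question to the standard Lockhart--McOwen theory for translation-invariant elliptic operators on a cylinder. I will describe the argument for $\delbar$; the case of $\del$ is identical after complex conjugation and exchanges the roles of $z$ and $\overline z$.

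First, I would introduce cylindrical coordinates on $\R^2\setminus\{0\}$ via $z=e^{t+i\theta}$, so that $\R^2\setminus\{0\}\simeq\R\times S^1$ conformally, with Euclidean volume form $dV=e^{2t}\,dt\,d\theta$ and $R\sim e^t$ as $t\to+\infty$. In these coordinates a direct calculation gives
\[
\delbar=\tfrac{1}{2}\,e^{-t-i\theta}\bigl(\partial_t+i\partial_\theta\bigr).
\]
Absorbing the conformal factor $e^{-t-i\theta}$ and the polynomial weights $R^{2\nu}$ into exponential weights on the $t\to+\infty$ end (the behaviour as $t\to-\infty$ is irrelevant since $R$ is bounded there and can be modified without changing the Fredholm property, using Lemma \ref{changingweights} and a cut-off), the map $\delbar:L^{1,2}_{\nu-1}(\R^2)\to L^2_\nu(\R^2)$ is equivalent to the constant-coefficient operator
\[
L:=\partial_t+i\partial_\theta
\]
acting between exponentially weighted Sobolev spaces $e^{\delta t}L^{1,2}\to e^{\delta t}L^2$ on $\R\times S^1$, where $\delta$ is an affine function of $\nu$.

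The Fredholm property is then extracted by Fourier decomposition in $\theta$: writing $u=\sum_{k\in\Z}u_k(t)e^{ik\theta}$, the equation $Lu=f$ splits into the uncoupled family of first-order scalar ODEs $u_k'(t)-ku_k(t)=f_k(t)$. On $e^{\delta t}L^2(\R)$, each such ODE is invertible with norm $\tfrac{1}{|k+\delta|}$ whenever the indicial root $k$ differs from $-\delta$. When $\delta\notin\Z$ these bounds are uniform in $|k|$ away from the finitely many small modes, so assembling the Fourier pieces via Parseval yields a bounded parametrix with finite-dimensional error. This is the standard Lockhart--McOwen criterion and produces the Fredholm conclusion for $\nu\notin\Z$.

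The main remaining task, and the only delicate point, is the identification of kernel and cokernel. Kernel elements of $L$ on each Fourier line are multiples of $e^{kt}$, which correspond on $\R^2$ to $z^k$ when $k\ge 0$ and to $\overline z^{-k}$ when $k<0$; one retains precisely those monomials whose weighted norm converges. The computation reduces to checking when $\int|z^k|^2R^{2(\nu-1)}\,r\,dr$ converges at infinity, i.e. when $k+\nu<0$, and selecting the corresponding holomorphic monomials for each integer window containing $\nu$. The cokernel is then obtained by the same analysis applied to the formal adjoint $-\del$ via the pairing $\langle u,v\rangle=\int u\bar v\,dV$, which identifies $(L^2_\nu)^*\simeq L^2_{-\nu}$ and sends $\delbar:L^{1,2}_{\nu-1}\to L^2_\nu$ to $-\del:L^{1,2}_{-\nu-1}\to L^2_{-\nu}$. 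Duality converts the kernel count for $-\del$ into the stated cokernel count for $\delbar$, and the three cases $\nu\in(0,1)$, $\nu\in(-n-1,-n)$, $\nu\in(n,n+1)$ follow by bookkeeping.
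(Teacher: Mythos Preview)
The paper does not actually prove this proposition: it is stated as a summary of known results with a citation to Lockhart--McOwen and closed with a bare \qed. Your sketch is therefore not competing with an argument in the paper but supplying one, and the route you take---passing to cylindrical coordinates $z=e^{t+i\theta}$, reducing to the translation-invariant operator $\partial_t+i\partial_\theta$ on exponentially weighted spaces, and Fourier-decomposing in $\theta$---is precisely the Lockhart--McOwen machinery the paper is invoking.

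A couple of small corrections are worth making. First, the polar-coordinate formula in the paper (equation \eqref{delbarpolar}) gives $\delbar=\tfrac{1}{2}e^{i\theta}(\partial_r+\tfrac{i}{r}\partial_\theta)$, so after $r=e^t$ one obtains $\delbar=\tfrac{1}{2}e^{-t+i\theta}(\partial_t+i\partial_\theta)$, not $e^{-t-i\theta}$; this is cosmetic. Second, your identification of the Fourier-mode solutions is slightly garbled: the mode $e^{kt}e^{ik\theta}$ equals $z^k$ for \emph{all} integers $k$, and for $k<0$ this is singular at the origin rather than equal to $\overline z^{-k}$. The correct mechanism excluding the negative-$k$ modes is not the weight at infinity but the requirement of membership in $L^{1,2}$ near $0$ (where $R$ is bounded), which forces any holomorphic kernel element to extend across the origin and hence be a polynomial in $z$. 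With that fix, your convergence check $k+\nu<0$ at infinity gives the right monomial count, and the duality argument for the cokernel goes through as you describe.
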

\qed

The next proposition is the appropriate version of the first bullet point for disks of finite radius. Combining these weighted spaces with the boundary conditions as in the previous subsection, we have the spaces  $\{u \in L^{1,2}_\nu \ | \ \Pi^{\pm}_{[m]}u=0\}$. Likewise for $L^{k,2}_{\nu}$.  

\begin{prop}
For $\nu=1/2$ the $\delbar$-operator subject to the boundary conditions $\Pi^+_{[0]}=0$ $$ \delbar: L^{1,2}_{ \nu-1}(D_\bold r; \C)\to L^2_{\nu}(D_\bold r; \C)$$
is invertible, and there is a constant $C$ such that $$\|u\|_{L^{1,2}_{\nu-1}} \leq C\, \|\delbar u\|_{L^2_{\nu}}$$ holds uniformly in $\bold r$. The corresponding statement holds for $\del$. In fact, both statements hold for any $\nu\in (0,1)$ where the constant $C$ may depend on $\nu$.   
\label{delbaruniformestimate}
\end{prop}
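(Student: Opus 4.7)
The plan is to reduce the problem to a family of one-dimensional ODEs via Fourier decomposition in the angular variable, solve each one explicitly, and then upgrade the resulting pointwise bounds to a uniform weighted estimate using a Hardy-type inequality.

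Writing $u(r,\theta)=\sum_{k\in\Z} u_k(r)e^{ik\theta}$ and $f(r,\theta)=\sum_{k\in\Z} f_k(r)e^{ik\theta}$, the polar form $\bar\partial=\tfrac{1}{2}e^{i\theta}(\partial_r+\tfrac{i}{r}\partial_\theta)$ raises the Fourier index by one, reducing $\bar\partial u = f$ to the decoupled ODEs $u_k'(r)-\tfrac{k}{r}u_k(r)=2f_{k+1}(r)$. The boundary condition $\Pi^+_{[0]}u=0$ becomes $u_k(\bold r)=0$ for $k\geq 0$, while finiteness of $\|u\|_{L^{1,2}_{\nu-1}}$ forces the regular behavior $u_k(r)=O(r^{|k|})$ near the origin. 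Integrating with the factor $r^{-k}$, the unique solution in each mode is $u_k(r)=-2r^k\int_r^{\bold r} s^{-k}f_{k+1}(s)\,ds$ for $k\geq 0$ and $u_k(r)=2r^k\int_0^r s^{-k}f_{k+1}(s)\,ds$ for $k<0$. Invertibility is immediate: injectivity, because a holomorphic function on $D_\bold r$ whose boundary values are killed by $\Pi^+_{[0]}$ must have zero boundary values and hence vanish; surjectivity, from the explicit formulas.

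The estimate is the substance of the proposition. By Plancherel in $\theta$, both $\|u\|_{L^{1,2}_{\nu-1}}^2$ and $\|\bar\partial u\|_{L^2_\nu}^2$ decompose as orthogonal sums over the Fourier modes, so the uniform bound reduces to proving the one-dimensional inequality
\[
\int_0^{\bold r}\!\Big(|u_k'(r)|^2 + \tfrac{k^2}{r^2}|u_k(r)|^2+\tfrac{|u_k(r)|^2}{R(r)^2}\Big)R(r)^{2\nu}r\,dr \ \leq\ C(\nu)\!\int_0^{\bold r}\!|f_{k+1}(r)|^2 R(r)^{2\nu}r\,dr
\]
with $C(\nu)$ independent of both $k$ and $\bold r$. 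The substitution $v_k=r^{-k}u_k$ turns the ODE into $v_k'=2r^{-k}f_{k+1}$, from which pointwise bounds on $v_k$ follow by the fundamental theorem of calculus and Cauchy-Schwarz applied on $(r,\bold r)$ for $k\geq 0$ and on $(0,r)$ for $k<0$; the ODE itself then expresses $u_k'$ in terms of $u_k$ and $f_{k+1}$. Combining these with a classical weighted Hardy inequality on the half-line yields the desired bound.

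The main obstacle---and the sole place where the hypothesis $\nu\in(0,1)$ enters---is maintaining uniformity of $C(\nu)$ across all $k\in\Z$ and all radii $\bold r$. The Hardy constant for the weight $R^{2\nu}$ is finite precisely when $\nu$ avoids the integer thresholds $\nu=0,1$, matching the exclusion in Proposition~\ref{FredholmonR^2}; at those endpoints logarithmic divergences appear in the relevant integrals $\int s^{-2k}\,s\,ds$ after the integrating-factor substitution. Uniformity in $k$ is ensured by the term $\tfrac{k^2}{r^2}|u_k|^2$ on the left, which supplies the extra coercivity needed to dominate the $r^{-k}$ blowup at the origin in the integrating factor, and uniformity in $\bold r$ is built into the half-line Hardy inequality because the weights are monotone and the estimate respects truncation to $[0,\bold r]$. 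Finally, the parallel statement for $\partial$ with boundary projection $\Pi^-_{[0]}$ follows by complex conjugation $u\mapsto\bar u$, which interchanges $\partial\leftrightarrow\bar\partial$ and $\Pi^+_{[m]}\leftrightarrow\Pi^-_{[-m]}$.
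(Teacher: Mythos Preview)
Your proposal is correct in outline but takes a genuinely different route from the paper. The paper does \emph{not} solve the modes explicitly or invoke Hardy inequalities. Instead it conjugates the weight away: by Lemma~\ref{changingweights} it suffices to bound the operator $R^{1/2}\circ\delbar\circ R^{-1/2}$ from $L^{1,2}_{-1}$ to $L^2$, and in polar form this operator is $\tfrac{1}{2}e^{i\theta}\big(\partial_r+\tfrac{1}{r}(i\partial_\theta-\tfrac{\chi}{2})\big)$ with $\chi=\tfrac{r}{R}\tfrac{dR}{dr}$ rising monotonically from $0$ to $1$. Squaring and integrating by parts yields $\int|\partial_r f|^2+\tfrac{1}{r^2}|(i\partial_\theta-\tfrac{\chi}{2})f|^2\,dV$ plus a boundary term and a bulk term $\int\langle f,\tfrac{d\chi}{dr}f\rangle\,dV$, both of which are nonnegative (the boundary term because of the condition $\Pi^+_{[0]}=0$, the bulk term because $\chi$ is increasing). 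Since $i\partial_\theta-\tfrac{\chi}{2}$ has lowest eigenvalue $\tfrac{\chi}{2}$ on each circle, this immediately controls $\int|\nabla f|^2+\tfrac{\chi^2}{r^2}|f|^2\,dV$; a Poincar\'e inequality with a large cutoff handles the region where $\chi<1$.

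Your approach has the virtue of being entirely constructive and making the role of $\nu\in(0,1)$ transparent through the Hardy constant. The paper's energy method, however, is shorter, avoids any mode-by-mode bookkeeping (uniformity in $k$ is automatic from the spectral gap of $i\partial_\theta-\tfrac{\chi}{2}$), and---more importantly for what follows---adapts verbatim to the twisted operators $\delbar_{A^H}$, $\del_{A^H}$ in Proposition~\ref{delbarAHprop}, where explicit integration would be considerably messier. Your sketch leaves the verification of the uniform-in-$k$ Hardy constant implicit; this can be done, but the paper's argument sidesteps the issue entirely.
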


\begin{proof}
The fact that $\delbar$ is an isomorphism with these boundary conditions follows from Proposition \ref{invertibledelbar}, so it suffices to show the uniform estimate here. In fact, by Lemma \ref{changingweights} it suffices to show it on the spaces $\{u \in L^{1,2}_{-1} \ | \ \Pi^{+}_{[0]}u=0\}\to L^2$ with the operator  \bea R^{1/2} \circ \delbar \circ R^{-1/2}&=& \frac{1}{2}e^{i\theta}\left(\del_r + \frac{i}{r}\del_\theta -\frac{1}{2r}\frac{r}{R}\d{R}{r}\right)\\
&=& \frac{1}{2}e^{i\theta}\left(\del_r + \frac{1}{r}\left(i\del_\theta -\frac{\chi}{2}\right)\right)
 \eea
 where $\chi$ is a function smoothly rising from $0$ at the origin and equal to 1 once $R=r$. To show the estimate, we integrate by parts: for $f\in\{u \in L^{1,2}_{-1} \ | \ \Pi^{+}_{[0]}u=0\}$ one has
 
 \bea
 \int_{\R^2}|R^{1/2}\circ \del \circ R^{-1/2} f|^2 R^{-1}dV& =&\frac{1}{4}\int_{\R^2} \br\del_r + \frac{1}{r}\left(i\del_\theta -\frac{\chi}{2}\right) \ , \ \del_r + \frac{1}{r}\left(i\del_\theta -\frac{\chi}{2}\right) \kt  \ dV \\
 &=& \int_{\R^2} |\del_r f|^2 + \frac{1}{r^2}|\left(i\del_\theta -\frac{\chi}{2}\right)f|^2 \\ & & \  \ \ + \ \br \del_r f \ , \  \frac{1}{r}\left(i\del_\theta -\frac{\chi}{2}\right)f\kt
+ \br  \frac{1}{r}\left(i\del_\theta -\frac{\chi}{2}\right)f \ , \  \del_r f  \kt \ r dr d\theta \\
&=& \int_{\R^2} |\del_r f|^2 + \frac{1}{r^2}|\left(i\del_\theta -\frac{\chi}{2}\right)f|^2 \  + \br  f, \d{\chi}{r} f\kt  \ dV + \int_{\del D_\bold r} \br f, i\del_\theta - \frac{\chi}{2} f \kt d\theta\\
& \geq & \int_{\R^2} |\del_r f|^2 + \frac{1}{r^2}|\left(i\del_\theta -\frac{\chi}{2}\right)f|^2 \ dV  + \sum_{k<0}  (-k-\tfrac{\chi}{2})|f_k|^2\\
&\geq &\int_{\R^2}|\nabla f|^2 + \frac{\chi^2}{r^2} |f|^2 \ dV
 \eea
 where we have integrated by parts in r, and observed that the boundary term is strictly positive as a result of our boundary conditions, and used that $\d{\chi}{r}>0$. The last line follows because $i\del_\theta-\tfrac{\chi}{2}$ is an invertible operator with lowest eigenvalue equal to $\tfrac{\chi}{2}$ on every circle of fixed radius. This is the desired estimate except for the fact that the second term is supported away from the origin. To remedy this, we apply the Poincar\'e inequality to $f$ times a large cutoff function equal to 1 where $\chi\neq 1$.
 
 More generally, for $\nu \in (0,1)$ the proof is identical replacing $\tfrac{\chi}{2}$ by  $\chi \nu $.   \end{proof}
 
 We also note the following specific corollary in the case when $\nu=-1/2$ and the boundary conditions for which the kernel is the constant functions, i.e. on the space $\{ u \in L^{1,2}_{-3/2} \ | \ \Pi^{-}_{[-1]}u=0\}$: 
 
 \begin{prop}
 For $\nu=-1/2$ and subject to the boundary $\Pi^{-}_{[-1]}$ the operator $$\del: L^{1,2}_{\nu -1}(D_\bold r; \C)\to L^{2}_{\nu}(D_\bold r; \C)$$
 is surjective with kernel equal to the constants. Moreover, the estimate \be\|u\|_{L^{1,2}_{\nu-1}} \leq C \left(\|\del u\|_{L^2_{\nu}} + \|\pi^\text{const} u \|_{L^{1,2}_{\nu-1}}\right)\label{elliptic6.10}\ee
 holds uniformly in $\bold r$ once $\bold r>>0$ where the projection is that arising from the $L^{1,2}_{-3/2}$-inner product.  The equivalent also holds for $\nu\in (-1,0)$ with the constant being allowed to depend on $\nu$. 
 \label{deluniformestimate}
 \end{prop}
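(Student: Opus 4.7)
My proof strategy is to first establish the Fredholm properties of the operator and then derive the uniform estimate via a compactness-contradiction argument. The first step is direct: Proposition~\ref{invertibledelbar}, applied to $\del$ with the boundary condition $\Pi^-_{[-1]}u=0$ (the anti-holomorphic case with $m=-1$), yields that the operator is surjective on the unweighted disk $D_\bold r$ with a one-dimensional kernel spanned by the constant function $1$ and empty cokernel. Since the norms $L^{1,2}_{\nu-1}$ and $L^{1,2}$ are equivalent on bounded disks by Lemma~\ref{changingweights}, this Fredholm description transfers to the weighted setting for every $\nu\in(-1,0)$ and every $\bold r$, with the kernel still being the constants.

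For the uniform estimate, I will avoid a direct integration-by-parts parallel to Proposition~\ref{delbaruniformestimate}. The analogous computation, with $g=R^{-1/2}u$ and $M=-i\del_\theta+\chi/2$, goes through formally but produces the $\chi'$-term with an unfavorable sign: the commutator $[\del_r,M]=\chi'/2$ leads to a contribution $-\tfrac{1}{2}\int\chi'|g|^2 \,d\rho\,d\theta$ rather than $+\tfrac{1}{2}$ as in the $\delbar$ case, and absorbing it requires a delicate weighted Poincar\'e argument on the zero Fourier mode exploiting the condition $\pi^{\text{const}}u=0$. I therefore argue by compactness and contradiction. Assuming \eqref{elliptic6.10} fails uniformly, I extract sequences $\bold r_n\geq \bold r_0$ and $u_n\in L^{1,2}_{\nu-1}(D_{\bold r_n})$ satisfying $\Pi^-_{[-1]}u_n=0$, $\pi^{\text{const}}u_n=0$, $\|u_n\|_{L^{1,2}_{\nu-1}}=1$, and $\|\del u_n\|_{L^2_\nu}\to 0$. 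Splitting into cases according to whether $\bold r_n$ remains bounded or tends to infinity along a subsequence, the bounded case follows from the continuity in $\bold r$ of the (non-uniform) Fredholm inverse norm: applied along a subsequence with $\bold r_n\to\bold r^*\in[\bold r_0,\infty)$, it forces $\|u_n\|\to 0$, contradicting $\|u_n\|=1$.

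In the unbounded case $\bold r_n\to\infty$, I extract a weak limit $u\in L^{1,2}_{\nu-1,\text{loc}}(\R^2)$ via a diagonal argument and local Rellich compactness; $u$ then satisfies $\del u=0$ weakly on all of $\R^2$ and hence is anti-holomorphic. A direct integral computation shows that the only anti-holomorphic functions in $L^{1,2}_{\nu-1}(\R^2)$ for $\nu\in(-1,0)$ are the constants, since $\int_{\R^2}|\bar z^k|^2 R^{2(\nu-1)}\,dV$ diverges at spatial infinity for every $k\geq 1$ in this range of $\nu$. The inherited condition $\pi^{\text{const}}u=0$ then forces $u\equiv 0$. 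The extension to general $\nu\in(-1,0)$ follows the same pattern with constants depending on $\nu$ through its distance from the endpoints.

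I anticipate the main obstacle to be closing the contradiction in the unbounded case, which requires upgrading local weak convergence to strong convergence in the global weighted norm $\|\cdot\|_{L^{1,2}_{\nu-1}(\R^2)}$. This reduces to a uniform tail estimate showing that $\int_{|z|\geq R}(R^2|\nabla u_n|^2+|u_n|^2)R^{2(\nu-1)}\,dV\to 0$ as $R\to\infty$ uniformly in $n$. The needed decay is a standard Lockhart--McOwen-type argument on the cylindrical end of $\R^2$: one combines integrability of the weight $R^{2(\nu-1)}\,dV$ at infinity --- which crucially requires $\nu<0$ and explains the restriction in the statement --- with the smallness of $\|\del u_n\|_{L^2_\nu}$ and local elliptic regularity to bootstrap the tail decay.
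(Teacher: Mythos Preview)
Your Fredholm setup (surjectivity with kernel the constants, via the anti-holomorphic case of Proposition~\ref{invertibledelbar}) matches the paper. The divergence is in the uniform estimate, where you have correctly identified the obstruction---the $\chi'$ term appears with the wrong sign---but drawn the wrong conclusion from it.

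The paper \emph{does} perform the integration by parts parallel to Proposition~\ref{delbaruniformestimate}. The point is not to absorb the bad-sign $\chi'$ term, but to observe that it is \emph{compactly supported} (since $\chi'$ vanishes outside a fixed region independent of $\bold r$). This yields an estimate $\|u\|_{L^{1,2}_{-3/2}} \leq C_1\bigl(\|\del u\|_{L^2_{-1/2}} + \|Ku\|_{L^2_{-1/2}}\bigr)$ with $K$ compactly supported. In the contradiction argument this immediately forces $\|Ku_n\| \geq 1/C_1 - 1/n$, so a definite portion of the norm of $u_n$ lives on a fixed compact set. Cutting off with large logarithmic cutoffs then costs negligibly little, and the cut-off sequence violates the estimate on all of $\R^2$, contradicting Proposition~\ref{FredholmonR^2} directly.

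By abandoning the integration by parts you lose exactly this localization, and the gap you anticipate---upgrading local weak convergence to global strong convergence---is not filled by what you wrote. Integrability of $R^{2(\nu-1)}\,dV$ at infinity controls weighted norms of \emph{pointwise-bounded} functions, but says nothing about a general $L^{1,2}_{\nu-1}$-bounded sequence; without an a priori localization there is nothing preventing the mass of $u_n$ from drifting toward $\del D_{\bold r_n}$. There is also a secondary issue you do not address: the projection $\pi^{\text{const}}$ depends on $\bold r_n$, and it is not automatic that $\pi^{\text{const}}u_n=0$ on $D_{\bold r_n}$ passes in the limit to orthogonality to constants on $\R^2$. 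The paper's device---use the integration by parts for localization rather than for a direct bound---sidesteps both difficulties and reduces everything to the already-established Proposition~\ref{FredholmonR^2}.
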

 \begin{proof}
 The statement about surjectivity and the form of the kernel follows again from Proposition \ref{invertibledelbar}. By conjugation, it suffices to prove the statement for $\delbar$. For $\nu=-1/2$, switching weights to $L^{1,2}_{-1}$ as in the previous proposition, and integrating by parts again yields the same result with $\chi$ replaced by $-\chi$ and the boundary sum replaced by $k\leq  0$. The boundary term is therefore still positive, but the $\d{\chi}{r}$ term is negative. It is compactly supported, hence we obtain an estimate for a compactly supported operator $K$. 
 
 $$\|u\|_{L^{1,2}_{-3/2}}\leq C_1 \left( \|\del u \|_{L^{2}_{-1/2}} + \|Ku\|_{L^2_{-1/2}}\right). $$
 
 \noindent Now we proceed by contradiction: assume there were a sequence $u_n$ on disks of radius $r_n$ having unit $L^{1,2}_{-3/2}$ norm, and violating the inequality to prove with constant $1/n$. The above estimate shows one must have $\|Ku_n\|> \tfrac{1}{C_1}-\tfrac{1}{n}$ and so $u_n$ must have non-zero portion of its norm on the compact support of $K$. Cutting off $u_n$ with increasingly large logarithmic cutoff functions $\chi_n$ shows that $\chi_n u_n$ eventually violates the equivalent inequality with projection to the kernel on all of $\R^2$, contradicting Proposition \ref{FredholmonR^2}. 
 
 The general case of $\nu\in(-1,0)$ is follows similarly. 
 \end{proof}

 \begin{rem}\label{rem6.11}
 Notice that if we consider the space $\{u \in L^{1,2}_{-3/2} \ | \ \Pi^{-}_{[0]}u=0\}$ with the index 0 boundary conditions the elliptic estimate (\refeq{elliptic6.10}) {\it cannot} be made uniform in the radius. Indeed, letting $\bold r_n=n$, taking a logarithmic cutoff function $\chi_n$ equal to $1$ for $r< n/2$ and vanishing on the boundary with $|d\chi_n|\leq c|\log n|^{-1}/r$, choosing constants $c_n$ so that $\|c_n\chi_n\|_{L^{1,2}_{-3/2}}=1$, one can see that the sequence of functions $c_n\chi_n$ satisfy the boundary conditions yet $\|\del (c_n\chi_n)\|_{L^2_{-1/2}} \to 0$, contradicting \ref{elliptic6.10}. 
 \end{rem}
 
 The relevance of the two above specific cases to our situation is that the connection $A^{H}$ implicitly adds a $\nu=+1/2$ weight to $\delbar$ and a $\nu=-1/2$ weight to $\del$. Indeed, recalling from Section \ref{section4.1} we wrote $$\del_{A^H}=\frac{1}{2}e^{i\theta}\left(\del_\rho + \frac{1}{\rho}\left(i\del_\theta - \frac{\chi_H}{2} \right)\right)  \hspace{1cm} \del_{A^H}=\frac{1}{2}e^{i\theta}\left(\del_\rho - \frac{1}{\rho}\left(i\del_\theta - \frac{\chi_H}{2} \right)\right)$$
 
 \noindent where $\tfrac{\chi_H}{2}= \tfrac{1}{2} + \rho \del_\rho H$.

 The above two propositions translate into the following statements about these operators. In it, we use the space
 $L^{1,2}_{-1}$ whose norm is $$\|u\|_{L^{1,2}_{-1}}=\int |\nabla u|^2 + \frac{|u|^2}{R^2} dV$$ 
 
 \begin{prop} The operators $\delbar_{A^H}, \del_{A^H}$ satisfy the following respectively:

\noindent {\bf (1)} With the Index 0 boundary conditions $\Pi^+_{[0]}=0$, the operator $$\delbar_{A^H}: L^{1,2}_{-1}(D_\bold r;\C)\to L^2(D_\bold r; \C)$$
 is invertible and $$\|\alpha\|_{L^{1,2}_{-1}}\leq C \|\delbar_{A^H}\alpha\|_{L^2}$$
 holds uniformly in $\bold r$. 

\noindent{\bf (2)} With the Index 2 boundary conditions, $\Pi^+_{[-1]}=0$ the operator $$\del_{A^H}: L^{1,2}_{-1}(D_\bold r ; \C) \to L^2(D_\bold r ; \C)$$
is surjective with kernel of dimension 2, and $$\|\beta\|_{L^{1,2}_{-1}}\leq C(\|\del_{A^{H}}\beta\|_{L^2} + \|p (\beta)\|_{L^{1,2}_{-1}})$$
holds uniformly in $\bold r$ for $\bold r>>0$, where $p$ is the orthogonal projection to the kernel in the $L^{1,2}_1$-inner product. 

\noindent {\bf (3)} More generally, the same statements hold for $\nu \in (-\frac12, \frac12)$, i.e. 

$$\|\alpha\|_{L^{1,2}_{\nu-1}}\leq C(\nu) \|\delbar_{A^H}\alpha\|_{L^2_\nu}\hspace{1cm}\|\beta\|_{L^{1,2}_{\nu -1}}\leq C(\nu)(\|\del_{A^{H}}\beta\|_{L^2_\nu} + \|p (\beta)\|_{L^{1,2}_{\nu-1}})$$
 \label{delbarAHprop}
 \end{prop}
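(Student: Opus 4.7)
The plan is to reduce both parts of the proposition to the radial integration-by-parts calculations of Propositions \ref{delbaruniformestimate}--\ref{deluniformestimate}, with the connection weight $\chi_H/2=1/2+\rho\partial_\rho H$ playing the role of the auxiliary cutoff function $\chi$ appearing there. The key structural input is Proposition \ref{Hrhoproperties}(4), which says $\chi_H/2 = 2f_\e$ is smooth and monotonically non-decreasing on $[0,\infty)$, with $\chi_H(0)/2=0$ and $\chi_H/2 \to 1/2$ exponentially as $\rho\to\infty$. This matches exactly the qualitative behavior of the weight $\chi$ used there, so the proofs transfer essentially verbatim after replacing $\chi$ by $\chi_H$.

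For part (1), I would carry out integration by parts on the Fourier decomposition $\alpha=\sum_k \alpha_k(\rho)e^{ik\theta}$. Writing $\delbar_{A^H}=\tfrac{1}{2}e^{i\theta}(\partial_\rho + \tfrac{1}{\rho}(i\partial_\theta - \chi_H/2))$ and expanding $\int|\delbar_{A^H}\alpha|^2\,dV$, the cross term yields a boundary integral at $\rho=\bold r$ equal to $\sum_k (-k-\chi_H(\bold r)/2)\,|\alpha_k(\bold r)|^2$, together with a non-negative interior term $\tfrac{1}{2}\int \partial_\rho\chi_H\,|\alpha|^2\,dV$ arising from monotonicity. The boundary condition $\Pi^+_{[0]}\alpha=0$ eliminates the modes $k\geq 0$ at $\rho=\bold r$, so all surviving boundary modes obey $-k-\chi_H(\bold r)/2\geq 1/2$; the contribution at $\rho=0$ vanishes because $\chi_H(0)=0$ and the only potentially contributing mode $k=0$ is already killed. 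This yields uniform control of $|\partial_\rho\alpha|^2$ and $\rho^{-2}|(i\partial_\theta-\chi_H/2)\alpha|^2$, and a Poincar\'e estimate applied to $\alpha$ on the compact region where $\chi_H < 1/4$ promotes this to the weighted $|\alpha|^2/R^2$ term, exactly as in the last step of Proposition~\ref{delbaruniformestimate}.

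For part (2), I would imitate Proposition~\ref{deluniformestimate}: the analogous integration by parts for $\del_{A^H}=\tfrac{1}{2}e^{-i\theta}(\partial_\rho - \tfrac{1}{\rho}(i\partial_\theta-\chi_H/2))$ produces a non-negative boundary term from the surviving modes under $\Pi^+_{[-1]}=0$, but the interior contribution from $\partial_\rho\chi_H$ now appears with the \emph{opposite} sign, giving an estimate modulo a compactly supported error. To remove this error I would argue by contradiction: a sequence of counterexamples $\beta_n$ on $D_{\bold r_n}$ with $\bold r_n\to\infty$, cut off with a logarithmic bump function $\chi_n$ (as in Remark~\ref{rem6.11}), would produce a violating sequence in $L^{1,2}_{-1}(\R^2)$, contradicting Proposition~\ref{FredholmonR^2}. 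The two-real-dimensional kernel corresponds to the single complex-dimensional $L^{1,2}_{-1}(\R^2)$-kernel of the radial ODE $\partial_\rho\beta = -\tfrac{\chi_H}{2\rho}\beta$, whose unique (up to scale) smooth solution $\beta_0(\rho)$ decays like $\rho^{-1/2}$ at infinity; the projection $p$ is then the $L^{1,2}_{-1}$-orthogonal projection onto the restriction of this kernel element to $D_\bold r$.

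Part (3) follows by conjugating both operators by $R^\nu$: using Lemma~\ref{changingweights} this identifies the $L^{1,2}_{\nu-1}\to L^2_\nu$ problem with the unweighted $L^{1,2}_{-1}\to L^2$ problem for an operator whose angular part is shifted by an additional constant $-\nu$. Because the positivity arguments of parts (1)--(2) leave a spectral margin of $1/2$, the restriction $\nu\in(-1/2,1/2)$ is precisely what preserves both the boundary positivity and the invertibility of the angular operator on the interior; the constant may deteriorate as $\nu\to\pm 1/2$, as stated. The hardest step will be the $\bold r$-uniformity in part~(2): one must verify that the natural restriction of the $\R^2$-kernel to $D_\bold r$ really spans the full kernel on $D_\bold r$ once $\bold r$ is sufficiently large (so that no spurious modes survive from the finite geometry), and that its $L^{1,2}_{-1}$-norm remains bounded away from zero on normalized kernel sequences, with Remark~\ref{rem6.11} illustrating why the naive index-zero boundary choice cannot satisfy such a bound and the unusual two-dimensional correction is structurally essential.
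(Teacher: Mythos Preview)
Your proposal is correct and follows essentially the same approach as the paper. The paper's proof is simply a terser packaging of yours: rather than re-running the integration by parts with $\chi_H$ in place of $\chi$, it observes that $\delbar_{A^H}$ has structurally the same form as $R^{1/2}\circ\delbar\circ R^{-1/2}$ (and $\del_{A^H}$ as $R^{-1/2}\circ\del\circ R^{1/2}$), so that the connection effectively implements a weight shift of $\pm 1/2$, and then cites Propositions~\ref{delbaruniformestimate}--\ref{deluniformestimate} directly, with the minor caveat that the effective weight is exponentially close to (rather than equal to) $\rho$ at infinity.
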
  
 
 \begin{proof}
 The operator $\delbar_{A^H}$ has the same form of the operator $R^{1/2}\circ \delbar \circ R^{-1/2}$, and as in the proof of Proposition \ref{delbaruniformestimate}, this operator acting on $L^{1,2}_{-1}$ is equivalent to $\delbar$ acting on $L^{1,2}_{-1/2}$. Item {\bf(1)} therefore follows directly from Proposition \ref{delbaruniformestimate}. The only minor caveat is that the effective weight function for $\delbar_{A^H}$ is asymptotically exponentially close to $r$ not equal to it outside a compact region, but this is of no consequence in the proof as one can easily check. 
 
 Likewise, $\del_{A^H}$ on $L^{1,2}_{-1}$ has the same form as the operator $R^{-1/2}\circ \del \circ R^{1/2}$ acting on the $L^{1,2}_{-3/2}$, and is thus equivalent to the situation of Proposition \ref{deluniformestimate} with the same minor caveat, and item ({\bf 2}) follows.
 
More generally, $\delbar_{A^H}$ with weight $\nu$ is equivalent to $\delbar$ with the weight $\nu + 1/2$ and $\del_{A^H}$ is equivalent for $\del$ with weight $\nu-1/2$. {\bf (3)} therefore follows from Proposition \ref{delbaruniformestimate} and Proposition  \ref{deluniformestimate} in the cases for $\nu \in (0,1)$ and $\nu \in (-1,0)$ respectively.   
 \end{proof}

\subsection{Boundary Conditions for $\widehat{\mathcal N}$}
\label{bdconditionsN}
\label{section6.3}

We now give boundary conditions for $\widehat{\mathcal N}$ on $D_\bold r$. Given that $\widehat{\mathcal N}$ consists of the operators $\del,\delbar $ and lower order terms, it would be natural to consider $\widehat{\mathcal N}$ acting on the following Sobolev spaces: 

\begin{eqnarray}
 \begin{matrix} L^{1,2}_{0,+} \ (D_\bold r; \C^2) \ \ \\ \oplus \\   L^{1,2}_{0,-}\ (D_\bold r ; \C^2)  \  \ \ \\   \oplus  \\ L^{1,2}_{0,+} \  (D_\bold r;\Omega^{1,0}) \ \\  \oplus \\  L^{1,2}_{0, -} \ (D_\bold r; \Omega^{0,1})    \end{matrix}  \ \ \ \ \overset{\widehat{\mathcal N}}\lre \ \ \ \  \begin{matrix} L^2  (D_\bold r; \C^2) \ \ \\ \oplus \\   L^{2}(D_\bold r ; \C^2)  \  \ \ \\   \oplus  \\ L^{2}   (D_\bold r;\Omega^{1,1}) \   \\ \oplus \\    L^{2}   (D_\bold r;\Omega^{1,1}). \end{matrix} 
 \label{index0setting}
\end{eqnarray}
Indeed, the above discussion of APS boundary conditions for the $\del,\delbar$ operators shows that when the zeroth-order terms are omitted from $\widehat{\mathcal N}$, the resulting operator is invertible on the above spaces, and thus $\widehat{\mathcal N}$ is index 0 (since the off diagonal terms are compact on the compact domain $D_\bold r$). Explicitly, this space is comprised of tuples $(\alpha,\beta,\zeta, \omega)$ having Fourier expansions on the boundary in which $\alpha,\zeta$ have only negative Fourier modes, and $\beta,\omega$ have only positive ones. 

The actual boundary conditions we will take are a slight modification of the above. We will expand the above space by allowing the $\beta$ component to have a constant Fourier mode $\beta_0$ on the boundary, and restrict it by disallowing a particular linear combination of the  $\alpha_{-1}$ and $\beta_0$ modes. As in Remark  \ref{rem6.11}, the index 0 boundary conditions allows the space to contain kernel elements that decay towards the boundary necessarily violating any uniform estimates. Notationally, this shift in the boundary values of $\beta$ is also necessitated by the $e^{-i\theta}$ on the $\beta$-component of $\Phi^H$. 

We now define these boundary conditions in terms of projection operators. Let 
$\Pi^{\pm}_{[m]}$ be the boundary projections defined by (\refeq{projections}) in Section \ref{APSbdsection}. We also define a two-dimensional projection $$\mu^\del_\C: L^{1,2}(D_\bold r; S_E\otimes (\Omega^0\oplus \Omega^1)) \lre \C$$
 given on a spinor  $(\alpha,\beta, \zeta , \omega )$ as follows. Let $a_1, a_2$ be the components of the $e^{-i\theta}$ boundary mode of $\alpha$, and $b_1, b_2$ be the components of the constant boundary mode of $\beta$ so that 

\smallskip 
 \be  \alpha_{-1}  =a_1 \otimes 1 + a_2 \otimes j  \hspace{1cm} \beta_{0} =b_1 \otimes 1 + b_2 \otimes j \label{muboundarysetup}\ee

\smallskip

\noindent where the subscript on the left hand sides denotes the Fourier mode. Then  
\smallskip 
  \be \mu^\del_\C\left(\alpha, \beta , \zeta , \omega \right)= b_1 \overline \alpha_1^H + \overline a_1 \beta_1^H + b_2 \overline \alpha_2^H + \overline a_2 \beta_2^H.   \label{mucboundarydef2d}\ee

\noindent Here, $\alpha_i^H$ and $\beta_i^H$ are the components of $\Phi^H$ restricted to the boundary (the subscripts on these denote the $\otimes 1$ and $\otimes j$ components, not the Fourier modes).  

\begin{defn}\label{boundarymodes}
We define the {\bf twisted boundary conditions} for $\widehat{\mathcal N}$ by the requirement $$\widehat{\Pi}^{ H^1}=0$$ where

$$\widehat{\Pi}^{ H^1}: L^{1,2}(D_\bold r; S_E\otimes (\Omega^0\oplus \Omega^1)) \lre H^+_{[0]}\oplus H^-_{[-1]}\oplus H^{+}_{[0]}\oplus H^-_{[0]}\oplus \C$$
is given by $$\widehat{\Pi}^{ H^1}:= \Pi^+_{[0]}\oplus  \Pi^-_{[-1]}\oplus  \Pi^+_{[0]}\oplus  \Pi^-_{[0]}\oplus \mu_\C^\del.$$

\end{defn}

Explicitly, the boundary conditions require that tuples $(\alpha,\beta,\zeta,\omega)$ have boundary Fourier expansions of the following form: 
 \begin{eqnarray}
 \text{Fourier mode}   & &  \ldots {\underline{k=-2} } \hspace{.5cm}     {\underline{k=-1} }  \hspace{.8cm}{\underline{k=0} } \  \hspace{.30cm} {\underline{k=1 }}   \ \ \  \hspace{.25cm} {\underline{k=2 }}  \ \ldots \hspace{1.7cm}\\
 \alpha|_{\del D_\bold r} &=& \ldots \alpha_{-2}e^{-2i\theta} + \alpha_{-1}e^{-i\theta} \\
\beta|_{\del D_\bold r} &=& \hspace{4.3cm}  \beta_0 \   +  \   \beta_1 e^{i\theta} \ +  \ \beta_2 e^{2i\theta} \  +\ldots \\
\zeta|_{\del D_\bold r} &=& \ldots \zeta_{-2}e^{-2i\theta}  +  \zeta_{-1}e^{-i\theta} \\
\omega|_{\del D_\bold r} &=&  \hspace{4.3cm}  0  \  \ + \  \  \omega_1 e^{i\theta}  \  +  \    \omega_2 e^{2i\theta} +\ldots.
\end{eqnarray}
\label{NBV1}
\noindent such that $\alpha_{-1}, \beta_0$ are constrained to linear combinations which satisfy \be \mu_\C^\del(\alpha_{-1},\beta_0)=0 \ee  with the notation of (\refeq{muboundarysetup}) and (\refeq{mucboundarydef2d}). 
  
This completes the definition of the domain $\widehat{ H}^1$ of the operator $\widehat{\mathcal N}$ advertised in (\refeq{H^12ddef}). We can also immediately conclude the first statement of Proposition \ref{invertibleN} which claimed that $\widehat{\mathcal N}$ with these boundary conditions is a Fredholm operator of real index 2. \\

\noindent {\it Proof of the Index statement in Proposition \ref{invertibleN}}.   On the compact domain $D_\bold r$, the zeroth order terms of $\widehat{\mathcal N}$ are compact, so it suffices to show the statement for the first order terms. Relative to the Index 0 setting of \ref{index0setting}, we have allowed a zeroeth order mode in $\beta$, which one complex dimension for each of the two copies of $\C$ in the domain of $\beta$, hence four real dimensions. Since the map $\mu_\C^\del: L^{1,2}(D_\bold r)\to \C$ has full rank (which is a consequence of $|c(t)|^2 + |d(t)|^2>0$), adding this condition subtracts two from the real index.   
\qed

\subsection{The Holomorphic Interpretation}
\label{holomorphicinterpretation}

This subsection proves Proposition \ref{invertibleN} in the case that $\zeta=0$. In this context, we can interpret the form $\omega$ as endowing the vector bundle $\C^2\oplus \C^2$ with a particular holomorphic structure, which is necessarily complex gauge equivalent to the standard one on the disk. Specifically, in this subsection we consider the reduced ``holomorphic'' operator 

$$\widehat {\mathcal N}^\C(\alpha,\beta,\omega   )= \begin{pmatrix} \begin{pmatrix} 0 & -2\del_{A^H} \\ 2\overline \del_{A^H} &0 \end{pmatrix} & \gamma( \ \ )\Phi^H  \\ \mu_\R( \ , \Phi^H) &  2\del 
\end{pmatrix} \begin{pmatrix}\alpha \\ \beta  \\ \omega \end{pmatrix},   $$

\noindent with the reduced boundary conditions given by 

\begin{eqnarray}
\alpha|_{\del D_\bold r} &=& \ldots \alpha_{-2}e^{-2i\theta} + \alpha_{-1}e^{-i\theta} \\
\beta|_{\del D_\bold r} &=& \hspace{4.5cm}  \beta_0 + \beta_1 e^{i\theta}+ \beta_2 e^{2i\theta} \  +\ldots \\
\omega|_{\del D_\bold r} &=&  \hspace{4.5cm}  0  \ + \  \omega_1 e^{i\theta}+ \omega_2 e^{2i\theta} +\ldots.
\end{eqnarray}

 \noindent obtained by omitting the requirements on $\zeta$ and $\mu_\C^\del$ from Equations \ref{NBV1}. Let $\widehat{H}^1_\C(D_\bold r)$ and $L^2_\C(D_\bold r)$ denote the Hilbert Spaces omitting the $\zeta$ component and the $\mu_\C$ term in the first, and the fourth factor in $L^2$. We will often abbreviate them $\widehat{ H}^1_\bold r$ and $ L^2_\bold r$. The norm is now given by 
 \be \|(\alpha,\beta,\omega)\|_{\widehat H^1_\C}:= \left(\int_{D_\bold r} |\nabla(\alpha,\beta,\omega)|^2 + \frac{|(\alpha,\beta)|^2}{R^2}+ | \mu_\R((\alpha,\beta), \Phi^H)|^2 + |\omega|^2 |\Phi^H|^2 dV\right)^{1/2}\label{H1Cnorm}\ee

\begin{prop}
The operator $$\widehat{\mathcal N}^\C: \widehat  {{H}}^1_\C(D_\bold r)\lre L^2_\C(D_\bold r)$$
is Fredholm of real Index 4. For $\bold r$ sufficiently large, it is surjective with kernel of dimension 4, and there is a projection $\pi^\psi:\widehat{ H}^1_{\bold r}\to \C^2$ such that the estimate 

$$\|(\alpha,\beta,\omega)\|_{\widehat{H}^1}\leq C (\|\widehat{\mathcal N}^\C(\alpha,\beta,\omega)\|_{L^2}+ \|\pi^\psi(\alpha,\beta)\|)$$
holds uniformly in $\bold r$, and $t$. 
\label{holomorphicinvertible}
\end{prop}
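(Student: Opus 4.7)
The strategy is to exploit the block-diagonal structure of the principal part of $\widehat{\mathcal N}^\C$ in order to reduce the problem to the scalar estimates of Proposition~6.12, while treating the off-diagonal zeroth-order couplings $\gamma(\cdot)\Phi^H$ and $\mu_\R(\cdot,\Phi^H)$ as terms to be absorbed into the weighted norm defining $\widehat H^1_\C$.

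\emph{Step 1: Index.} On the compact disk $D_\bold r$ the off-diagonal zeroth-order terms are compact between the relevant Sobolev spaces, so by Fredholm stability the index of $\widehat{\mathcal N}^\C$ equals the index of its block-diagonal principal part. This splits as $\delbar_{A^H}$ on $\alpha\in\C^2$ subject to $\Pi^+_{[0]}\alpha=0$, $\del_{A^H}$ on $\beta\in\C^2$ subject to $\Pi^+_{[-1]}\beta=0$, and $2\del$ on $\omega\in\Omega^{0,1}$ subject to $\Pi^-_{[0]}\omega=0$. Proposition~6.12(1) gives real index $0$ for the $\alpha$-block, Proposition~6.12(2) gives real index $4$ for the $\beta$-block, and a direct application of Proposition~6.6 (in its conjugate form for $\del$) gives real index $0$ for the $\omega$-block, for a total of $4$.

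\emph{Step 2: Surjectivity and uniform estimate.} Write the equation $\widehat{\mathcal N}^\C(\alpha,\beta,\omega)=(f_\alpha,f_\beta,h)$ as the three scalar equations and treat them sequentially. First, invert the third equation, $2\del\omega = h - \mu_\R((\alpha,\beta),\Phi^H)$ with $\Pi^-_{[0]}\omega=0$: the kernel is trivial, and an integration-by-parts argument modelled on the proof of Proposition~6.8, but performed with the geometric weight $|\Phi^H|^2$ in place of a polynomial weight $R^{2\nu}$, yields $\|\nabla\omega\|_{L^2} + \bigl\||\omega|\,|\Phi^H|\bigr\|_{L^2} \leq C\|\del\omega\|_{L^2}$ uniformly in $\bold r$ for $\bold r$ large, the boundary contribution being non-negative precisely because of the choice $\Pi^-_{[0]}$. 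Next, Proposition~6.12(1) applied to the second equation gives $\|\alpha\|_{L^{1,2}_{-1}} \leq C(\|f_\beta\|_{L^2} + \bigl\||\omega|\,|\Phi^H|\bigr\|_{L^2})$, and Proposition~6.12(2) applied to the first equation gives $\|\beta\|_{L^{1,2}_{-1}} \leq C(\|f_\alpha\|_{L^2} + \bigl\||\omega|\,|\Phi^H|\bigr\|_{L^2} + \|p\beta\|_{L^{1,2}_{-1}})$. Substituting the $\omega$-bound, and using that $\|\mu_\R((\alpha,\beta),\Phi^H)\|_{L^2}$ is one of the summands of the $\widehat H^1_\C$-norm of $(\alpha,\beta)$, all cross terms can be absorbed for $\bold r$ sufficiently large, yielding the claimed uniform estimate with $\pi^\psi$ the $L^{1,2}_{-1}$-orthogonal projection onto the four-real-dimensional kernel of $\del_{A^H}$ supplied by Proposition~6.12(2). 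Putting $f_\alpha=f_\beta=h=0$ forces $\omega=0$, $\alpha=0$, and $\beta\in\ker\pi^\psi^\perp$, so the kernel is exactly this $4$-dimensional space, matching the index count.

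\emph{Main obstacle.} The delicate point is the uniform-in-$\bold r$ estimate on $\omega$ in Step~2: the boundary condition $\Pi^-_{[0]}\omega=0$ kills the kernel of $\del$ but admits slowly-decaying near-kernel modes that would defeat a naive $L^{1,2}\to L^2$ bound (by exactly the mechanism transposed from Remark~6.11). The resolution is the use of $|\Phi^H|^2$ itself as the positive weight, which behaves like $\rho$ at infinity and so plays the role of a polynomial weight with effective $\nu\in(-1,0)$ as in Proposition~6.8. Verifying that the integration-by-parts boundary term remains non-negative with this geometric weight, and that the moment-map feedback $\mu_\R$ can genuinely be absorbed into the $\widehat H^1_\C$-norm of $(\alpha,\beta)$ uniformly in $\bold r$, is where the twisted boundary conditions of Definition~6.11 are essential and where the bulk of the technical work lies.
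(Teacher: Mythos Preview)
Your sequential approach has a genuine gap at two linked points. First, the uniform estimate you assert for $\omega$, namely $\|\nabla\omega\|_{L^2}+\bigl\||\omega|\,|\Phi^H|\bigr\|_{L^2}\leq C\|\del\omega\|_{L^2}$, is false uniformly in $\bold r$: take $\omega$ to be a radial bump supported in the annulus $R/2\leq\rho\leq R$ (compactly supported, so the boundary condition is vacuous); then $\int|\omega|^2|\Phi^H|^2\sim R^3$ while $\int|\del\omega|^2\sim 1$. Integration by parts with the weight $|\Phi^H|^2$ does not help, since the weight would then also appear on the right-hand side. Second, even granting your $\omega$-estimate, the argument is circular: bounding $\omega$ via $2\del\omega=h-\mu_\R$ costs you $\|\mu_\R((\alpha,\beta),\Phi^H)\|_{L^2}$ on the right, but the scalar estimates on $\alpha,\beta$ from Proposition~6.12 control only $\|(\alpha,\beta)/R\|_{L^2}$, not $\|(\alpha,\beta)|\Phi^H|\|_{L^2}$ (since $|\Phi^H|\sim R^{1/2}$ grows). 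The term $\|\mu_\R\|_{L^2}$ is indeed part of the $\widehat H^1_\C$-norm, but it reappears on the right with a constant that is not small, and taking $\bold r$ large makes this worse, not better.

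The paper breaks this cycle by a genuinely different decomposition: writing $\omega=2\delbar h$ for $h$ in a space with double APS boundary conditions, one has $(\alpha,\beta,\omega)=(\psi,0)+(h\alpha^H,-\overline h\beta^H,2\delbar h)$, and in the variables $(\psi,h)$ the operator becomes lower-triangular with diagonal blocks $\slashed D^\C_{A^H}$ and $-\Delta-|\Phi^H|^2$. The point is that the Dirac equation for the horizontal part $\psi$ is now \emph{decoupled} from $\omega$ (the $\gamma(\omega)\Phi^H$ term is exactly cancelled by the $h$-contribution), so one obtains $\|\psi\|_{L^{1,2}_{-1}}\leq C(\|y_1\|+\|p\psi\|)$ without any circular input. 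The moment map $\mu_\R$ is then absorbed into the Schr\"odinger-type equation for $h$, where the favourable sign of the potential $-|\Phi^H|^2$ gives uniform invertibility. Even with this decomposition, two further ingredients are needed: a Weitzenb\"ock identity showing that the first-order cross terms between $\slashed D^\C\ph$ and $\gamma(\omega)\Phi^H$ cancel after integration by parts (this is what controls $\|\mu_\R\|$ directly by $\|\widehat{\mathcal N}^\C\|$ plus a compact remainder $K=\ph/R$), and a separate argument bounding $\|K\|$ via the solvability of the lower-triangular system. The complex-gauge decomposition is not optional here; it is the mechanism that breaks the feedback loop your sequential scheme cannot escape.
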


\begin{proof} The index statement is immediate from the above discussion of boundary conditions, since we have added 4 real dimensions in the $\beta_0$ component compared to the index 0 boundary conditions. The remainder of the proof consists of three steps, each of which requires several lemmas. 

\bigskip 

\noindent {\bf Step 1 (Complex Gauge Action):} To begin, we decompose the domain into a slice of the complex gauge action and its complement.

Define 

\be T\mathcal G^\C_\bold r:= \{h \in L^{2,2}(D_\bold r ; \Omega^0(\C)) \ | \ \Pi^+_{[0]}(h)=0 \text{ and }\Pi^-_{[0]}(\delbar h)=0\}. \label{doubleaps}\ee

\noindent to be the $L^{2,2}$ configurations with {\bf double APS boundary conditions}. Here $h|_{\del D_\bold r}$ is understood via the restriction map $L^{2,2}(D_\bold r)\to L^{3/2, 2}(\del D_\bold r)$ and $\delbar h$ via the same with one lower regularity. There is the linearized action at $(\Phi^H, A^H)$ 
$$\bold d_{(\Phi^H, A^H)}: T \mathcal G^\C_\bold r \to \widehat{ H}^1_\bold r$$ 
given by $$h\mapsto ( h\alpha^H, -\overline h \beta^H ,2\delbar h).$$

\noindent Since we are interested only in a holomorphic description of the linearized operator here, rather than the moduli space of solutions to the non-linear equation, it's not necessary to introduce the complex gauge group itself. The decomposition of $\widehat{ H}^1_{\bold r}$ is philosophically decomposing into a slice of the complex gauge action and its complement, but our approach here only retains this philosophy (and suggestive notation) and we do not need to explicitly check the above space is the Lie algebra of a well-defined Hilbert Lie Group. 
 
 \begin{rem}
 A few remarks are in order: 

(1) The ``double APS'' boundary conditions are rather non-standard in the ory of second-order elliptic PDE, but are the natural boundary conditions for the  square of a Dirac operator, as they require the boundary term to vanish when integrating by parts. In our case, explicitly, (\refeq{doubleaps}) requires
 
 $$h|_{\del D}=\sum_{\ell <0} h_\ell e^{i\ell \theta} \hspace{1cm} \delbar h|_{\del D}=\sum_{\ell > 0} a_\ell e^{i\ell \theta}$$
 so that the boundary term $
\br e^{\pm i \theta} h, \delbar h \kt_{L^2(\del D)} =0$ vanishes (see Lemma \ref{intbyparts1}). 

More generally, on a manifold with boundary $(X,\del X)$, one could split $L^2(\del X)=H^+\oplus H^-$ where $H^\pm$ are respectively the positive and negative eigenspaces of $\slashed D|_{\del X}$ and require $\gamma(\vec n) \Phi \in H^-$ and  $\slashed D \Phi \in H^+$  
so that the integration by parts formula (\cite{KM}, Lemma 4.5.1)
$$\int_{X} \br  \Phi, \slashed D \slashed D\Phi \kt =\int_{X} | \slashed D\Phi|^2 - \int_{\del X}\br \gamma(\vec n) \Phi, \slashed D \Phi \kt$$
has vanishing boundary term. 
 
(2) Note that the Index 0 boundary conditions  (\refeq{index0setting}) do not allow an action of the complex gauge group in the desired way. Writing  $h|_{\del D}=\sum_{\ell <0} h_\ell e^{i\ell \theta}$ as required by (\refeq{doubleaps}), and using  $$\Phi^H = \begin{pmatrix} e^H cr^{1/2} \\ e^{-H}dr^{1/2} e^{-i\theta}
 \end{pmatrix}\otimes 1 + ...\otimes j \hspace{1cm}\Rightarrow \hspace{1cm} h\cdot  \Phi^H= \begin{pmatrix} f_1  e^{-i\theta}+ f_2 e^{-2i \theta}+ ...\\ g_0 + g_1 e^{i\theta} + ...
 \end{pmatrix}\otimes 1 + ...\otimes j$$
 so that the $\beta$ component of $h\cdot \Phi^H$ may have a non-zero constant component on the boundary for $h\in T\mathcal G_\bold r^\C$. This is another reason for introducing the twisted boundary conditions.

 \end{rem} 
 
 \medskip
  \begin{lm}
 The Linearized action $$\bold d_{(\Phi^H, A^H)}: T \mathcal G^\C_\bold r \to {\widehat{ H}^1_\bold r}$$ is an isomorphism onto its image, which is a graph over the form component $\omega\in L^{1,2}_{-,0}(D_\bold r; \Omega^{0,1}(\C))$.
 \end{lm} 
  \begin{proof}
 It suffices to show that the projection of $\bold d_{(\Phi^H, A^H)}$ to the third component of triples $(\alpha,\beta,\omega)$ is an isomorphism. We have that $$\delbar: L^{2,2}_{+,0}(D_\bold r; \C) \to L^{1,2}(D_\bold r;\Omega^{0,1}(\C)) $$
 is an isomorphism, by the discussion following Proposition \ref{fredholmdelbar}, (see \ref{delm}). Here again, the domain denotes the space of functions $h$ on which $\Pi^+_{[0]}h=0$. Thus all that needs to be shown is that adding the second boundary condition to the domain restricts the image to those $L^{1,2}$ configurations satisfying the first-order boundary condition, i.e. that  
 
 $$ \Pi^+_{[0]}h=0  \ \text{ and }\Pi^-_{[0]}(\delbar h)=0\Leftrightarrow h\in L^{2,2}_{0,+} \text{ and }\delbar h\in L^{1,2}_{0,-}$$ 
 but the left side is exactly the definition of the spaces on the right. 
 \end{proof}
 
  As a consequence of the Lemma, there is a splitting into the tangent directions of the complex gauge action and a horizontal slice complementing it. Explicitly, there is an isomorphism 
 \be \begin{matrix}\mathcal H_\bold r \\ \oplus \\ T \mathcal G^\C_\bold r \end{matrix} \ \ \overset{\simeq }  \lre \ \  {\widehat{ H}^1_\bold r}\label{graphiso}\ee
 where
 $$ \mathcal H_\bold r = L^{1,2}_{0,+}(D_\bold r;\C^2)\oplus L^{1,2}_{-1,-}(D_\bold r;\C^2)$$ are the ``horizontal'' components of ${ \widehat H^1_\bold r}$. Explicitly, the isomorphism is given by $(Id, \bold d_{(\Phi^H, A^H)})$ i.e.

 $$(\psi, h)\mapsto (\psi, 0)+ (h \alpha^H , -\overline h \beta^H ,  \delbar h).  $$
 
 \medskip

  \noindent Conversely, any configuration can be written uniquely  $(\alpha,\beta , \omega)=(\psi, 0)+ (h\cdot \Phi^H ,  \delbar h) $.

\begin{lm}
The operator $\widehat{\mathcal N}^\C$ acting on triples $(\alpha,\beta , \omega )= (\psi,0)+ ( h \cdot \Phi^H,  2\delbar h)$ is given by the mixed-order operator

$$\Box(\psi, h)=\begin{pmatrix} \slashed D^\C_{A^H}   & 0 \\ \mu_\R(-, \Phi^H)  & -\Delta -|\Phi^H|^2 \end{pmatrix} \begin{pmatrix} \psi  \\ h \end{pmatrix}$$ where $$\slashed D^\C_{A^H}=   \begin{pmatrix} 0 & -2\del_{A^H} \\ 2\overline \del_{A^H} & 0 \end{pmatrix}.  $$ 

\end{lm}

\begin{proof}
The Lemma is a direct computation of  $$\widehat {\mathcal N}^\C= \begin{pmatrix} \begin{pmatrix} 0 & -2\del_{A^H} \\ 2\overline \del_{A^H}\end{pmatrix} & \gamma( \ \ )\Phi^H  \\ \mu_\R( \ , \Phi^H) & 2\del 
\end{pmatrix}\begin{pmatrix} \psi + \begin{pmatrix} h \alpha^H \\ -\overline h \beta^H \end{pmatrix} \\ 2\delbar h \end{pmatrix}.$$

\noindent For the spinor component, 
\bea \begin{pmatrix} 0 & -2\del_{A^H} \\ 2\overline \del_{A^H} & 0 \end{pmatrix} \begin{pmatrix}\psi + \begin{pmatrix}h \alpha^H \\ -\overline h \beta^H\end{pmatrix}\end{pmatrix}&=& \slashed D^\C_{A^H}\psi -2\del(-\overline h)\beta^H +2 \delbar(h)\alpha^H\\
&=& \slashed D^\C_{A^H}\psi + 2 \del \overline h \beta^H + 2\delbar h \alpha^H \\
&=& \slashed D^\C_{A^H}\psi - \gamma(2\del \overline h dw - 2 \delbar h d\overline w) \Phi^H  \\
&=& \slashed D^\C_{A^H}\psi -\gamma(2\delbar h)\Phi^H
 \eea
where we've expressed Clifford multiplication on $\Omega^1$ in terms of $\Omega^{0,1}$ via \ref{cliffordmult}. 
For the form component, 
$$\mu_\R(\psi+ h\cdot \Phi^H) + 4\del\delbar h=- \Delta h + \mu_\R(\psi, \Phi^H) + \mu_\R(h\cdot\Phi, \Phi).$$ 
And using (\refeq{mumaps}), $$\mu_\R(h\cdot \Phi^H, \Phi^H)= -h |\alpha_1^H|^2 - h|\beta_1^H|^2 -h |\alpha_2^H|^2 - h|\beta_2^H|^2=-|\Phi^H|^2.$$ 

\end{proof}

\noindent {\bf Step 2: (The Diagonal Terms)} The splitting $\widehat H^1_\bold r= \mathcal H_\bold r\oplus  T\mathcal G^\C_\bold r$  does not respect the norm. The norm  on the  ${\widehat{H}^1_\bold r}$ side is $$\|(\psi + h\cdot \Phi^H, 2\delbar h)\|_{\widehat{ H}^1}$$ while the natural norm on $\mathcal H_\bold r\oplus T\mathcal G_\bold r^\C$ is   
$$\left(\|\psi \|_{L^{1,2}_{-1}}^2+ \|h\|_{L^{2,2}}^2\right)^{1/2}.$$ 

\noindent These two norms are not uniformly equivalent in $\bold r$. The norm on ${\widehat{ H}^1_\bold r}$ is ``larger'' in the sense that it contains the $|\mu_\R(\ph,\Phi^H)|^2$ term, while it is ``smaller'' in the sense that for some configurations $\psi + h\cdot \Phi^H$ is small, while $\psi, h$ are individually large but nearly cancel. This problem becomes more pronounced for as $\bold r\to \infty$: in regions where $\Phi^H$ is large, then $h\Phi^H$ \--- hence the $\widehat H^1$ norm \--- is large when $h$ is of unit size. Viewing $T\mathcal G_\C$ as a graph over the $\omega$-component again, this behavior means the slope of the graph diverges for such configurations. To keep track of this we define the following norms on $ \mathcal H_\bold r,T\mathcal G_\bold r^\C$ respectively:  

\begin{eqnarray}
\|\psi \|_{L^{1,2}_{-1}}:&=& \left(\int_{D_\bold r} |\nabla \psi|^2 +  \frac{|\psi|^2}{R^2} \ dV \right)^{1/2}\\
\|h\|_{T\mathcal G^\C}:&=& \left(\int_{D_\bold r} |\nabla^2 h|^2 + |\Phi^H|^2  |\nabla h|^2+  |\Phi^H|^4|h|^2 \ dV \right)^{1/2}
\label{norms}
\end{eqnarray} 
and the {\bf Graph Norm} on $\widehat{ H}^1_\bold r= \mathcal H_\bold r\oplus T\mathcal G_\bold r^\C$ by 

\begin{equation}
\|(\psi,h)\|_{Gr}=\left( \|\psi\|^2_{L^{1,2}_{-1}}+ \|h\|^2_{T\mathcal G^\C}\right)^{1/2}.
\label{graphnorm}
\end{equation} 

The proof of Proposition \ref{holomorphicinvertible}, rests on the following abstract lemma which identifies the kernel of $\Box$ and provides uniform bounds on the inverse on the complement of the kernel. The lemma references two norms on the domain, $\|-\|$ and $\|-\|'$, which will be taken to be the Graph norm and $\widehat{H}^1_\bold r$ norm respectively.

 \begin{lm}  \label{abstractlem}
 Suppose that $( X^i_\bold r, \|-  \|_{ X_i,\bold r})$ and $(Y^i_\bold r, \| - \|_{Y_i,\bold r})$ for $i=1,2$ are families of Banach spaces parameterized by $\bold r\in (0, \infty)$. Set $X_\bold r= X_\bold r ^1 \oplus X_\bold r^2$ and $Y_\bold r= Y^1_\bold r\oplus Y^2_\bold r$ and suppose $N: X_\bold r \to Y_\bold r$ is a linear operator bounded for each $\bold r$ and admitting a block lower-triangular decomposition as $$N=\begin{pmatrix} A  & 0 \\ B & C  \end{pmatrix}. $$
Then 
 \begin{enumerate}

 \item[(1)] Assume that $A: X^1_\bold r \to Y^1_\bold r$ and $C: X^2_\bold r \to Y^2_\bold r$ are invertible, then $N$ is invertible for every $\bold r$.  If instead, $A: X^1_\bold r \to Y^1_\bold r$ is surjective with kernel of some finite dimension independent of $\bold r$, then $N$ is surjective, and $\dim \ker(N)=\dim \ker(A)$. If $x_i\in X_\bold r$ for $i=1,...,m$ are a basis for $\ker (A)$ then  $$\begin{pmatrix}  x_i \\ -C^{-1}B x_i\end{pmatrix} \hspace{1cm}\text{for }i=1,...,m$$
form a basis of $\ker(N)$. Additionally, if $p: X_\bold r\to V$ is a projection to a finite dimensional space restricting to an isomorphism on $\ker(N)$, then $$N\oplus p: X_\bold r\to Y_\bold r\oplus V$$ is an isomorphism.

  \item[(2)]
 Moreover, assume there exists a norm $\|-\|'$ on $X_\bold r$ equivalent for each $\bold r$ to the norm induced by the direct sum.  Suppose additionally that there is a family of operators and projections  
  $$K: X_\bold r\to Y_\bold r \hspace{1cm} p: X_\bold r\to V$$ respectively, where $p$ is as above, satisfying the following estimates: 
  \begin{enumerate}\item There is a constant $\kappa_1$ such that $$\|K x\|_{Y_\bold r} \leq \kappa_1 (\|Nx\|_{\bold r} + \|p x\|_V )$$
  \item There is a constant $\kappa_2$ such that: 

 $$\|x\|'\leq \kappa_2(  \|N x\|_{Y_\bold r}+\|K x\|_{Y_\bold r} ) $$
 where $\kappa_i$ are uniform in $\bold r$. 
  \end{enumerate} 
 
\noindent Then, denoting $X'_\bold r=(X_\bold r, \|-\|')$,  the operator $N: X_\bold r'\to Y_\bold r\oplus V$ is uniformly invertible, i.e. there is a constant $\kappa$ independent of $\bold r$ such that $$\|x \|' \leq \kappa (\|N x\|_{Y_\bold r} + \|px\|).$$  
  \end{enumerate}
 \end{lm}
 \begin{proof}
The first statement of (1) follows directly from $A, C$ being invertible. The inverse is given explicitly by $$N^{-1}=\begin{pmatrix} A^{-1} & 0 \\ -C^{-1}B A^{-1} & C^{-1}
 \end{pmatrix}.$$
 If $A$ has kernel, but $C$ is invertible, the form of the kernel follows directly from the form of $N$. The statement involving $N\oplus p$ is immediate. 
 \noindent For assertion (2), the conclusion follows directly from applying the estimate {\bf (ii)} then {\bf (i)} successively. 
 \end{proof}
 
 This lemma will be applied in that case that $N=\Box$ with $p$ the projection to the kernel of $\slashed D_{A^H}^\C$, as suggested by the notation. The remainder of Step 2 focuses on the diagonal terms $A=\slashed D_{A^H}$ and $C=-\Delta -|\Phi^H|^2$ to verify the hypotheses of part 1. of the lemma. The subsequent Step 3 addresses the hypotheses of part 2 of Lemma  \ref{abstractlem}.

The following Integration by parts identities are needed: 

\begin{lm}
For $u,v\in T\mathcal G^\C_\bold r$, the following integration by parts formulas hold: 

 \begin{itemize}\item  $\int_{D_\bold r} \br \Delta u, v \kt \ dV = \int_{D_\bold r}\br 2\delbar u, 2\delbar v\kt \ dV.$  
 \item  $\int_{D_\bold r} \br \Delta u, v \kt \ dV=\int_{D_\bold r} \br \nabla u, \nabla v \kt \ dV+ \int_{\del D_\bold r} \br i \del_\theta u , v \kt \ d\theta$
  
 \end{itemize}
\label{intbyparts1}
\end{lm}
\begin{proof}
One has the following integration by parts formlae for $\del, \delbar$: 
\begin{eqnarray}
\int_{D_\bold r} \br 2\del u, v\kt + \br u, 2\delbar v\kt  \ dV &=& \int_{\del D_\bold r} \br u,v\kt \rho e^{i\theta}d\theta\\
\int_{D_\bold r} \br 2\delbar u, v\kt + \br u, 2\del v\kt  \ dV &=& \int_{\del D_\bold r} \br u,v\kt \rho e^{-i\theta}d\theta
\label{deldelbarbyparts}
\end{eqnarray} 
\noindent Since for $u\in T\mathcal G^\C$,  $$u= u_{-1} e^{-i\theta} + u_{-2}e^{-2i\theta}+\ldots\hspace{1cm}\delbar u= f_1 e^{i\theta}
+ f_2 e^{2i\theta}+\ldots$$ and likewise for $v$, the boundary term$$\int_{\del D_\bold r} \br  \delbar u, v \kt \rho e^{i\theta}d\theta=0$$
vanishes. Consequently, 
\bea
\int_{D_\e}\br \Delta u ,v \kt dV &=& \int_{D_\bold r} \br -4\del \delbar u, v\kt \ dV= \int_{D_\bold r} \br 2\delbar u, 2\delbar v\kt \ dV
\eea
yielding the first equality. For the second recall in polar coordinates $\Delta=-\tfrac{1}{\rho}\del_\rho (\rho\del \rho)- \tfrac{1}{\rho^2}\del_\theta^2$. Then the equality $$d( \br \del_\rho u, v \kt \rho d\theta)= (\br  \del_\rho u, \del_\rho v \kt + \br  \tfrac{1}{\rho}\del_\rho (\rho \del_\rho u), v\kt ) \rho d\rho d\theta$$ implies 
\bea
\int_{D_\bold r} \br \Delta u, v\kt \ dV&=& \int_{D_\bold r} \br \del_\rho u, \del_\rho v\kt+ \tfrac{1}{\rho^2}\br \del_\theta u , \del_\theta v\kt \ dV - \int_{\del D_\bold r} \br \del_\rho u, v \kt \rho d\theta\\
&=& \int_{D_\bold r} \br \nabla u, \nabla v \kt -\int_{\del D_\bold r} \br  \delbar u, v\kt e^{-i\theta} \rho d\theta +\int_{\del D_\bold r} \br i\del_\theta u, v \kt  d\theta \\
&=& \int_{D_\bold r} \br \nabla u, \nabla v \kt +\int_{\del D_\bold r} \br i\del_\theta u, v \kt  d\theta
\eea
where we have used $\delbar = e^{i\theta}(\del_\rho + \tfrac{i}{\rho}\del_\theta)$ and observed the boundary term involving $\delbar$ vanishes for the same reason as in the first bullet point.  
\end{proof}  

This next lemma verifies the necessary hypotheses for the operator $C=-\Delta-|\Phi^H|^2$. The lemma after it verifies the same for $A=\slashed D_{A^H}^\C$. 

\begin{lm} Consider $(T\mathcal G^\C_\bold r, \|-\|_{T\mathcal G^\C})$ equipped the norm described in \ref{norms}. Then $$-\Delta -|\Phi^H|^2 : T\mathcal G^\C_\bold r\to L^2(D_\bold r \ ; \ \Omega^{1,1})$$
is uniformly invertible. 
\label{CLemma}
\end{lm}
\begin{proof}First observe that the first bullet point of Lemma \ref{intbyparts} shows $\br \Delta h, h\kt$ is positive, hence $$\int_{D_\bold r} \br (\Delta +|\Phi^H|^2)h, h\kt \geq  \int_{D_\bold r} |\Phi^H|^2 |h|^2 \ dV \geq c \| h\|_{L^2}$$
 since $|\Phi^H|$ is bounded below uniformly. This operator is therefore positive with a uniform lower bound on the lowest eigenvalue. Consequently, there is a uniform estimate: 
 \begin{equation}\|(\Delta +|\Phi^H|^2) h\|_{L^2}  \geq c \|h\|_{L^2}.\label{eigenvaluebound}\end{equation}
 
 Next, expanding and using the second integration by parts formula from Lemma \ref{intbyparts1}, 
 
 \bea
 \int_{D_\bold r}|(\Delta+|\Phi^H|^2) h|^2 \ dV&=&\int_{D_\bold r} |\Delta h|^2 + |\Phi|^4 |h|^2 + 2 \br \Delta h, h |\Phi^H|^2\kt \ dV \\
 &=& \int_{D_\bold r} |\Delta h|^2 + |\Phi^H|^4 |h|^2 + 2 \br \nabla  h,  \nabla (h |\Phi^H|^2)\kt \ dV + \int_{\del D_\bold r}2\br  i\del_\theta h, |\Phi^H|^2 h\kt d\theta\\
 &\geq &  \int_{D_\bold r} |\Delta h|^2 + |\Phi^H|^4 |h|^2 + |\nabla h|^2 |\Phi^H|^2 \ dV + \int_{\del D_\bold r}2\br  i\del_\theta h, |\Phi^H|^2 h\kt d\theta\\
 & & -\int_{D_\bold r} \Big | 2 \br \nabla h , h\kt  (\nabla |\Phi^H|^2) \Big |  \ dV
 \eea
 and writing $h=\sum_{\ell>0}h_\ell e^{-i\ell \theta}$ shows
$$ \int_{\del D}2\br  i\del_\theta h, |\Phi^H|^2 h\kt d\theta= \int_{D_\bold r} \sum_{\ell>0} \ell |h_\ell|^2 |\Phi^H|^2 d\theta\geq 0.$$
Young's inequality shows \bea \br \nabla h , h\kt (\nabla |\Phi^H|^2) &\leq&  \tfrac{\epsilon}{2} |\nabla h|^2 + \tfrac{1}{2\epsilon} |h|^2 (\nabla |\Phi^H|^2 )^2 \\
&\leq & \tfrac{1}{2}|\nabla h|^2 |\Phi^H|^2 + C |h|^2 
\eea 
for $\epsilon$ sufficiently small, since $|\Phi^H|$ is bounded below uniformly and $|\Phi^H|^2\sim \rho$ so $\nabla |\Phi^H|^2$ is uniformly bounded. Absorbing the first term and moving the second to the  other side yields 
\begin{eqnarray}
  \int_{D_\bold r} |\Delta h|^2 + |\Phi^H|^4 |h|^2 + |\nabla h|^2 |\Phi^H|^2 \ dV &\leq& \tfrac{1}{2}  \int_{D_\bold r}|(\Delta+|\Phi^H|^2) h|^2 \ dV+ C \int_{D_\bold r}|h|^2 \ dV \\
  &\leq &  C'  \int_{D_\e}|(\Delta+|\Phi^H|^2) h|^2 \ dV
  \label{eigenvalueabsorbing}
\end{eqnarray}
after applying (\refeq{eigenvaluebound}). To conclude, we note that the estimate 

$$\int_{D_\bold r} |\nabla^2 h |^2 \ dV \leq C \int_{D_\bold r} |\Delta h |^2 \ dV$$

\noindent holds uniformly in $\bold r$. It is trivial on $\R^2$ via integration by parts, and if it were not true uniformly in $\bold r$ then on a sequence $h_n$ of unit norm in $T\mathcal G^\C_{r_n}$ violating the inequality, $\chi_n h_n$ would violate the inequality on $\R^2$ for a sequence of large cutoffs.  
\end{proof}

\bigskip 
\begin{lm}\label{ALemma}
Consider $(\mathcal H_\bold r, \|-\|_{L^{1,2}_{-1}})$ equipped with the norm described in \ref{norms}. Then $$\slashed D_{A^H}^\C: \mathcal H_\bold r \lre L^2(D_\bold r ; \C^4)$$ is surjective with kernel of real dimension 4 given by the complex span of 
$$ \beta_1 =\begin{pmatrix}  0 \\ e^{-H} \cdot \rho^{-1/2} \end{pmatrix} \otimes 1  \hspace{1cm} \beta_2 =  \begin{pmatrix}  0 \\ e^{-H} \cdot \rho^{-1/2} \end{pmatrix} \otimes j. $$
Moreover, if $p: \mathcal H_{\bold r} \to \ker(\slashed D^\C_{A^H})$ is the orthogonal projection to the kernel (with respect to the  $\|-\|_{L^{1,2}_{-1}}$ norm), then $$\|\psi\|_{L^{1,2}_{-1}}\leq C (\|\slashed D_{A^H}^\C \psi\|_{L^2}+ \|p(\psi)\|)$$
\noindent holds uniformly in $\bold r$.  
\end{lm}

\begin{proof}
First, we identify the kernel. It follows from the discussion of APS boundary conditions that 
\bea
D^\C_{A^H}=\begin{pmatrix} 0 & -2\del_{A^H} \\ 2\delbar_{A^H}& 0 \end{pmatrix} : \begin{matrix} L^{1,2}_{0,+}(D_\bold r; \C^2 )\\ \oplus \\ L^{1,2}_{-,-1}(D_\bold r ; \C^2)\end{matrix}\lre \begin{matrix} L^2(D_\bold r; \C^2) \\ \oplus \\ L^2(D_\bold r; \C^2) \end{matrix}
\eea
is a bounded Fredholm operator of (real) Index 4, since $\gamma(A^H)$ is a compact perturbation. To see the kernel is as claimed, we (complex) gauge transform to the standard complex structure. Let $U=\log(\rho^{1/2})$, and $A_0= \tfrac{1}{4}\left(\tfrac{dw}{w}-\tfrac{d\overline w}{\overline w}\right)$ be the singular connection in $\rho$-coordinates. Then we have the relations
\bea
e^U\cdot \Gamma_0& =&  A_0\\
e^H \cdot A_0&=& A^H
\eea  
where $\Gamma_0$ is the trivial connection (the second equality is the definition of $A^H$). Thus letting $G=H+U$, the gauge transformation $e^G~r^{1/2}$ acts by $$e^G \cdot \Gamma_0\mapsto A_H.$$
Notice that $G$ is non-singular at the origin, since $U,H$ have the same leading order term with opposite signs. Since $G$ is rotational invariant, it preserves the boundary values and thus the property of lying in $\mathcal H_\bold r$.  Consequently $$\delbar_{A^H}(e^G u)=e^G \delbar u \hspace{1cm} \del_{A^H} (e^{-G}v)=e^{-G}\del v$$  
where $\delbar,\del$ are the standard operators on $\C$. Since $e^{\pm G}\neq 0$ we see an element $(\alpha,\beta)$ is in the kernel if and only if $(e^{-G}\alpha, e^G \beta)$ are holomorphic and anti-holomorphic respectively in the standard complex structure. Since with the boundary conditions allow no holomorphic functions, and only the constant anti-holomorphic functions, we find the kernel consists of elements $\beta$ such that $e^G\beta=\text{const}$. The assertion for the form of the kernel follows, and the surjectivity from the index computation.    

The uniform estimate is given by Proposition \ref{delbarAHprop}, with the inconsequential caveat that we are free to instead take the weight $R$ used in that Proposition   \ref{delbarAHprop} (which is equal to $\rho$ outside a compact region) to be $\sqrt{1+\rho^2}$. 

 \end{proof}

 \noindent {\bf Step 3: (Uniform Invertibility)}
 
 To obtain the uniform estimate of Proposition \ref{holomorphicinvertible}, we apply part 2. of Lemma \ref{abstractlem}. In this step, we verify the estimates {\bf (ii)} and then {\bf (i)}. The first, {\bf (i)} is obtained from the solvability of $\Box$, and {\bf (ii)} follows from the Weitzenb\"ock formula. The next two Lemmas establish first {\bf (ii)} and then {\bf (i)}. 
 
 We define the projection operator $$K: \widehat{ H}^1_\bold r\to L^2_\bold r$$ by \begin{equation}K(\ph,a):= \frac{\ph}{R}.\label{Kdef}\end{equation}
Since $\|(\ph,a)\|_{\widehat{H}^1_\bold r}$ contains the term $\int_{D_\bold r} \tfrac{|\ph|^2}{R^2} \ dV$ ,   $K$ is well-defined and bounded. 

\begin{lm}There is a constant $\kappa_2$ such that the estimate  

\begin{equation}
\|(\ph,a)\|_{\widehat{ H}^1_\bold r}\leq \kappa_2 (\|\widehat{\mathcal N}^\C(\ph,a)\|_{L^2} +  \|K(\ph,a)\|_{L^2})
\end{equation}
holds uniformly in $\bold r$. 
\label{hypothesisii}
\end{lm}

\begin{proof}
This proposition is Weitzenb\"ock formula (Proposition \ref{Weitzenb\"ock}) combined with the observation that the cross-term is bounded by $K$. We begin by showing that the chosen boundary conditions have no boundary terms when integrating by parts. 

\begin{claim} For $(\ph,\omega)\in \widehat{H}^1_\bold r$ one has $$\int_{D_\bold r}\br \slashed D_{A^H}^\C \ph, \gamma(\omega)\Phi^H\kt \ dV = \int_{D_\bold r}\br \ph, \slashed D_{A^H}^\C\gamma(\omega)\Phi^H\kt  \ dV. $$ 
\label{integrationbypartsA}
\end{claim}
\begin{proof}
\noindent  Explicitly (recalling the expression \ref{cliffordmult} for Clifford multiplication) the left side in terms of the $\alpha,\beta$ component of the spinor is $$\text{Re}\int_{D_\bold r} \br  -2\del_{A^H} \beta, -\overline \omega \beta^H \kt+  \br  2\delbar_{A^H}\alpha,  -\omega \alpha^H \kt \ dV. $$

\noindent Using the integration by parts formulae 
\begin{eqnarray}
\int_{D_\bold r} \br 2\del u, v\kt + \br u, 2\delbar v\kt  \ dV &=& \int_{\del D_\bold r} \br u,v\kt \rho e^{i\theta}d\theta\\
\int_{D_\bold r} \br 2\delbar u, v\kt + \br u, 2\del v\kt  \ dV &=& \int_{\del D_\bold r} \br u,v\kt \rho e^{-i\theta}d\theta
\label{deldelbarbyparts1}
\end{eqnarray} 
we see the boundary term is $$\text{Re}\int_{\del D_\bold r} \br -\beta, -\overline\omega \beta^H\kt  e^{i\theta}+ \br \alpha, -\omega \alpha^H \kt  e^{-i\theta}  \ \rho d\theta $$

\noindent Since $\alpha^H, \beta^H$ have only constant and $e^{-i\theta}$ boundary modes respectively, and 
\bea
\alpha|_{\del D} &=& \ldots \alpha_{-2}e^{-2i\theta} + \alpha_{-1}e^{-i\theta} \\
\beta|_{\del D} &=& \hspace{4.5cm}  \beta_0 + \beta_1 e^{i\theta}+ \beta_2 e^{2i\theta} +\ldots \\
\omega|_{\del D} &=&  \hspace{4.5cm}    \ \ \ \ \  \ \ \omega_1 e^{i\theta}+ \omega_2 e^{2i\theta} +\ldots  
\eea
there are no overlapping Fourier modes on the boundary. This completes the claim. 
\end{proof}

We may write $$\widehat{\mathcal N}^\C(\ph,a)= \begin{pmatrix}\slashed D_{A^H}^\C \ph + \gamma(\omega)\Phi^H \\ \mu_\R(\ph, \Phi^H) + 2\del \omega \end{pmatrix}.$$

The above claim combined with the cancellation of the first-order terms as in Proposition [Weitzenb\"ock] shows that

\begin{eqnarray}
\int_{D_\bold r} |\widehat{\mathcal N}^\C (\ph)|^2  \ dV&=&\int_{D_\bold r} |\slashed D_{A^H}^\C \ph|^2 + |2 \del \omega|^2 + |\mu_\R (\ph,\Phi^H)|^2 + |\omega|^2 |\Phi^H|^2  \\   & &\ +  \  \br (\ph,\omega), \mathfrak B(\ph,\omega)\kt_{L^2}  \ dV\end{eqnarray}
where the cross-term $\mathfrak B(\ph,\omega)$ are as in Proposition \ref{Weitzenb\"ock}, i.e.  $$\mathfrak B\begin{pmatrix} \ph \\ \omega \end{pmatrix} = \begin{pmatrix}  -2 \omega\cdot \nabla_{A^H}\Phi^H \\2i \br i\ph, \nabla_{A^H}\Phi^H\kt  \end{pmatrix}$$
where $\cdot$ denotes the contraction of form components (where $\omega=(-\text{Im}(\omega)id\hat x+ \text{Re}(\omega)id\hat y)$ as in \ref{isomorphisms}), and the bottom component is a 1-form. We now claim the following two estimates:  

\begin{claim}
The estimate 

\bea \left(\int_{D_\bold r} \frac{|\ph|^2}{R^2}+ |\nabla \ph|^2 \ dV\right)^{1/2} &\leq&  C_1  (\|\slashed D^\C_{A^H}\ph\|_{L^2} + \|\tfrac{\ph}{R}\|_{L^2}) \\ &=&C_1  (\|\slashed D^\C_{A^H}\ph\|_{L^2} + \|K(\ph,a)\|_{L^2})  \eea 
holds uniformly in $\bold r$. 
\label{lockmcowenkernel}
\end{claim}

\begin{claim}
There is a constant $C_2$ such that 

$$ \br \begin{pmatrix}\ph \\ a \end{pmatrix}, \mathfrak B\begin{pmatrix}\ph \\ a \end{pmatrix}  \kt_{L^2}\leq C_2\|K(\ph,a)\|_{L^2}+\frac{1}{2}\int_{D_\bold r} |\mu_\R(\ph,\Phi^H)|^2+|\omega|^2|\Phi^H|^2  \ dV $$
holds uniformly in $\bold r$. 
\label{crossterm}
\end{claim}
 
\noindent  To conclude the proof of Lemma \ref{hypothesisii}, move the $\mathfrak B$ term to the right side and apply Claim \ref{crossterm}, then absorb the $\mu_\R$ and $|\omega|^2|\Phi^H|^2$ terms. Possibly increasing the constant on the $K(\ph,a)$ term, applying claim \ref{lockmcowenkernel} makes the left side into the $\widehat{ H}^1_\bold r$ norm.
\end{proof}

We now prove the two claims.  
\begin{proof} {\it (of Claim \ref{lockmcowenkernel})} For the $\alpha$ component this follows directly from Proposition \ref{delbaruniformestimate}. For the $\beta$ component, integration by parts as in Proposition \ref{deluniformestimate} shows such an estimate with an operator $K_1$ being a term arising from the exponentially decaying curvature $F_{A^H}$ (which is positive but acts by $-i$ on the $\beta$ component). This term is dominated by $K$ for a sufficiently large $C_1$.   
\end{proof} 

\begin{proof} {\it (of Claim \ref{crossterm})}
Recall $R=\sqrt{(1+\rho^2)}$. Observe that there are constants $C_1, c_1$ such that $$|\nabla_{A^H}\Phi^H|^2 \leq C_1 R^{-1} \hspace{1.5cm}  c_1 R \leq |\Phi^H|^2.$$

\noindent The first of these follows since $A^H, \Phi^H$ as exponentially close to $A_0$ and $\Phi_0$ respectively, and $\nabla_{A_0}\Phi_0\sim \rho^{-1/2}$ since $\Phi_0\sim \rho^{1/2}$. Additionally, $\nabla_{A^H}\Phi^H$ is bounded across the origin. Likewise, the second holds since $|\Phi^H|$ is non-zero and increasing, and exponentially close to $\Phi_0\sim \rho^{1/2}$. Combining these, there is a positive constant $c_2<<1$ such that 

\begin{equation}
c_2 |\nabla_{A^H}\Phi^H|^2 R^2 \leq \tfrac{1}{4} |\Phi^H|^2. 
\end{equation}

\noindent Then 
\bea
\br \ph, -2\omega \cdot \nabla_{A^H}\Phi^H\kt\leq \frac{|\ph|^2}{2c_2R^2} +  \frac{|\omega|^2}{2}c_2  |\nabla_{A^H}\Phi^H|^2 R^2\leq c_3 \frac{|\ph|^2}{R^2}+\frac{1}{4} |\omega|^2 |\Phi^H|^2
\eea
and identically for the second component. The claim follows.  
\end{proof} 
As a consequence, hypothesis {\bf (ii)} in the abstract Lemma \ref{abstractlem} is satisfied. The following final lemma establishes hypothesis {\bf (i)} in the abstract Lemma \ref{abstractlem}: 

\begin{lm}
There is a constant $\kappa_1$ such that the estimate

\begin{equation}
\|K(\ph,a)\|_{L^2}\leq \kappa_1 (\|\widehat{\mathcal N}^\C(\ph,a)\|_{L^2}+ \|p(\ph,a)\|_{L^{1,2}_{-1}})
\end{equation}
holds uniformly in $\bold r$. 
\label{hypothesisi}
\end{lm}
\begin{proof}
Suppose that $(y_1,y_2)$ satisfy $\widehat{\mathcal N}^\C (\ph,a)=(y_1, y_2)$. Then writing $(\ph,a)=(\psi + h\cdot \Phi^H, \delbar h)$ one has 

\bea
\slashed D^\C_{A^H} \psi &=& y_1 \\ 
(-\Delta - |\Phi^H|^2)h + \mu_\R(\psi, \Phi^H)&=&y_2
\eea
Since $K\ph=\tfrac{\ph}{R}$ times a constant, it suffices to show 

$$\int_{D_\bold r} \frac{|\psi + h\cdot \Phi^H|^2}{R^2}\  dV \leq C(\|(y_1,y_2)\|_{L^2}+\|p(\psi)\|_{L^{1,2}_{-1}}).$$

\noindent One has 

\bea
\int_{D_\bold r} \frac{|\psi + h\cdot \Phi^H|^2}{R^2} \ dV &\leq & \int_{D_\bold r} \frac{|\psi|^2}{R^2}+ \frac{|h|^2|\Phi|^2}{R^2} \ dV \\
& \leq &  \|\psi\|^2_{L^{1,2}_{-1}}+ \int_{D_\bold r} \frac{|h|^2 |\Phi^H|^2}{R^2} \ dV \\
&\leq &   \|y_1\|^2_{2} + \|p(\psi)\|_{L^{1,2}_{-1}}^2+ \int_{D_\bold r} \frac{|h|^2 |\Phi^H|^2}{R^2} \ dV. 
\eea   

\noindent where we have applied Proposition \ref{ALemma}. We wish to show a uniform bound on the second term when 
$$(\Delta+|\Phi^H|^2) h = \mu(\psi, \Phi^H) - y_2.$$

To see this, note that the right hand side may be written as $R^{3/2}f$  for a function $f\in L^2$ with $\|f\|_{L^2}\leq C(\|(y_1,y_2)\|_{L^2}+ \|p(\psi)\|_{L^{1,2}_{-1}})$, since $$\int_{D_\bold r} \Big |\frac{\mu(\psi, \Phi^H)-y_2}{R^{3/2}}\Big |^2 \ dV \leq C \int_{D_\bold r} \frac{|\psi|^2}{R^2}+ |y_2|^2 \ dV\leq C(\|(y_1,y_2)\|^2_{L^2}+\|p(\psi)\|^2_{L^{1,2}_{-1}})$$ where $C$ is such that $|\Phi^H|\leq Cr^{1/2}$. The second inequality follows from applying the uniform estimate on $\slashed D_{A^H}^\C$ from Proposition  \ref{ALemma} again.

Now let $h_1 \in T \mathcal G^\C_\bold r$ be the unique solution of $(\Delta +|\Phi^H|^2 )h_1 = f$. By the definition of the norm on $T\mathcal G^\C_\bold r$ one has $\|R^{1/2}\nabla h_1\|_{L^2} \leq C \| |\Phi^H|^2 \nabla h_1\|_{L^2}$ so $R^{1/2} \nabla h_1 \in L^2$, and clearly $|R^{-1/2}h_1| \in L^2$.  Thus $$ \Delta R^{3/2} h_1+ \nabla R^{3/2} \cdot \nabla h_1 \in L^2(D_\bold r; \C)$$ and its $T\mathcal G^\C$ norm is bounded by a constant time $\|(y_1,y_2)\|_{2} + \|p(\psi)\|_{L^{1,2}_{-1}}$. Now let $g\in T\mathcal G^\C_\bold r$ be the unique solution of  
$$(\Delta +|\Phi^H|^2)g =  \Delta R^{3/2} h_1+ \nabla R^{3/2} \cdot \nabla h_1 $$
satisfying the given boundary conditions, which also satisfies $\|g\|_{T\mathcal G^\C}\leq C( \|(y_1,y_2)\|_{2} + \|p(\psi)\|_{L^{1,2}_{-1}})$ and define $$h:= R^{3/2}h_1 -g.$$

\noindent Then  
\bea
(\Delta+|\Phi^H|^2) h&=& R^{3/2}(\Delta h_1 +|\Phi^H|^2 h_1) + \Delta R^{3/2} h_1+ \nabla R^{3/2} \cdot \nabla h_1 - (\Delta+\|\Phi^H|^2)g \\
&=& R^{3/2} f
\eea
is the unique solution sought. And it now follows that 
\bea
\int_{D_\bold r} \frac{|h\cdot \Phi^H|^2}{R^2} \leq C\int_{D_\bold r} \frac{|h|^2}{R}  \ dV\leq  C_1 \int_{D_\bold r} R^2 |h_1|^2 + \frac{|g|^2}{R} \ dV &\leq&  C_2 \|h_1\|^2_{T\mathcal G^\C} + \| g\|^2_{T\mathcal G^\C}\\  &\leq &C_3(\|(y_1,y_2)\|_{L^2}+\|p(\psi)\|_{L^{1,2}_{-1}}).
\eea

\end{proof}

We can now conclude the proof of Proposition \ref{holomorphicinvertible}. Indeed, Lemmas \ref{CLemma} and \ref{ALemma} show that the hypothesis of part (1) of Lemma \ref{abstractlem} are satisfied for $N=\widehat{\mathcal N}^\C=\Box$. Subsequently, Lemmas \ref{hypothesisi} and \ref{hypothesisii} show that the two hypothesis of part (2) are satisfied for $K$ as defined in Equation (\ref{Kdef}) and $p$ as in Lemma \ref{ALemma}. The proposition now follows from applying Lemma \ref{abstractlem}.

\end{proof}

\subsection{The General Case}
\label{generalcaseN}
\label{section6.5}

This subsection completes the proof of Proposition \ref{invertibleN} by deducing the general case from the case that $\zeta=0$ studied in the previous subsection. This involves two steps: first an integration by parts that shows the terms arising from $\zeta$ are strictly positive, and second, replacing the projection to the 4-dimensional kernel of $\widehat{\mathcal N}^\C$ with the 2-dimensional one of $\widehat{\mathcal N}$.

{\bf{Step 1:}}
We have the following integration by parts: 
\begin{lm}
$$ \int_{D_\bold r}|\widehat{\mathcal N}(\ph, \omega, \zeta)|^2  \ dV= \int_{D_\bold r} |\widehat{\mathcal N}^\C(\ph, \omega, 0)|^2+ |\mu_\C (\ph,\Phi^H)|^2 + |\nabla \zeta|^2 + |\zeta|^2|\Phi^H|^2  \ dV$$
\label{withb} 
\end{lm}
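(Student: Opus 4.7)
The strategy is to expand $|\widehat{\mathcal N}(\ph,\omega,\zeta)|^2$ pointwise and isolate the terms that distinguish it from $|\widehat{\mathcal N}^\C(\ph,\omega,0)|^2$. Writing $a=(\zeta,\omega)$ and using the matrix form (\refeq{cliffordmult}) of Clifford multiplication, the difference in the spinor components is precisely $\gamma(\zeta,0)\Phi^H=(ip\alpha^H,\,i\overline{p}\beta^H)$, while the form/moment-map block of $\widehat{\mathcal N}$ gains an additional row $\mu_\C(\ph,\Phi^H)-2\delbar\zeta$ which is absent from $\widehat{\mathcal N}^\C$. Expanding the square produces three pure new diagonal pieces: $|\gamma(\zeta,0)\Phi^H|^2=|\zeta|^2|\Phi^H|^2$, $|\mu_\C(\ph,\Phi^H)|^2$, and $|-2\delbar\zeta|^2=4|\delbar\zeta|^2$. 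The rest are cross-terms linear in $\zeta$.

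These cross-terms are of exactly the type that is cancelled by the Weitzenb\"ock identity. They are three: the Dirac--Clifford pairing $2\text{Re}\,\br\slashed D^\C_{A^H}\ph,\gamma(\zeta,0)\Phi^H\kt$; the moment-map--differential pairing $-4\text{Re}\,\br\mu_\C(\ph,\Phi^H),\delbar\zeta\kt$; and the pointwise Clifford--Clifford cross $2\text{Re}\,\br\gamma(0,\omega)\Phi^H,\gamma(\zeta,0)\Phi^H\kt$ arising from expanding $|\gamma(a)\Phi^H|^2$. Using the commutator identities (\refeq{appendix1identities1})--(\refeq{appendix1identities}), the Clifford--moment-map adjointness $\br\gamma(b)\Phi,\Psi\kt=\br b,\mu(\Psi,\Phi)\kt$, and the fact that the scale-invariant fiducial configurations satisfy $\slashed D^\C_{A^H}\Phi^H=0$ and $\mu_\C(\Phi^H,\Phi^H)=0$, these three contributions assemble into a total divergence. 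This is the same cancellation already exploited in Claim~\ref{integrationbypartsA}. After integrating, the bulk contribution vanishes and only boundary integrals on $\del D_\bold r$ survive.

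It remains to convert $\int 4|\delbar\zeta|^2$ into $\int|\nabla\zeta|^2$. The pointwise identity $|\nabla\zeta|^2=2|\del\zeta|^2+2|\delbar\zeta|^2$, combined with Stokes applied to $d(\overline\zeta\,\delbar\zeta\,dw)$, yields $\int(|\delbar\zeta|^2-|\del\zeta|^2)=\int_{\del D_\bold r}(\text{bdy})$, producing the desired $\int 4|\delbar\zeta|^2=\int|\nabla\zeta|^2$ up to boundary terms. The main obstacle in executing the proof will be verifying that \emph{all} such boundary integrals vanish under the twisted boundary conditions of Definition~\ref{boundarymodes}. This is a Fourier-mode bookkeeping argument in the spirit of Claim~\ref{integrationbypartsA}: the conditions $\Pi^+_{[0]}\alpha=\Pi^-_{[-1]}\beta=\Pi^+_{[0]}\zeta=\Pi^-_{[0]}\omega=0$ ensure that most pairings between boundary Fourier modes of $\alpha,\beta,\zeta,\omega$ and the modes $\alpha^H_0,\beta^H_{-1}$ of $\Phi^H$ are orthogonal. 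The subtle point is the boundary residual produced by the $\mu_\C$--$\delbar\zeta$ integration by parts: it pairs the constant Fourier mode $\beta_0$ and the $e^{-i\theta}$ mode $\alpha_{-1}$ against the corresponding modes of $\Phi^H$, and so does \emph{not} vanish by raw Fourier orthogonality. Its vanishing is precisely the statement $\mu_\C^\del(\alpha_{-1},\beta_0)=0$ imposed in Definition~\ref{boundarymodes} -- which is, in fact, the structural reason that constraint was included in the twisted boundary conditions.
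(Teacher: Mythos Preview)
Your approach is essentially the same as the paper's: decompose $\widehat{\mathcal N}$ as $\widehat{\mathcal N}^\C$ plus the $\zeta$-contributions, expand the square, and show the linear-in-$\zeta$ cross terms cancel by the Weitzenb\"ock-type integration by parts, with the crucial boundary residue being exactly $\mu_\C^\del(\alpha_{-1},\beta_0)$. You are in fact more careful than the paper on two points. First, you flag the pointwise cross term $2\text{Re}\,\br\gamma(0,\omega)\Phi^H,\gamma(\zeta,0)\Phi^H\kt$, which the paper silently drops; it vanishes pointwise because $\sum_j\overline{\alpha_j^H}\beta_j^H=0$, i.e.\ $\mu_\C(\Phi^H,\Phi^H)=0$. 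Second, you isolate the conversion $\int 4|\delbar\zeta|^2\to\int|\nabla\zeta|^2$, which the paper does not address at all.

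There is one inaccuracy in your proposal: the boundary term from that last conversion does \emph{not} vanish. With $\Pi^+_{[0]}\zeta=0$, a direct Fourier computation gives
\[
\int_{D_\bold r}4|\delbar\zeta|^2\,dV-\int_{D_\bold r}|\nabla\zeta|^2\,dV \;=\;2\pi\bold r\sum_{k<0}|k|\,|\zeta_k|^2\;\geq\;0,
\]
which is generically nonzero. Thus the identity stated in the lemma is really the inequality $\|\widehat{\mathcal N}\|_{L^2}^2\geq\|\widehat{\mathcal N}^\C\|_{L^2}^2+\|\mu_\C(\ph,\Phi^H)\|_{L^2}^2+\|\nabla\zeta\|_{L^2}^2+\|\zeta|\Phi^H|\|_{L^2}^2$. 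This is harmless for the application---only the inequality is used in deriving (\ref{otherprojection}) and thereafter---and the paper itself glosses over this point; but your claim that ``all such boundary integrals vanish'' is not quite right for this particular one.
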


\begin{proof}
We may write
\bea \widehat{\mathcal N}(\ph, \omega, \zeta) &=&\widehat{\mathcal N}^\C(\ph, \omega)+ \begin{pmatrix}\gamma(\zeta)\Phi^H \\ 0 \\ -2\delbar \zeta + \mu_\C(\ph,\Phi^H)\end{pmatrix}\\
&=& \begin{pmatrix}\slashed D_{A^H}^\C & \gamma( \ \ )\Phi^H  \\ \mu_\R( \ , \Phi^H) &  2\del  \\ 0 & 0
\end{pmatrix} \begin{pmatrix}\ph  \\ \omega \\ \zeta  \end{pmatrix}+  \begin{pmatrix}\gamma(\zeta)\Phi^H \\ 0 \\ -2\delbar \zeta + \mu_\C(\ph,\Phi^H)\end{pmatrix}. \eea

Next, we integrate by parts to show the cross terms cancel, as in the Weitzenb\"ock formula Proposition \ref{Weitzenb\"ock}. The cross terms are:

\be
2\text{Re}\br -2\del_{A^H}\beta \ , \  i\zeta \alpha^H\kt \hspace{1cm}  2\text{Re}\br  2\delbar_{A^H}\alpha \ , \  i\overline \zeta \beta^H\kt \hspace{1cm}
2\text{Re} \br -2 \delbar \zeta \ , \ \mu_\C(\ph,\Phi^H) \kt.
\label{crossterms}
\ee

\noindent Provided we may integrate by parts with no boundary terms, the lemma follows from the same cancellation that occurs is Proposition \ref{Weitzenb\"ock} after it is pushed through the appropriate isomorphisms with the original form expressions via \ref{isomorphisms} . In order to show the boundary term vanishes, note that the allowed boundary modes are illustrated by

\begin{eqnarray} \int_{D_\bold r}\br -2\del_{A^H}\beta \ , \ i\zeta \alpha^H  \kt  + \br \beta \ , \ - 2\delbar_{A^H} i \zeta \alpha^H \kt &=&- \int_{\del D_\bold r} \br \beta, i\zeta \alpha^H  \kt \rho e^{-i\theta }d\theta  \label{partsintA}\\
\int_{D_\bold r}\br 2\delbar_{A^H}\alpha\ , \ i\overline \zeta \beta^H  \kt  + \br \alpha \ , \  2\del_{A^H} i \overline \zeta \beta^H \kt &=& \int_{\del D_\bold r} \br \alpha, i\overline\zeta \beta^H  \kt \rho e^{i\theta }d\theta
\label{partsintB}
\end{eqnarray}

\noindent and recall the boundary conditions require

\begin{eqnarray}
\alpha|_{\del D_\bold r} &=& \ldots \alpha_{-2}e^{-2i\theta} + \alpha_{-1}e^{-i\theta} \label{boundarymodesA} \\
\beta|_{\del D_\bold r} &=& \hspace{4.5cm}  \beta_0 + \beta_1 e^{i\theta}+ \beta_2 e^{2i\theta} \  +\ldots \\
\zeta|_{\del D_\bold r} &=& \ldots \zeta_{-2}e^{-2i\theta}  + \zeta_{-1}e^{-i\theta} \\
\omega|_{\del D_\bold r} &=&  \hspace{4.5cm}  0  \ + \  \omega_1 e^{i\theta}+ \omega_2 e^{2i\theta} +\ldots.
\label{boundarymodes}
\end{eqnarray}
\label{NBV2}

\noindent and, writing
 $$  \alpha_{-1}  =a_1 \otimes 1 + a_2 \otimes j  \hspace{1cm} \beta_{0} =b_1 \otimes 1 + b_2 \otimes j $$

\smallskip

\noindent where the subscript on the left hand sides denotes the Fourier mode, it is additionally required that  
\smallskip 
  $$ 0=\mu_\C^\del(\alpha,\beta)= b_1 \overline \alpha_1^H + \overline a_1 \beta_1^H + b_2 \overline \alpha_2^H + \overline a_2 \beta_2^H. $$
  
\noindent   Using (\refeq{boundarymodesA})-(\refeq{boundarymodes}), most modes on right hand side of (\ref{partsintA}) and (\ref{partsintB}) vanish. The real part of the boundary term becomes 

\bea
&=& \text{Re} \int_{D_\bold r}  -b_1 \overline{(i\zeta_{-1})} \overline \alpha_1^H -b_2 \overline{(i\zeta_{-1}) }\overline \alpha_2^H +a_1 \overline{ (i\overline \zeta_{-1})} \overline \beta_1^H+a_2 \overline {(i\overline \zeta_{-1})} \overline \beta_2^H \rho d\theta  \\
&=& \int_{D_\bold r} \text{Re}(  b_1\overline\alpha_1^H  (i\overline \zeta_{-1})+ b_2\overline\alpha_2^H  (i\overline \zeta_{-1}) + a_1 \overline \beta_1^H (-i \zeta_{-1} + a_2 \overline \beta_2^H (-i \zeta_{-1} ))\\
&=& \int_{D_\bold r} \text{Re}( \mu_\C^\del(\alpha,\beta)\cdot (i\overline \zeta_{-1}))=0
\eea 
where we have conjugated the second two terms. Thus the boundary term is 0 when integrating by parts. 

Then since $\delbar_{A^H}\alpha^H= \del_{A^H}\beta^H=0$, the cross term vanishes by the cancellation in the Weitzenb\"ock formula. Indeed, in this context, one has 
\bea
\text{Re}(\br -2\del_{A^H}\beta \ , \  i\zeta \alpha^H\kt +\br  2\delbar_{A^H}\alpha \ , \  i\overline \zeta \beta^H\kt)&=& \text{Re} (  \br \beta, 2\del_{A^H} (i \overline \zeta \alpha^H) \kt+\br \beta, -2\delbar_{A^H} (i \zeta \alpha^H) \kt )\\ &=& \text{Re}\Big \langle \begin{pmatrix} - \beta \\ \alpha \end{pmatrix} \ , \ \begin{pmatrix} -i2\delbar\zeta & 0 \\ 0 & -i2 \overline {\delbar \zeta} \end{pmatrix} \begin{pmatrix} \alpha^H \\ \beta^H\end{pmatrix} \Big \rangle\\
&=& \text{Re}\Big \langle  \mu_\R \left(\begin{pmatrix} - \beta \\ \alpha \end{pmatrix}, \begin{pmatrix} \alpha^H \\ \beta^H \end{pmatrix} \right) \ , \ -2\delbar \zeta\Big \rangle\\
&=& -\text{Re}\br  -2\delbar \zeta, \mu_\C(\ph,\Phi^H) \kt
\eea
since 

$$\mu_\R \left(\begin{pmatrix} - \beta \\ \alpha \end{pmatrix}, \begin{pmatrix} \alpha^H \\ \beta^H \end{pmatrix} \right)=  \beta_1 \overline\alpha_1^H + \overline \alpha_1 \beta_1^H +\beta_2 \overline \alpha_2^H + \overline \alpha_2 \beta_2^H=- \mu_\C(\ph, \Phi^H) $$

\noindent thus the cross terms (\ref{crossterms}) cancel after integrating by parts.  
\end{proof}

It now follows from the above identity of Lemma \ref{withb} in conjunction with the result for $\widehat{\mathcal N}^\C$,  Proposition \ref{holomorphicinvertible} that the estimate 

\begin{equation}\|(\ph,a)\|_{\widehat{ H}^1} \leq C (\|\widehat{\mathcal N}(\ph,a)\|_{L^2} + \|p(\psi)\|_{L^{1,2}_{-1}}) 
\label{otherprojection}\end{equation}

\noindent holds for $C$ independent of $\bold r$.

{\bf Step 2:}  The final step is to adjust the projection $p: \widehat H^1_\bold r\to \C^2$ to one valued in $\C$ (denoted by the same letter).  The point here is simply that when adding the $\mu_\C$ term, only two dimensions of the four (real) dimension are still kernel elements. 

Recall that the elements of the four (real) dimensional kernel of $\widehat{\mathcal N}$ can be written as the complex span of $$\beta_i=\beta_i^\circ + h_i \cdot \Phi^H$$

\noindent where \begin{equation} \beta_1^\circ =\begin{pmatrix}  0 \\ e^{-H} \cdot \rho^{-1/2} \end{pmatrix} \otimes 1  \hspace{1cm} \beta^\circ_2 =  \begin{pmatrix}  0 \\ e^{-H} \cdot \rho^{-1/2} \end{pmatrix} \otimes j. \label{ker2} \end{equation}
and $h_i$ is the unique solution of $$(-\Delta - |\Phi^H|^2)h_i = -\mu_\R(\beta_i^\circ,\Phi^H) $$
where $h$ satisfies the boundary conditions of (\refeq{doubleaps}).

Since $\mu_\C$ is complex gauge invariant, one has $\mu_\C(h\cdot \Phi^H, \Phi^H)=0$, hence $$\mu_\C(\beta_i, \Phi^H)= \mu_\C(\beta_i^\circ, \Phi^H)$$

\noindent The expressions (\refeq{ker2}) and the form of $\Phi^H$ show that for an kernel element  $k_1 \beta_1 + k_2 \beta_2$, one has  $$\mu_\C(k_1 \beta_1 + k_2 \beta_2, \Phi^H)= -k_1 e^{-H} \rho^{-1/2}\cdot \overline {c} e^H \rho^{1/2} - k_2 e^{-H}\rho^{-1/2}\cdot (-d)e^{H}\rho^{1/2}= -k_1 \overline {c} + k_2d  $$ 
is constant on $D_\bold r$. 

Assumption \ref{assumption2} implies $|c(t)|^2 + |d(t)|^2>0$ which shows that $$\mu_\C : \ker (\widehat{\mathcal N}^\C)\to \C$$ has full rank, and it is complex linear on the span of $\beta_i^\circ$. Let $\beta_t$ be an element whose complex span is the subset $\{\beta \in \ker(\widehat{\mathcal N}^\C) \ | \ \mu_\C(\beta)=0\}$. By construction $\beta_t |_{\del D_\bold r}$ satisfies $\mu_\C^\del(\beta_t)=0$, and so satisfies the boundary conditions. It is then not hard to show (argue by contradiction), that the four dimensional projection $p(\psi)$ can be replaced by the two dimensional one $p^\text{ker}(\psi)$. The details are omitted since in the next subsection we replace $p(\psi)$ with a projection that is more natural for the $H^1_\e$-norm, rather than $p(\psi)$ which is natural in the graph norm. This concludes the proof of Proposition \ref{invertibleN}.  

\qed

\subsection{The $L^2$-orthogonal projection} 
\label{section6.6}
The final detail to consider is switching the projection $p^{\ker}$, which is natural in the ``graph'' decomposition $(\psi,h)$ from (\refeq{graphiso}), to a projection which is more natural in the pair $(\ph,a)$. From here on we fix the size of the neighborhood of $\mathcal Z_0$ to have radius $$\boxed {\lambda= \e^{1/2}.}$$ so that $\bold r= (K(t))^{2/3} \e^{-1/6}$. 

Let $\beta_t$ continue to denote the kernel of $\mathcal N_{t}$ for every $t\in S^1$.

\begin{defn}
The (normalized) $L^2$- {kernel projection} $\pi_{t,\nu}^\text{ker}: {H}^1_{\e,}(D_\lambda)\to \C$ is defined to be  $$\pi_t^\text{ker}(\ph,a)= \int_{\{t\}\times D_\lambda} \frac{\br  (\ph,a),  \beta_t \kt}{\| \beta_t\|^2_{L^2(D_\lambda)}} dV$$
\end{defn}

\noindent The denominator normalizes it so that $\pi_t^{\ker}(\beta_t)=1$, since $\beta_t$ is normalized in the $H^1_{\e}$ norm rather than the $L^2$ norm.  

The version of Proposition \ref{invertibleN} which we will employ in the next section is then the following. 

\begin{cor}
For any fixed $t\in \mathcal Z$, assume that $(\ph,a)\in H^1_{\e}(\{t\}\times D_\lambda)$ is a configuration  satisfying the boundary conditions given in (\refeq{NBV1}). Then the estimate on the normal disk $\{t\}\times D_\lambda$ 

\bea  \|(\ph,a)\|_{H^1_{\e}(\{t\}\times D_\lambda)}\leq \frac{C}{\e^{1/12}} \left( \|\mathcal N_{t} (\ph,a)\|_{L^2(\{t\}\times D_\lambda)}+ \|\pi^{\ker}_{t}(\ph,a)\|_{2}\right)\eea 
holds for a constant $C$ independent of $t,\e$. 

\label{l2projectionNbound}
\end{cor}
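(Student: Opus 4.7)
The plan is to reduce the claim to Proposition \ref{invertibleN} applied to the $L^2$-orthogonal complement of $\beta_t$ in $H^1_\e$, and then carefully convert the abstract projection in Proposition \ref{invertibleN} to the $L^2$-orthogonal projection $\pi^{\ker}_t$. The entire $\e^{-1/12}$ loss will arise from comparing the $L^2$-mass of $\beta_t$ with the $L^2\to H^1_\e$ embedding constant on $D_\lambda$.

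Given $(\ph,a)\in H^1_\e(\{t\}\times D_\lambda)$, set $k=\pi^{\ker}_t(\ph,a)\in\C$ and $u=(\ph,a)-k\beta_t$, so that $u$ is $L^2$-orthogonal to $\beta_t$. Since $\|\beta_t\|_{H^1_\e}$ is bounded uniformly in $\e$ (by equivalence with the scale-invariant norm from Section 5.3, where $\|\widehat\beta_t\|_{\widehat H^1}\sim 1$), the triangle inequality gives $\|(\ph,a)\|_{H^1_\e}\le \|u\|_{H^1_\e}+C\|\pi^{\ker}_t(\ph,a)\|_2$. Applying Proposition \ref{invertibleN} (in its un-rescaled form) to $u$, and using $\mathcal N_t u=\mathcal N_t(\ph,a)$, yields $\|u\|_{H^1_\e}\le C\bigl(\|\mathcal N_t(\ph,a)\|_{L^2}+|p^{\ker}(u)|\bigr)$, where $p^{\ker}$ denotes the abstract projection furnished by that proposition. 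It therefore suffices to control $|p^{\ker}(u)|$.

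For this, decompose $u=p^{\ker}(u)\beta_t+u_W$ along the splitting of Proposition \ref{invertibleN} on whose complement $\mathcal N_t$ is uniformly invertible; then $\mathcal N_t u_W=\mathcal N_t u=\mathcal N_t(\ph,a)$ gives $\|u_W\|_{H^1_\e}\le C\|\mathcal N_t(\ph,a)\|_{L^2}$. The orthogonality $\langle u,\beta_t\rangle_{L^2}=0$ produces $p^{\ker}(u)\|\beta_t\|_{L^2}^2+\langle u_W,\beta_t\rangle_{L^2}=0$, hence by Cauchy--Schwarz $|p^{\ker}(u)|\le \|u_W\|_{L^2}/\|\beta_t\|_{L^2}$. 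Two weight estimates are now needed. First, from the pointwise bound $R_\e\le \lambda=\e^{1/2}$ on $N_\lambda(\mathcal Z_0)$ applied to the $|\ph|^2/R_\e^2$ term in the $H^1_\e$ norm, and from the lower bound $|\Phi^{h_\e}|^2/\e^2\ge c\e^{-4/3}$ (Proposition \ref{desingularizedproperties}) applied to the $|a|^2|\Phi^{h_\e}|^2/\e^2$ term, one obtains $\|u_W\|_{L^2(D_\lambda)}\le C\e^{1/2}\|u_W\|_{H^1_\e}$, where the spinor part dominates. Second, the explicit leading behavior of $\beta_t$ at large $\rho$ is $\rho^{-1/2}$ (from $\beta_t^\circ\sim e^{-H}\rho^{-1/2}$ in Lemma \ref{ALemma}, the exponential decay $H\to 0$, and the fact that the complex-gauge correction $h_t\cdot\Phi^H$ decays at the same order); integrating $|\widehat\beta_t|^2\sim\rho^{-1}$ over $D_\bold r$ gives $\|\widehat\beta_t\|_{L^2(D_\bold r)}^2\sim\bold r=\e^{-1/6}$, which after un-rescaling by the $L^2$-scaling factor $(\e/K)^{2/3}$ from Section 5.3 yields $\|\beta_t\|_{L^2(D_\lambda)}\sim\e^{7/12}$.

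Combining, $|p^{\ker}(u)|\le C\e^{1/2-7/12}\|\mathcal N_t(\ph,a)\|_{L^2}=C\e^{-1/12}\|\mathcal N_t(\ph,a)\|_{L^2}$, and assembling the inequalities produces the stated bound. The main obstacle is the computation of $\|\beta_t\|_{L^2}$: the fact that $\beta_t$ only barely fails to be $L^2$ on all of $\R^2$ (its scale-invariant $L^2$-mass grows like $\bold r^{1/2}$ rather than remaining bounded) is precisely the feature that produces the $\e^{-1/12}$ loss. As noted in the introduction, this is the two-dimensional shadow of the infinite-dimensional cokernel that eventually appears in the three-dimensional problem, so no naive choice of boundary conditions or projection could eliminate it.
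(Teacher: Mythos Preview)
Your proof is correct and takes a genuinely more direct route than the paper's. The paper proceeds by contradiction: it assumes a violating sequence $(\ph_n,a_n)$, decomposes it via the complex-gauge splitting $(\psi_n+h_n\cdot\Phi^H,\omega_n,\zeta_n)$ of Section~\ref{holomorphicinterpretation}, further writes the kernel piece as $k_n\beta_t+j_n\beta_t^\perp$, and uses the identity of Lemma~\ref{withb} to show $j_n$ is small while $|k_n|\to 1$, contradicting the smallness of $\pi^{\ker}_t$. Your argument bypasses the $(\psi,h)$ decomposition entirely: you subtract off the $L^2$-kernel component directly, then use $L^2$-orthogonality to translate the abstract $p^{\ker}(u)$ into the ratio $\|u_W\|_{L^2}/\|\beta_t\|_{L^2}$. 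Both approaches ultimately hinge on the same two computations---the embedding $\|\cdot\|_{L^2}\le C\e^{1/2}\|\cdot\|_{H^1_\e}$ on $D_\lambda$ and $\|\beta_t\|_{L^2}\sim\e^{7/12}$ (Lemma~\ref{betaasymptotics})---and your organization makes it transparent that the $\e^{-1/12}$ is precisely the quotient of these two scales. One small point worth making explicit: your decomposition $u=p^{\ker}(u)\beta_t+u_W$ tacitly assumes $p^{\ker}(\beta_t)=1$; in general it is some nonzero constant $c_0$, but since you only need $|c_0|$ bounded \emph{above} (which follows from $\|\beta_t\|_{H^1_\e}=1$ and the uniform boundedness of $p^{\ker}$), the argument goes through unchanged.
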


\begin{rem} The above estimate is (obviously) not uniform in $\e$. By using the natural orthogonal projection in $H^1_\e(\{t\}\times D_\lambda)$, it is straighforward to obtain a uniform estimate. Using the $L^2$-norm is essential in the next section, however, where the projection must be controlled by the $t$-derivatives $\|\nabla_t (\ph,a)\|_{L^2}$. Finally, we remark that the constant $\e^{-1/12}$ depends on our choice of $\lambda=\e^{1/2}$; for  $\lambda= \e^{\alpha}$, the power of $\e$ that appears in the estimate is $(\tfrac{\alpha}{2}-\tfrac{1}{3})$ so the estimate becomes uniform as the radius approaches the invariant radius $O(\e^{2/3})$. However, it can never be uniform and also allow an intermediate region where the exponential decay estimates from Corollary \ref{mainb} to apply. Our choice of $\alpha=1/2$ is purely aesthetic, and any $\tfrac23>\alpha>\tfrac13$ would work.     
\end{rem}
\begin{proof}
By the scaling invariance in  \ref{diagramA} and Proposition \ref{scalingprop}, it suffices to show the estimate 
\bea  \|(\ph,a)\|_{\widehat H^1_{}( D_\bold r)}\leq \frac{C}{\e^{1/12}} \left( \|\widehat{\mathcal N}_{t} (\ph,a)\|_{L^2(D_\bold r)}+ \|\pi^{\ker}_{t}(\ph,a)\|_{2}\right)\eea 

\noindent in the invariant scale instead. Here we have not scaled the projection, and in a slight abuse of notation we have abbreviated $\overline \Upsilon_\e (\ph,a)$ by still denoting it $(\ph,a)$.

 We proceed now by contradiction using \ref{otherprojection}. Suppose there is no such $C$ satisfying the conclusion. Then for every $n\in \N$ there is an $\e_n$ and an element $(\ph_n, a_n)$ of unit $\widehat H^1_\bold r$ norm on the disk of radius $r_n= K(t)^{2/3} \e_n^{-1/6}$ such that 

$$\frac{\e^{1/12}}{n}\geq \|\widehat{\mathcal N}(\ph_n,a_n)\|_{L^2} + \|\pi^\text{ker}(\ph_n)\|.$$

We may write $(\ph_n, a_n)=( \psi_n + h_n\cdot \Phi^H, \omega_n, \zeta_n)$ where $\omega_n=2\delbar h_n$. Since $p(\psi)\neq 0$ else \ref{otherprojection} would be violated, $\psi$ must have some component in the kernel elements. Write $\psi_n= \psi_n^\text{ker}+\xi_n$ the orthogonal decomposition in $\mathcal H_\bold r$ so that $p(\xi_n)=0$. Writing $h_n= h_n^\text{ker} + h_n'$, the element can be expressed   

$$(\ph_n, a_n)=( \psi_n^\text{ker} + h^\text{ker}_n \cdot \Phi^H, 0,0)+(\xi_n + h_n'\cdot \Phi^H \  , \  \omega_n \ , \   \zeta_n).$$

\noindent where the first is in the kernel of $\widehat{\mathcal N}^\C$.

The equality 
\be \|\widehat{\mathcal N}(\ph_n, \omega_n, \zeta_n)\|_{L^2}^2=\|\widehat{\mathcal N}^\C(\ph_n, \omega_n, 0)\|^2_{L^2}+ \|\mu_\C (\ph_n,\Phi^H)\|^2_{L^2} + \|\nabla \zeta_n\|_{L^2}^2 + \|\zeta_n |\Phi^H|\|_{L^2}^2\label{fourterms}\ee 

\noindent from Lemma \ref{withb}, shows that the $L^2$ norm of each term on the right hand side must be individually less than $\tfrac{\e^{1/12}}{n}$.    In particular, from the first term, $$\|\widehat{\mathcal N}^\C(\xi_n + h_n'\cdot \Phi^H, \omega_n, 0 )\|=\|\widehat{\mathcal N}(\ph_n, \omega_n,0)\|\leq \frac{\e^{1/12}}{n}$$

\noindent and since $p(\xi_n)=0$ vanishes on $\xi_n$ by construction, and trivially on $(h_n'\cdot \Phi^H,\omega_n, \zeta_n)$ since $p$ does not see those components, the result for $\widehat{\mathcal N}^\C$ (Proposition \ref{holomorphicinvertible}), combined with the bounds on the third and fourth terms implies a bound on all the parts of the $\widehat{H}^1_{\bold r}$-norm except the $\mu_\C$ part. That is, letting $\phi_n=\xi_n + h_n'\cdot \Phi^H  $

\be  \|(\phi_n, \omega_n)\|_{\widehat H^1_\C} + \|\nabla \zeta_n\|_{L^2}+ \|\zeta_n |\Phi^H|\|_{L^2}\leq \frac{C\e^{1/12}}{n} \label{mucomitted}\ee  

\noindent for $C$ independent of $\bold r$. Recall in this that the $\widehat H^1_\C $-norm is given by omitting the $\mu_\C$ component, as in (\ref{H1Cnorm}). 

Now write $$\beta^\text{ker}_n= \psi^\text{ker}_n + h^\text{ker}_n \cdot \Phi^H$$ for the kernel element, and decompose it \be \beta_n^\text{ker}= k_n\beta_t + j_n \beta_t^\perp\label{kerndecomp}\ee
where $\beta_t$ is the true kernel element and $\beta_t^\perp$ is the element in the kernel of $\widehat{\mathcal N}^\C$ not satisfying $\mu_\C=0$ normalized in the  $\widehat{H}^1_\C$-norm.  

Next, we claim $j_n$ is small. On the unit disk, we have 

$$\int_{D_1} |\mu_\C(j_n \beta_t^\perp + \phi_n, \Phi^H)|^2 dV \leq  \int_{D_\bold r} |\mu_\C(j_n \beta_t^\perp + \phi_n, \Phi^H)|^2 dV\leq \frac{\e^{2/12}}{n^2}$$

\noindent but on the unit disk, $\Phi^H, R$ are universally bounded, hence $|\mu_\C(\phi_n,\Phi^H)|\leq C | \phi_n| \leq C\tfrac{|\phi_n|}{R}$ on $D_1$, for universal constants. Therefore (\refeq{mucomitted}) implies $$\|\mu_\C(\phi_n,\Phi^H)\|_{L^2(D_1)}\leq \frac{C\e^{1/12}}{n}$$ on the unit disk, thus since $\mu_\C(\beta^\perp_t,\Phi^H)$ is constant on the unit disk, we must have $j_n\leq \tfrac{C\e^{1/12}}{n}$, and thus $\|j_n \beta_t^\perp\|_{\widehat{H}^1_\C}\leq \frac{C\e^{1/12}}{n}$.  

Combining this with (\ref{mucomitted}) and the inequality on the second term of (\ref{fourterms}), we have 

\begin{eqnarray} \|(j_n\beta_t^\perp + (\phi_n, \omega_n,\zeta_n))\|_{\widehat{H}^1}&\leq&  \|j_n \beta_t^\perp + (\phi_n, \omega_n)\|_{\widehat{H}^1_\C} + \|\nabla \zeta_n\|_{L^2}\\ & & \   + \  \|\zeta_n |\Phi^H|\|_{L^2} + \|\mu_\C(\ph_n, \Phi^H)\|_{L^2} \\ & \leq &  \frac{C\e^{1/12}}{n}  + \frac{C\e^{1/12}}{n} + \frac{\e^{1/12}}{n}\leq \frac{C\e^{1/12}}{n}.\label{qnest}\end{eqnarray}

\noindent In the last term we have used that $\mu_\C(\ph_n, \Phi^H)= \mu_\C(j_n\beta_t^\perp+\phi_n, \Phi^H)$ since $\mu_\C$ vanishes on $\beta_t$. 

Finally, we are able to conclude that $$\|k_n \beta_t\|_{\widehat{H}^1} \geq 1-\tfrac{C\e^{1/12}}{n}$$
\noindent and so $|k_n|\geq 1-\tfrac{C\e^{1/12}}{n}$ as well. This now yields a contradiction to $\|\pi_t^\text{ker}(\ph_n, a_n) \|\leq \tfrac{\e^{1/12}}{n}$. Indeed, writing $(\ph_n,a_n)= k_n \beta_t + q_n$ then (\refeq{qnest}) and re-scaling back shows that $\|q_n\|_{\widehat{H}^1}= \|\overline \Upsilon_\e^{-1}q_n\|_{H^1_\e}\leq \tfrac{C\e^{1/12}}{n}$. Then by Cauchy-Schwartz, working now on the disk of radius $\e^{1/2}_n$,   

\bea
\frac{\e^{1/12}}{n}\geq |\pi_t^\text{ker}(\ph_n, a_n)|&=&\Big | \int_{D_\bold r} \frac{\br k_n \beta_t + \overline \Upsilon_\e^{-1} q_n, \beta_t\kt }{|\beta_t|_{L^2}^2} dV\Big | \\ & \geq& |k_n| -\frac{\|\overline \Upsilon_\e^{-1} q_n\|_{L^2} }{\|\beta_t\|_{L^2}}\cdot \frac{\|\beta_t\|_{L^2} }{\|\beta_t\|_{L^2}}\\
&\geq & |k_n| - \|\tfrac{\overline \Upsilon_\e^{-1} q_n}{R_\e}\|_{L^2} \cdot \e^{1/2} \tfrac{1}{\|\beta_t\|_{L^2}}\\
&\geq & |k_n| - \|\overline \Upsilon_\e^{-1}q_n\|_{ H^1_\e} \cdot \tfrac{\e^{1/2}}{\|\beta_t\|_{L^2}}
\eea
but, $\beta_t \sim \rho^{-1/2}$ (the detailed proof of this is given in the next section in Lemma \ref{betaasymptotics} ), so it $\|\beta_t\|_{L^2}\geq c \e^{1/2+1/12}$. Therefore, the latter term is bounded by a constant times $\tfrac{1}{n}$, while $k_n\sim 1$, yielding a contradiction. This completes the proof.

\end{proof}

The last order of business is to show the bounds $\|\beta_t\|_{L^2}\geq c \e^{1/2+1/12}$ used in the final two sentences of the above proof of Corollary \ref{l2projectionNbound}, and also that similar bounds hold for the $t$-derivative $\dot \beta_t$ (which are used in Section \ref{section7}). These follow from basic integration if one knows that $\beta_t\sim \rho^{-1/2}$ and similarly for the derivative $\dot \beta_t$. 

\begin{lm}
The elements $\beta_t$ have non-vanishing leading order term so that  $$\beta_t \sim \rho_t^{-1/2} $$
for $\rho_t>\!>1$. As a consequence, we have the following bounds where the constants $C,c, \kappa_1$ are independent of $\e,t$  
\begin{enumerate}
\item[1)]  $c\e^{1/2+1/12}\leq \|\beta_t\|_{L^2(D_\lambda)} \leq C \e^{1/2+1/12}$
\item[2)] For $\rho_t>\!> 1$ sufficiently large, $|\dot \beta_t| \leq \kappa_1 |\beta_t|$ pointwise. 
\item[3)] ${\|\dot \beta_t\|_{L^2(D_\lambda)}}\leq \kappa_1{\|\beta_t\|_{L^2(D_\lambda)}}$  \ and \ ${\|\dot \beta_t\|_{L^2(\del D_\lambda)}}\leq \kappa_1{\|\beta_t\|_{L^2(\del D_\lambda)}}$
\end{enumerate}
\label{betaasymptotics}  
\end{lm}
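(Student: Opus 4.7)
\textbf{Proof plan for Lemma \ref{betaasymptotics}.}
The strategy is to combine the complex-gauge decomposition of Section \ref{holomorphicinterpretation} with the exponential convergence $H(\rho) \to 0$ from Proposition \ref{Hrhoproperties}(2) to extract the leading behavior, and then obtain the integral bounds by explicit integration in the scale-invariant coordinate $\rho$ followed by the rescaling from Section \ref{section5}.

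First I would write $\beta_t = \beta_t^\circ + h_t\cdot \Phi^H$ as in Section \ref{section6.5}, where $\beta_t^\circ$ is the unique (up to $\C$-scaling) complex linear combination of the two holomorphic kernel elements $\beta_1^\circ,\beta_2^\circ$ from Lemma \ref{ALemma} satisfying $\mu_\C^\partial(\beta_t^\circ,\Phi^H)=0$, and $h_t \in T\mathcal G^\C_\bold r$ solves $(-\Delta - |\Phi^H|^2)h_t = -\mu_\R(\beta_t^\circ,\Phi^H)$. The $\beta$-component of $\beta_t^\circ$ is a constant-coefficient linear combination of $(0,e^{-H(\rho)}\rho^{-1/2})^T$ (with the $\otimes 1$ and $\otimes j$ parts), so Proposition \ref{Hrhoproperties}(2) immediately gives $\beta_t^\circ = \rho^{-1/2}(k^\circ(t) + O(e^{-c\rho^{3/2}}))$ for large $\rho$, with a coefficient $k^\circ(t)$ that is nonzero since Assumption \ref{assumption2} forces $K(t)\neq 0$. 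For the correction $h_t\cdot \Phi^H$, observe that $\mu_\R(\beta_t^\circ,\Phi^H)$ is $O(1)$ (its factors $e^{\pm H}\rho^{\pm 1/2}$ cancel), so the semilinear equation together with $|\Phi^H|^2 \geq c\rho$ gives $|h_t|\leq C\rho^{-1}$, which in turn yields $|h_t\cdot \Phi^H|\leq C\rho^{-1/2}$. A standard elliptic bootstrap using the uniform invertibility from Lemma \ref{CLemma} shows this correction is a genuine perturbation that does not cancel the leading term of $\beta_t^\circ$, establishing $\beta_t\sim \rho_t^{-1/2}$.

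For the $L^2$ bounds in (1), I would pass to the invariant scale, where by the scaling relation following Proposition \ref{scalingprop} one has $\|\beta_t\|_{L^2(D_\lambda)} = (\e/K(t))^{2/3}\|\widehat\beta_t\|_{L^2(D_\bold r)}$ with $\bold r = K(t)^{2/3}\e^{-1/6}$. The pointwise estimate $|\widehat\beta_t|\sim \rho^{-1/2}$ just proved gives
\[
\|\widehat\beta_t\|_{L^2(D_\bold r)}^2 \;=\; \int_{D_\bold r} \rho^{-1}\,\rho\,d\rho\,d\theta + O(1) \;\asymp\; \bold r \;\asymp\; \e^{-1/6},
\]
so $\|\widehat\beta_t\|_{L^2(D_\bold r)}\asymp \e^{-1/12}$ and therefore $\|\beta_t\|_{L^2(D_\lambda)}\asymp \e^{2/3-1/12} = \e^{1/2+1/12}$. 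The upper and lower constants are uniform in $t$ because $K(t)$ is bounded above and below on $S^1$.

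Parts (2) and (3) follow from observing that the $t$-dependence of $\beta_t$ enters only through the smooth, nonvanishing background data $K(t),c(t),d(t)$ and through the combination $\rho_t = (K(t)/\e)^{2/3}r$. Differentiating yields $\dot\rho_t = \tfrac{2}{3}(\dot K/K)\rho_t$, so the chain rule produces contributions of the form $(\partial_\rho \widehat\beta_t)\dot\rho_t$ plus bounded coefficient-derivatives times $\widehat\beta_t$. Since $\partial_\rho(\rho^{-1/2}) \asymp \rho^{-3/2}$, the product with $\dot\rho_t \asymp \rho$ is again of order $\rho^{-1/2}$, yielding the pointwise bound $|\dot\beta_t|\leq \kappa_1 |\beta_t|$ for $\rho_t\gg 1$; the $L^2$ assertions on $D_\lambda$ and on $\partial D_\lambda$ then follow by integrating the pointwise bound over the annular region and noting that the small-$\rho$ contributions are uniformly bounded. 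The main obstacle is the second paragraph: one must argue that the corrector $h_t\cdot \Phi^H$, which is the same order as $\beta_t^\circ$, does not conspire to cancel the leading coefficient $k^\circ(t)$. This is handled by the fact that $h_t$ is determined by the equation and hence is (at leading order in $\rho^{-1}$) an \emph{algebraic} function of $\mu_\R(\beta_t^\circ,\Phi^H)/|\Phi^H|^2$, which a direct computation shows contributes a multiplicative factor strictly less than one to the $(0,\rho^{-1/2})$-mode.
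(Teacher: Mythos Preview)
Your overall strategy matches the paper's: decompose $\beta_t = \beta_t^\circ + h_t\cdot\Phi^H$, extract the leading $\rho^{-1/2}$ behavior, then rescale to get the $L^2$ bounds. The derivative bounds in (2)--(3) via $\dot\rho_t = \tfrac{2\dot K}{3K}\rho_t$ are also handled the same way. There are, however, two places where your argument is too thin compared to what the paper actually needs.

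\textbf{Pointwise decay of $h_t$.} You assert $|h_t|\le C\rho^{-1}$ directly from $(-\Delta-|\Phi^H|^2)h_t=-\mu_\R(\beta_t^\circ,\Phi^H)$ and $|\Phi^H|^2\ge c\rho$, invoking Lemma \ref{CLemma} for a ``standard elliptic bootstrap.'' But Lemma \ref{CLemma} only gives control in the $T\mathcal G^\C$ norm, which is $L^2$-based; this does not by itself yield pointwise $O(\rho^{-1})$ decay on a disk whose radius diverges as $\e\to 0$. The paper instead constructs an explicit approximate solution $\widetilde h_1 = -\tfrac{d(t)e^{-i\theta}}{2K(t)\rho}e^{-2H\chi_\e}$ (which satisfies the double-APS boundary conditions because $1/(\rho e^{i\theta})$ is holomorphic), shows the remainder $f_1=h_1-\widetilde h_1$ satisfies a weighted bound $\|\chi\rho^{3/2}f_1\|_{T\mathcal G^\C}\le C$, and then uses a scaled Gagliardo--Nirenberg/Sobolev embedding to convert this into a $C^0$ bound. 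The same has to be redone for $\dot h_t$, where a new term $(\partial_t|\Phi^H|^2)f_1$ appears on the right-hand side; you do not address this.

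\textbf{Non-cancellation.} You correctly identify that $h_t\cdot\Phi^H$ is the \emph{same} order $\rho^{-1/2}$ as $\beta_t^\circ$, so cancellation is a genuine concern. Your proposed fix, that the correction contributes a ``multiplicative factor strictly less than one to the $(0,\rho^{-1/2})$-mode,'' is not substantiated and is awkward to carry out for the specific linear combination $\beta_t = w_1\beta_1+w_j\beta_j$ picked out by $\mu_\C=0$. The paper uses a cleaner dichotomy that avoids this computation entirely: look at the \emph{top} ($\alpha$) components. Since $\beta^\circ$ has $\alpha=0$, the $\alpha$-leading term comes solely from $h\cdot\Phi^H$ and equals $(w_1p_1+w_jp_j)c(t)\rho^{-1/2}$ (resp.\ $-(w_1p_1+w_jp_j)\overline d(t)\rho^{-1/2}$). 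If $(w_1p_1+w_jp_j)\neq 0$ you are done. If it vanishes, then $w_1h_1+w_jh_j=O(\rho^{-3/2})$, so the $h$-correction to the \emph{bottom} components is $O(\rho^{-1})$ and the bottom leading terms reduce to $w_1\beta_1^\circ,\,w_j\beta_j^\circ$, which cannot both vanish. This observation---that the same combination $(w_1p_1+w_jp_j)$ governs both the $\alpha$- and $\beta$-corrections---is what makes the argument work without computing any explicit multiplicative factor.
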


As in equation (\refeq{kerndecomp}) in the proof of Corollary \ref{l2projectionNbound}, the kernel element may be decomposed $$\beta_t = \psi_t + h_t \cdot \Phi_0$$
where $h_t\cdot \Phi_0$ is the component of the kernel tangent to the complex gauge orbits. It was shown in Lemma \ref{ALemma} that $\psi_t \sim \rho^{-1/2}$, thus the above lemma simply asserts that $h_t$ has asymptotics that does not disrupt this. The proof of this is straightforward but slightly intricate, and is given in Appendix \ref{appendix3}.  

\section{The Linearization}
\label{section7}

This section proves that in the proper context, the linearization $\mathcal L^{h_\e}$ at the de-singularized configuration $(\Phi^{h_\e}, A^{h_\e})$ is invertible. The precise statement is given in the below Theorem \ref{invertibleL}, which is the main technical result of this article. It proves a precise version of the ``proto-theorem'' stated in Section 5 below Equation (\refeq{non-lineartosolve}), and the main results Theorem \ref{main} and Theorem \ref{mainc} follow directly from Theorem \ref{invertibleL} as is shown in Section 8. 

The statement of Theorem \ref{invertibleL} is almost identical to the statement of Theorem \ref{mainc}, but replaces the linearization at the approximate solutions $\mathcal L_\e$ with the linearization at the de-singularized configurations $\mathcal L^{h_\e}$. To review the statement briefly, the operator $\mathcal L^{h_\e}$ is viewed as a first-order boundary value problem on the tubular neighborhood $N_{\lambda}(\mathcal Z_0)$ where $\lambda=c\e^{1/2}$. This is done by introducing a Hilbert space $H$ and a projection \be \Pi^{\mathcal L}: L^{1,2}(N_\lambda(\mathcal Z_0))\to H \label{bdorthogonalityconstraints7.1}\ee
    
\noindent so that $\ker(\Pi^\mathcal L)$ is the subspace of sections satisfying certain boundary and orthogonality conditions. The statement also relies on the weighted norms $\| -\|_{H^1_{\e,\nu}}$ and $\|-\|_{L^2_{\e, \nu}}$ defined in Section \ref{subsection5.1}.  
 \begin{thm} {\bf (Invertibility of $\mathcal L^{h_\e})$} 
\label{invertibleL}
Subject to the boundary and orthogonality conditions defined by (\refeq{bdorthogonalityconstraints7.1}), the extended gauge-fixed linearization at the de-singularized configurations $(\tfrac{\Phi^{h_\e}}{\e}, A^{h_\e})$

\be \mathcal L^{h_\e}: \ker(\Pi^\mathcal L)\subseteq L^{1,2}(N_\lambda(\mathcal Z_0)) \lre L^2(N_\lambda(\mathcal Z_0))\ee

\noindent is Fredholm of Index 0. Additionally, there is an $\e_0>0$ such that for $\e<\e_0$ it is invertible, and there are positive constants $C,\gamma^\text{in}\!<\!<1$ independent of $\e$ such that  the bounds

\smallskip
\be \|(\ph,a)\|_{H^1_{\e,\nu}} \leq  \frac{C}{\e^{1/12+\gamma^\text{in}}} \  \|\mathcal L^{h_\e}(\ph,a)\|_{L^2_{\e,\nu}}\label{Linvertiblebound}\ee

\be\|(\ph,a)\|_{H^1_{\e,\nu}} \leq   C\e^{1/12-\gamma^\text{in}} \|\mathcal L^{h_\e}(\ph,a)\|_{L^2}\label{linvertiblemixedweight}\ee
\smallskip
\noindent hold for weights $0\leq \nu<\tfrac14$. Notice the distinction is that there is no weight on the codomain in the latter bound. 
\end{thm}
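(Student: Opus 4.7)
\medskip

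\textbf{Proof Proposal.}

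The plan is to reduce the three-dimensional statement to the two-dimensional one already established in Corollary~\ref{l2projectionNbound} via integration by parts in the tangential direction. Writing the operator as $\mathcal L^{h_\e} = \sigma(dt)\partial_t + \mathcal N_t$ with $\mathcal N_t$ acting fiberwise on the normal disks $\{t\}\times D_\lambda$, the symbol $\sigma(dt)$ is a skew-adjoint unitary, so that formally
\begin{equation*}
\|\mathcal L^{h_\e}(\ph,a)\|_{L^2_{\e,\nu}}^2 \; = \; \|\partial_t(\ph,a)\|_{L^2_{\e,\nu}}^2 \;+\; \int_{S^1}\!\|\mathcal N_t(\ph,a)\|_{L^2_{\e,\nu}(D_\lambda)}^2\, dt \;+\; \br\sigma(dt)\partial_t(\ph,a),\mathcal N_t(\ph,a)\kt_{L^2_{\e,\nu}}.
\end{equation*}
The first job is to choose the boundary conditions and orthogonality projection in $\Pi^\mathcal L$ so that the cross term in the last display integrates by parts with no boundary contribution and yields only a controlled zeroth-order commutator. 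The natural choice is to impose the 2-dimensional APS boundary conditions of Definition~\ref{NBV1} fiberwise on $\partial D_\lambda$ for every $t\in S^1$, together with an orthogonality projection whose fiberwise restriction equals the $L^2$-kernel projection $\pi_t^{\ker}$ of Section~\ref{section6.6}. In other words, $\Pi^{\mathcal L}$ collects the fiberwise boundary projection $\widehat\Pi^{H^1}$ and the ``tangential'' constraint $\pi_t^{\ker}(\ph,a)=0$ for almost every $t$, which forces the unknown to be orthogonal to the approximate cokernel $\bigoplus_{t\in S^1}\C\cdot\beta_t$ that in the limit $\e\to 0$ grows into the $L^2(\mathcal Z;\C)$ factor of Proposition~\ref{infcokernel}. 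With this choice, $\mathcal L^{h_\e}$ is automatically Fredholm of index $0$ (the symbol is elliptic, APS conditions are self-adjoint up to a constant, and the projection removes a subspace whose dimension matches the kernel on the adjoint side).

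For the estimate \eqref{Linvertiblebound}, I would proceed in three steps. First, integrate $t$-by-$t$ using Corollary~\ref{l2projectionNbound}: on each normal disk
\begin{equation*}
\|(\ph,a)(t,\cdot)\|_{H^1_\e(D_\lambda)}^2 \;\leq\; \frac{C}{\e^{1/6}}\Bigl(\|\mathcal N_t(\ph,a)(t,\cdot)\|_{L^2(D_\lambda)}^2 \;+\; |\pi_t^{\ker}(\ph,a)(t,\cdot)|^2\Bigr),
\end{equation*}
and the orthogonality constraint in $\Pi^{\mathcal L}$ kills the last term outright. Second, I need a tangential estimate controlling $\|\partial_t(\ph,a)\|_{L^2_{\e,\nu}}$ by $\|\mathcal L^{h_\e}(\ph,a)\|_{L^2_{\e,\nu}}$. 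This is where integration by parts in $t$ enters: expanding $\|\mathcal L^{h_\e}(\ph,a)\|^2$ and using $\sigma(dt)^2=-\mathrm{Id}$, the cross term becomes, after one integration by parts,
\begin{equation*}
\br\sigma(dt)\partial_t(\ph,a),\mathcal N_t(\ph,a)\kt \;=\; \tfrac12\tfrac{d}{dt}\br\sigma(dt)(\ph,a),\mathcal N_t(\ph,a)\kt \;-\; \br(\ph,a), [\partial_t,\mathcal N_t](\ph,a)\kt,
\end{equation*}
the total derivative integrates to zero over $S^1$, and the commutator $[\partial_t,\mathcal N_t]$ is zeroth order, bounded by $C$ pointwise by Proposition~\ref{desingularizedproperties} combined with $|\dot\beta_t|\leq\kappa_1|\beta_t|$ from Lemma~\ref{betaasymptotics}. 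Absorbing this commutator using Young's inequality yields
\begin{equation*}
\tfrac12\|\partial_t(\ph,a)\|_{L^2_{\e,\nu}}^2 + \int_{S^1}\!\|\mathcal N_t(\ph,a)\|_{L^2_{\e,\nu}(D_\lambda)}^2\,dt \;\leq\; \|\mathcal L^{h_\e}(\ph,a)\|_{L^2_{\e,\nu}}^2 + C\|(\ph,a)\|_{L^2_{\e,\nu}}^2,
\end{equation*}
and the last term is absorbed into the left using $\|(\ph,a)\|_{L^2_{\e,\nu}}\leq C\e^{2/3}\|(\ph,a)/R_\e\|_{L^2_{\e,\nu}}\leq C\e^{2/3}\|(\ph,a)\|_{H^1_{\e,\nu}}$ since $R_\e\gtrsim\e^{2/3}$. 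Finally, combining the two estimates and integrating in $t$ gives \eqref{Linvertiblebound} with the loss $\e^{-1/6}$ coming from the fiberwise estimate. The exponent $1/12+\gamma^{\text{in}}$ (rather than $1/6$) is recovered by an interpolation: applying the Cauchy--Schwarz inequality to $|\pi_t^{\ker}(\ph,a)|$ against $\beta_t$ in $L^2$ and using $\|\beta_t\|_{L^2(D_\lambda)}\gtrsim\e^{1/2+1/12}$ from Lemma~\ref{betaasymptotics} produces the sharper half-power.

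The mixed-weight bound \eqref{linvertiblemixedweight} is obtained by the same argument applied with a cutoff function separating the region $\{r\leq \e^{2/3-\gamma'}\}$ near $\mathcal Z_0$ from its complement. On the complement the corrections $(\Phi^{h_\e}-\Phi_0, A^{h_\e}-A_0)$ are exponentially small by Proposition~\ref{desingularizedproperties}, so the operator is a small perturbation of $\mathcal L_{(\Phi_0,A_0,\e)}$, which by the two-dimensional analysis satisfies a uniform (unweighted) estimate; on the inner region one invokes \eqref{Linvertiblebound} and observes $R_\e^{2\nu}\lesssim \e^{4\nu/3}$, which trades the weight for a positive power of $\e$ that overcomes the $\e^{-1/12-\gamma^{\text{in}}}$ loss and yields $\e^{+1/12-\gamma^{\text{in}}}$. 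The main obstacle throughout is the bookkeeping around the projection $\pi_t^{\ker}$: because the kernel dimension blows up as $\e\to 0$ (approaching $L^2(\mathcal Z;\C)$), one must verify that the $t$-derivative of $\beta_t$ remains controlled by $\beta_t$ itself (the content of Lemma~\ref{betaasymptotics}) and that the orthogonality projection commutes sufficiently well with $\partial_t$, so that integration by parts does not introduce uncontrolled boundary terms on $\partial D_\lambda$ or hidden mass in the cokernel direction.
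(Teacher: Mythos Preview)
Your proposal has a genuine gap at its core: the choice of $\Pi^{\mathcal L}$ and the treatment of the boundary term in the integration by parts.

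\textbf{The boundary term.} When you expand $\|\mathcal L^{h_\e}\mathfrak q\|^2$ and integrate by parts, the cross term does \emph{not} reduce to a $t$-total-derivative plus a commutator. One of the two cross terms $\langle\mathcal N_t\mathfrak q,\sigma_t\partial_t\mathfrak q\rangle$ requires moving $\mathcal N_t$ across, which is a spatial integration by parts on $D_\lambda$ and produces the boundary integral
\[
\int_{\partial N_\lambda}\langle -\sigma_t J\varphi,\partial_t\varphi\rangle\,dA
\]
on the torus $\partial N_\lambda$ (see the paper's derivation at the start of Step~1). With the fiberwise two-dimensional conditions of Definition~\ref{boundarymodes}, this term does \emph{not} vanish: the space $(\mu_\C^\partial)^{-1}(0)\subset E_{-1,0}$ is six-real-dimensional, hence not isotropic for the symplectic form compatible with $\sigma_1 J$, and the pairing between the extra $w_1$-mode and the $V_t^{\Phi_0}$-modes survives. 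Controlling this boundary term is the entire difficulty of the theorem.

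\textbf{The projection conditions.} Relatedly, your proposed $\Pi^{\mathcal L}$ --- fiberwise index-2 boundary conditions together with $\pi_t^{\ker}=0$ for \emph{every} $t$ --- is precisely the naive choice the paper rules out in Section~7.3.2: imposing both conditions simultaneously is an infinite-dimensional modification in each direction, and the resulting problem is not Fredholm (formally ``$\mathrm{Ind}=-L^2(S^1;\C)$''). The paper's $\Pi^{\mathcal L}$ instead performs a \emph{finite}-dimensional trade governed by a high/low Fourier splitting at the scale $|\ell|\sim \e^{-1/2}/L_0$: the interior orthogonality $P^{\text{low}}=0$ is imposed only for low modes, and correspondingly the $b_1(t)w_1(t)$ boundary component is \emph{allowed} only for those same low modes. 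This keeps the index computation elementary (Proposition~\ref{indexmixedbd}) and, crucially, bounds the boundary term by $C(\e^{1/2}L_0)^{-1}\|\varphi\|_{L^2(\partial N_\lambda)}^2$ (Claim~\ref{claim751}), which is then absorbed via the scaled trace inequality (Lemma~\ref{bdabsorption}). For the high-mode part $\eta(t)\beta_t$ of the kernel-bundle component, no projection is needed because $\|\partial_t(\eta\beta_t)\|_{L^2}$ itself dominates $\|\eta\beta_t\|_{H^1_\e}$ once $|\ell|\gtrsim\e^{-1/2}$.

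\textbf{The exponent $1/12$.} Finally, the power $\e^{1/12}$ does not come from an interpolation against $\|\beta_t\|_{L^2}$ as you suggest (indeed you have already killed the projection term, so there is nothing to interpolate). It emerges from a three-step bootstrap: an unweighted estimate with loss $\e^{-1/6}$ plus the boundary term (Step~1); a weighted Weitzenb\"ock estimate modulo a compactly supported operator $K_\e$ living on $\{r\lesssim\e^{2/3-\gamma'}\}$ (Step~2, using Proposition~\ref{weightedestimatesdirac2}); and then controlling $\|K_\e\mathfrak q\|_{L^2_\nu}$ by $\e^{1/3-2\gamma}\|\mathfrak q\|_{H^1_\e}^2$ and feeding back Step~1 with the boundary absorption lemma (Step~3). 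The combination of the radius $\lambda=\e^{1/2}$, the weight $R_\e^{2\nu}$ with $\nu$ near $1/4$, and the trace inequality is what produces $\e^{1/12-\gamma^{\text{in}}}$.
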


Specifically, when $\gamma'$ is the small constant that was used to define the interior region in the proof of Lemma \ref{errorcalculation}, then $$\gamma^\text{in}= \tfrac{2}{3}\left(\tfrac{1}{4}-\nu\right)+ \nu \gamma'$$

\noindent hence $\gamma^\text{in}\!<\!<1 $ when $\gamma'$ is chosen suitably small and $\nu$ suitably close to $\tfrac{1}{4}$. 
 
 The remainder of this Section is devoted to the proof of Theorem \ref{invertibleL}, and is organized as follows. Akin to Section 6.2, Section 7.1 develops the Fredholm theory of Atiyah-Patodi-SInger boundary value problems in for the Dirac operator on a 3-manifold though in a slightly non-standard context more suitable to the problem at hand. Section 7.2 describes a distinguished subspace of configurations--section of the ``kernel subbundle''-- which play a prominent role in the proof. Section 7.3 gives the precise definition $H$ and $\Pi^\mathcal L$ in the statement of \ref{invertibleL}, and Section 7.4 concludes the proof via an integration by parts argument. Section 7.5 then deduces the case of a general metric from the model case. We continue to assume, until that section, that the assumptions of the model case (definition \ref{modeldef}) hold.

 \subsection{APS Boundary Conditions in 3d}
\label{aps3dsubsection}

\subsubsection{Untwisted Boundary Conditions}\label{aps3dsubsection7.1.1}

Consider $Y=S^1\times D^2$ equipped with the product metric. Let $(t,r,\theta)$ be cylindrical coordinates. As in  Section \ref{APSbdsection} there is a restriction (or trace) map $$\Tr: L^{1,2}(S^1\times D^2 ; \C^2)\to L^{1/2,2}(T^2;\C^2)$$
to the boundary values and we will choose a ``half-dimensional'' subspace $H_0\subseteq L^{1/2,2}(T^2;\C^2)$. Typically, one choose the negative eigenspace of the induced Dirac operator on the boundary (see \cite{KM} Section 17), which leads to an Index 0 problem. For our purposes, an alternative choice of a $t$-independent space $H_0$ is more suitable. The restriction to the boundary torus of a spinor $\ph=(\alpha,\beta)$ can be decomposed in Fourier series $$\begin{pmatrix} \alpha \\ \beta \end{pmatrix}\Big |_{S^1\times \del D}=\sum_{k,\ell \in \Z} \begin{pmatrix} \alpha_{k\ell} \\ \beta_{k\ell} \end{pmatrix} e^{i\ell t} e^{ik\theta}.$$

We define the subspace $H_0\subseteq L^{1/2,2}(T^2;\C^2)$ by $$H_0 = \Big\{(\alpha,\beta)\in L^{2} \ | \ \alpha = \sum_{k<0, \ell \in \Z} a_{k\ell}e^{i\ell t}e^{ik\theta} \ , \ \beta=\sum_{k>0, \ell \in \Z} \beta_{k\ell}e^{i\ell t}e^{ik\theta} \Big\}\cap L^{1/2,2}(T^2;\C^2).$$
\noindent Equivalently, in the notation of Section \ref{APSbdsection} (recall Equation \refeq{projections}) we require $\alpha_\ell(\theta)$ has vanishing $H^+_{[0]}$ component and $\beta_\ell(\theta)$ has vanishing $H^-_{[0]}$ component for every $\ell \in \Z$. Let
$$\Pi_0: L^{1,2}(S^1\times D^2;\C^2)\to H_0^\perp$$
be the projection to the orthogonal complement of $H_0$, so that $$\ph\in \ker(\Pi_0) \  \ \Leftrightarrow  \ \ \ph| _{S^1\times \del D} \in H_0.$$

\noindent Pictorially, associating the boundary Fourier modes with the lattice $\Z^2$ where $\ell $ is the vertical index and $k$ the horizontal, the condition to lie in $\ker(\Pi_0)$ says that $\alpha$ has non-zero boundary modes on the left half-lattice  
\begin{eqnarray*}
\text{Fourier mode}   & &  \ldots {\underline{k=\!-\!2} } \hspace{.3cm}     {\underline{k=\!-\!1} }  \hspace{.3cm}{\underline{k=0} } \  \hspace{.30cm} {\underline{k>0 }}  \ \ \ldots \\
 &&  \ \ \   \ \vdots \   \  \  \ \ \ \ \ \  \ \  \ \vdots \   \   \ \ \ \ \  \ \ \ \hspace{.05cm}   0   \hspace{2.5cm} \\
\underline{\ell=2}   \ \ &&\ldots \alpha_{-2,2}  \  \ \  \ \ \alpha_{-1,2}  \  \  \ \ \  0  \hspace{2.5cm} \\
\underline{\ell=1} \ \ &&\ldots \alpha_{-2,1}  \  \  \ \ \ \alpha_{-1,1}  \  \ \ \ \  0  \hspace{2.5cm} \\ 
\underline{\ell=0}  \ \ && \ldots \alpha_{-2,0}  \  \ \ \ \ \alpha_{-1,0}  \ \ \  \ \   0  \ \  \ \ \ \  \ \ \ldots 0 \ldots    \\
\underline{\ell=\!-\!1} \ \  && \ldots \alpha_{-2,0}  \  \ \ \  \ \alpha_{-1,0}  \  \ \  \ \  0\hspace{2.5cm}     \\
\underline{\ell=\!-\!2} \ \ &&  \ldots \alpha_{-2,0}  \  \  \ \ \ \alpha_{-1,0}  \  \ \  \ \  0\hspace{2.5cm}   \\
 &&  \ \ \   \ \vdots \   \  \  \ \ \ \ \ \  \ \  \ \vdots \   \   \ \ \ \ \  \ \ \   \hspace{.05cm} 0    \hspace{2.5cm}
\end{eqnarray*}
\label{NBV3d}

\noindent while $\beta$ has non-zero modes in the right half-lattice. Equivalently, since the boundary conditions only restrict the $\theta$ Fourier modes, we can express the condition $(\alpha,\beta)\in \ker(\Pi_0)$ as 
\begin{eqnarray*}
 \hspace{1.75cm}  \underline{k=-1}  \  \   \ &\underline{ k =0}& \ \ \ \underline{k=1} \medskip  \\  
\ldots  \alpha_{-2}(t)  \  \ \   \ \ \ \ \alpha_{-1}(t)     \   \ \ \ &0&   \ \ \ \  \  \ 0  \  \   \ \  \ \ \ \ \ \ \ 0 \ldots \\
\ldots  0  \   \ \ \  \ \ \ \ \   \ \ \ \ 0 \ \ \ \  \     \   \ \ \ &0 & \ \ \ \  \beta_{1}(t)  \  \  \ \ \ \  \beta_{2}(t) \ldots \\ 
\end{eqnarray*}
\label{NBV3d}

\noindent The next proposition shows that the Dirac operator with these boundary conditions has Index 0. Although this result is quite standard, it is beneficial to give a proof here that is suggestive of the eventual proof of the invertibility of $\mathcal L^{h_\e}$. The key point is that an estimate on the operator defined on slices of constant $t$ is applied for each $t$ and then integrated over $t\in S^1$ 
\begin{prop}
The boundary value problem  \medskip $$(\slashed D,\Pi_0):L^{1,2}(S^1\times D^2 ; S_E)\lre L^2(S^1\times D^2 ; S_E)\oplus H_0^\perp$$ 
\noindent  is invertible, and {\it a fortiori} Fredholm of Index 0. Equivalently, the same holds for the  operator $$\slashed D: \ker(\Pi_0)\lre L^2(S^1\times D^2; S_E).$$
\label{untwisteddiracbd}\qed 
\end{prop}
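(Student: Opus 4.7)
The plan is to Fourier-decompose in the circle direction and reduce the problem to a family of two-dimensional boundary value problems on $D^2$ indexed by $\ell\in\Z$, each of which is a zeroth-order perturbation of the theory of Section~\ref{section6.2}. First I would write $\ph = \sum_{\ell\in\Z}\ph_\ell(r,\theta)e^{i\ell t}$ with $\ph_\ell=(\alpha_\ell,\beta_\ell)$. Because $H_0$ is defined purely by conditions on the $\theta$-Fourier modes that are independent of $t$, each mode inherits the same two-dimensional boundary conditions: $\alpha_\ell|_{\del D}$ has only $e^{ik\theta}$-modes with $k\leq -1$ and $\beta_\ell|_{\del D}$ only modes with $k\geq 1$. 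By Parseval the equation $\slashed D\ph = f$ decouples into the family
$$\slashed D_\ell\ph_\ell = f_\ell,\qquad \slashed D_\ell = \begin{pmatrix}-\ell & -2\del \\ 2\delbar & \ell\end{pmatrix}.$$

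Next I would run an integration-by-parts energy identity on each slice. Expanding $\|\slashed D_\ell\ph_\ell\|_{L^2(D)}^2$ produces the nonnegative terms $\ell^2\|\ph_\ell\|^2 + 4\|\del\beta_\ell\|^2 + 4\|\delbar\alpha_\ell\|^2$ together with a cross term proportional to $\ell\,\mathrm{Re}\bigl(\br\alpha_\ell, 2\del\beta_\ell\kt + \br 2\delbar\alpha_\ell,\beta_\ell\kt\bigr)$. The Cauchy--Riemann integration-by-parts identity (as used in Lemma~\ref{intbyparts1}) converts this cross term into a boundary integral proportional to $\ell\int_{\del D}\br\alpha_\ell,\beta_\ell\kt\, re^{-i\theta}\,d\theta$. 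Because the Fourier supports of $\alpha_\ell$ and $\beta_\ell$ on $\del D$ lie in $\{k\leq -1\}$ and $\{k\geq 1\}$ respectively, the product $\br\alpha_\ell,\beta_\ell\kt$ has Fourier modes of index at most $-2$; multiplying by $e^{-i\theta}$ shifts this to index at most $-3$, so the $\theta$-integral vanishes. This yields the clean identity
$$\|\slashed D_\ell\ph_\ell\|_{L^2(D)}^2 = \ell^2\|\ph_\ell\|^2 + 4\|\del\beta_\ell\|^2 + 4\|\delbar\alpha_\ell\|^2.$$

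From this identity injectivity follows immediately: for $\ell\neq 0$ it forces $\ph_\ell = 0$, and for $\ell=0$ it forces $\alpha_0$ holomorphic and $\beta_0$ antiholomorphic on $D^2$, which together with the prescribed Fourier supports on $\del D$ forces $\alpha_0=\beta_0=0$ by Proposition~\ref{fredholmdelbar}. Combining the identity with the $L^{1,2}$-coercivity of $\delbar$ and $\del$ under APS boundary conditions (Proposition~\ref{fredholmdelbar}) yields an estimate $\|\ph_\ell\|_{L^{1,2}(D)}\leq C\|\slashed D_\ell\ph_\ell\|_{L^2(D)}$ with $C$ independent of $\ell$; summing in $\ell$ via Parseval then gives $\|\ph\|_{L^{1,2}(Y)}\leq C\|\slashed D\ph\|_{L^2(Y)}$. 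An identical argument applied to the formal adjoint, whose boundary conditions are the $L^2$-dual of $\ker(\Pi_0)$ under the same integration-by-parts formula, rules out the cokernel, so $(\slashed D,\Pi_0)$ is an isomorphism, and therefore Fredholm of Index~$0$.

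The main obstacle in the argument is ensuring the boundary integral produced by the integration by parts genuinely vanishes. This is secured by the one-mode asymmetric gap in the definition of $H_0$ (the cutoff $k\leq -1$ for $\alpha$ paired with the cutoff $k\geq 1$ for $\beta$), which is precisely calibrated to annihilate the boundary term after the extra $e^{-i\theta}$ factor appearing in the Cauchy--Riemann integration-by-parts formula. Once that cancellation is in hand, the reduction to the two-dimensional theory of Section~\ref{section6.2} and the Parseval summation complete the argument essentially by bookkeeping.
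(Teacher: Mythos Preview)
Your proposal is correct and follows precisely the approach the paper indicates: the paper states only that ``the proof is a standard application of integration by parts,'' with the key remark that ``an estimate on the operator defined on slices of constant $t$ is applied for each $t$ and then integrated over $t\in S^1$,'' and your Fourier decomposition in $t$ together with the slicewise energy identity and boundary-cancellation argument implements exactly this.
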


The proof is a standard application of integration by parts. 

\subsubsection{Twisted Boundary Conditions}
\label{twistedbdsubsection1}
The boundary conditions we will impose on $\mathcal L^{h_\e}$ are based on a twisted variation of the boundary conditions given in the previous Subsection \ref{aps3dsubsection7.1.1}. In this subsection, define an abstract version of the twisted boundary conditions and calculate the Fredholm index of the resulting Dirac operator. This result will be employed later to calculate the index of $\mathcal L^{h_\e}$ in the context of Theorem \ref{invertibleL}. 

The idea of the twisted boundary conditions is that for each fixed $t$, we allow four new modes by allowing $\beta_0$ to be non-vanishing and then impose four constraints on linear combinations of $\beta_0$ and $\alpha_{-1}$. An example of such a linear constraint is provided by the condition $\mu_\C^\del$ that appear in the boundary conditions for $\widehat{\mathcal N}_t$ in Definition \ref{boundarymodes}.

To make this precise, let $E_{-1,0} \to S^1$ be the trivial vector bundle with fiber $\R^8$, where the fiber is thought of as the complex span \be  (E_{-1,0})_{t}= \text{Span}_\C\left \{\begin{pmatrix} e^{-i\theta} \\ 0 \end{pmatrix}\otimes 1 \ \ , \ \ \begin{pmatrix} e^{-i\theta} \\ 0 \end{pmatrix}\otimes j\  \ , \ \ \begin{pmatrix}  0 \\ 1  \end{pmatrix}\otimes 1\ \ , \ \  \begin{pmatrix} 0 \\ 1 \end{pmatrix}\otimes j \right \}\ee

\noindent so that the space of sections $\Gamma(E_{-1,0})\subseteq L^2(\del (S^1\times D); S_E)$ is the closed subspace consisting of boundary configurations
$$\begin{pmatrix}\alpha_{-1}(t) e^{-i\theta}\\ 0 \end{pmatrix}  \hspace{1cm} \begin{pmatrix}0 \\ \beta_0(t) \end{pmatrix}. $$

\noindent In terms of the previous diagram, it is the subspace spanned by the boxed modes. 
\begin{eqnarray*}
 \hspace{1.75cm}  \underline{k=-1}  \  \   \ &\underline{ k =0}& \ \ \ \underline{k=1} \medskip  \\  
\ldots  \alpha_{-2}(t)  \  \ \   \ \  \ \ \ \boxed{\alpha_{-1}(t)}    \     \ \ &\alpha_0(t)&  \ \ \ \  \alpha_{1}(t)  \  \  \ \ \ \  \alpha_{2}(t) \ldots \\
\ldots  \beta_{-2}(t)  \   \  \ \ \ \   \ \ \ {\beta_{-1}(t)}    \   \   \ \ &\boxed{\beta_0(t)} & \ \ \ \  \beta_{1}(t)  \  \  \ \ \ \  \beta_{2}(t) \ldots \\ 
\end{eqnarray*}

Let $$V_t\subseteq E_{-1,0}$$ denote a real 4-plane distribution, and set \be H_1:=  \{(\alpha,\beta)\in L^{2} \ | \ \alpha = \sum_{k<-1, \ell \in \Z} a_{k\ell}e^{i\ell t}e^{ik\theta} \ , \ \beta=\sum_{k\geq 0, \ell \in \Z} \beta_{k\ell}e^{i\ell t}e^{ik\theta} \} \label{H1def}\ee 
to be the previously allowed modes omitting the $\alpha_{-1}$ and $\beta_0$ modes. Then consider
 $$H_{\text{Tw}}= (H_1 \oplus L^2(S^1;V_t)) \cap L^{1/2,2} \hspace{1.5cm} H_{\text{Tw}}^\perp= (H_1 \oplus L^2(S^1;V_t))^\perp  \cap  L^{1/2,2}  $$ 

\noindent and denote $$\Pi_{\text{Tw}}: L^{1,2} \to H_{\text{Tw}}^\perp$$ the projection to the orthogonal complement. 
\begin{defn}
The {\bf $V_t$-Twisted Boundary Conditions} are given by the requirement that 

$$\ph|_{\del(S^1\times D^2)} \in  H_{\text{Tw}} \   \Leftrightarrow \  \Pi_{\text{Tw}}(\ph)=0$$
\noindent so that the allowed modes are illustrated by 
\begin{eqnarray*}
 \hspace{1.75cm}  \underline{k=-1}  \  \   \ &\underline{ k =0}& \ \ \ \underline{k=1} \medskip  \\  
\ldots  \alpha_{-2}(t)  \  \ \   \ \ \  \boxed{ \alpha_{-1}(t)}       \ \ \ &0&   \ \ \ \  \  \ 0  \  \   \ \  \ \ \ \ \ \ \ 0 \ldots \\
\ldots  0  \   \ \ \  \ \ \ \ \   \ \ \ \ 0 \ \ \ \  \     \   \ \ \ &\boxed{\beta_0(t)} & \ \ \ \  \beta_{1}(t)  \  \  \ \ \ \  \beta_{2}(t) \ldots \\ 
\end{eqnarray*}
subject to the constraint that $$\hspace{.9cm }\boxed{\alpha_{-1}(t)}+ \boxed{\beta_0(t)}\in V_t \ \ \forall t\in S^1.$$
\begin{flushright}$\diamondsuit$\end{flushright}
\end{defn}

As an example, the untwisted case considered in the previous subsection is the special case that $V_t= \text{span}\{\alpha_{-1}\}$, i.e. $\beta_0=0$ for all $t$.  We constrain the distribution $V_t$ in two ways. These constraints are expressed in terms of two anti-involutions, which we now define. Write the fiber of $E_{-1,0}$ as $\C^2\otimes_\C \mathbb H$ so elements may be written $$\ph=\begin{pmatrix} \alpha_{-1} \\ \beta_0\end{pmatrix}$$ 
where the two components are $\mathbb H$-valued. Let $$J: \C^2\otimes \mathbb H \to \C^2\otimes \mathbb H \hspace{1.5cm}  \sigma_1 : \C^2\otimes \mathbb H \to \C^2\otimes \mathbb H $$ denote, respectively, the involutions 

$$J\begin{pmatrix} \alpha_{-1} \\ \beta_0\end{pmatrix}= \begin{pmatrix} - \beta_0 \\ \alpha_{-1}\end{pmatrix} \hspace{2cm} \sigma_1=\begin{pmatrix}i & 0  \\ 0 & -i\end{pmatrix} $$ 
(Note that $J$ is complex linear, and is not the charge conjugation map often denoted by the same letter). 
\begin{lm} \label{lagrangiansubspacelem}
The following hold. Throughout, we use the real inner product on $\C^2 \otimes \mathbb H$. 

\begin{enumerate}
\item[(1)] $(\sigma_1J)^{2}=-Id$, hence $\sigma_1J$ is an almost-complex structure. In particular, $(\sigma_1J)v \perp v$ for any $v \in \C^2\otimes \mathbb H$. 

\item[(2)] For spinors $\ph=(\alpha,\beta) \in \ker(\Pi_{\text{Tw}})$, the operator $$\slashed D^\C=\begin{pmatrix} 0 & -2\del \\ 2\delbar & 0\end{pmatrix}$$ satisfies the integration by parts formula 
$$\int_{S^1\times D^2} \br \slashed D^\C \ph    ,  \ph   \kt   \ - \   \br  \ph   ,  \slashed D^\C  \ph \kt \ dV \ = \int_{T^2} \Big \br  J\begin{pmatrix}\alpha_{-1} \\ \beta_0 \end{pmatrix}  \ , \  \begin{pmatrix}\alpha_{-1} \\ \beta_0 \end{pmatrix} \Big \kt  \ dA.$$ 
\end{enumerate}
\end{lm}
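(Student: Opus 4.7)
Part (1) is a direct calculation. Writing elements of $\C^2\otimes_\C \mathbb H$ as ordered pairs $(\alpha,\beta)$ with $\alpha,\beta\in\mathbb H$, I would compute
$$\sigma_1 J(\alpha,\beta)=\sigma_1(-\beta,\alpha)=(-i\beta,-i\alpha),$$
and then apply $\sigma_1 J$ again to obtain $(\sigma_1 J)^2(\alpha,\beta)=(-\alpha,-\beta)$, so $(\sigma_1 J)^2=-\mathrm{Id}$. For the orthogonality claim, I would note that $J$ (a sign-swap of components) and $\sigma_1$ (left multiplication by $\pm i$ on each component of $\mathbb H$) are each orthogonal with respect to the standard real inner product on $\mathbb H^2\simeq \R^8$, so their composition is orthogonal. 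Combined with $(\sigma_1J)^2=-\mathrm{Id}$ this forces $(\sigma_1 J)^\mathrm{T}=-(\sigma_1 J)$, i.e.\ $\sigma_1 J$ is skew-adjoint, from which $\langle v,(\sigma_1 J)v\rangle=-\langle (\sigma_1 J)v,v\rangle=-\langle v,(\sigma_1 J)v\rangle$ gives the perpendicularity.

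For Part (2), the plan is a careful integration by parts using the formulas of Section~\ref{section6.2} (paper convention $\langle u,v\rangle=\bar u v$). Applying formula (*) with $u=\beta,v=\alpha$ to $\int\langle-2\partial\beta,\alpha\rangle\,dV$ and formula (**) with $u=\alpha,v=\beta$ to $\int\langle\alpha,-2\partial\beta\rangle\,dV$, all four interior cross-terms cancel, leaving only the boundary contribution
$$\int_{S^1\times D^2}\bigl(\langle\slashed D^\C\varphi,\varphi\rangle-\langle\varphi,\slashed D^\C\varphi\rangle\bigr)\,dV \;=\;\int_{T^2}\bigl(\bar\alpha\,\beta\,e^{-i\theta}-\bar\beta\,\alpha\,e^{i\theta}\bigr)\rho\,d\theta\,dt.$$
This may equivalently be recognized as $\int_{T^2}\langle\gamma(\vec n)\varphi,\varphi\rangle dA$ where $\gamma(\vec n)=\gamma(dr)$ is Clifford multiplication by the outward unit normal on $\partial(S^1\times D^2)$, serving as a sanity check.

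The key step is then to expand the boundary values in Fourier series in $\theta$. Under the twisted boundary conditions $\varphi\in\ker(\Pi_{\mathrm{Tw}})$, one has $\alpha|_{T^2}=\sum_{k\leq -1}\alpha_k(t)e^{ik\theta}$ and $\beta|_{T^2}=\sum_{m\geq 0}\beta_m(t)e^{im\theta}$. The $\theta$-integrals in the boundary term pick out precisely those modes for which $k-m-1=0$ in the first term and $k-m+1=0$ in the second; in both cases the combined range $k\leq -1$, $m\geq 0$ forces $k=-1$ and $m=0$. The surviving contribution is therefore
$$2\pi\rho\int_{S^1}\bigl(\bar\alpha_{-1}(t)\beta_0(t)-\bar\beta_0(t)\alpha_{-1}(t)\bigr)dt,$$
which is exactly $\int_{T^2}\langle J(\alpha_{-1},\beta_0),(\alpha_{-1},\beta_0)\rangle dA$ once the normalization is absorbed into $dA$, since $\langle J(\alpha_{-1},\beta_0),(\alpha_{-1},\beta_0)\rangle=\overline{-\beta_0}\,\alpha_{-1}+\bar\alpha_{-1}\,\beta_0$.

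The step that requires the most care is the Fourier bookkeeping on the boundary: one must track which $(k,m)$ pairs survive each of the two $\theta$-integrals, and verify that the untwisted modes (those appearing in $H_1$ with $k\leq -2$ or $m\geq 1$) never contribute to either integral — it is essential here that the boundary conditions exclude exactly the mode $k=0$ of $\alpha$ and $m=-1$ of $\beta$, so that no spurious cross-terms appear. The rest of the argument is routine integration by parts, and the computation is truly local in $t$, which is what makes this formula useful in Section~\ref{section7} for integrating the fiberwise normal-operator estimates against a $t$-derivative.
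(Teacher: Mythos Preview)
Your proposal is correct and follows essentially the same approach as the paper. The only notable difference is in Part~(1): the paper verifies $(\sigma_1 J)v \perp v$ by a direct computation of the real inner product, whereas you deduce it from the general fact that an orthogonal map squaring to $-\mathrm{Id}$ is skew-adjoint. For Part~(2) your argument is identical to the paper's --- invoking the same $\partial/\bar\partial$ integration-by-parts formulas and observing that under the twisted boundary conditions only the $\alpha_{-1}$ and $\beta_0$ modes survive --- but you spell out the Fourier bookkeeping in more detail than the paper does.
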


\begin{proof}By definition $$(\sigma_1J)^2 \begin{pmatrix}\alpha \\ \beta \end{pmatrix}=\sigma_1 J \sigma_1 \begin{pmatrix}-\beta \\ \alpha \end{pmatrix}=\sigma_1 J \begin{pmatrix}-i\beta \\ -i\alpha \end{pmatrix} =\sigma_1 \begin{pmatrix}i\alpha \\ -i\beta \end{pmatrix}=-\begin{pmatrix}\alpha \\ \beta \end{pmatrix}    $$

\noindent and taking the real inner-product and conjugating the bottom term, $$\text{Re}\Big \br\sigma_1 J\begin{pmatrix}\alpha \\ \beta \end{pmatrix}, \begin{pmatrix}\alpha \\ \beta \end{pmatrix}  \Big \kt = \text{Re}( \overline{-i\beta}\alpha) + \text{Re}(\overline{-i\alpha}\beta)=\text{Re}(i \overline{\beta}\alpha) + \text{Re}(-i\alpha\overline \beta)=0.$$

\noindent which completes item (1). 

Item (2) follows immediately from the previously used integration by parts formulas

\begin{eqnarray}
\int_{D^2} \br -2\del \beta, \alpha \kt + \br \beta, -2\delbar \alpha\kt  \ dV &=& \int_{\del D} \br -\beta,\alpha \kt  e^{i\theta}d\theta\\
\int_{D^2} \br 2\delbar \alpha, \beta\kt + \br \alpha, 2\del \beta\kt  \ dV &=& \int_{\del D} \br \alpha,\beta\kt e^{-i\theta}d\theta
\end{eqnarray} 
and the observation that the condition $\Pi_{\text{Tw}}(\alpha,\beta)=0$ implies the only non-zero inner product for the boundary modes occurs in the $\alpha_{-1}$ and $\beta_0$ modes. 
\end{proof}

As a consequence of the above Lemma \ref{lagrangiansubspacelem}, there is a complex-linear isomorphism $(E_{-1,0}, \sigma_1 J)\simeq (\C^4, i)$ where the latter is given the standard almost-complex structure. This endows the former with a symplectic structure for which $\sigma_1 J$ is a compatible almost-complex structure given by the pullback of the standard symplectic structure on $\C^4$. We impose the following hypotheses on the distribution $V_t$: 
\bigskip

\noindent  \ \ \ {\bf Hypothesis (I):} Assume that $V_t\subseteq E_{-1,0}$ is a bundle of Lagrangian subspaces with respect to the

  \hspace{2.2cm}  \ \ \ symplectic structure compatible with the almost-complex $\sigma_1 J$.  In particular, this 
  
  \hspace{2.2cm} \ \ \   requires  $(\sigma_1 J)V_t \perp V_t$ for all $t\in S^1$. 
  \bigskip 

\noindent  \ \ \ {\bf Hypothesis (II):} $V_t$ is homotopic through distributions satisfying {\bf (I)} to a constant distribution. 

\bigskip 

The twisted analogue of Proposition \ref{untwisteddiracbd} is the following: 

\begin{prop}
Suppose that the 4-plane distribution $V_t$ satisfies hypotheses {\bf (I)} and {\bf (II)}. Then the operator \medskip  \be (\slashed D, \Pi_{\text{Tw}}): L^{1,2}(S^1\times D^2; S_E) \lre L^2(S^1\times D^2; S_E)\oplus H_{\text{Tw}}^\perp\label{fullbdvaluesdirac}\ee
\medskip

\noindent is Fredholm of Index 0. 

\label{twistedbdindex}
\end{prop}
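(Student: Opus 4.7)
The plan is to prove Proposition \ref{twistedbdindex} by reducing to the untwisted case of Proposition \ref{untwisteddiracbd} via homotopy invariance of the Fredholm index. The overall strategy has two stages: first establish that $(\slashed D, \Pi_{\text{Tw}})$ is Fredholm for every Lagrangian distribution $V_t$ satisfying Hypothesis \textbf{(I)}, and then use Hypothesis \textbf{(II)} together with path-connectedness of the Lagrangian Grassmannian to deform $V_t$ to the untwisted Lagrangian.

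First I would show the Fredholm property for fixed $V_t$ satisfying \textbf{(I)}. The key algebraic input is that the Lagrangian condition makes the boundary bilinear form in Lemma \ref{lagrangiansubspacelem}(2) vanish on $\ker(\Pi_{\text{Tw}})$; equivalently, $H_{\text{Tw}}$ is a Lagrangian subspace of $L^{1/2,2}(T^2; S_E)$ for the symplectic structure coming from $J$ on the $E_{-1,0}$ modes and the natural skew pairing on the remaining $\theta$-Fourier modes. This is the ellipticity condition for boundary projections of Dirac-type operators, and combined with the observation that $\Pi_{\text{Tw}}$ and $\Pi_0$ agree away from the four families of modes living over $E_{-1,0}$, it yields the Fredholm property via the same integration-by-parts estimate underlying Proposition \ref{untwisteddiracbd}, with the boundary term now controlled by the Lagrangian condition instead of being identically zero.

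Second, I would invoke Hypothesis \textbf{(II)}: let $V_t^s$ for $s\in [0,1]$ be a homotopy through Lagrangian distributions from $V_t^0 = V_t$ to a distribution $V_t^1 = V^\star$ that is constant in $t$. The family of boundary projections $\Pi_{\text{Tw}}^s$ is norm-continuous in $s$, and the preceding step applied uniformly in $s$ ensures each $(\slashed D, \Pi_{\text{Tw}}^s)$ is Fredholm, so $\text{Ind}(\slashed D, \Pi_{\text{Tw}}^0) = \text{Ind}(\slashed D, \Pi_{\text{Tw}}^1)$. Since the Lagrangian Grassmannian $\Lambda(\C^4) \cong U(4)/O(4)$ of the symplectic space $(E_{-1,0}, \sigma_1 J)$ is path-connected, I can further homotope $V^\star$ through constant Lagrangian distributions to the \emph{untwisted} Lagrangian $V^{\text{un}} = \{(\alpha_{-1}, 0) : \alpha_{-1} \in \mathbb{H}\}$ (this is indeed Lagrangian, since $\sigma_1 J V^{\text{un}} = \{(0, -i\alpha_{-1})\}$ is orthogonal to $V^{\text{un}}$). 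A second application of homotopy invariance yields $\text{Ind}(\slashed D, \Pi_{\text{Tw}}^1) = \text{Ind}(\slashed D, \Pi_0) = 0$, where the last equality is Proposition \ref{untwisteddiracbd}.

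The main obstacle is verifying the Fredholm property with uniform index bounds through the homotopy. The projections $\Pi_{\text{Tw}}^s$ are \emph{not} compact perturbations of $\Pi_0$ --- they rearrange an entire $L^2(S^1)$-worth of boundary modes parameterized by $t$ --- so the index stability cannot be reduced to a finite-rank calculation. The work lies in showing that Hypothesis \textbf{(I)} supplies exactly the algebraic data needed for a uniform version of the integration-by-parts estimate to hold, giving uniform elliptic estimates along the path $s \mapsto \Pi_{\text{Tw}}^s$. An equivalent and perhaps cleaner reformulation is that the index equals the Maslov index of the loop $t \mapsto V_t$ in $\Lambda(\C^4)$, and Hypothesis \textbf{(II)} is precisely the statement that this Maslov index vanishes.
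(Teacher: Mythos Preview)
Your proposal is correct and follows essentially the same route as the paper: Hypothesis \textbf{(I)} yields the Fredholm property via an integration-by-parts estimate (the paper bounds the boundary term by $\tfrac{\epsilon}{2}\|(\Pi_{\text{Tw}})^\perp\ph\|^2 + \tfrac{1}{2\epsilon}\|\Pi_{\text{Tw}}\ph\|^2$ via Young's inequality rather than having it vanish outright, since $\partial_t\ph$ need not remain in $V_t$ when $V_t$ varies), and Hypothesis \textbf{(II)} together with connectedness of the Lagrangian Grassmannian reduces the index computation to the untwisted case of Proposition~\ref{untwisteddiracbd}. Your Maslov-index reformulation is a clean restatement that the paper does not make explicit.
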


\begin{proof}

First, the hypothesis {\bf (I)} implies that this operator is Fredholm. Integrating by parts and using Young's inequality and {\bf {(I)}} shows the boundary is bounded above by $\tfrac{\epsilon}{2}\|(\Pi_\text{Tw})^\perp\ph\|^2+ \tfrac{1}{2\epsilon}\|\Pi_\text{Tw}\ph\|^2$. Choosing $\epsilon$ sufficiently small and absorbing the first into the left-hand side shows   

\be \|\ph\|^2_{L^{1,2}}\leq C ( \|\slashed D\ph\|^2_{L^2} + \|\Pi_{\text{Tw}}\ph\|^2_{L^{1/2,2}} + \|K\ph\|^2_{L^2})\label{injclosedrange}\ee

\noindent where $K: L^{1,2}\to L^2$ is a compact operator. Using this, it follows from standard theory that $(\slashed D, \Pi_{\text{Tw}})$ has closed range and finite dimensional kernel. Integrating by parts on $\br \slashed D \ph, \psi\kt$ shows an element of the complement of the range must solve $\slashed D\psi=0$ subject to the twisted boundary conditions for the distribution $W_t=\sigma_1 V_t$, which also satisfies {\bf (I)} hence (\refeq{injclosedrange}) applies to show the cokernel is finite dimensional.  

Hypotheses {\bf (II)} implies that $(\slashed D, \Pi_{\text{Tw}})$ is homotopic through Fredholm operators to one for which $V_t$ is constant. Since the space of 4-planes in $\R^8$ satisfying hypothesis {\bf (I)} is homeomorphic to the Lagrangian Grassmannian, it is connected, and $V_t$ it is therefore homotopic to the distribution $$V_0 = \text{span}\begin{pmatrix}\alpha_{-1}\\ 0\end{pmatrix}$$
which obviously satisfies hypothesis {\bf (I)}. The twisted boundary condition for $V_0$ is the untwisted boundary condition of Proposition \ref{NBV3d}, which has index 0. 

\end{proof}

\subsubsection{The Degenerating Family}
\label{motfortwisted1}

The boundary and orthogonality conditions $\Pi^\mathcal L$ for $\mathcal L^{h_\e}$ used in Theorem \ref{invertibleL} are more intricate than a simple choice of twisted Lagrangian distribution $V_t$. Before proceeding, we describe the geometric intuition motivating their definition. 

In order to identify a proper context in which $\mathcal L^{h_\e}$ is invertible, one must understand more precisely how the family of Fredholm operators   \begin{equation}\slashed D_{A^{h_\e}}\to \slashed D_{A_0}\label{diraclimit}\end{equation} degenerate to the singular semi-Fredholm operator in the limit, and in particular how the infinite-dimensional cokernel of Proposition \ref{infcokernel} arises. (Here, the limit of the operators should be interpreted only in an imprecise sense, as the difference is not bounded in $L^2$). One might expect that there is an infinite-dimensional family of eigenfunctions with small eigenvalues approaching 0 for which the ratio of the $L^{1,2}$ to the $L^2$ norm becomes infinite, which gives rise to the infinite-dimensional cokernel in the limit (which consists of kernel elements that are $L^2$ but not $L^{1,2}$).  

Indeed, this occurs in the two-dimensional case. Here, assuming the metric on $D_\lambda$ is a product, the cokernel of the limiting operator $\slashed D_{A_0}^\C$ is finite-dimensional and spanned by $$k_1=\begin{pmatrix}0 \\ r^{-1/2}\end{pmatrix}\otimes 1  \ \ \ \text{and } \ \ k_j=\begin{pmatrix}0 \\ r^{-1/2}\end{pmatrix}\otimes j $$
\noindent on $D_\lambda$. The kernel of the de-singularized operator $\slashed D_{A^{h_\e}}$ in $L^{1,2}$ is spanned by the configurations \be \beta_1= \begin{pmatrix} 0 \\ e^{-H}\rho^{-1/2}\end{pmatrix}\otimes 1=\left(\frac{K(t)}{\e}\right)^{-\tfrac{1}{3}}\begin{pmatrix} 0 \\ e^{-h_\e}r^{-1/2}\end{pmatrix}\otimes 1\label{smoothedcokernel}\ee  

\noindent  (and likewise for the $\otimes j$ component) that were described in Lemma \ref{ALemma}. For every $\e>0$, these are smoothed off versions of the function $r^{-1/2}$ where the smoothing occurs closer and closer to the origin. Thus these elements converge in $L^2$ (after renormalizing in $L^2$) to the limiting cokernel element $k_1$ which fails to be in $L^{1,2}$. 

Counterintuitively, in the three-dimensional case on $S^1\times D_\lambda$ this picture is only correct for the constant Fourier mode in the $t$-direction. The reason for this is that for the $\ell \neq 0$ modes, the two spinor components $\alpha,\beta$ are coupled. Again assuming the metric on $S^1\times D_\lambda$ is Euclidean, the infinite-dimensional cokernel is spanned (over $\C$) in $L^2$ by the elements 

\be \psi^1_\ell = \sqrt{|\ell|} e^{i\ell t} \begin{pmatrix} e^{-i\theta} \ \  \tfrac{e^{-|\ell | r}}{\sqrt{r}} \\ \text{sgn}(\ell)\tfrac{e^{-|\ell | r}}{\sqrt{r}} \end{pmatrix}\otimes 1 \hspace{1cm}\psi^j_\ell = \sqrt{|\ell|} e^{i\ell t} \begin{pmatrix} e^{-i\theta} \ \  \tfrac{e^{-|\ell | r}}{\sqrt{r}} \\ \text{sgn}(\ell)\tfrac{e^{-|\ell | r}}{\sqrt{r}} \end{pmatrix}\otimes j.\label{psilimits}\ee
\noindent for $\ell \in \Z$. Recall that the complex gauge transformation acts by $e^{-h_\e}\sim r^{1/2}$ in the bottom component, but by $e^{h_\e}\sim r^{-1/2}$ in the top component. For $\ell \neq 0$, one sees that after smoothing the bottom component behaves as in the two-dimensional case, but the top component becomes {\it more} singular. Consequently, in this case the smoothed cokernel elements analogous to (\refeq{smoothedcokernel}) are now {\it neither} $L^{1,2}$ \text{nor} $L^2$ along $\mathcal Z_0$. This suggests that the infinite-dimensional cokernel that appears in the limit $\e\to 0$ does not arise from a family of $L^{1,2}$- eigenspaces; instead each $\psi_\ell$ appears to arise from an $\e$-parameterized family of elements that are not even in $L^2$ for $\e>0$, but which limit to an element of $L^2$. 

One can confirm this picture with the following basic calculation. Consider replacing $A^{h_\e}$ with the nearby non-smooth connection given as follows. Let $\rho_0\sim \e^{2/3}$ be a fixed constant, and define a connection $A_1$ piecewise by setting it to be the product connection for $\rho<\rho_0$ and setting it equal to $A_0$ for $\rho \geq \rho_0$. Writing the Dirac operator in Fourier series leads to ODEs in both regions, and it is straightforward to check by matching boundary conditions at $r=\rho_0$ that there are no solutions with exponential decay away from $\mathcal Z_0$ that are locally $L^2$ along $\mathcal Z_0$. This property should persist under the minor smoothing that corrects $A_1$ to $A^{h_\e}$. 

A more accurate picture of what occurs for the boundary-value problem is as follows. Rather surprisingly, the infinite-dimensional kernel that appears in the limit $\e\to 0$ arises from elements with exponential growth away from $\mathcal Z_0$. For the discontinuous connection $A_1$ from the previous paragraph, there is an finite-dimensional family of kernel elements which are $L^2$ along $\mathcal Z_0$, but which look like \be\sim e^{i\ell t} \frac{e^{+|\ell| r}}{\sqrt{ r}}\label{expgrowth}\ee for large $r$. But on the disk $D_\lambda$ of radius $\lambda=c\sqrt{\e}
$, the elements (\refeq{expgrowth}) are still monotonically decreasing toward the boundary for values of $|\ell| \leq O(\e^{-1/2})$. Cutting these elements off near the boundary leads to a family of spinors (satisfying the boundary conditions) with unit $H^1_\e$-norm for which applying $\slashed D_{A^{h_\e}}$ result in very small elements in $L^2$.  For precisely the same reason as in Remark \ref{rem6.11}, such elements violate any uniform bound on the inverse. Indeed, as one can check, if (and only if!) $|\ell|\leq O(\e^{1/2})$, the configurations $e^{i\ell t}\beta_t$ where $\beta_t$ is a kernel element of $\mathcal N_t$ (as in \refeq{smoothedcokernel} above) results approximate kernel elements on which $\mathcal L^{h_\e}$ is very small. 

 Given this picture, to find a setting in which there is a uniform bound on the inverse of $\mathcal L^{h_\e}$, we must choose boundary conditions that allow these approximate kernel elements and  then project orthogonal to them, just as we did for $\mathcal N_t$. Once $\ell$ is sufficiently large, the $t$-derivative becomes violent enough that these elements are no longer almost in the kernel and projection orthogonal to them is no longer necessary. Thus {\it the boundary conditions on $\mathcal L^{h_\e}$ are taken to be combination of boundary conditions and orthogonal projections on the interior}. Specifically, we choose boundary conditions that allow the boundary values of $e^{i\ell t}\beta_t$ for low $\ell$ but disallow them for high $\ell$, while at the same time we impose orthogonal projections that disallow the elements $e^{i\ell t}\beta_t$ in the interior for low $\ell$ while leaving the projections unrestricted for high $\ell$. The key point of the proof of Theorem \ref{invertibleL} is to show that these can be done simultaneously without imposing so many constraints as to result in a large negative index. 

The upshot of this intuitive dicussion is that it is the exponentially growing elements of the form (\refeq{expgrowth}) that are the impediment to obtaining uniform elliptic estimates for $\mathcal L^{h_\e}$, rather than exponentially decaying ones limiting to (\refeq{psilimits}). This observation identifies the correct setup for Theorem \ref{invertibleL}. Using this setup, the proof proceeds in the next four subsections without reference to the  above intuitive picture (in particular, no claims about exponentially growth are explicitly made).  
Although this intuitive geometric picture guides our setup for Theorem \ref{invertibleL}, it is not necessary to make it precise. The justification of this picture's correctness lies in the fact that a setup designed with it in mind actually yields a proof of Theorem \ref{invertibleL} , while attempts to prove Theorem \ref{invertibleL} envisioning other pictures (such as one in which every mode is analogous to the zeroth mode) are completely confounding.

\subsection{The Kernel Bundle}
\label{kernelbundlesection}

As explained in the previous subsection \ref{motfortwisted1}, the constraints $\Pi^\mathcal L$ for the operator $\mathcal L^{h_\e}$ used in Theorem \ref{invertibleL} are a mixture of boundary conditions and orthogonality constraints. The boundary portion of these constraints are a specific case of the twisted boundary conditions discussed in the previous subsection \ref{aps3dsubsection}. In this subsection, we define the accompanying orthogonality constraints. These project orthogonal to (a subspace) of the configurations that lie in the kernel of $\mathcal N_t$ for every $t\in S^1$.

Let $D_\lambda$ continue to denote the disk of radius $\lambda=c\e^{1/2}$. For each $t\in S^1$, recall that $\beta_t$ is the element whose complex span is $\ker(\mathcal N_t)\subseteq H^1_{\e}(\{t\}\times D_\e)$ such that it is normalized in the ${H^1_{\e}( D_\lambda)}$-norm.  

\begin{defn}
Define the {\bf Kernel Subbundle} as $$K(\mathcal N_t) \subseteq S^1 \times H^1_{\e}(\{t\}\times D_\lambda)$$
where the latter is viewed as the trivial vector bundle over $S^1$ having fiber $H^1_{\e}(D_\lambda)$. Thus its sections are $$\Gamma(K(\mathcal N_t))= \{\eta(t)\beta_t   \ | \  \eta: S^1 \to \C\}.$$ 
\end{defn}

\medskip 

Before proceeding, let us make a brief remark on function spaces. We have versions of the space $ H^1_{\e}$ in both two and three dimensions. To distinguish we rename them $H^1_{slice}$ and $H^1_{\e}$  respectively. Explicitly, where $\nabla$ denotes only the derivatives in the $D_\lambda$-directions, the norms are given by

\bea
\|(\ph,a)\|_{ H^1_{slice}} &=&\left(\int_{D_\lambda}  |\nabla\ph|^2 + |\nabla a|^2 + \frac{|\ph|^2}{R_\e^2} + \frac{|\mu(\ph,\Phi^{h_\e})|^2}{\e^2} + \frac{|a|^2|\Phi^{h_\e}|^2}{\e^2} \ rdrd\theta\right)^{1/2} \\  
 \|\ph,a\|_{ H^1_{\e}}&=&\left(\int_{S^1}\int_{D_\lambda}  |\del_t \ph|^2 + |\del_t a|^2 +  |\nabla\ph|^2 + |\nabla a|^2 + \frac{|\ph|^2}{R_\e^2} + \frac{|\mu(\ph,\Phi^{h_\e})|^2}{\e^2} + \frac{|a|^2|\Phi^{h_\e}|^2}{\e^2}  \ rdrd\theta dt\right)^{1/2} \\
 &=&  \left(\int_{S^1} \left(\int_{ D_\lambda} |\del_t (\ph,a)|^2  \ rdrd\theta + \|u\|^2_{H^1_{slice}} \right)dt\right)^{1/2} 
\eea

Configurations $\eta(t)\beta_t \in \Gamma(K(\mathcal N_t))$, play a distinguished role because for $\eta$ with sufficiently small derivative, these form the  $O(\e^{-1/2})$-dimensional space approximate kernel elements described in the previous subsection \ref{motfortwisted1}.

 The sections of the kernel bundle $K(\mathcal N_t)$ are, by design, distinguished by being precisely the configurations on which $\mathcal N_t$ vanishes for every $t$. More precisely,  
 
 \begin{lm}
The inclusion \bea L^{1,2}(S^1; \ker (\mathcal N_t)) &\to& H^1_{\e} \\ \eta & \mapsto& \eta(t)\beta_t \eea
of sections of the kernel bundle has image characterized by  being precisely the sections $\xi$ such that $$\mathcal \|\mathcal N_t \xi \|_{L^2(S^1\times D_\e)} =0.$$ 
\label{kernelsubbundlecharacterization}
\end{lm}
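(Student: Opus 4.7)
The forward inclusion is an immediate pointwise calculation. By the definition of $\mathcal N_t$ in (\ref{normaloperator}), this operator contains only derivatives in the $D_\lambda$-directions and no $\partial_t$. Consequently, for a section $\xi(t,x)=\eta(t)\beta_t(x)$ the operator acts by $(\mathcal N_t\xi)(t,\cdot)=\eta(t)\,\mathcal N_t\beta_t=0$ for every $t$, so $\|\mathcal N_t\xi\|_{L^2(S^1\times D_\lambda)}=0$. The boundedness of the inclusion $L^{1,2}(S^1;\ker\mathcal N_t)\hookrightarrow H^1_\e$ follows from the estimates of Lemma~\ref{betaasymptotics}: the $\beta_t$-component of the norm and its $t$-derivative are controlled via the uniform $L^2$ and $L^{1,2}$-size of $\beta_t,\dot\beta_t$ on each slice.

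For the converse, I plan to proceed by a Fubini argument followed by a regularity check. Assume $\xi\in H^1_\e$ satisfies $\|\mathcal N_t\xi\|_{L^2(S^1\times D_\lambda)}=0$. Fubini's theorem guarantees that the slices $\xi(t,\cdot)$ lie in $H^1_{slice}(\{t\}\times D_\lambda)$ for almost every $t\in S^1$ and that $\mathcal N_t\xi(t,\cdot)=0$ in $L^2(D_\lambda)$ for such $t$. These slices inherit the twisted boundary conditions of Definition~\ref{boundarymodes} (which are fiberwise conditions baked into the trace on $\partial N_\lambda(\mathcal Z_0)=S^1\times\partial D_\lambda$). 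Proposition~\ref{invertibleN} then identifies $\ker(\mathcal N_t)=\mathbb C\cdot\beta_t$, so there exists a unique scalar $\eta(t)\in\mathbb C$ with $\xi(t,\cdot)=\eta(t)\beta_t$ for almost every $t$. Explicitly one can take $\eta(t)=\pi^{\ker}_t(\xi(t,\cdot))$ using the $L^2$-projection defined in Section~\ref{section6.6}, which shows that $t\mapsto\eta(t)$ is measurable.

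The main (and essentially only) substantive step is to verify that this $\eta$ lies in $L^{1,2}(S^1;\mathbb C)$, so that $\xi$ genuinely lies in the image of the inclusion. The $L^2$ bound is immediate from the identity
\[
\|\xi\|^2_{L^2(S^1\times D_\lambda)}=\int_{S^1}|\eta(t)|^2\|\beta_t\|^2_{L^2(D_\lambda)}\,dt
\]
and the uniform lower bound $\|\beta_t\|_{L^2}\geq c\,\e^{1/2+1/12}$ from Lemma~\ref{betaasymptotics}. For the derivative, differentiating distributionally gives $\partial_t\xi=\dot\eta(t)\,\beta_t+\eta(t)\,\dot\beta_t$. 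Taking $L^2(D_\lambda)$-norms and applying Cauchy--Schwarz together with the uniform bound $\|\dot\beta_t\|_{L^2(D_\lambda)}\leq\kappa_1\|\beta_t\|_{L^2(D_\lambda)}$ from Lemma~\ref{betaasymptotics}, the cross term can be absorbed to yield a pointwise inequality of the form
\[
|\dot\eta(t)|^2\,\|\beta_t\|^2_{L^2(D_\lambda)}\leq 2\,\|\partial_t\xi(t,\cdot)\|^2_{L^2(D_\lambda)}+C\,|\eta(t)|^2\,\|\beta_t\|^2_{L^2(D_\lambda)}.
\]
Integrating over $S^1$, using the two-sided bounds on $\|\beta_t\|_{L^2}$ from Lemma~\ref{betaasymptotics}, and combining with the $L^2$ estimate on $\eta$ controls $\|\dot\eta\|_{L^2(S^1)}$ by $\|\xi\|_{H^1_\e}$ (with an $\e$-dependent constant, which is harmless for the qualitative statement). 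The main obstacle in the proof is this last step: the $t$-variation of the basis section $\beta_t$ must be quantitatively tame enough that the orthogonal splitting of $\partial_t\xi$ into a $\beta_t$-direction and a transverse part is genuine, and this is precisely the content of item (3) of Lemma~\ref{betaasymptotics}.
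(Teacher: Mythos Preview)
Your proposal is correct and follows essentially the same approach as the paper's proof: establish the forward inclusion by the pointwise vanishing $\mathcal N_t(\eta(t)\beta_t)=0$, verify the map lands in $H^1_\e$ using the bounds on $\beta_t,\dot\beta_t$ from Lemma~\ref{betaasymptotics}, and for the converse write $\xi=\eta(t)\beta_t$ slicewise and then check $\eta\in L^{1,2}$ by comparing norms. Your treatment of the $L^{1,2}$ regularity of $\eta$ is in fact more explicit than the paper's, which simply says ``reversing the above inequality'' without writing out the absorption of the $\eta\dot\beta_t$ cross term; the key input in both cases is item~(3) of Lemma~\ref{betaasymptotics}.
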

\begin{proof}
Notice first that $\eta(t)\beta_t \in  H^1_{\e}$ since 
\bea \|\eta(t)\beta_t\|_{ H^1_{\e}}&=&  \int_{S^1} \left(\int_{ D_\lambda} |\dot \eta \beta_t + \eta \dot \beta |^2 \  \ rdrd\theta + \|\eta(t)\beta_t\|_{H^1_{slice}} \right)dt \\
&\leq & \int_{S^1} |\dot \eta|^2 \|\beta_t\|^2_{L^2(D_\lambda)} +   | \eta|^2 \|\dot \beta_t\|^2_{L^2(D_\lambda)}  + |\eta(t)|^2 \|\beta_t\|^2_{ H^1_{slice}} dt  \\
&\leq &  \int_{S^1} |\dot \eta|^2 \|\beta_t\|^2_{L^2(D_\lambda)} +   | \eta|^2 \kappa_1 \|\beta_t\|^2_{L^2(D_\lambda)}  + |\eta(t)|^2 dt\lesssim_\e \|\eta\|_{L^{1,2}}
\eea 
where in the third line we have invoked item (3) of Lemma \ref{betaasymptotics}  .On such configurations, $\mathcal N_t (\eta(t)\beta_t)=0$ by definition. Supposing conversely that a smooth element $\xi$ had $\mathcal N_t\xi=0$. Then, by definition of the kernel subbundle, we may write $\xi = \eta(t)\beta_t$ for a function $\eta(t)$. Clearly $\eta\in L^2(S^1;\C)$ if and only if $\xi \in L^2(S^1\times D_\lambda)$, and then reversing the above inequality shows such a configuration is in $ H^1_{\e}$ only if $\eta \in L^{1,2}$. 
\end{proof}

The projection $\pi^\text{ker}_{t}: H^1_{slice}\to \C$ now becomes a $t$-parameterized family of projections resulting in a function in $L^{1,2}(S^1;\C)$. 

\begin{defn}
the {\bf normalized projection to the kernel bundle} $$P:  H^1_{\e} \to L^{1,2}(S^1;\C)$$
 defined by
$$[P(\xi)](t)= \int_{\{t\}\times D_\lambda} \frac{\br \xi(t) , \beta _{t} \kt}{\|\beta_t\|^2_{L^2(\{t\}\times D_\lambda)}} \ rdrd\theta$$
\label{tdependentprojectionsdef}
\end{defn}

\noindent so that for each fixed $t$, the value is the value of the slice projection $\pi^\text{ker}_{t}$ for $\mathcal N_t$. Notice that since $\del_t \xi  , \del_t \beta_t  \in L^2$ this is a bounded map into $L^{1,2}$ by Cauchy-Schwartz, and since $\eta(t)\beta_t$ projects to $\eta(t)$, the previous Lemma \ref{kernelsubbundlecharacterization} shows its image is all of $L^{1,2}$.

We can also view the projection as a $\Z$-parameterized family of projections to $$P^\ell: H^1_{\e}\lre  \C$$ giving the Fourier modes:  \be P^\ell (\xi)= \int_{S^1} \int_{\{t\} \times D_\lambda} \frac{\br \xi , e^{i\ell t}\beta_{t} \kt}{\|\beta_t\|^2_{L^2(\{t\}\times D_\lambda)}} \  rdrd\theta dt\label{lthmodeprojection}\ee

\noindent so that if $\xi= \eta(t)\beta_t$ then $P^\ell(\xi)=\br \eta, e^{i\ell t}\kt_{L^2(S^1;\C)}$ is the $\ell^{th}$ Fourier coefficient, and the original projection is given by $$P(\xi)=\sum_{\ell \in \Z} P^\ell(\xi)e^{i\ell t}.$$

\noindent Clearly since $P(\xi)\in L^{1,2}$ the sequence satisfies  $\{P^\ell(\xi)\}_{\ell \in \Z}\in l^{1,2}(\Z)$.  

Additionally, we can split this family of projections into two regimes: the high and low Fourier modes. Let $L_0\in \N$ denote a large constant to be chosen later. In a slight abuse of notation, we write $\frac{1}{L_0}\e^{-1/2}$ to mean the  smallest integer greater than $\tfrac{1}{L_0}\e^{-1/2}$ if $\e^{-1/2}\notin \Z$.

\begin{eqnarray}
P^{\text{low}}: H^1_{\e} &\to&  \C^{1+ 2   \e^{-1/2}/L_0} \hspace{5.6cm} \xi \to \bigoplus_{|\ell| \leq \frac{1}{L_0}\e^{-1/2}} P^\ell(\xi) \label{Plowdef} \\
P^{\text{high}}: H^1_{\e} &\to &\{ \eta \in  L^{1,2}(S^1;\C) \ | \ \eta_\ell=0 \text{ for }|\ell| \leq  \tfrac{1}{L_0}\e^{-1/2}\}\hspace{.9cm} \xi \to \sum_{|\ell | \geq \frac{1}{L_0}\e^{-1/2}} P^\ell(\xi)e^{i\ell t} 
\end{eqnarray}

\subsection{Boundary and Projection Conditions}
\label{twistedbdsubsection}
In this section we define the precise constraints $\Pi^\mathcal L$imposed  on the operator $\mathcal L^{h_\e}$ in Theorem \ref{invertibleL}. As explained in subsection \ref{motfortwisted1}, these are a combination of boundary conditions and interior orthogonal projections using $P_\ell$. First, we cover the boundary conditions, which are a particular case of the twisted boundary conditions which appeared in subsection \ref{twistedbdsubsection}, and subsequently define the projection conditions.

\subsubsection{Pure Boundary Conditions}
 We define a twisted boundary condition by specifying a 4-dimensional distribution $V_t\subseteq E_{-1,0}$ as in subsection \ref{twistedbdsubsection1}.

 Let $a_1, a_2$ be the components of the $e^{-i\theta}$ boundary mode of $\alpha$, and $b_1, b_2$ be the components of the constant boundary mode of $\beta$ so that 

\smallskip 
 \be  \begin{pmatrix}\alpha_{-1}(t)  \\ \beta_0(t)\end{pmatrix}  =\begin{pmatrix} a_1(t) \\ b_1(t) \end{pmatrix}\otimes 1+\begin{pmatrix} a_2(t) \\ b_2(t) \end{pmatrix} \otimes j.\label{alphabetathing}\ee

\smallskip

\noindent Recall that  
\smallskip 
  \be \mu^\del_\C\left(\alpha, \beta , \zeta , \omega \right)= b_1 \overline \alpha_1^{H} + \overline a_1 \beta_1^{H} + b_2 \overline \alpha_2^{H} + \overline a_2 \beta_2^{H}.   \label{mucboundarydef}\ee
was the Index 2 boundary condition imposed on $\mathcal N_t$, where, $\alpha_i^{H}$ and $\beta_i^{H}$ are the components of $\Phi^H$ restricted to the boundary (the subscripts on these denote the $\otimes 1$ and $\otimes j$ components, not the Fourier modes). Here, we split this condition to give an index zero one.

 \begin{defn} Define the 4-dimensional Lagrangian distribution $V_t^{\Phi_0}$ determined by $\Phi_0$ by setting 
 
 \be
 \mu_\C^{\alpha}:= \overline a_1 \beta_1^H + \overline a_2 \beta_2^H \hspace{2cm} \mu_\C^\beta:= b_1 \overline \alpha_1^H + b_2 \overline \alpha_2^H
 \label{mualphabetadef}
 \ee
 and taking $$ V^{\Phi_0}_t := \{ (\alpha_{-1}, \beta_0)\in E_{-1,0} \ | \ \mu_\C^\alpha = \mu_\C^\beta=0\}$$\label{VPhidef}
 \end{defn}

 \begin{lm} \label{mufiltration}The distribution $V_t^{\Phi_0}$ fits into the proper complex filtration 
 $$\{0\} \ \subsetneq  \ \ker(\mathcal N_t) |_{\del D_\e} \ \subsetneq  \  V_t^{\Phi_0} \ \subsetneq \  ( \mu_\C^\del)^{-1}(0) \ \subsetneq \   E_{-1,0}.$$
 
\noindent in particular, configurations with boundary values in $V_t^{\Phi_0}$ satisfy the boundary conditions for $\mathcal N_t$ and the kernel elements $\beta_t$ have boundary values in $V_t^{\Phi_0}$.

 \end{lm}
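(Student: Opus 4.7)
The proof is essentially a dimension count together with two short direct verifications, one trivial and one structural.

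I would begin by recording the real dimensions of each subspace. The ambient space $E_{-1,0}$ is $8$-dimensional over $\R$. The map $\mu_\C^\del\colon E_{-1,0}\to\C$ has real rank $2$ (since $(\alpha^H,\beta^H)|_{\partial D_\lambda}\neq 0$ by Assumption~\ref{assumption2}), so $(\mu_\C^\del)^{-1}(0)$ has real dimension $6$. The two defining equations $\mu_\C^\alpha$ and $\mu_\C^\beta$ of $V_t^{\Phi_0}$ each have real rank $2$ and involve disjoint variables ($a_1,a_2$ versus $b_1,b_2$), so $V_t^{\Phi_0}$ has real dimension $4$. Finally, $\ker(\mathcal N_t)$ is one-dimensional over $\C$, and Lemma~\ref{betaasymptotics} guarantees that $\beta_t|_{\partial D_\lambda}\neq 0$ (via the non-vanishing $\rho^{-1/2}$ leading behaviour), so $\ker(\mathcal N_t)|_{\partial D_\lambda}$ is real $2$-dimensional. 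Thus all claimed inclusions are automatically strict once we check they are inclusions.

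The inclusion $V_t^{\Phi_0}\subseteq(\mu_\C^\del)^{-1}(0)$ is immediate from $\mu_\C^\del=\mu_\C^\alpha+\mu_\C^\beta$. The remaining inclusion $\ker(\mathcal N_t)|_{\partial D_\lambda}\subseteq V_t^{\Phi_0}$ is the content of the lemma and is proved by direct computation. Using the decomposition $\beta_t = \psi_t + h_t\cdot\Phi^H$ from Step~2 of the proof of Proposition~\ref{holomorphicinvertible}, write $\beta_t$ as a complex combination of the two basis elements $\beta_i = \beta_i^\circ + h_i\cdot\Phi^H$ of Lemma~\ref{ALemma}, and read off the $\alpha_{-1}$ and $\beta_0$ boundary modes by substituting the explicit local form of $\Phi^H$ from Section~\ref{section4.1}. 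The plan is to observe the following structural cancellation: in each $\mu_\C^\alpha(\beta_i)$, and in the pure-gauge part of $\mu_\C^\beta(\beta_i)$, the relevant coefficient is the combination $\overline{\alpha_1^H}\tilde\beta_1^H+\overline{\alpha_2^H}\tilde\beta_2^H$, which equals $\tfrac{\overline c d -d\overline c}{K^2}\rho=0$ identically by the Haydys real-structure form of $\Phi^H$. What survives is $\mu_\C^\alpha(\beta_i)=0$ for both $i$, while $\mu_\C^\beta(\beta_i)$ (equivalently $\mu_\C^\del(\beta_i)$) is a non-zero multiple of $\overline c/K$ respectively $-d/K$. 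The unique complex combination $\beta_t=k_1\beta_1+k_2\beta_2$ singled out in Section~\ref{generalcaseN} by $\mu_\C^\del(\beta_t)=0$ is characterised exactly by $k_1\overline c = k_2 d$, and with this choice $\mu_\C^\beta(\beta_t)$ also vanishes. Hence $\beta_t|_{\partial D_\lambda}\in V_t^{\Phi_0}$.

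The hardest part, if any, is simply keeping careful track of the $\otimes 1$, $\otimes j$ splitting and of which factor $h_t$ acts on when passing from the abstract decomposition $\beta_t=\psi_t+h_t\cdot\Phi^H$ to the boundary-mode scalars $(a_1,a_2,b_1,b_2)$; the cancellation itself is a one-line identity once everything is on the table. As a by-product of the above, any configuration whose $(\alpha_{-1},\beta_0)$ boundary values lie in $V_t^{\Phi_0}$ automatically satisfies $\mu_\C^\del=0$, which together with the ambient APS half-space requirement on the remaining Fourier modes is precisely Definition~\ref{NBV1}; so configurations with boundary values in $V_t^{\Phi_0}$ satisfy the boundary conditions for $\mathcal N_t$, completing the lemma.
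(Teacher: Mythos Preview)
Your proposal is correct and follows essentially the same approach as the paper. The one stylistic difference is in how you show the gauge-orbit part $h_i\cdot\Phi^H$ lands in $V_t^{\Phi_0}$: you verify the scalar identity $\overline{\alpha_1^H}\beta_1^H+\overline{\alpha_2^H}\beta_2^H\propto \overline c\,d-d\,\overline c=0$, whereas the paper simply observes that the $(\alpha_{-1},\beta_0)$ boundary modes of $h_i\cdot\Phi^H$ are visibly complex multiples of the spanning vectors $v_1,v_2$ of $V_t^{\Phi_0}$ (since the $\alpha$-part is proportional to $(c,-\overline d)$ and the $\beta$-part to $(d,\overline c)$). These are of course the same fact; the paper's phrasing is marginally more geometric, yours marginally more computational. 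One small caution: you cite Lemma~\ref{betaasymptotics} to get $\beta_t|_{\partial D_\lambda}\neq 0$ and hence strictness of the leftmost inclusion, but in the paper's ordering that lemma comes later; the paper instead writes out the explicit boundary expression for $\beta_t$ at the end of the proof, from which non-vanishing is evident.
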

 
 \begin{proof}
 The inclusion $V_t^{\Phi_0}\subsetneq (\mu_\C^{\del})^{-1}(0)$ is obvious since $$\mu_\C^{\del}= \mu_\C^{\alpha} + \mu_\C^\beta$$ hence vanishes if they vanish individually. 
 
 Next, recalling that the coefficients of the leading order term of $\Phi_0$ are denoted $c(t), d(t)$, notice that up a factors of $r^{-1/2}$ and $e^{-i\theta}$ on the boundary, we have $$\mu_\C^\alpha= \overline a_1 d(t) +\overline a_2 \overline c(t) \hspace{2cm} \mu_\C^\beta= b_1 \overline c(t) + b_2(-d(t)) $$
 
\noindent thus these see only the $a_i$ and $b_i$ components respectively. For each $t$, $(\mu_\C^{\alpha})^{-1}(0) \cap \{\beta_0=0\}$ and $(\mu_\C^{\beta})^{-1}(0)\cap \{\alpha_{-1}=0\}$ are spanned over $\C$ by 

\be v_1:=\begin{pmatrix}c(t) \\ 0 \end{pmatrix}\otimes 1 + \begin{pmatrix}-\overline d(t) \\ 0 \end{pmatrix}\otimes j \hspace{1.5cm}v_2:= \begin{pmatrix} 0 \\ d(t) \end{pmatrix}\otimes 1 + \begin{pmatrix} 0 \\ \overline c(t) \end{pmatrix}\otimes j\label{formofmualphabeta}\ee 
\noindent respectively. Thus $V_t^{\Phi_0}$ is the complex span the above elements. The condition $|c(t)|^2 + |d(t)|^2>0$ shows it is indeed $4$-dimensional over $\R$.  

To observe that $\beta_t |_{\del D_\e}\in V_t^{\Phi_0}$, recall that $\beta_t$ is the linear combination of 
\bea \beta_1&=& \begin{pmatrix} 0 \\ e^{-H}\rho^{-1/2} \end{pmatrix} \otimes 1+ h_1\cdot  \Phi^H\\ \beta_j&=& \begin{pmatrix} 0 \\ e^{-H}\rho^{-1/2} \end{pmatrix} \otimes j+ h_j\cdot \Phi^H \eea 

\noindent satisfying $\mu_\C=0$ with $h_1, h_j$ as in (the proof of) Lemma \ref{betaasymptotics}. Since $$\Phi^H=\begin{pmatrix} e^{H}c(t)\rho^{1/2} \\  e^{-H}d(t)\rho^{1/2} e^{-i\theta} \end{pmatrix}\otimes 1 + \begin{pmatrix}-e^H\overline d(t)\rho^{1/2} \\ e^{-H}\overline c(t) \rho^{1/2}e^{-i\theta}\end{pmatrix}\otimes j $$
and $h_1, h_j$ have only negative Fourier modes, it is immediate that the $h_1\cdot \Phi^H$ and  $h_j\cdot \Phi^H$ contribute boundary terms that are complex multiples of (\refeq{formofmualphabeta}) in the relevant Fourier modes. Thus \bea \mu_\C^{\alpha}(\beta_t)&=&0 \\ 
\mu_\C^{\beta}(\beta_t)&=& \mu^\del_\C(\beta_t)=0\eea
since the linear combination of the first terms of $\beta_1$ and $\beta_j$ is exactly the one satisfying $\mu_\C=0$ (notice that $h_j=\tfrac{\overline c(t)}{d(t)})h_1$ in a projective sense so the lower order terms contribute the same linear combinations as the leading order). This shows $\beta_t\in V_t^{\Phi_0}$.  

For completeness, we note that the expression for $\beta_t$ restricted to the boundary is
\bea
\beta_t|_{T^2}=\left[\begin{pmatrix} 0 \\ d(t) \end{pmatrix}\otimes 1 + \begin{pmatrix} 0 \\ \overline c(t) \end{pmatrix}\otimes j  + \frac{3K(t)}{4}\left(\begin{pmatrix} - c(t) e^{-i\theta} \\ d(t)\end{pmatrix}\otimes 1+ \begin{pmatrix} \overline d(t)e^{-i\theta} \\  \overline c(t)\end{pmatrix}\otimes j \right)\right] \rho_t^{-1/2} + O(\rho_t^{-1}).
\eea

\end{proof}

Next, we show this subbundle $V_t^{\Phi_0}$ satisfies the hypotheses of Proposition \ref{twistedbdindex} for the Dirac operator with these twisted boundary conditions to be Fredholm.  

\begin{lm}
The distribution $V_t^{\Phi_0}$ satisfies the hypotheses

\begin{enumerate}
\item[{\bf (I)}] $(\sigma_1 J)V_t^{\Phi_0} \perp V_t^{\Phi_0}$ for all $t\in S^1$. 
\item[{\bf (II)}] $V_t^{\Phi_0}$ is homotopic through distributions satisfying {\bf (I)} to a constant one.
 \end{enumerate}
 
 \noindent of Proposition \ref{twistedbdindex}.
 \label{hypothesisIII}
\end{lm}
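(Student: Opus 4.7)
The plan is to exploit the explicit description of $V_t^{\Phi_0}$ from the proof of Lemma~\ref{mufiltration}, together with the commutativity of complex multiplication, to verify both hypotheses directly. The starting point is the observation that since $\mu_\C^\alpha$ depends only on the $a_i$-components and $\mu_\C^\beta$ only on the $b_i$-components of an element of $E_{-1,0}$ written as in \eqref{alphabetathing}, the distribution splits as an orthogonal direct sum
$$V_t^{\Phi_0}=V_\alpha(t)\oplus V_\beta(t),$$
where $V_\alpha(t)\subset\{\beta_0=0\}$ is the complex line spanned by $v_1$ and $V_\beta(t)\subset\{\alpha_{-1}=0\}$ is the complex line spanned by $v_2$, with $v_1,v_2$ as in \eqref{formofmualphabeta}.

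For Hypothesis~(I), I would use the formulas $J\begin{pmatrix}x\\y\end{pmatrix}=\begin{pmatrix}-y\\x\end{pmatrix}$ and $\sigma_1=\mathrm{diag}(i,-i)$ to see that $\sigma_1 J$ interchanges the two spinor slots (with an extra factor of $-i$), so that $\sigma_1 J V_\alpha(t)\subset\{\alpha_{-1}=0\}$ and $\sigma_1 J V_\beta(t)\subset\{\beta_0=0\}$. The relations $\sigma_1 J V_\alpha\perp V_\alpha$ and $\sigma_1 J V_\beta\perp V_\beta$ then hold automatically because the image lies in the complementary slot. Since $V_\alpha,V_\beta$ are complex lines and $\sigma_1 J$ is $\C$-linear, the two remaining cross-orthogonalities reduce to the single Hermitian identities $\langle\sigma_1 Jv_1,v_2\rangle_\C=0$ and $\langle\sigma_1 Jv_2,v_1\rangle_\C=0$. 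A direct expansion gives
$$\langle\sigma_1 Jv_1,v_2\rangle_\C=i\overline{c}d-id\overline{c}=i(\overline{c}d-d\overline{c}),$$
together with the analogous identity exchanging the roles of $v_1,v_2$; both vanish by the commutativity of complex multiplication.

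For Hypothesis~(II), the key remark is that the construction of $V_t^{\Phi_0}$ and the verification of (I) just sketched use only that $(c(t),d(t))\in\C^2\setminus\{0\}$, which is guaranteed by Assumption~\ref{assumption2}. Since $\C^2\setminus\{0\}\simeq S^3$ is simply connected, the loop $t\mapsto(c(t),d(t))$ is null-homotopic through nowhere-vanishing pairs; feeding such a null-homotopy into the formulas for $v_1,v_2$ yields a homotopy of $V_t^{\Phi_0}$ through distributions still satisfying (I) to the constant distribution associated to any chosen $(c_0,d_0)\ne 0$. I do not anticipate any serious obstacle: the entire argument is a direct unpacking of the algebra on the $8$-dimensional fiber $E_{-1,0}$, and the only nontrivial input is the commutativity identity $\overline{c}d=d\overline{c}$ used in verifying (I).
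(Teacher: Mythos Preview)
Your proposal is correct and follows essentially the same approach as the paper. For (I), the paper also reduces to checking the three Hermitian pairings $\langle\sigma_1 Jv_i,v_k\rangle$, observes that the diagonal ones vanish because $J$ swaps the $\alpha$- and $\beta$-slots, and computes the cross term to be $i(\overline{c}d-d\overline{c})=0$; for (II), the paper likewise uses that $(c(t),d(t))$ defines a loop in $S^3$, which is null-homotopic. Your explicit splitting $V_t^{\Phi_0}=V_\alpha(t)\oplus V_\beta(t)$ is a clean way to phrase what the paper does implicitly.
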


\begin{proof}

Let $v_1, v_2$ denote the elements spanning $V_t^{\Phi_0}$ as in (\refeq{formofmualphabeta}). For {\bf (I)} it suffices to show the three hermitian inner products 

$$\br  \sigma_1 J v_1, v_1\kt=0 \hspace{1.5cm}\br  \sigma_1 J v_2, v_2\kt=0 \hspace{1.5cm} \br  \sigma_1 J v_1, v_2\kt=0 $$
all vanish. Notice the third implies the same reversing $v_1$ and $v_2$ since $\sigma_1J$ is orthogonal and squares to $-I$. 

The first two are obvious, since $v_1$ has only $\alpha$-components, while $Jv_1$ has only $\beta$-components. Likewise for $v_2$. For the third, 

\bea
\br  \sigma_1 J v_1, v_2\kt&=& \Big \br  \sigma_1 J \begin{pmatrix}c(t) \\ 0 \end{pmatrix}\otimes 1 +\sigma_1J \begin{pmatrix}-\overline d(t) \\ 0 \end{pmatrix}\otimes j  \ , \  \begin{pmatrix} 0 \\ d(t) \end{pmatrix}\otimes 1 + \begin{pmatrix}-\overline 0 \\ \overline c(t) \end{pmatrix}\otimes j\Big \kt \\
&=& \Big \br  \sigma_1  \begin{pmatrix}0 \\ c(t)  \end{pmatrix}\otimes 1 +\sigma_1 \begin{pmatrix}0 \\  -\overline d(t)  \end{pmatrix}\otimes j  \ , \  \begin{pmatrix} 0 \\ d(t) \end{pmatrix}\otimes 1 + \begin{pmatrix}-\overline 0 \\ \overline c(t) \end{pmatrix}\otimes j\Big \kt \\
&=& i (\overline cd +(\overline {-\overline d}) \overline c)= i (\overline cd- d\overline c)=0.
\eea

For {\bf (II)}, observe that the subspace $V_t^{\Phi_0}$ depends only on the functions $c(t)$ and $d(t)$ which are required to satisfy $|c(t)|^2 + |d(t)|^2>0$ by Assumption \ref{assumption2}. Thus normalizing, we view them as map \bea S^1 &\to& S^3 \subseteq \C^2\\ t&\mapsto &\frac{(c(t), d(t))}{|c(t)|^2 + |d(t)|^2} \eea 
and since $S^3$ is simply connected, there is a homotopy through pairs satisfying $|c(t)|^2 + |d(t)|^2>0$ connecting them to the constant pair $(1,0)$. 
\end{proof}

We now define 

\begin{defn} \label{purebddef} Write $(\ph,a)= (\alpha,\beta,\zeta,\omega)$ as in Section \ref{bdconditionsN}. Then $(\ph,a)$ satisfies the {\bf Pure Boundary Conditions} on $\mathcal L^{h_\e}$ if
\bea
(\alpha,\beta)|_{\del(N_\lambda(\mathcal Z_0))} \in H_\text{Tw}& \Leftrightarrow &\Pi_{\text{Tw}}(\alpha,\beta)=0 \\
(\zeta,\omega)|_{\del(N_\lambda(\mathcal Z_0))} \in H_0 \ \ & \Leftrightarrow &\Pi_{0}(\zeta,\omega)=0, \\
\eea
where $H_{\text{Tw}}$ uses the distribution $V_t^{\Phi_0}$ defined above in Definition \ref{VPhidef}, and $H_0$ is the untwisted version from Section \ref{aps3dsubsection}. The allowed boundary modes are illustrated by 
\begin{eqnarray*}
 \hspace{1.75cm}  \underline{k=-1}  \  \   \ &\underline{ k =0}& \ \ \ \underline{k=1} \medskip  \\  
\ldots  \alpha_{-2}(t)  \  \ \   \ \ \  \boxed{ \alpha_{-1}(t)}       \ \ \ &0&   \ \ \ \  \  \ 0  \  \   \ \  \ \ \ \ \ \ \ 0 \ldots \\
\ldots  0  \   \ \ \  \ \ \ \ \   \ \ \ \ 0 \ \ \ \  \     \   \ \ \ &\boxed{\beta_0(t)} & \ \ \ \  \beta_{1}(t)  \  \  \ \ \ \  \beta_{2}(t) \ldots \\ 
\ldots  \zeta_{-2}(t)  \  \ \   \ \ \  \ { \zeta_{-1}(t)}     \  \ \ \ &0&   \ \ \ \  \  \ 0  \  \   \ \  \ \ \ \ \ \ \ 0 \ldots \\
\ldots  0  \   \ \ \  \ \ \ \ \   \ \ \ \ 0 \ \ \ \  \     \   \ \ \ &  0  & \ \ \  \ \omega_{1}(t)  \  \  \ \ \ \  \omega_{2}(t) \ldots \\ 
\end{eqnarray*}
\noindent where the boxed modes are constrained so that $$\hspace{.9cm }\boxed{\alpha_{-1}(t)}+ \boxed{\beta_0(t)}\in V_t^{\Phi_0} \ \ \forall t\in S^1.$$

\end{defn}

\begin{cor}
Subject to the pure boundary conditions, the boundary-value problem
\medskip  \be (\mathcal L^{h_\e}, \Pi_{\text{Tw}} \oplus \Pi_0) : H^1_{\e}(N_\lambda(\mathcal Z_0)) \lre L^2(N_\lambda(\mathcal Z_0))\oplus H_\text{Tw}^\perp \oplus H_0^\perp\ee
\medskip 
is Fredholm of Index 0. \label{corpurebdindex}
\end{cor}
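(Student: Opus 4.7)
The plan is to exhibit $\mathcal L^{h_\e}$ as a block-diagonal first-order elliptic operator plus a compact zeroth-order perturbation, and to invoke Proposition \ref{twistedbdindex} (together with Lemma \ref{hypothesisIII}) for the spinor block and a parallel argument for the form block. Explicitly, I would write
$$ \mathcal L^{h_\e} \;=\; \mathcal L_0^{h_\e} + \mathcal K, \qquad \mathcal L_0^{h_\e} := \begin{pmatrix} \slashed D_{A^{h_\e}} & 0 \\ 0 & \bold d \end{pmatrix}, \qquad \mathcal K := \begin{pmatrix} 0 & \gamma(\_)\tfrac{\Phi^{h_\e}}{\e} \\ \tfrac{\mu(\_,\Phi^{h_\e})}{\e} & 0 \end{pmatrix}. $$
Since $N_\lambda(\mathcal Z_0)$ is compact and, for the fixed $\e>0$ under consideration, Proposition \ref{desingularizedproperties} gives $|\Phi^{h_\e}/\e|\leq C\e^{-2/3}<\infty$, the operator $\mathcal K$ factors as multiplication by a bounded tensor composed with the Rellich embedding $H^1_\e \hookrightarrow L^2$ (recall that $H^1_\e = L^{1,2}$ as topological vector spaces for each fixed $\e$). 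Thus $\mathcal K$ is a compact operator $H^1_\e\to L^2$, and it suffices to compute the index of $(\mathcal L_0^{h_\e},\Pi_{\text{Tw}}\oplus\Pi_0)$.

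For the spinor block I would first discard the zeroth-order contribution $\gamma(iA^{h_\e}-i A_0) + (B_0\text{-connection terms})$ by the same Rellich compactness argument, reducing the problem to computing the index of the model Dirac operator $\slashed D$ on $S^1\times D^2$ subject to the twisted boundary conditions of Definition \ref{purebddef}. Lemma \ref{hypothesisIII} verifies hypotheses \textbf{(I)} and \textbf{(II)} for the distribution $V_t^{\Phi_0}$, so Proposition \ref{twistedbdindex} immediately gives Fredholm index $0$ for this block.

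For the form block, the operator $\bold d$ is a first-order elliptic Dirac-type operator on $(\Omega^0\oplus \Omega^1)(i\R)$; via the Hodge star on the 3-disk it is conjugate to the Hodge-Dirac operator $d+d^\star$ on the de Rham complex restricted to even or odd forms. The boundary conditions $\Pi_0$ are the untwisted APS conditions of Section \ref{aps3dsubsection7.1.1}, applied separately to the $\zeta$ and $\omega$ components. An integration-by-parts argument identical in form to the one in Lemma \ref{lagrangiansubspacelem} (with the Clifford symbol of $d+d^\star$ in place of $\sigma_1$) shows that, with these boundary values, all cross terms vanish, so the operator is Fredholm and, just as in Proposition \ref{untwisteddiracbd}, decouples into a family of $L^2(S^1)$-parameterized boundary value problems on $D^2$ whose Fourier-mode analysis gives vanishing index for each mode. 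Hence the form block is Fredholm of index $0$.

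Adding the two blocks gives $\operatorname{ind}(\mathcal L_0^{h_\e}) = 0$, and compact perturbation by $\mathcal K$ preserves the index, yielding the corollary. I expect the only genuine obstacle to be the form block: the analogue of Proposition \ref{twistedbdindex} is not stated in the text, so one must verify by hand that the boundary symbol of $\bold d$ is compatible with the splitting used to define $\Pi_0$ (this is straightforward since $\Pi_0$ was designed to annihilate the boundary pairing) and that the subsequent homotopy to a $t$-independent boundary condition remains within Fredholm operators. Once this parallel is drawn, the corollary follows immediately.
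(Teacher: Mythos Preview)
Your proposal is correct and follows essentially the same approach as the paper: reduce modulo compact operators to the block-diagonal operator $\text{diag}(\slashed D,\bold d)$, then apply Lemma \ref{hypothesisIII} with Proposition \ref{twistedbdindex} for the spinor block and the untwisted result for the form block. The only streamlining available is that you need not redo any integration by parts for the form block: under the identification $(a_0+a_tdt,\,a_xdx+a_ydy)\sim(\zeta,\omega)$ from Section \ref{section6.1}, the operator $\bold d$ is literally the Dirac operator up to a sign, so Proposition \ref{untwisteddiracbd} applies verbatim and the ``genuine obstacle'' you anticipate dissolves.
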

\begin{proof}
The spaces in question are equivalent to the space $L^{1,2}$ and $L^2$ respectively since the domain is compact, so we may disregard the weighted norms. The compactness of the domain also implies that, (although the off-diagonal and connection terms are large),

$$\begin{pmatrix}
\slashed D_{A^{h_\e}} & \gamma(\_)\frac{\Phi^{h_\e}}{\e} \\ \frac{\mu(\_,\Phi^{h_\e})}{\e}  & \bold d 
\end{pmatrix}  \ = \begin{pmatrix} \slashed D_{ }  & \frac{}{}&0 \medskip  \\  0  &  & \bold d 
\end{pmatrix} \mod \mathcal K $$
where $\mathcal K$ is the space of compact operators. The previous Lemma  \ref{hypothesisIII} shows that the conditions of Proposition  \ref{twistedbdindex} are satisfied for the twisted boundary conditions defined by $V_t^{\Phi_0}$, hence Proposition  \ref{twistedbdindex} applies to show that the top block is Fredholm of Index 0. Under the association $(a_0 + a_t dt, a_x dx + a_ydy)\sim (\zeta,\omega)$ the operator $\bold d$ is the Dirac operator up to a sign, hence the bottom block is also Fredholm of Index 0 by the untwisted case of Proposition \ref{untwisteddiracbd}. 
\end{proof}

\subsubsection{Mixed Boundary and Projection Conditions}

The eventual proof to proving the invertibility of $\mathcal L^{h_\e}$ follows from an integration by parts argument. Holistically, it has the following form. Let $\xi=(\ph,a)$, then 

\begin{eqnarray}
\int_{N_\lambda} |\mathcal L^{h_\e}\xi|^2 \ dV &=& \int_{N_\lambda} |\del_t \xi |^2 + |\mathcal N_t \xi |^2 + \br  \sigma_t \del_t \xi \ , \ \mathcal N_t\xi \kt + \br\mathcal N_t\xi  \ , \sigma_t \del_t\xi \kt  \ dV  \\
 &=&  \int_{N_\lambda} |\del_t \xi |^2 + |\mathcal N_t \xi |^2 +  \br \xi, \{\sigma_t \del_t, \mathcal N_t \} \xi \kt \ dV + \int_{\del(N_\lambda)} \br -\sigma_t J \xi, \del_t \xi \kt\label{intbypartsholistic}
\end{eqnarray}
where $\sigma_t$ is the symbol of $\mathcal L^{h_\e}$. The cross term $\{\sigma_t\del_n, \mathcal N_t\}$ is comparatively small, and can be absorbed. We would wish to impose constraints so that

\medskip 
{\bf i)} the boundary term vanishes 

\smallskip 
{\bf ii)} $\xi$ is orthogonal to the subspace consisting of sections of $\Gamma(K(\mathcal N_t))$. 

\medskip 
\noindent Given both of these, one could then apply the estimate for $\mathcal N_t$ on each slice of fixed $t$ to conclude the orem, as in the proof of Proposition \ref{untwisteddiracbd}. The problem is that imposing both these constraints does not lead to a Fredholm problem (it is `` $\text{Ind}=-L^2(S^1;\C)$'').  The solution is to observe that for sufficiently low Fourier modes, the boundary term can be absorbed. On the other hand, for sufficiently high Fourier modes in $K(\mathcal N_t)$, the $\del_t \xi$ term becomes sufficiently large to dominate the norm of these configurations, rendering the projections unnecessary. The actual conditions we impose therefore allow the low Fourier modes for an extra boundary component, and also allow non-zero projections to $K(\mathcal N_t)$ for the high fourier modes.

We define the mixed boundary and orthogonality conditions as a direct sum $$\Pi^\mathcal L:= (\Pi_{\text{Tw}}^\circ \oplus \Pi_0) \oplus P^\text{low}$$

\noindent wherein $P^\text{low}$ is the orthogonal projection defined in Equation (\refeq{Plowdef}), $\Pi_0$ is the untwisted boundary projection on the form components $(\zeta,\omega)$  identical to that in Definition \ref{purebddef}, and   $\Pi_\text{Tw}^\circ $ is a boundary condition which has $1+2\e^{-1/2}/L_0$ fewer constraints than $\Pi^\text{Tw}$ obtained by removing the boundary conditions in certain low modes.

\bigskip 

Let us now explain these more precisely.  Just as we split the kernel projection $P$ into a family of projections $P^\ell$ parameterized by Fourier modes, we can do the same for the twisted boundary projection $\Pi_\text{Tw}$ to obtain a family of projections indexed by $\Z^4$. As in Equation (\refeq{formofmualphabeta}) let $v_1(t)$ and $v_2(t)$ be two vectors whose complex span is $V^{\Phi_0}_t\subseteq E_{-1,0}$ for each $t\in S^1$. Similarly let $w_1(t), w_2(t)$ be two vectors whose complex span is $(V^{\Phi_0}_t)^\perp$ for each $t\in S^1$. By Lemma \ref{mufiltration}, we may choose $w_1(t)$ so that \be \text{span}_\C\{v_1(t), v_2(t), w_1(t)\}=(\mu_\C^\del)^{-1}(0)\subseteq E_{-1,0} \label{w1def}.\ee 
  
\noindent Writing the $E_{-1,0}$ components of the boundary values as $$\begin{pmatrix}\alpha_{-1}(t) \\ \beta_0(t)\end{pmatrix}= a_1(t)v_1(t)+ a_2(t)v_2(t) + b_1(t)w_1(t) + b_2(t)w_2(t)$$

\noindent where $a_i(t), b_i(t)\in L^{1/2,2}(S^1;\C)$ then the condition that the boundary values lie in $V_t^{\Phi_0}$ can be expressed as $$b_1(t)= b_2(t)=0.$$

\begin{defn} 
The {\bf Mixed Boundary and Projection Constraints} are defined by the condition that 
 
 \be (\ph, a)|_{\del(N_\lambda(\mathcal Z_0))} \in \ker(\Pi^{\mathcal L})
 \ee
 
 \noindent where $$\Pi^{\mathcal L}:=(\Pi_{\text{Tw}}^\circ \oplus \Pi_0) \oplus P^\text{low}$$ 
 is given by 
 
 \begin{itemize}
 
 \item $P^\text{low}: H^1_\e(N_\lambda(\mathcal Z_0))\to \C^{1+2\e^{-1/2}/L_0}$ is the projection to the low modes of the kernel bundle defined in Equation (\refeq{Plowdef}). 
 \item $\Pi_0: H^1_\e(N_\lambda(\mathcal Z_0))\to H_0^\perp$ is the untwisted boundary condition of Subsection \ref{aps3dsubsection7.1.1} on the form components $(\zeta,\omega)$. 
 \item  $\Pi_\text{Tw}^\circ: H^1_\e(N_\lambda(\mathcal Z_0))\to (H_\text{Tw} \ \oplus \  \C^{1+2\e^{-1/2}/L_0})^\perp$ is defined (using the notation above) by the constraints that $$\pi^\text{high}(b_1(t))=0 \hspace{2cm} b_2(t)=0$$
 
\noindent where $\pi^\text{high}$ denotes the projection to Fourier modes $|\ell|\geq \tfrac{1}{L_0}\e^{-1/2}$, and $\pi^\text{low}=1-\pi^\text{high}$ so that $\Pi_\text{Tw}= \Pi_\text{Tw}^\circ \oplus \pi^\text{low}$.   
 \end{itemize}

\noindent The allowed modes on the boundary are illustrated by

\begin{eqnarray*}
 \hspace{1.75cm}  \underline{k=-1}  \  \   \ &\underline{ k =0}& \ \ \ \underline{k=1} \medskip  \\  
\ldots  \alpha_{-2}(t)  \  \ \   \ \ \  \boxed{ \alpha_{-1}(t)}       \ \ \ &0&   \ \ \ \  \  \ 0  \  \   \ \  \ \ \ \ \ \ \ 0 \ldots \\
\ldots  0  \   \ \ \  \ \ \ \ \   \ \ \ \ 0 \ \ \ \  \     \   \ \ \ &\boxed{\beta_0(t)} & \ \ \ \  \beta_{1}(t)  \  \  \ \ \ \  \beta_{2}(t) \ldots \\ 
\ldots  \zeta_{-2}(t)  \  \ \   \ \ \  \ { \zeta_{-1}(t)}     \  \ \ \ &0&   \ \ \ \  \  \ 0  \  \   \ \  \ \ \ \ \ \ \ 0 \ldots \\
\ldots  0  \   \ \ \  \ \ \ \ \   \ \ \ \ 0 \ \ \ \  \     \   \ \ \ &  0  & \ \ \  \ \omega_{1}(t)  \  \  \ \ \ \  \omega_{2}(t) \ldots \\ 
\end{eqnarray*}
\noindent where the boxed modes are constrained so that $$\hspace{.9cm }\boxed{\alpha_{-1}(t)}+ \boxed{\beta_0(t)}\in V_t^{\Phi_0}   \ \bigoplus \ \Big\{ \sum_{|\ell | \leq \frac{1}{L_0}\e^{-1/2}} b_\ell e^{i\ell t}w_1(t) \Big\} \ \ \forall t\in S^1.$$
and configurations are further constrained by the requirement that $$P^\text{low}(\ph,a)=0.$$

\noindent A visualization of these conditions in comparison to the pure boundary condition is given in Figure 2 below.

\medskip 
\noindent {\bf Notice:} In addition to the above, configurations lying in $\ker(\Pi^\mathcal L)$ lie in the space of $(\mu_\C^\del)^{-1}(0)$ for each fixed $t$, which was the boundary condition imposed on $\mathcal N_t$.   
\label{mixedbddef}

\bigskip 
\end{defn}

\begin{figure}[h!]
\begin{center}
\begin{picture}(500,515)
\put(25,-25){\includegraphics[scale=.50]{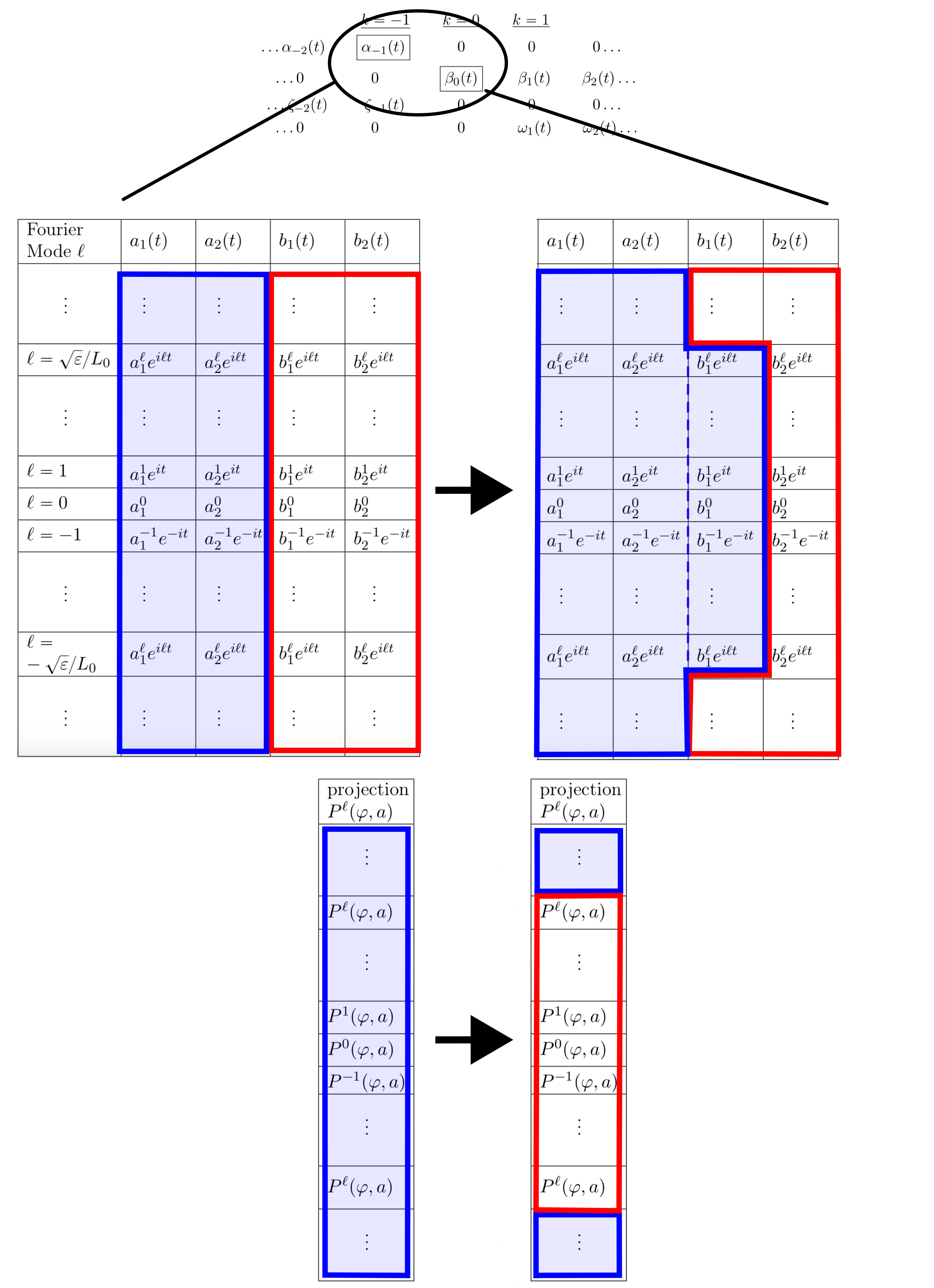}}
\put(107,465){\color{blue}\Large $V_t^{\Phi}$}
\put(167,465){\color{red}\Large $(V_t^{\Phi})^\perp$}
\put(295,465){\color{blue}\Large $V_t^{\Phi}$}
\put(335,465){\color{magenta}\Large $(V_t^{\Phi})^\perp$}
\put(35,525){\color{black}\large $\begin{matrix}\text{pure b.d.} \\ \text{conditions}\end{matrix}$}
\put(345,525){\color{black}\large $\begin{matrix}\text{mixed b.d. $+$} \\ \text{orthogonality} \\ \text{conditions}\end{matrix}$}
\put(65,105){\color{black}\large $\begin{matrix}\text{no orthogonality} \\ \text{conditions}\end{matrix}$}
\put(325,105){\color{black}\large $\begin{matrix}P^\text{low}(\ph,a)=0\end{matrix}$}
\label{Fig2}
\end{picture}

\bigskip 
\vspace{.5mm}

\caption{Illustration of the pure boundary conditions (left) versus the mixed boundary and orthogonality conditions (right). Allowed modes are indicated by blue boxes, and modes constrained to be 0 are indicated by red boxes. Compared to the pure boundary conditions, the mixed conditions remove $1+2\e^{-1/2}/L_0$ boundary constraints from $b_1$ modes, and impose the same number of orthogonality constraints on low modes in the kernel bundle.}
\end{center}

\end{figure}

 \begin{prop}
 The mixed boundary value and projection problem 
 
 \medskip 
 \be
 (\mathcal L^{h_\e}, \Pi^\mathcal L): H^1_{\e,\nu}\lre L^2_\nu  \  \oplus \  (H_\text{Tw}\oplus \C^{1+2\e^{-1/2}/L_0})^\perp \ \oplus \ H_0^\perp  \ \oplus \ \C^{1+2\e^{-1/2}/L_0}
 \ee
 
 \medskip
 \noindent is Fredholm of Index 0. 
 \label{indexmixedbd}
 \end{prop}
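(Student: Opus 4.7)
The plan is to deduce this from Corollary \ref{corpurebdindex} (which says the pure boundary problem has Fredholm Index $0$) by exhibiting the mixed problem as a compact perturbation of the pure one. Write $N := 1 + 2\e^{-1/2}/L_0$; although $N$ grows as $\e \to 0$, for each fixed $\e$ it is finite, and all that will matter is the finite-rank nature of the modification relative to the pure boundary problem.

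The key structural observation is the orthogonal decomposition
\begin{equation*}
H_\text{Tw}^\perp \ \cong \ (H_\text{Tw} \oplus \mathbb{C}^N)^\perp \ \oplus \ \mathbb{C}^N,
\end{equation*}
where the $\mathbb{C}^N$ factor corresponds to boundary configurations supported in the span of $\{e^{i\ell t}w_1(t)\}_{|\ell|\leq \e^{-1/2}/L_0}$, with $w_1(t)$ as in \ref{w1def}. Under this splitting, the pure boundary projection decomposes as $\Pi_\text{Tw} = \Pi_\text{Tw}^\circ \oplus b_1^{\text{low}}$, where $b_1^{\text{low}} \colon H^1_{\e,\nu} \to \mathbb{C}^N$ extracts the low-mode $w_1(t)$-coefficients of the boundary trace. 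Consequently Corollary \ref{corpurebdindex} can be rephrased as the statement that
\begin{equation*}
\bigl(\mathcal{L}^{h_\e},\ \Pi_\text{Tw}^\circ \oplus \Pi_0 \oplus b_1^{\text{low}}\bigr) \colon H^1_{\e,\nu} \ \longrightarrow \ L^2_\nu \oplus (H_\text{Tw} \oplus \mathbb{C}^N)^\perp \oplus H_0^\perp \oplus \mathbb{C}^N
\end{equation*}
is Fredholm of Index $0$. The mixed operator $(\mathcal{L}^{h_\e}, \Pi^{\mathcal{L}})$ has the \emph{same} domain and codomain, differing only in its final $\mathbb{C}^N$ component, where $b_1^{\text{low}}$ is replaced by $P^\text{low}$.

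Both $b_1^{\text{low}}$ and $P^\text{low}$ are bounded finite-rank maps $H^1_{\e,\nu} \to \mathbb{C}^N$: the first factors through the continuous trace $H^1_{\e,\nu} \to L^{1/2,2}(\partial N_\lambda(\mathcal{Z}_0))$ followed by projection onto finitely many Fourier modes in a one-dimensional direction, while the second is defined in Definition \ref{tdependentprojectionsdef} by $L^2$-inner products against the $N$-dimensional collection of kernel bundle sections $\{e^{i\ell t}\beta_t\}_{|\ell|\leq \e^{-1/2}/L_0}$. The difference $P^\text{low} - b_1^{\text{low}}$ is therefore compact, and Fredholmness together with the Fredholm index are invariant under compact perturbations. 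The main thing to check carefully is the claimed orthogonal splitting of $H_\text{Tw}^\perp$ and the corresponding decomposition $\Pi_\text{Tw} = \Pi_\text{Tw}^\circ \oplus b_1^{\text{low}}$, but this follows directly from the definition of $H_\text{Tw}$ in \refeq{H1def} together with the filtration of $E_{-1,0}$ given in Lemma \ref{mufiltration}; everything else is then a formal consequence of standard Fredholm perturbation theory.
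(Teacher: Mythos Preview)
Your proof is correct and follows essentially the same idea as the paper's: both observe that relative to the pure boundary problem of Corollary~\ref{corpurebdindex}, the mixed problem removes $2N$ real boundary constraints and adds $2N$ real interior projection constraints, leaving the Fredholm property and index unchanged. Your presentation is more careful in packaging this as a compact (finite-rank) perturbation between operators with identical domain and codomain, whereas the paper compresses the argument to a one-sentence dimension count.
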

 \begin{proof}
 Compared to the pure boundary conditions in Corollary  \ref{corpurebdindex}, we removed $2(1+2\e^{-1/2}/L_0)$ real dimensions of constraints on the boundary modes, and added the same number via the interior projection $P^\text{low}$, thus the property of being Fredholm and the index are unchanged. The spaces with difference weights are equivalent, hence they do not change the Fredholmness property. 
 \end{proof}

\subsection{Cross-terms, Boundary Terms, and Weights}

This section proves several  technical lemmas used in the proof of Theorem \ref{invertibleL}. As explained in the previous subsection (recall Equation \refeq{intbypartsholistic}), the approach to Theorem \ref{invertibleL} inspired by the observation that the cross term is comparatively mild since configurations only concentrate in the directions of the normal disks. the  next three subsections give precise estimates on this cross term, the boundary term in this integration by parts, and improved weighted estimates for the Dirac operator:

\subsubsection{Cross-Terms} 

The first technical lemma states that the cross term when integrating by parts is small in the weighted norms. 
 Denote the $dt\wedge dx$ and $dt\wedge dy$ components of the curvature $F_{A^{h_\e}}$ by $F_{A^{h_\e}}^\perp$. Additionally, we let $\frak B_t$ denote the $dt$-components of the cross term $\frak B$ from the Weitzenb\"ock formula  \ref{Weitzenb\"ock}. Explicitly,  \be\mathfrak B_t \begin{pmatrix}\ph \\ a \end{pmatrix}= \begin{pmatrix}  \gamma((-1)^\text{deg} a)\sigma_1 \del_t \Phi^{h_\e} - 2a_t \del_t\Phi^{h_\e} \\ (-1)^{\text{deg}} \mu(\ph, \sigma_1 \del_t \Phi^{h_\e}) + 2i \br i\ph, \del_t \Phi^{h_\e}\kt dt   \end{pmatrix}. \label{frakBt}\ee

\medskip 

\begin{lm}\label{anticommutatorbd}
The anti-commutator $ \{\sigma_t \del_t, \mathcal N_t\}=\sigma_t \del_t \mathcal N_t + \mathcal N_t \sigma_t \del_t $ is given by

$$\{\sigma_t \del_t, \mathcal N_t\}\begin{pmatrix}\ph \\  a\end{pmatrix}= \gamma(F_{A^{h_\e}}^\perp). \ph + \frac{1}{\e} \mathfrak B_t \begin{pmatrix}\ph \\ a \end{pmatrix}$$

\noindent It follows that for configurations $\frak q, \frak p \in H^1_{\e,\nu}$ there is a constant $C$ independent of $\e$ such that 
\bea
\br \frak q \ , \ \{\sigma_t \del_t, \mathcal N_t\}  \frak p \kt_{L^2_\nu} \leq C\e^{1/2} \left( \| \frak q\|^2_{H^1_{\e,\nu}} + \| \frak p\|^2_{H^1_{\e,\nu}} \right).
\eea 
\end{lm}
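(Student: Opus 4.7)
The plan is to prove the two assertions of the lemma in order: first the algebraic identity for the anti-commutator, and then the analytic bound that exploits it. The anti-commutator formula I will verify by direct block-by-block computation of $\sigma_t\partial_t \mathcal N_t + \mathcal N_t \sigma_t \partial_t$. For the diagonal spinor block $\slashed D_{A^{h_\e}}^\C$, the principal symbols $\gamma(dt)$ and $\gamma(dx),\gamma(dy)$ anti-commute in the Clifford algebra, so the zeroth-order derivatives cancel and only $\sigma_t[\partial_t,\slashed D^\C_{A^{h_\e}}] = \sigma_t\gamma(\partial_t A^{h_\e})$ survives; since $A^{h_\e}$ is a purely spatial 1-form in the model case, wedging with $dt$ turns this into $\gamma(F_{A^{h_\e}}^\perp)$. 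For the off-diagonal blocks $\gamma(\cdot)\Phi^{h_\e}/\e$ and $\mu(\cdot,\Phi^{h_\e})/\e$, the $t$-derivative in $\sigma_t\partial_t$ falls on $\Phi^{h_\e}/\e$ by the Leibniz rule, producing precisely the $dt$-indexed entries of the Weitzenb\"ock cross term $\mathfrak B$ from Proposition~\ref{weitzenbock}; this identifies them with $\tfrac{1}{\e}\mathfrak B_t$ as defined in (\ref{frakBt}). The same formula can be verified as a consistency check by expanding $(\mathcal L^{h_\e})^2 = -\partial_t^2 + \{\sigma_t\partial_t, \mathcal N_t\} + \mathcal N_t^2$ and matching term by term with the Weitzenb\"ock formula, whose non-$dt$ cross contributions are absorbed into $\mathcal N_t^2$.

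For the bound, I will estimate the curvature and $\tfrac{1}{\e}\mathfrak B_t$ contributions separately, and in both cases the $\e^{1/2}$ savings will come from the fact that the tubular neighborhood has radius $\lambda = c\e^{1/2}$, so $R_\e \leq c\e^{1/2}$ pointwise on $N_\lambda(\mathcal Z_0)$. For the curvature term, writing $A^{h_\e} = 2if_\e(r,t)d\theta$ and using $\partial_t f_\e = (\partial_\rho f_\e)\cdot\tfrac{2K'(t)}{3K(t)}\rho_t$, the exponential bounds in Proposition~\ref{Hrhoproperties}(4) after chain rule give $|F_{A^{h_\e}}^\perp| \leq C\rho_t e^{-c\rho_t^{3/2}}$, which in particular is uniformly bounded. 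Splitting $|F^\perp||\ph_\frak{q}||\ph_\frak{p}| = (|F^\perp|R_\e |\ph_\frak{q}|)(|\ph_\frak{p}|/R_\e)$ and using $|F^\perp|R_\e \leq C\e^{1/2}$ together with Young's inequality then bounds the curvature contribution by $C\e^{1/2}(\|\frak{q}\|_{H^1_{\e,\nu}}^2 + \|\frak{p}\|_{H^1_{\e,\nu}}^2)$, after absorbing a further factor of $\sup R_\e^2 \leq c\e$ into the $\|\ph_\frak{q}\|_{L^2_\nu}$ piece.

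For the $\tfrac{1}{\e}\mathfrak B_t$ term, the key pointwise estimate is $|\partial_t \Phi^{h_\e}| \leq C|\Phi^{h_\e}|$, which follows from $\partial_t e^{\pm H(\rho_t)} = e^{\pm H(\rho_t)}\partial_\rho H\cdot \tfrac{2K'(t)}{3K(t)}\rho_t$ combined with the boundedness of $\rho_t \partial_\rho H$ (near $\rho_t = 0$ one has $H\sim -\tfrac12\log\rho_t$ so $\rho_t\partial_\rho H \sim -\tfrac12$, and for large $\rho_t$ the factor decays exponentially). Consequently, each term in $\mathfrak B_t$ can be compared against $|\Phi^{h_\e}||\ph||a|/\e$. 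By Cauchy--Schwarz I factor off $|a||\Phi^{h_\e}|/\e$ as one of the terms controlled in $\|\frak{p}\|_{H^1_{\e,\nu}}$ (or $\|\frak{q}\|$), leaving the other factor of the form $|\ph|R_\e^{\nu}$, which satisfies $\int |\ph|^2 R_\e^{2\nu}\,dV \leq (\sup R_\e^2)\int |\ph|^2 R_\e^{2\nu-2}\,dV \leq c\e\,\|\cdot\|_{H^1_{\e,\nu}}^2$. This yields the desired $C\e^{1/2}$-bound after another application of Young's inequality; the remaining mirror terms in $\mathfrak B_t$ (the ones producing a 1-form output paired against $a_\frak{q}$) are handled by swapping the roles of $\frak p$ and $\frak q$.

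The only step I anticipate requiring care is the verification of the off-diagonal blocks of the anti-commutator identity, since the action of $\sigma_t\partial_t$ on the form components requires correctly tracking how $\sigma_t$ interacts with the interior product and Hodge operators that appear in $\bold d^\C$; the cleanest path is to simply observe that Proposition~\ref{weitzenbock} already bookkeeps these signs, and to read off $\mathfrak B_t$ as the $dt$-component of $\mathfrak B$. All the analytic estimates are then routine applications of Young's inequality, with the single nontrivial input being that the tubular neighborhood $N_\lambda$ has been chosen small enough that $R_\e$ is uniformly $O(\e^{1/2})$ on it.
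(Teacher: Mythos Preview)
Your approach matches the paper's: the anti-commutator formula via comparison of $(\mathcal L^{h_\e})^2$ with $-\partial_t^2+\mathcal N_t^2$ and the Weitzenb\"ock formula is exactly what the paper does, and your treatment of the $\tfrac{1}{\e}\mathfrak B_t$ contribution (pointwise bound $|\partial_t\Phi^{h_\e}|\leq C|\Phi^{h_\e}|$ from $\rho_t\partial_\rho H$ bounded, then Young/Cauchy--Schwarz using $R_\e\leq c\e^{1/2}$) is correct and the same as the paper's.

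There is, however, a scaling error in your curvature estimate. Writing $A^{h_\e}=2if_\e\,d\theta$ and $F^\perp=2i(\partial_t f_\e)\,dt\wedge d\theta$, you must account for $|d\theta|=1/r=(K/\e)^{2/3}/\rho_t$. This produces
\[
|F_{A^{h_\e}}^\perp|\;\leq\; C\,\e^{-2/3}\,|f'(\rho_t)|,
\]
so your claimed bound $|F^\perp|\leq C\rho_t e^{-c\rho_t^{3/2}}$ (hence ``uniformly bounded'') is off by exactly $\e^{-2/3}$, and consequently $|F^\perp|R_\e$ is only $O(1)$, not $O(\e^{1/2})$. The paper recovers the smallness by using the exponential decay of $f'(\rho_t)$ to bound $|f'(\rho_t)|R^2\leq C$ and then $R_\e\sim \e^{2/3}R$, giving $|F^\perp|\leq C\e^{2/3}/R_\e^2$; from this, $\langle\ph_{\frak q},F^\perp\ph_{\frak p}\rangle R_\e^{2\nu}\leq C\e^{2/3}\bigl(\tfrac{|\ph_{\frak q}|^2}{R_\e^2}+\tfrac{|\ph_{\frak p}|^2}{R_\e^2}\bigr)R_\e^{2\nu}$ integrates directly to the desired bound. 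Your own splitting would also go through with the corrected input $|F^\perp|R_\e\leq C$: your ``further factor of $\sup R_\e^2\leq c\e$'' then supplies the $\e^{1/2}$ on its own.
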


\begin{proof}
This is an easy consequence of the Weitzenb\"ock formula. Recall 

\be
\mathcal L^{h_\e}\mathcal L^{h_\e} \begin{pmatrix}\ph \\ a \end{pmatrix}= \begin{pmatrix} \slashed D_{A^{h_\e}}\slashed D_{A^{h_\e}} \ph \\   \bold d\bold d a \end{pmatrix} +\frac{1}{\e^2} \begin{pmatrix}  \gamma(\mu(\ph, \Phi^{h_\e}) \Phi^{h_\e}) \\   \mu(\gamma(a)\Phi^{h_\e}, \Phi^{h_\e}) \end{pmatrix}+ \frac{1}{\e}\mathfrak B \begin{pmatrix}\ph \\ a \end{pmatrix}\label{LWeitzenb\"ock}
\ee
where 

$$\mathfrak B \begin{pmatrix}\ph \\ a \end{pmatrix}= \begin{pmatrix}  \gamma((-1)^\text{deg} a)\slashed D_{A^{h_\e}} \Phi^{h_\e} - 2a.\nabla \Phi^{h_\e} \\ (-1)^{\text{deg}} \mu(\ph,\slashed D_{A^{h_\e}} \Phi^{h_\e}) + 2i \br i\ph,\nabla \Phi^{h_\e}\kt    \end{pmatrix}.$$

On the other hand, since $\mathcal L^{h_\e}= \sigma_t \del_t + \mathcal N_t$, we have 

\be \mathcal L^{h_\e} \mathcal L^{h_\e}  \begin{pmatrix}\ph \\ a \end{pmatrix} = -\del_t^2\begin{pmatrix}\ph \\ a \end{pmatrix}  + \mathcal N_t \mathcal N_t \begin{pmatrix}\ph \\ a \end{pmatrix} + \{\sigma_t \del_t, \mathcal N_t\}\begin{pmatrix}\ph \\  a\end{pmatrix}\label{LWeitzenb\"ock2}\ee
and the Wietzenbock formula for the linearization at $\Phi^{h_\e}(t)$ for each fixed $t$ applied to $t$-independent configurations shows  

\bea
\mathcal N_t\mathcal N_t \begin{pmatrix}\ph \\ a \end{pmatrix}= \begin{pmatrix} \slashed D^\C_{A^{h_\e}}\slashed D^\C_{A^{h_\e}} \ph \\  \Delta^\C a \end{pmatrix} +\frac{1}{\e^2} \begin{pmatrix}  \gamma(\mu(\ph, \Phi^{h_\e}) \Phi^{h_\e}) \\   \mu(\gamma(a)\Phi^{h_\e}, \Phi^{h_\e}) \end{pmatrix}+ \frac{1}{\e}\mathfrak B^\C \begin{pmatrix}\ph \\ a \end{pmatrix}
\eea
where $\Delta^\C$ is the Laplacian on normal planes, and $$\mathfrak B^\C \begin{pmatrix}\ph \\ a \end{pmatrix}= \begin{pmatrix}  \gamma((-1)^\text{deg} a)\slashed D^\C_{A^{h_\e}} \Phi^{h_\e} - 2a.\nabla_{x,y} \Phi^{h_\e} \\ (-1)^{\text{deg}} \mu(\ph,\slashed D^\C_{A^{h_\e}} \Phi^{h_\e}) + 2i \br i\ph,\nabla_{x,y} \Phi^{h_\e}\kt    \end{pmatrix}.$$ 

\noindent Comparing (\refeq{LWeitzenb\"ock}) and (\refeq{LWeitzenb\"ock2}) and using this expression and \bea \slashed D_{A^{h_\e}}\slashed D_{A^{h_\e}}  &=&  -\del_t^2 + \slashed D^\C_{A^{h_\e}}\slashed D^\C_{A^{h_\e}}+\gamma(F_{A^{h_\e}}^\perp)\\\bold d \bold d  &=& -\del_t^2 + \Delta^\C  \eea
  yields the formula for $\{\sigma_t \del_t , {\mathcal N}_t\}$.

  We now proceed to show the bound in the second assertion. To begin, we claim there is a pointwise bound 
  \be |\del_t \Phi^{h_\e} |\leq C | \Phi^{h_\e}|\label{pointwisedtbound}.\ee 
  To verify this claim, first recall that $\Phi^{h_\e}$ is given by $$\Phi^{h_\e}:=\begin{pmatrix} e^{H(\rho_t)} c(t)r^{1/2} \ \ \  \\ e^{-H(\rho_t)} d(t) r^{1/2} e^{-i\theta}\end{pmatrix}\otimes 1 + \begin{pmatrix} -e^{H(\rho_t)}\overline d (t)r^{1/2} \ \ \  \\ e^{-H(\rho_t)} \overline c(t) r^{1/2} e^{-i\theta}\end{pmatrix}\otimes j.$$

\noindent Differentiating the top left component shows 

\bea
\del_te^{H(\rho_t)} c(t)r^{1/2} &=& e^{H(\rho_t)} \dot c(t)r^{1/2}+ e^{H(\rho_t)} c(t)r^{1/2}\p{H}{\rho_t} \p{\rho_t}{t} \\
  &=&  e^{H(\rho_t)} \dot c(t)r^{1/2}+ e^{H(\rho_t)} c(t)r^{1/2}   \ \cdot  \  \p{H}{\rho_t} \ \cdot \  \frac{2 \dot K(t)}{3K(t)}\rho_t
\eea
\noindent where we have used the expression $\rho_t=\left({K(t)}/\e\right)^{2/3} r$ to calculate $\tfrac{\del \rho_t}{\del t}$. By Assumption \ref{assumption2}, $3|K(t)|^2= 2|c(t)|^2 + |d(t)|^2>0$ is bounded independent of $\e$ the ratio ${\dot K(t)}/{K(t)}$ is bounded by a constant. Next, recall that $H(\rho_t)\sim -\log(\rho_t)^{-1/2}$, hence $\rho_t \del_{\rho_t}H$ is bounded at $\rho=0$ and decays exponentially hence is bounded by a universal constant. Using Assumption \ref{assumption2} again, the above is bounded by $C|\Phi^{h_\e}|$. The other components are identical, yielding the bound (\refeq{pointwisedtbound}) as claimed.

Using this bound yields a bound on the $\mathfrak B_t$ portion of the cross term. Write $\frak q=(\ph,a)$ and $\frak q=(\psi,b)$. Combining the pointwise (\refeq{pointwisedtbound}) with the expression in (\refeq{frakBt}) and using Young's inequality yields 

\bea
\br \begin{pmatrix} \ph \\ a \end{pmatrix},\frac{1}{\e}\mathfrak B_t \begin{pmatrix} \psi \\ b \end{pmatrix} \kt R^{2\nu}_\e &\leq&  C \ \left(|\ph|  \frac{|\Phi^{h_\e}|}{\e} |b| + |\psi|  \frac{|\Phi^{h_\e}|}{\e} |a|\right )R^{2\nu}_\e\\
&=& C \left( \frac{|\ph|^2}{2\e^{1/2}} + \frac{\e^{1/2}}{2} \frac{|b|^2 |\Phi^{h_\e}|^2}{\e^2}  +\frac{|\psi|^2}{2\e^{1/2}} + \frac{\e^{1/2}}{2} \frac{|a|^2 |\Phi^{h_\e}|^2}{\e^2} \right) R_\e^{2\nu}\\
&\leq & C \e^{1/2} \left(\frac{|\ph|^2}{R_\e^2} + \frac{|\psi|^2}{R_\e^2} +  \frac{|a|^2 |\Phi^{h_\e}|^2}{\e^2}+  \frac{|b|^2 |\Phi^{h_\e}|^2}{\e^2}\right)R_{\e}^{2\nu}\\
&\leq & C \e^{1/2} \  (\|\frak q\|^2_{H_{\e,\nu}^1} + \|\frak p\|^2_{H_{\e,\nu}^1} )
\eea 
where in passing to the third line we have used $R_\e\leq c\e^{1/2}$ on $N_\lambda(\mathcal Z_0)$.  

For the curvature term, recall that $${A^{h_\e}}= f(\rho_t) \left(\frac{dz}{z} - \frac{d\overline z}{\overline z}\right)$$ where $f_\e(\rho_t)$ is the function from Proposition (\refeq{Hrhoproperties}). Using the chain rule and the expression $\del_t\rho_t$ from above shows that $$\Big |(F_{A_{h_\e}})_{tz} \Big |=\Big | f'(\rho_t) \p{\rho_t}{t} \frac{1}{z}\Big |\leq \frac{2\dot K(t)}{3K(t)^{1/3}} \Big | f'(\rho_t)\frac{\rho_t}{\rho_t} \frac{1}{\e^{2/3}}\Big |\leq \frac{C}{\e^{2/3}} |f'(\rho_t)|. $$

\noindent Moreover, since $f'(\rho_t)$ decay exponentially  in $\rho_t$, (recalling the weight $R\sim \sqrt{1+\rho^2}\sim \e^{-2/3} R_\e$) we have $$\Big | (F_{A^{h_\e})_{tz}}\Big |\leq  \frac{C}{\e^{2/3}} \Big | \frac{f'(\rho_t)R^2}{R^2}\Big |\leq  C \e^{2/3}\frac{1}{R_\e^2} $$

\noindent and identically for the $t\overline z$ component. 
Then there's a pointwise bound, 
\bea
\br \ph, F_{A^{h_\e}}^\perp \psi\kt R_\e^{2\nu} \leq C \e^{2/3} \left( \frac{|\ph|^2}{R_\e^2} + \frac{|\psi|^2}{R_\e^2} \right) R_\e^{2\nu}
\eea
and integrating yields the result. 

  \end{proof}

  \bigskip 
  The other cross-term that arises comes from the $t$-derivative when decomposing a configuration $$\frak q= \frak q^\perp + \eta(t)\beta_t$$ as a section of $K(\mathcal N_t)$ and a section that is slicewise $L^2$-orthogonal to it.  Since the $t$-derivatives of $\beta_t$ depend only on $\Phi_0$ thus is bounded independent of $\e$, the condition that $\frak q^\perp$ is slicewise orthogonal to $\beta$ implies that $\del_t\frak q^\perp, \del_t(\eta\beta_t)$ are {\it almost} orthogonal. The next lemma gives a precise bound. 
  
  The fact that this lemma holds is the key reason we used a normalized $L^2$-projection to define the projection $P^\ell$ in (\refeq{lthmodeprojection}), rather than an $H^1_{\e}$ where nothing similar is true. 

\begin{lm}
Suppose that a configuration $\frak q$ is written $$\frak q= \frak q^\perp + \eta(t)\beta_t$$

\noindent where $\frak q^\perp$ is slice-wise $L^2$-orthogonal to $\beta_t$ and $P^\text{low}(\frak q)=0$, i.e. $\eta(t)$ has only Fourier modes in the high range. Then 

$$\frac{1}{2}\Big(\|\del_t(\eta(t) \beta_t)\|_{L^2}+  \|\del_t \frak q^\perp \|_{L^2}^2 \Big)  \ \leq \  \|  \del_t\frak q\|^2_{L^2} \ + \ \e^{5/6}\Big \|\frac{\frak q^\perp}{R_\e}\Big\|^2_{L^2}.$$\label{almostorthogonality}
\end{lm}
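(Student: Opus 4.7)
The identity $\|\del_t \frak q\|_{L^2}^2 = \|\del_t\frak q^\perp\|_{L^2}^2 + \|\del_t(\eta\beta_t)\|_{L^2}^2 + 2\,\mathrm{Re}\langle \del_t\frak q^\perp, \del_t(\eta\beta_t)\rangle_{L^2}$ reduces the claim to showing that the cross term satisfies
\begin{equation*}
\big|2\,\mathrm{Re}\langle \del_t\frak q^\perp, \del_t(\eta\beta_t)\rangle_{L^2}\big| \ \leq \ \tfrac{1}{2}\|\del_t\frak q^\perp\|_{L^2}^2 + \tfrac{1}{2}\|\del_t(\eta\beta_t)\|_{L^2}^2 + \e^{5/6}\big\|\frak q^\perp/R_\e\big\|_{L^2}^2.
\end{equation*}
I will write $\del_t(\eta\beta_t) = \dot\eta\,\beta_t + \eta\,\del_t\beta_t$ and bound the two resulting cross terms separately, using the slicewise orthogonality for the first and the high-Fourier-mode condition on $\eta$ for the second.

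For the first piece $\langle \del_t\frak q^\perp, \dot\eta\beta_t\rangle_{L^2}$, the plan is to integrate by parts in $t$ by using the slicewise orthogonality $\int_{D_\lambda}\langle \frak q^\perp(t), \beta_t\rangle\,dV_{\mathrm{slice}} = 0$. Differentiating this identity in $t$ yields $\int_{D_\lambda}\langle \del_t\frak q^\perp, \beta_t\rangle\,dV_{\mathrm{slice}} = -\int_{D_\lambda}\langle \frak q^\perp, \del_t\beta_t\rangle\,dV_{\mathrm{slice}}$, so that after pulling out $\dot\eta$ (which depends only on $t$) and using $R_\e \leq c\,\e^{1/2}$ on $N_\lambda(\mathcal Z_0)$, Cauchy--Schwarz on each slice gives
\begin{equation*}
\Big|\int_{D_\lambda}\langle \frak q^\perp, \del_t\beta_t\rangle\,dV_{\mathrm{slice}}\Big| \ \leq \ \big\|\tfrac{\frak q^\perp}{R_\e}\big\|_{L^2(D_\lambda)}\cdot c\,\e^{1/2}\,\|\del_t\beta_t\|_{L^2(D_\lambda)}.
\end{equation*}
Part (3) of Lemma~\ref{betaasymptotics} controls $\|\del_t\beta_t\|_{L^2(D_\lambda)} \leq \kappa_1\|\beta_t\|_{L^2(D_\lambda)}$, and part (1) gives $\|\beta_t\|_{L^2(D_\lambda)} \geq c\,\e^{7/12}$ uniformly in $t$. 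Combining these and integrating in $t$, Cauchy--Schwarz in $t$ produces $\|\dot\eta\|_{L^2(S^1)}$, which via $\|\dot\eta\beta_t\|_{L^2} \geq c\,\e^{7/12}\|\dot\eta\|_{L^2(S^1)}$ converts to $\|\dot\eta\beta_t\|_{L^2}$ with a net gain of $\e^{1/2}$. A Young's inequality with weight $\delta = \e^{5/6}$ then absorbs $\|\dot\eta\beta_t\|_{L^2}^2$ (with coefficient $O(\e^{1/6})\ll \tfrac{1}{4}$) and produces the stated $\e^{5/6}\|\frak q^\perp/R_\e\|_{L^2}^2$ term.

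For the second piece $\langle \del_t\frak q^\perp, \eta\,\del_t\beta_t\rangle_{L^2}$, the key input is that $\eta$ has only Fourier modes $|\ell| \geq \e^{-1/2}/L_0$, so $\|\dot\eta\|_{L^2(S^1)}^2 \geq L_0^{-2}\e^{-1}\|\eta\|_{L^2(S^1)}^2$. Combined with Lemma~\ref{betaasymptotics}(3), this yields
\begin{equation*}
\|\eta\,\del_t\beta_t\|_{L^2(N_\lambda)}^2 \ \leq \ \kappa_1^2\,\|\eta\beta_t\|_{L^2}^2 \ \leq \ C\,\e^{7/6}\|\eta\|_{L^2(S^1)}^2 \ \leq \ C L_0^2\,\e^{1/6}\cdot \e^{-1}\cdot \e^{7/6}\|\eta\|^2 \cdot \frac{\|\dot\eta\|_{L^2}^2}{\|\dot\eta\|_{L^2}^2},
\end{equation*}
and comparing instead to $\|\dot\eta\beta_t\|_{L^2}^2 \geq c\,\e^{7/6}\|\dot\eta\|_{L^2(S^1)}^2$ one obtains $\|\eta\,\del_t\beta_t\|_{L^2}^2 \leq C L_0^2\,\e\,\|\dot\eta\beta_t\|_{L^2}^2$. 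A second expansion shows $\|\dot\eta\beta_t\|_{L^2}^2 \leq 2\|\del_t(\eta\beta_t)\|_{L^2}^2 + 2\|\eta\del_t\beta_t\|_{L^2}^2$, and absorbing the $\|\eta\del_t\beta_t\|^2$ on the right (valid for $\e < (4L_0^2)^{-1}$) gives $\|\eta\,\del_t\beta_t\|_{L^2}^2 \leq 4L_0^2\,\e\,\|\del_t(\eta\beta_t)\|_{L^2}^2$. Cauchy--Schwarz and Young then bound $|\langle \del_t\frak q^\perp, \eta\del_t\beta_t\rangle|$ by $\tfrac{1}{4}\|\del_t\frak q^\perp\|_{L^2}^2 + O(L_0^2\e)\,\|\del_t(\eta\beta_t)\|_{L^2}^2$, which is absorbable into the $\tfrac{1}{2}\|\del_t(\eta\beta_t)\|^2$ term.

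The main obstacle is the second piece: naively $\|\eta\,\del_t\beta_t\|_{L^2}$ is comparable to $\|\eta\beta_t\|_{L^2}$, not to $\|\del_t(\eta\beta_t)\|_{L^2}$, so there is nothing to absorb it into \textemdash\ the saving comes entirely from the high-Fourier-mode hypothesis $P^\text{low}(\frak q) = 0$, which forces $\dot\eta$ to be at least a factor of $\e^{-1/2}/L_0$ larger than $\eta$. This is precisely why the projection $P^\text{low}$ must kill Fourier modes up to order $\e^{-1/2}$ and not some slower rate, and why the $L^2$ (rather than $H^1_\e$) normalization of $\pi_t^{\ker}$ was chosen in Definition~\ref{tdependentprojectionsdef}; any other normalization would disrupt the interplay between $\|\beta_t\|_{L^2}$ and $\|\del_t\beta_t\|_{L^2}$ supplied by Lemma~\ref{betaasymptotics} and spoil the needed powers of $\e$.
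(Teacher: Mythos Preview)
Your proposal is correct and follows essentially the same approach as the paper: both expand $\|\del_t\frak q\|^2$, split the cross term into the $\dot\eta\beta_t$ and $\eta\dot\beta_t$ pieces, convert $\langle\del_t\frak q^\perp,\beta_t\rangle$ to $-\langle\frak q^\perp,\dot\beta_t\rangle$ via slicewise orthogonality, invoke Lemma~\ref{betaasymptotics} for the bounds $\|\dot\beta_t\|_{L^2(D_\lambda)}\leq\kappa_1\|\beta_t\|_{L^2(D_\lambda)}$ and $\|\beta_t\|_{L^2(D_\lambda)}\sim\e^{7/12}$, use the high-Fourier hypothesis $\|\eta\|_{L^2}^2\leq L_0^2\e\|\dot\eta\|_{L^2}^2$, and finish with Young's inequality weighted to produce the $\e^{5/6}$ factor. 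The only organizational difference is that the paper first extracts an explicit $\tfrac{1}{8}\|\dot\eta\beta_t\|^2$ reserve from $\|\del_t(\eta\beta_t)\|^2$ and absorbs the error terms into that, whereas you derive the auxiliary inequality $\|\dot\eta\beta_t\|^2\leq 4\|\del_t(\eta\beta_t)\|^2$ separately and absorb directly into $\|\del_t(\eta\beta_t)\|^2$; these are equivalent bookkeeping choices.
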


\begin{proof}
Throughout the proof, we denote $t$-derivatives by $\del_t \frak q= \dot {\frak q}$. Slicewise orthogonality implies

\be 0 \ = \ \del_t \br  \frak q^\perp,  \beta_t\kt_{L^2(\{t\}\times D_\lambda)}  \ = \ \br {\dot {\frak q}}^\perp ,  \beta_t\kt_{L^2(\{t\}\times D_\lambda)} + \br \xi, \dot \beta_t\kt_{L^2(\{t\}\times D_\lambda)} .\label{sliceorthogonality1}\ee

\bigskip

 \noindent Then expanding, and with the understanding that we use the $L^2$ norm and inner product throughout,  
\bea
\|\del_t(\frak q^\perp + \eta(t)\beta_t)\|^2 &=& \| \dot {\frak q}^\perp\|^2  + \|\del_t (\eta(t)\beta_t)\|^2 + 2\br \dot {\frak q}^\perp , \del_t(\eta(t)\beta_t) \kt\\ 
&=&  \| \dot {\frak q}^\perp\|^2  + \frac{1}{2}\|\del_t (\eta(t)\beta_t)\|^2+  \frac{1}{2}\| \dot \eta(t) \beta_t + \eta(t)\dot \beta_t \|^2 + 2\br \dot {\frak q}^\perp , \dot \eta(t)\beta_t+ \eta(t)\dot \beta_t\kt.
\eea

\noindent Focusing on the third term momentarily, we have the following. Recall that by Lemma  \ref{betaasymptotics}  we have the bound $\|\dot \beta_t\|_{L^2(\{t\}\times D_\lambda)}\leq C
\| \beta_t\|_{L^2(\{t\}\times D_\lambda)}$. Applying this, 

\bea
\| \dot \eta(t) \beta_t + \eta(t)\dot \beta_t \|^2& =& \|\dot \eta(t)\beta_t\|^2 + \| \eta \dot \beta_t\|^2 + 2\br \dot \eta(t) \beta_t, \eta(t) \dot \beta_t \kt\\
&\geq &  \|\dot \eta(t)\beta_t\|^2 + \| \eta \dot \beta_t\|^2 + 2 \int_{S^1}\br \dot \eta(t), \eta_t\kt \int_{\{t\}\times D_\lambda} \br \beta_t, \dot \beta_t\kt \\
&\geq &   \|\dot \eta(t)\beta_t\|^2  - 2\Big | \int_{S^1}\br \dot \eta(t), \eta_t\kt \int_{\{t\}\times D_\lambda}\underbrace{ \br \beta_t, \dot \beta_t\kt}_{\leq \tfrac{1}{2}(|\beta_t|^2 + |\dot\beta_t|^2)} dA dt \Big | \\
&\geq &  \|\dot \eta(t)\beta_t\|^2  - 2C\Big | \int_{S^1}\br \dot \eta(t), \eta_t\kt  \|\beta_t\|^2_{L^2(D_\lambda)}  dt\Big | \\
&\geq &   \|\dot \eta(t)\beta_t\|^2  - \frac{2C}{L_0} \e^{1/2}\Big | \int_{S^1} |\dot \eta(t)|^2  \|\beta_t\|^2_{L^2(D_\lambda)}  dt\Big | \\
&\geq & \frac{1}{2}  \|\dot \eta(t)\beta_t\|^2 \eea

\noindent once $\e$ is sufficiently small. 

Substituting this expression for the third term yields  

\bea
\|\del_t\frak q \|^2  &\geq & \| \dot {\frak q}^\perp\|^2  + \frac{1}{2}\|\del_t (\eta(t)\beta_t)\|^2+  \frac{1}{4}\| \dot \eta(t) \beta_t \|^2 + 2\br \dot {\frak q}^\perp , \dot \eta(t)\beta_t+ \eta(t)\dot \beta_t\kt  \\ 
&\geq &  \| \dot {\frak q}^\perp\|^2  + \frac{1}{2}\|\del_t (\eta(t)\beta_t)\|^2+  \frac{1}{4}\| \dot \eta(t) \beta_t \|^2 + 2\br \dot {\frak q}^\perp , \dot \eta(t)\beta_t \kt  - \Big | \frac{\e^{1/6}\|\dot{\frak q}^\perp\|^2}{2}  + \frac{\|\eta(t) \dot \beta_t\|^2 }{2\e^{1/6}} \Big |\\
&\geq &  \frac{1}{2}\| \dot \xi\|^2  + \frac{1}{2}\|\del_t (\eta(t)\beta_t)\|^2+  \frac{1}{8}\| \dot \eta(t) \beta_t \|^2 + 2\br \dot {\frak q}^\perp , \dot \eta(t)\beta_t \kt  
\eea
where we have again used $\|\dot \beta_t\|_{L^2_\nu(\{t\}\times D_\lambda)}\leq 
\| \beta_t\|_{L^2_\nu(\{t\}\times D_\lambda)}$ by Lemma \ref{betaasymptotics}, and that $\|\eta(t)\|^2_{L^2} \leq L_0^2 \e \|\dot \eta\|_{L^2}^2$ by the assumption of only high Fourier modes, hence the final term can be absorbed into $\tfrac{1}{4}\| \dot \eta(t) \beta_t \|^2 $ up to replacing it by $\tfrac{1}{8}$. For the remaining inner product, using the relation 
(\refeq{sliceorthogonality1})

\bea
|\br \dot {\frak q}^\perp , \dot \eta(t)\beta_t \kt |& =&\Big | \int_{S^1}\dot \eta(t)\int_{\{t\}\times D_\lambda} \br \dot {\frak q}^\perp, \beta_t \kt dA dt \Big | \\
&\leq & \Big | \int_{S^1}\dot \eta(t)\int_{\{t\}\times D_\lambda} \br {\frak q}^\perp,\dot \beta_t \kt dA dt   \Big |\\
&\leq &  \frac{\| {\frak q}^\perp\|^2}{2\e^{1/6}} + \e^{1/6} \frac{ \|\dot \eta \dot \beta_t\|^2}{2}\\ &\leq&  \e^{5/6} {\Big\|\frac{{\frak q}^\perp}{R_\e}\Big\|^2} + \e^{1/6} \frac{ \|\dot \eta \beta_t\|^2}{2} 
\eea
and absorbing the second of these into $\tfrac{1}{8}\| \dot \eta(t) \beta_t \|^2$ and moving the first to the other side yields the result. 

\end{proof}

\subsubsection{Weighted Estimates for Dirac Operator}
The next two lemmas required for the proof of \ref{invertibleL} are weighted estimates for the standard Dirac operator and the de-singularized $\Z_2$-Dirac operator $\slashed D_{A^{h_\e}}$. These estimates reference a compact operator $K$ similar to the one used in (\refeq{Kdef}), which we now define.

For $\gamma'<<1$ as in the proof of Lemma \ref{errorcalculation}, define the operator $K_\e$ by \be K_\e\ph = \frac{\ph}{R_\e} \bold 1_{\{r< \e^{2/3-\gamma'}\}}\label{Kedef}\ee

\noindent where $\bold 1_{\{r< \e^{2/3-\gamma'}\}}$ denotes the indicator function of the ball of radius $r\leq \e^{2/3-\gamma'}$.  

Denote by $L^{1,2}_\nu, L^2_\nu$ as the completion of compactly supported smooth functions in $Y$ with respect to the norms 
 
 \begin{eqnarray}
 \|u\|_{L^{1,2}_\nu} &:=& \left({\int_{N_\lambda(\mathcal Z_0)}\left( |\nabla u|^2 + \frac{|u|^2}{R_\e^2}\right) R_\e^{2\nu}} \ dV\right)^{1/2}\label{L12nunorm}\\
  \|u\|_{L^{2}_\nu} &:=& \left({\int_{N_\lambda(\mathcal Z_0)} {|u|^2}R_\e^{2\nu}} \ dV\right)^{1/2}\\
 \end{eqnarray}
 
\noindent  In these expression $\nabla, dV$ denote the structures arising from the product metric on $Y$. 

The first lemma is a basic estimate for the weighted Dirac operator (with the trivial connection). The subsequent lemma does the trickier case of the almost-singular connection $A^{h_\e}$. In both, one should have in mind that $\nu \in (0, \tfrac{1}{4})$ is chosen very close to the upper limit, say, $\nu=\tfrac{1}{4}-10^{-6}$.

\begin{lm} Let $\slashed D$ denote the standard Dirac operator with the trivial connection, and fix a weight $\nu\in (0,\tfrac{1}{4})$. If $u$ is a configuration on $N_\lambda(\mathcal Z_0)$ satisfying the Index 0 boundary conditions of Lemma \ref{untwisteddiracbd}, then  $$ \|u\|_{L^{1,2}_\nu}\leq C_\nu\left(\|\slashed Du\|_{L^2_\nu} + \|K_\e u\|_{L^2_\nu} \right)$$
\noindent where $K_\e$ is the compact operator defined above in (\refeq{Kedef}). 
 \label{weightedestimatesdirac1}
\end{lm}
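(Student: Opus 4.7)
The plan is to establish the inequality via a weighted Bochner identity on $N_\lambda(\mathcal Z_0)$, with the $\|K_\e u\|_{L^2_\nu}$ term absorbing the contribution of the inner region $\{r<\e^{2/3-\gamma'}\}$ where $R_\e$ has levelled off and the Laplacian of the weight loses its favorable sign.

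First, I apply the Weitzenb\"ock formula. Since the metric on $N_\lambda(\mathcal Z_0)$ is the flat product (the model case) and the connection defining $\slashed D$ is trivial, one has $\slashed D^2 = \nabla^*\nabla$ pointwise on spinors. Two successive integrations by parts against the weight $R_\e^{2\nu}$ then give
\[
\int_{N_\lambda}|\slashed D u|^2 R_\e^{2\nu}\,dV = \int_{N_\lambda}|\nabla u|^2 R_\e^{2\nu}\,dV + \tfrac12\!\int_{N_\lambda}\!(\Delta R_\e^{2\nu})|u|^2\,dV + \int_{N_\lambda}\!\bigl\langle u,\gamma(dR_\e^{2\nu})\slashed D u\bigr\rangle\,dV - \mathcal B,
\]
with $\mathcal B$ collecting the boundary contributions on $\partial N_\lambda = T^2$. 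Since $R_\e$ restricts to a constant on $\partial N_\lambda$, the boundary term reduces to a constant multiple of the unweighted one, whose sign is controlled by the Index~$0$ APS conditions of Proposition \ref{untwisteddiracbd}: its dominant part is non-negative, and the remainder is a trace term absorbable into the $L^{1,2}_\nu$-estimate.

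Next I extract a Hardy-type contribution from the Laplacian of the weight. In the outer region $\{r \geq \e^{2/3-\gamma'}\}$ one has $R_\e \sim r$, and because $\mathcal Z_0$ has codimension two, the transverse $2$D computation (with the convention $\Delta = \nabla^*\nabla$) gives
\[
\Delta R_\e^{2\nu} = -4\nu^2 R_\e^{2\nu-2} + O\!\bigl(R_\e^{2\nu-2}\,\mathbf{1}_{\{r < \e^{2/3-\gamma'}\}}\bigr),
\]
so that $-\tfrac12\!\int(\Delta R_\e^{2\nu})|u|^2\,dV \;\geq\; 2\nu^2\,\|u/R_\e\|^2_{L^2_\nu(\mathrm{outer})} - C\,\|K_\e u\|^2_{L^2_\nu}$. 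The cross term is bounded by $C\nu\,\|u/R_\e\|_{L^2_\nu}\,\|\slashed D u\|_{L^2_\nu}$ using $|dR_\e|\leq 1$, and Young's inequality with a $\nu$-dependent splitting absorbs it into the two favourable terms.

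Combining these ingredients yields
\[
\|\nabla u\|_{L^2_\nu}^2 + \nu^2\,\|u/R_\e\|_{L^2_\nu}^2 \;\leq\; C\bigl(\|\slashed D u\|_{L^2_\nu}^2 + \|K_\e u\|_{L^2_\nu}^2\bigr),
\]
and dividing by $\nu^2$ produces the claimed bound with constant $C_\nu = O(\nu^{-2})$. The main obstacle is the careful management of the boundary term $\mathcal B$: although the index-$0$ structure ensures that its dominant part has the right sign in the unweighted setting, one must verify that the scale-dependent prefactor $R_\e^{2\nu}|_{\partial N_\lambda} = \lambda^{2\nu}$ does not spoil the trace absorption. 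This ultimately reduces to the uniform invertibility asserted in Proposition \ref{untwisteddiracbd}, together with the choice $\gamma' \ll 1$ of Lemma \ref{errorcalculation} which keeps the transition between the inner and outer regions well inside $N_\lambda$.
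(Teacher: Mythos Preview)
There is a genuine gap: the sign you extract from the weighted Bochner identity is the opposite of the one you need. Your identity and your computation $\Delta R_\e^{2\nu}\sim -4\nu^2 R_\e^{2\nu-2}$ are both correct, but together they say that the term $\tfrac12\int(\Delta R_\e^{2\nu})|u|^2$ appearing on the \emph{same side} as $\|\nabla u\|^2_{L^2_\nu}$ equals $-2\nu^2\|u/R_\e\|^2_{L^2_\nu}$ up to $K_\e$-error. Thus the identity reads
\[
\|\slashed D u\|^2_{L^2_\nu}\;=\;\|\nabla u\|^2_{L^2_\nu}\;-\;2\nu^2\|u/R_\e\|^2_{L^2_\nu}\;+\;\text{cross}\;-\;\mathcal B\;+\;O(\|K_\e u\|^2_{L^2_\nu}),
\]
which only yields $\|\nabla u\|^2_{L^2_\nu}\le C\|\slashed D u\|^2_{L^2_\nu}+C\|u/R_\e\|^2_{L^2_\nu}+\ldots$, the reverse of the claimed bound. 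You never produce an independent upper bound on $\|u/R_\e\|^2_{L^2_\nu}$, and you cannot appeal to a Hardy inequality to close the loop: the weight $R_\e$ measures distance to a codimension-two set, and the two-dimensional Hardy inequality $\int |u|^2/r^2\le C\int|\nabla u|^2$ fails. No choice of the boundary term $\mathcal B$ or the cross-term splitting repairs this.

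The underlying reason is that passing to $\slashed D^2=\nabla^*\nabla$ discards precisely the first-order angular structure that makes the weighted estimate true. The paper's indicated approach (``the weight shifts the spectrum of the operator restricted to slices of constant $r$'') is to conjugate $\slashed D$ itself by $r^\nu$ and work in polar coordinates on normal disks, exactly as in Proposition~\ref{delbaruniformestimate} and Claim~\ref{lm74}. Writing $2\delbar=e^{i\theta}(\del_r+\tfrac{i}{r}\del_\theta)$, conjugation by $r^\nu$ shifts the angular piece to $i\del_\theta-\nu$, which for $\nu\in(0,\tfrac12)$ has spectrum on $S^1$ bounded away from zero. Integration by parts in $r$ then gives $\|\slashed D^\C u\|^2_{L^2_\nu}\ge c_\nu\|u/r\|^2_{L^2_\nu}$ directly, with the Index~0 APS conditions of Proposition~\ref{untwisteddiracbd} ensuring the boundary contribution has a favorable sign; the $\del_t$ direction and the inner region are then handled as you outlined. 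The Bochner route cannot substitute for this spectral-shift step.
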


The proof is a standard application of the idea that the weight shifts the spectrum of the operator restricted to slices of constant $r$. This lemma actually holds for $\nu  \in (0, \tfrac{1}{2})$. The upcoming estimate for the de-singularized operator, however, restricts to $\nu  \in (0, \tfrac{1}{4})$. In this second case, the estimate is almost certainly true for the same range $\nu  \in (0, \tfrac{1}{2})$, but the proof in the more general case appears to require more sophisticated parametrix methods, and is not needed here (see \cite{MazzeoEdgeOperators, FangyunThesis}).

  \begin{prop} Let $\slashed D_{A^{h_\e}}$ denote the de-singularized $\Z_2$-Dirac operator, and fix a weight $\nu \in (0,\tfrac{1}{4})$. If $u$ is a configuration satisfying  the boundary constraint portion of the mixed boundary and constraint conditions (Definition \ref{mixedbddef}), i.e. 
  
  $$\Pi_\text{Tw}^\circ(\ph)=0$$
   then once $L_0$ in the definition of $\Pi_\text{Tw}^\circ$ is chosen sufficiently large,  $$ \|\ph\|_{L^{1,2}_\nu}\leq C_\nu \left( \|\slashed D_{A^{h_\e}}\ph\|_{L^2_\nu} + \|K_\e\ph\|_{L^2_\nu}\right)$$
 where $K_\e$ is the compact operator defined in (\refeq{Kedef}).
  \label{weightedestimatesdirac2}
 \end{prop}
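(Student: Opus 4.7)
The plan is to Fourier-decompose $\ph = \sum_{\ell \in \Z} \ph_\ell(r,\theta) e^{i\ell t}$ and reduce the weighted estimate to a family of two-dimensional problems on the normal disks $D_\lambda$. In the model case $\slashed D_{A^{h_\e}} = \sigma_t \del_t + \slashed D_{A^{h_\e}}^\C$, so Plancherel applied to $\|\slashed D_{A^{h_\e}}\ph\|_{L^2_\nu}^2$ together with the anticommutator bound of Lemma~\ref{anticommutatorbd} (which controls the cross term $\br\sigma_t\del_t \ph, \slashed D_{A^{h_\e}}^\C \ph\kt$ when integrated by parts) gives
$$\|\slashed D_{A^{h_\e}}\ph\|_{L^2_\nu}^2 \;\geq\; \sum_\ell \Bigl(\|\slashed D^\C_{A^{h_\e}} \ph_\ell\|_{L^2_\nu(D_\lambda)}^2 + \ell^2 \|\ph_\ell\|_{L^2_\nu(D_\lambda)}^2\Bigr) \;-\; C\e^{1/2} \|\ph\|_{H^1_{\e,\nu}}^2,$$
and the $\e^{1/2}$-error is absorbable. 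It therefore suffices to prove a slicewise uniform inequality
$$\|\ph_\ell\|_{L^{1,2}_\nu(D_\lambda)}^2 \;\leq\; C_\nu \Bigl(\|\slashed D^\C_{A^{h_\e}} \ph_\ell\|_{L^2_\nu}^2 + \ell^2 \|\ph_\ell\|_{L^2_\nu}^2 + \|K_\e \ph_\ell\|_{L^2_\nu}^2\Bigr)$$
for every $\ell$, and then sum.

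The mode-by-mode estimate is proved by splitting at the threshold $|\ell| = \tfrac{1}{L_0}\e^{-1/2}$ chosen in the definition of $\Pi_\text{Tw}^\circ$. In the high regime $|\ell| \geq \tfrac{1}{L_0}\e^{-1/2}$, the boundary constraint $\Pi_\text{Tw}^\circ$ restricts the $\alpha_{-1},\beta_0$ modes to $V_t^{\Phi_0}$, which is exactly the boundary condition governing $\mathcal N_t$ in Section~\ref{section6}. A weighted version of Corollary~\ref{l2projectionNbound}, obtained by substituting $\ph_\ell = R_\e^{-\nu}v_\ell$ and re-running the holomorphic argument of Section~\ref{holomorphicinterpretation} with the shifted weight, controls $\|\ph_\ell\|_{L^{1,2}_\nu}$ by $\|\slashed D^\C_{A^{h_\e}}\ph_\ell\|_{L^2_\nu}$ plus a projection onto $\ker(\mathcal N_t) = \C\langle \beta_t\rangle$. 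The coercive term $\ell^2 \|\ph_\ell\|_{L^2_\nu}^2 \geq L_0^{-2}\e^{-1} \|\ph_\ell\|_{L^2_\nu}^2$ absorbs this finite-dimensional projection and any sub-leading error, yielding the claim in this regime.

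In the low regime $|\ell| < \tfrac{1}{L_0}\e^{-1/2}$, the boundary subspace allowed by $\Pi_\text{Tw}^\circ$ is enlarged to $V_t^{\Phi_0} \oplus \langle w_1\rangle$, which adjoins a finite-dimensional family of approximate kernel elements of $\slashed D^\C_{A^{h_\e}}$. By Lemma~\ref{mufiltration} and the asymptotics of Lemma~\ref{betaasymptotics}, these approximate kernels are asymptotic, near the boundary, to multiples of $\beta_t$ with $r^{-1/2}$ profile and therefore concentrate in $\{r \lesssim \e^{2/3-\gamma'}\}$; their $L^2_\nu$ mass is consequently dominated by $\|K_\e \ph_\ell\|_{L^2_\nu}$ (for $\gamma'$ adjusted accordingly). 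Outside this interior region the operator is close to its $(\Phi_0,A_0)$ limit, where the weighted $\delbar$/$\del$-estimates of Proposition~\ref{delbarAHprop} apply with weight $\nu-1 \in (-1,-\tfrac34)$ on the domain. Combining these two contributions gives the mode-by-mode inequality with the required $K_\e$ error.

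The main obstacle is establishing the weighted two-dimensional estimate for $\slashed D^\C_{A^{h_\e}}$ with weight $\nu \in (0,\tfrac14)$. Conjugating by $R_\e^\nu$ produces the operator $\slashed D^\C_{A^{h_\e}} + \nu\,\gamma(dR_\e/R_\e)$, whose zeroth-order perturbation equals $\nu\gamma(dr)/r$ away from $\mathcal Z_0$ and combines with the $\tfrac{i}{2}d\theta$ part of $A^{h_\e}$ to shift the spectrum of the angular operator on circles of constant $r$. Rerunning the integration-by-parts identity from Proposition~\ref{delbarAHprop} with this shift shows that the positivity required to close the estimate survives precisely when the nearest indicial root is not crossed; the $\tfrac14$ threshold (rather than the $\tfrac12$ available for the trivial connection in Lemma~\ref{weightedestimatesdirac1}) is the halving induced by the $e^{\pm H(\rho)} \sim \rho^{\mp 1/2}$ profile of $\Phi^H$ in the two spinor components. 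Taking $L_0$ sufficiently large forces the transition scale $\tfrac{1}{L_0}\e^{-1/2}$ to lie safely inside the range where this weighted positivity is quantitative, which is what yields a single uniform constant $C_\nu$ across all Fourier modes and completes the proof.
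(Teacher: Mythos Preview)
Your approach diverges substantially from the paper's, and unfortunately several of the steps have genuine gaps.

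\textbf{The missing boundary term.} Your first displayed inequality drops the boundary contribution. When you integrate by parts in the cross term $\br\sigma_t\del_t\ph,\slashed D^\C_{A^{h_\e}}\ph\kt_{L^2_\nu}$, you pick up $\int_{\del N_\lambda}\br -\sigma_t J\ph,\del_t\ph\kt R_\e^{2\nu}$, and this is \emph{not} negligible: it is of the same order as the main terms and must be controlled. The paper handles it via Claim~\ref{claim751} (which uses that only low $t$-modes contribute to the pairing under $\Pi^\circ_{\text{Tw}}$) together with the boundary absorption Lemma~\ref{bdabsorption}, and this is where the choice of $L_0$ actually enters. Without this, your first inequality is false.

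\textbf{The low-mode argument.} Your claim that the approximate kernel elements ``concentrate in $\{r\lesssim\e^{2/3-\gamma'}\}$'' and are therefore dominated by $\|K_\e\ph_\ell\|_{L^2_\nu}$ is not correct. By Lemma~\ref{betaasymptotics}, $\beta_t\sim\rho_t^{-1/2}$ for $\rho_t\gg 1$, so the $L^2$-mass of $\beta_t$ on $D_\lambda$ is spread across the annulus $\e^{2/3}\lesssim r\lesssim\e^{1/2}$, most of which lies \emph{outside} the support of $K_\e$. The operator $K_\e$ cannot capture these elements.

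\textbf{The $\tfrac14$ threshold.} Your explanation attributes the threshold to the $e^{\pm H(\rho)}\sim\rho^{\mp 1/2}$ profile of $\Phi^H$, but $\Phi^H$ does not appear in $\slashed D_{A^{h_\e}}$ at all. The paper's mechanism (Claim~\ref{lm74} in Step~2) is that the singular connection $A_0=\tfrac{i}{2}d\theta$ shifts the angular spectrum by $\tfrac12$, giving a lowest eigenvalue $(\nu-\tfrac12)^2/r^2$ for $(\slashed D^\C_{A_0})^*\slashed D^\C_{A_0}$ in the weighted space; the three-dimensional integration by parts then produces a competing $-\nu^2/r^2$ term from the weight hitting $\sigma_r$, and $(\nu-\tfrac12)^2>\nu^2$ holds precisely when $\nu<\tfrac14$.

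\textbf{What the paper actually does.} The paper avoids the Fourier decomposition entirely and instead splits $N_\lambda$ radially into an interior region $I_r=\{r\leq\e^{2/3-\gamma'}\}$ (where $K_\e$ is supported and a crude estimate suffices) and an exterior region where $A^{h_\e}$ is exponentially close to the \emph{singular} limit $A_0$. On the exterior, the weighted estimate is proved directly for $\slashed D_{A_0}$ via the positivity in Claim~\ref{lm74}, with the boundary term left explicit. The two regions are then patched with logarithmic cutoffs, and the boundary term is absorbed at the end using Claim~\ref{claim751} and Lemma~\ref{bdabsorption}. This route never needs a mode-by-mode kernel analysis.
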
 
 
 \begin{proof}

The proof consists of three steps: an interior estimate where $K_\e\neq 0$, an outside estimate where $K_\e=0$, and parametrix patching combining them.

 \medskip 
 
 \noindent {\bf Step 1: Interior Estimate.}
 
 The following estimate holds on the interior domain $I_r=\{r\leq \e^{2/3-\gamma'}\}$ for configurations $\ph$ vanishing on the boundary $r=\e^{2/3-\gamma'}$. 
 
 $$ \int_{I_r} \left(|\nabla \ph|^2 + \frac{|\ph|^2}{R_\e^2}\right) R_\e^{2\nu} \ dV \leq C   \left(\int_{I_r}|\slashed D_{A^{h_\e}}\ph|^2 R_\e^{2\nu} \ dV+ \int_{I_r} |K_\e\ph|^2 R_\e^{2\nu} \ dV\right).$$

\noindent This is obvious: integrate by parts and one obtains the first derivative squared and error terms given by $F_{A^{h_\e}}, \tfrac{\nu}{R_\e}\tfrac{dR_\e}{dr}$. These and the $L^2$ term on the left hand side are pointwise bounded by multiples of $K_\e$.

\bigskip 

\noindent {\bf Step 2: Outside Estimate.}

Let $\slashed D_{A_0}$ denote the limiting Dirac operator. Recall that $|A^{h_\e}-A_0|$ is exponentially small in the region $N_{\lambda}(\mathcal Z_0)-I_r$. In this step, we show the estimate for the connection $A_0$ on all of $N_{\lambda}(\mathcal Z_0)$, and in the next step apply it to configurations supported on $N_{\lambda}(\mathcal Z_0)-I_r$.

Assume $\nu < 1/4$ as before, and the $\Pi_\text{Tw}^\circ(\ph)=0$. Additionally, assume $\ph$ vanishes on a small neighborhood of $\mathcal Z_0$ (say $r<\e$). Then 

\smallskip

 $$ \int_{N_\lambda(\mathcal Z_0)} \left(|\nabla \ph|^2 + \frac{|\ph|^2}{r^2}\right)r^{2\nu} \ dV \leq C_\nu   \left(\int_{N_\lambda(\mathcal Z_0)}|\slashed D_{A_0}\ph|^2 r^{2\nu} \ dV\right) + C_\nu \Big | \int_{\del N_\lambda(\mathcal Z_0)} \br -\sigma_t J \ph, \del_t \ph \kt r^{2\nu} r d\theta dt\Big |.$$

 \noindent Notice the weight function here is the genuine radial function $r$ rather than the smoothed off version $R_\e$. 
 \medskip 
 
 To begin, write $$\slashed D_{A_0}= \sigma_t \del_t +\slashed D_{A_0}^\C$$ where $$\sigma_t = \begin{pmatrix} i &0 \\ 0 & -i \end{pmatrix} \hspace{1cm} \slashed D_{A_0}=\begin{pmatrix} 0 & -2\del_{A_0} \\ 2\delbar_{A_0} & 0 \end{pmatrix}$$

\begin{claim} The following hold: 
\begin{enumerate} \item[(1)] For $\nu<1/4$, $$\int_{D_\lambda}\left( \frac{(\nu-\frac{1}{2})^2}{r^2}|u|^2 \right) r^{2\nu} \ dV \leq \int_{D_\lambda} |\slashed D_{A_0}^\C u |^2 r^{2\nu } \ dV$$
\item[(2)] There is a constant $c_\nu$ such that $$ \int_{D_\lambda}\left(|\nabla^\C \ph|^2 +\frac{|\ph|^2}{r^2}\right)r^{2\nu} \ dV \leq c_\nu \int_{D_\lambda} |\slashed D_{A_0}^\C u |^2 r^{2\nu } \ dV.$$
\end{enumerate}
Here $\nabla^\C$ denotes the derivatives in the $D_\lambda$-directions. Notice also that the first estimate is asserted without a constant $c_\nu$.  
\label{lm74}
 \end{claim}

\begin{proof}
 
 Write $u=\begin{pmatrix}\alpha \\ \beta \end{pmatrix}$. Since the components decouple, it suffices to show the result for each. First, consider the $\alpha$ component. The recall the polar coordinate expression $2\delbar=e^{i\theta}(\del_r + \tfrac{i}{r}\del_\theta)$, and write $$\alpha = a r^{-\nu}$$ for $a$ in the space defined by the $\nu=0$ version of the norm on the left hand side of the statement of the proposition. Then $$2\delbar_{A_0}\alpha = e^{i\theta} (\del_r a + \tfrac{i}{r}\del_\theta +\tfrac{-\nu}{r} - \tfrac{1}{2r}) r^{-\nu}$$

 \bea
 \|2\delbar_{A_0}\alpha\|_{L^2_\nu}^2 &=& \int_{D_\lambda} \br \del_r a + \tfrac{1}{r}(i\del_\theta -(\nu+\tfrac{1}{2}))a \ , \ \del_r a + \tfrac{1}{r}(i\del_\theta -(\nu-\tfrac{1}{2}))a \kt \ r drd\theta  \\
 &=& \int_{D_\lambda} |\del_r a |^2 + \tfrac{1}{r^2} | (i\del_\theta -( \nu+\tfrac{1}{2}))a|^2 \ dV \\ &  &  +\int_{D_\lambda} \br a , -\del_r(i\del_\theta -( \nu+\tfrac{1}{2}))a   + (i\del_\theta -( \nu+\tfrac{1}{2}))\del_r a\kt drd\theta   + \int_{\del D_\lambda} \br  a, i\del_\theta - (\nu-\tfrac{1}{2}) a \kt d\theta\\
 &\geq &  \int_{D_\lambda} |\del_r a |^2 + \tfrac{1}{r^2} | (i\del_\theta -( \nu+\tfrac{1}{2}))a|^2 \ dV. 
 \eea
 
 \noindent since $F_{A_0}=0$ and the restriction of $\alpha$ to the boundary has only Fourier modes in $\theta$ with, hence the boundary term is positive since $|\nu-\tfrac{1}{2}|<1$. A similar integration by parts holds for $b=\beta r^{-\nu}$, except $i\del_\theta -( \nu+\tfrac{1}{2})$ is replaced by $- i\del_\theta -( \nu-\tfrac{1}{2})$. Since $\nu-\tfrac{1}{2}<0$ the boundary term is again positive since the allowed Fourier modes are $k\geq 0$.  Both  $i\del_\theta -( \nu+\tfrac{1}{2})$ and $- i\del_\theta -( \nu-\tfrac{1}{2})$ have lowest eigenvalue $(\nu-\tfrac{1}{2})$ on the circle. The first bullet point of the claim follows.

    For the second bullet point, notice that $i\del_\theta -(\nu+\tfrac{1}{2})$ and $-i\del_\theta - (\nu-\tfrac{1}{2})$ are invertible on the circle hence there are estimates $$\|a\|_{L^{1,2}(S^1)}\leq c_\nu \|(i\del_\theta -(\nu+\tfrac{1}{2})a\|_{L^2(S^1)}$$
    and likewise for $-i\del_\theta - (\nu-\tfrac{1}{2})$. Applying this instead of the $L^2$ estimate from the eigenvalues shows 
    
    $$\int_{D_\lambda} |\nabla (r^\nu \ph)|^2 + \frac{|r^{\nu}\ph|^2}{r^2} \ dV \leq C_\nu \int_{D_\lambda} |\slashed D_{A_0}^\C \ph|^2 r^{2\nu} \ dV$$
and the second bullet point follows. 
\end{proof}

With the claim established, we integrate by parts: 

\bea
\int_{N_\lambda(\mathcal Z_0)} |\slashed D_{A_0}\ph|^2 r^{2\nu} \ dV &=& \int_{N_\lambda(\mathcal Z_0)} |\del_t \ph|^2 r^{2\nu} + |\slashed D_{A_0}^\C \ph|^2 r^{2\nu} + \br \ph, \sigma_t \del_t \slashed D^\C_{A_0}\ph+ \slashed D_{A_0}^\C \sigma_t \del_t  \ph\kt r^{2\nu}   \\
& &+ \br \ph, \tfrac{2\nu}{r} \sigma_r \sigma_t \del_t \ph \kt r^{2\nu} \ dV  +\int_{\del N_\lambda(\mathcal Z_0)}\br -\sigma_t J \ph, \del_t \ph \kt r^{2\nu} rd\theta dt  \\
&\geq & \int_{N_\lambda(\mathcal Z_0)} |\slashed D_{A_0}^\C \ph|^2 r^{2\nu}  - \frac{\nu^2}{r^2}|\ph|^2 r^{2\nu} \ dV+\int_{\del N_\lambda(\mathcal Z_0)}\br -\sigma_t J \ph, \del_t \ph \kt r^{2\nu} rd\theta dt.
\eea
Now apply the first bullet point from the above claim. Since $\nu< \frac{1}{4}$ implies $(\nu-\tfrac{1}{2})^2 > \nu^2$, hence we find 

\be  \phantom{\hspace{.7cm}}\geq  C_\nu \int_{N_\lambda(\mathcal Z_0)} \frac{|\ph|^2}{r^2} r^{2\nu}+\int_{\del N_\lambda(\mathcal Z_0)}\br -\sigma_t J \ph, \del_t \ph \kt r^{2\nu} rd\theta dt. \label{736}\ee

Next, we integrate by parts again and substitute this inequality: 

\bea
\int_{N_\lambda(\mathcal Z_0)} |\slashed D_{A_0}\ph|^2 r^{2\nu} \ dV & =& \int_{N_\lambda(\mathcal Z_0)}  |\del_t \ph|^2 r^{2\nu} +|\slashed D^\C_{A_0}\ph|^2 r^{2\nu}  \\ & & + \br  \ph, \tfrac{2\nu}{r}\sigma_r \sigma_t \del_t  \ph \kt r^{2\nu} + \int_{\del N_\lambda(\mathcal Z_0)}\br -\sigma_t J \ph, \del_t \ph \kt r^{2\nu}rd\theta dt \\
 &\geq & \int_{N_\lambda(\mathcal Z_0)}\tfrac{1}{2}|\del_t \ph|^2r^{2\nu} + |\slashed D_{A_0}^\C \ph|^2  r^{2\nu}-C_\nu\int_{N_\lambda(\mathcal Z_0)}\frac{|\ph|^2}{r^2}r^{2\nu} \ dV  \\ & & +\int_{\del N_\lambda(\mathcal Z_0)}\br -\sigma_t J \ph, \del_t \ph\kt r^{2\nu} rd\theta dt \\
 &\geq &  c_\nu \int_{N_\lambda(\mathcal Z_0)} \left(|\nabla \ph|^2 + \frac{|\ph|^2}{r^2}\right)r^{2\nu} \ dV -C_\nu\int_{N_\lambda(\mathcal Z_0)}\frac{|\ph|^2}{r^2}r^{2\nu} \ dV  \\ & &+\int_{\del N_\lambda(\mathcal Z_0)}\br -\sigma_t J \ph, \del_t \ph\kt r^{2\nu} rd\theta dt 
\eea
 where we have now used the second bullet point in Lemma \ref{lm74}. Moving the negative term to the other side and applying (\refeq{736}) yields 
 
  \be \int_{N_{\lambda}} \left(|\nabla \ph|^2 + \frac{|\ph|^2}{r^2}\right)r^{2\nu} \ dV \leq C_\nu   \left(\int_{N_{\lambda}}|\slashed D_{A_0}\ph|^2 r^{2\nu} \ dV\right) + C_\nu \Big | \int_{\del N_{\lambda}} \br -\sigma_t J \ph, \del_t \ph \kt r^{2\nu} r d\theta dt\Big |\label{eq74}\ee

\noindent completing step 2.

 \medskip

{\bf Step 3: Parametrix Patching.} Let $\eta$ denote a cutoff equal to 1 at the origin and supported in the region $I_r=\{r< \e^{2/3-\gamma'}\}$ such that $d\eta$ has support in $r\in [\tfrac{1}{4}\e^{2/3-\gamma'}, \tfrac{1}{2}\e^{2/3-\gamma'}]$ and satisfies $$|d\eta|\leq \frac{c}{R_\e}.$$

\noindent We now complete the proof:  let $\ph$ be a spinor satisfying $\Pi^\circ_\text{Tw}(\ph)=0$ and having finite $L^{1,2}_\nu$ (as in \refeq{L12nunorm}). Applying the estimates from Step 1 and Step 2 to $\eta u$ and $(1-\eta)u$ respectively, and using the fact that $A_{h_\e}$ is exponentially close to $A_0$ in the ``outside'' region, 

\bea
\|u\|^2_{L^{1,2}_\nu} &= & \|\eta u + (1-\eta)u\|^2_{L^{1,2}_\nu} \\
&\leq& C_\nu \left( \|\eta u \|^2_{L^{1,2}_\nu} +\| (1-\eta)u\|^2_{L^{1,2}_\nu}\right) \\
&\leq& C_\nu  \left(\|\slashed D_{A^{h_\e}} (\eta u)\|^2_{L^2_\nu}  + \|\slashed D_{A_0} ((1-\eta) u)\|^2_{L^2_\nu} + \|K_\e u\|^2_{L^2_\nu}+ \text{b.d. term} \right)\\ 
&\leq& C_\nu \left( \|\slashed D_{A^{h_\e}} (\eta u)\|^2_{L^2_\nu}  + \|\slashed D_{A^{h_\e}} ((1-\eta) u)\|^2_{L^2_\nu}  +\|K_\e u\|^2_{L^2_\nu} + O(\text{Exp}(-\tfrac{1}{\e^\gamma})) + \text{b.d. term}\right)\\ 
\eea
where the boundary term is as in (\refeq{eq74}). Then, $$\slashed D_{A^{h_\e}}(\eta u)= \eta \slashed D_{A^{h_\e}}u + \gamma(d\eta)u$$ and likewise for $(1-\eta)$. Substituting this shows the above is bounded by

\bea
&\leq& C_\nu \left(   \|\eta \slashed D_{A^{h_\e}} u \|^2_{L^2_\nu}  + \|(1-\eta)\slashed D_{A_0} u\|^2_{L^2_\nu} +2 \|\gamma(d\chi)u\|_{L^2_\nu}^2  +\|K_\e u\|^2_{L^2_\nu} + O(\text{Exp}(-\tfrac{1}{\e^\gamma})) + \text{b.d. term}\right)\\ 
&\leq& 2C_\nu \|\slashed D_{A^{h_\e}}u\|^2_{L^2_\nu} + 4c \|K_\e u\|^2_{L^2_\nu} + \text{b.d. term}
\eea
where we have used the definition of $K_\e$ and to bound the derivative of the cutoff. The exponentially small term is easy to absorb into $\|u\|_{L^{1,2}|_\nu}$ once $\e$ is sufficiently small.

The final step is to absorb the boundary term. This a consequence of the lemma in the following subsection, combined with the fact that the twisted boundary conditions allow only pairings between boundary Fourier modes with $|\ell|\leq \frac{1}{L_0} \e^{-1/2}$, which gives an estimate  

$$ \Big | \int_{\del N_\lambda}\br  -\sigma_t J \ph, \del_t \ph\kt \ dA  \Big | \leq  \frac{C}{\e^{1/2}L_0} \|\ph\|^2_{L^2(\del N_\lambda)}$$

\noindent which is proved precisely in Claim \ref{claim751} during the proof of Theorem \ref{invertibleL}. Given this, combining this estimate with the next lemma and choosing $L_0$ sufficiently large completes Step 3 and the proof of Proposition   \ref{weightedestimatesdirac2}. 
 \end{proof}

 \subsubsection{Boundary Terms}

Since the radius of $N_{\lambda}(\mathcal Z_0)$ is very small, scaling leads to a strong estimate on the (weighted) $L^2$-norm of the boundary values. This is one of the key reasons the size of the neighborhood must shrink as $\e\to 0$.

\begin{lm}{\bf (boundary absorption Lemma)}
 There exists a constant $C_\nu$ such that on $N_\lambda(\mathcal Z_0)$, $$\int_{\del {N_\lambda(\mathcal Z_0)}} |\ph|^2 R_\e^{2\nu} \ rd\theta dt \leq C_\nu \e^{1/2} \int_{N_\lambda(\mathcal Z_0)} \left(|\nabla \ph|^2 +\frac{|\ph|^2 }{R_\e^2}\right) R_\e^{2\nu} \ dV$$

\label{bdabsorption}
\end{lm}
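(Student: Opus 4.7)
The plan is to reduce the claim to a standard one-dimensional trace inequality applied in the radial direction, then track the behavior of the weight $R_\e$ on the thin annulus near the boundary. Since $\lambda=\tfrac{1}{2}\e^{1/2}$ and $R_\e = \sqrt{\kappa^2 \e^{4/3}+r^2}$, for all $r\in[\lambda/2,\lambda]$ one has $r \gg \kappa\e^{2/3}$ once $\e$ is sufficiently small, and therefore $R_\e(r)\sim r\sim \lambda$ on this annulus. In particular,
\[
R_\e(\lambda)^{2\nu}\leq C_\nu\, R_\e(r)^{2\nu}\qquad \text{and}\qquad \tfrac{1}{\lambda}\leq \tfrac{C\lambda}{R_\e(r)^2}
\]
uniformly for $r\in[\lambda/2,\lambda]$, which are the two pointwise comparisons that will do all the work.

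First I would fix $(\theta,t)\in S^1\times S^1$ and apply the elementary trace inequality
\[
|f(\lambda)|^2 \leq \frac{C}{\lambda}\int_{\lambda/2}^\lambda |f|^2\,dr + C\lambda \int_{\lambda/2}^\lambda |f'|^2\,dr
\]
to $f(r)=|\ph(r,\theta,t)|$ (applied componentwise to avoid issues with differentiating $|\ph|$, or simply via the cutoff $\chi(r)$ vanishing at $\lambda/2$ and equal to $1$ at $\lambda$, applied to $|\ph|^2$). This gives a pointwise bound on $|\ph|^2$ restricted to $\del N_\lambda(\mathcal Z_0)$ in terms of a radial integral of $|\ph|^2$ and $|\nabla\ph|^2$ over the annulus $A:=S^1\times \{\lambda/2\leq r\leq \lambda\}\times S^1$.

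Next I would multiply by the boundary measure factor $\lambda\,R_\e(\lambda)^{2\nu}$ and integrate over $(\theta,t)$. Using $R_\e(\lambda)^{2\nu}\leq C_\nu R_\e(r)^{2\nu}$ to pull the weight inside, and converting $dr\,d\theta\,dt = r^{-1}\,dV\leq (2/\lambda)\,dV$ in the annulus, one obtains
\[
\int_{\del N_\lambda(\mathcal Z_0)} |\ph|^2 R_\e^{2\nu}\, r\,d\theta\,dt \leq C_\nu \int_{A}\Big(|\nabla\ph|^2 + \tfrac{1}{\lambda^2}|\ph|^2\Big)R_\e^{2\nu}\,dV \cdot \lambda.
\]
Finally, invoking $\tfrac{1}{\lambda^2}\leq \tfrac{C}{R_\e^2}$ on the annulus and $\lambda=\tfrac12 \e^{1/2}$, the integrand on the right is bounded by a constant times the integrand of $\|\ph\|_{H^1_{\e,\nu}}^2$, yielding
\[
\int_{\del N_\lambda(\mathcal Z_0)} |\ph|^2 R_\e^{2\nu}\,r\,d\theta\,dt \leq C_\nu\, \e^{1/2}\int_{N_\lambda(\mathcal Z_0)}\Big(|\nabla\ph|^2+\tfrac{|\ph|^2}{R_\e^2}\Big)R_\e^{2\nu}\,dV,
\]
as desired. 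There is no real obstacle here; the only thing to be careful about is the comparison of $R_\e$ on the annulus, which uses crucially that $\lambda=\tfrac{1}{2}\e^{1/2}$ is much larger than the leveling-off scale $\e^{2/3}$ of the weight. The gain of the factor $\e^{1/2}$ is precisely the thickness of the annulus $[\lambda/2,\lambda]$, which is exactly why this bound shrinks as $\e\to 0$ and becomes strong enough to absorb the boundary terms arising from integration by parts in Proposition~\ref{weightedestimatesdirac2}.
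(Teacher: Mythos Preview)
Your argument is correct. The route differs from the paper's, and the comparison is worth recording.

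The paper argues by scaling the standard two-dimensional trace inequality on the unit disk: applying $\|\psi\|_{L^2(\partial D)}^2\leq C_0(\|\nabla\psi\|_{L^2(D)}^2+\|\psi\|_{L^2(D)}^2)$ to $\psi(y)=\ph(\e^{-1/2}y)$ yields the $\nu=0$ case directly, and the general weight is then handled by substituting $\widetilde\ph=R_\e^{\nu}\ph$ and invoking the norm-equivalence of Lemma~\ref{changingweights}. So the paper uses the full disk and a separate weight-shifting step.

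You instead localize to the annulus $\{\lambda/2\le r\le\lambda\}$ and use only the one-dimensional radial trace estimate, exploiting that on this annulus $R_\e\sim r\sim\lambda$ so the weight $R_\e^{2\nu}$ is effectively constant and can be moved inside with no loss. This is a bit more elementary: it bypasses the two-dimensional trace theorem and the weight-equivalence lemma entirely, and makes transparent that the only information used about $\ph$ is its behavior in the $\e^{1/2}$-thick collar near the boundary. The paper's version is slightly cleaner conceptually (pure scaling), while yours is more self-contained and makes the origin of the $\e^{1/2}$ factor---the collar thickness---explicit. Either way the mechanism is the same: the smallness of $\lambda$ relative to $1$ produces the gain, and the fact that $\lambda\gg\e^{2/3}$ ensures the weight behaves like a pure power of $r$ where it matters.
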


\begin{proof}
This follows from scaling the trace inequality from the disk of radius $r=1$. First we prove the inequality in the case that $\nu=0$. Let $C_0$ be the constant for which the two-dimensional trace inequality $$\|\psi\|^2_{L^2(\del D)}\leq C_0 \int_{D} |\nabla \psi|^2 + |\psi|^2 \ dV$$

\noindent holds on the disk of radius $r=1$. Apply this to $\psi(y)=\ph(\e^{-1/2}y)$ to see 

$$\e^{-1/2}\|\ph\|^2_{L^2(D_\lambda)}  = \|\psi\|^2_{L^2(D_1)}\leq C_0 \int_{D}|\nabla \psi|^2 + |\psi|^2 \ dV \leq C_0\int_{D_\lambda} |\nabla \ph|^2 + \frac{|\ph|^2}{\e} \ dV \leq C_1 \int_{D_\lambda} |\nabla \ph|^2 + \frac{|\ph|^2}{R_\e^{2}} \ dV.$$

\noindent Integrating with respect to $t$ yields the inequality in the case that $\nu=0$.

For a general $\nu\in (0,1/4)$, apply the above to $\widetilde \ph= R_\e^{\nu}\ph$ and combine this with the equivalence of norms as in Lemma \ref{changingweights}. 

$$\|\ph R_\e^{\nu}\|^2_{L^{1,2}_0}\leq C_\nu \|\ph\|^2_{L^{1,2}_\nu}.$$

\end{proof}

\subsection{Integration by Parts}
\label{intbyparts}

This subsection carries out the proof of Theorem \ref{invertibleL} in the model case. The case of a general metric is treated in the subsequent section by a perturbation argument. The proof in the model case combines the holistic 
integration by parts argument described in Equation (\refeq{intbypartsholistic}) with estimates reminiscent of the proof of the uniform invertibility of $\widehat{\mathcal N}_t$ in Section \ref{section6} (recall Item (2) of Lemma  \ref{abstractlem}). 

First we show an estimate \be 
\|(\ph,a)\|_{H^1_\e}\leq C \ \|\mathcal L^{h_\e}(\ph,a)\|_{L^2} + \text{b.d. term}\label{7.33unweighted}
\ee
for the weight $\nu=0$. Next, a weighted version of the Weitzenb\"ock formula shows that for a weight $\nu<1/4$ 

 \be 
\|(\ph,a)\|_{H^1_{\e,\nu}}\leq C \ \left(\|\mathcal L^{h_\e}(\ph,a)\|_{L^2_\nu} + \|K_\e(\ph,a)\|_{L^2_\nu}\right)
\label{7.34weighted}
\ee

\noindent also holds where  $K_\e$ is as defined in $(\refeq{Kedef})$. But in turn, we also have 

\be  \|K_\e(\ph,a)\|_{L^2_\nu}\lesssim_\e \ \|(\ph,a)\|_{H^1_\e} \label{step3}\ee

\noindent where $\lesssim_\e$ denotes a bound by a constant times an appropriate power of $\e$. Applying \ref{7.33unweighted} again to the right hand side of  \ref{step3} and showing the boundary term can be absorbed yields the result after the appropriate bookkeeping of powers of $\e$.

\begin{proof} (of Theorem \ref{invertibleL}) The index statement was proved in Proposition \ref{indexmixedbd}, and it therefore suffices to show injectivity, for which it is enough to prove the second estimate in the statement of the orem. The first estimate follows immediately from the second using $\tfrac{1}{R_\e^\nu}\leq C\e^{-2\nu/3}$. We therefore prove 
\be \|(\ph,a)\|_{H^1_{\e,\nu}} \leq   C\e^{1/12-\gamma_2} \|\mathcal L^{h_\e}(\ph,a)\|_{L^2}. \label{toprove}\ee
\noindent By taking limits, it suffices to prove the estimate for smooth configurations. Thus let $\frak q=(\ph,a)$ be a smooth configuration satisfying the mixed boundary and projection conditions. In particular, with such a configuration it makes sense to reference $\frak q|_{\{t\}\times D_\lambda}$ for any $t\in S^1$ and integrate with respect to $t$ at the end. The proof now consists of three steps corresponding to the bounds (\refeq{7.33unweighted})- (\refeq{step3}) as described above respectively.

\medskip

\noindent {\bf Step 1:}
The following estimate holds: \be \| \frak q \|^2_{H^1_\e}\leq \frac{C}{\e^{1/6}}\|\mathcal L \frak q\|^2_{L^2} + \frac{C}{\e^{1/2}L_0} \|\mathfrak \ph\|^2_{L^2(\del N_\lambda(\mathcal Z_0))}. \ee

Omitting the superscript $h_\e$ from the proof, we may write $$\mathcal L=\sigma_t \del_t + \mathcal N_t.$$
\noindent Expanding and integrating by parts (this was alluded to in \refeq{intbypartsholistic}) yields
\bea
\|\mathcal L\frak q\|_{L^2}^2 & =& \int_{N_\lambda} |\del_t \frak q|^2 + |\mathcal N_t \frak q|^2 + \br \sigma_t \del_t \frak q, \mathcal N_t \frak q \kt+  \br  \mathcal N_t \frak q , \sigma_t \del_t \frak q \kt \ dV \\
 &\geq & \int_{N_\lambda} |\del_t \frak q|^2 + |\mathcal N_t \frak q|^2 + \br\frak q,  \sigma_t \del_t  \mathcal N_t \frak q \kt+  \br   \frak q , \mathcal N_t \sigma_t \del_t \frak q \kt \ dV  + \int_{\del N_\lambda}\br  -\sigma_t J \ph, \del_t \ph\kt \ dA\\
  &\geq & \int_{N_\lambda} |\del_t \frak q|^2 + |\mathcal N_t \frak q|^2 + \br\frak q, \{ \sigma_t \del_t ,   \mathcal N_t \} \frak q \kt + \int_{\del N_\lambda}\br  -\sigma_t J \ph, \del_t \ph\kt \ dA.
\eea

\noindent Now, $\frak q$ may be decomposed into the component in the kernel subbundle and its slicewise $L^2$ orthogonal complement as in Lemma \ref{almostorthogonality} so that  $$\frak q= \frak q^\perp+ \eta(t)\beta_t$$ where $\frak q^\perp$ is $L^2$-orthogonal to $\beta_t$ on $\{t\}\times D_\lambda$ for every $t$.  Lemma \ref{almostorthogonality} applies to show that  

\be\frac{1}{2}\Big(\|\del_t(\eta \beta_t)\|^2_{L^2}+  \|\del_t \frak q^\perp \|_{L^2}^2 \Big) \  \leq \  \|  \del_t\frak q\|^2_{L^2}+ \e^{5/6}\Big\|\frac{\frak q^\perp}{R_\e}\Big\|^2_{L^2}.\label{substitution1}\ee

\noindent In addition, by definition of $\beta_t$ as the span of $\ker(\mathcal N_t)$ we have 

$$\mathcal N_t \frak q = \mathcal N_t (\frak q^\perp+\eta \beta_t)=\mathcal N_t(\frak q^\perp),$$ 
\noindent so by slicewise-orthogonality, which implies $\pi^\text{ker}_t(\frak q^\perp)=0$ the main result of Section \ref{section6} from Corollary \ref{l2projectionNbound} shows 

\be \|\frak q^\perp\|^2_{H^1_{slice}}\leq \frac{C}{\e^{1/6}} \|\mathcal N_t \frak q^\perp\|_{L^2}^2=\frac{C}{\e^{1/6}} \|\mathcal N_t \frak q\|_{L^2}^2.\label{substitution2}\ee
\noindent Substituting (\refeq{substitution1}) and (\refeq{substitution2}) into the integration by parts yields 

\bea
\|\mathcal L\frak q\|_{L^2}^2 
&\geq &\frac{1}{2} \| \del_t \frak q\|_{L^2}^2  + \frac{1}{4} \| \del_t (\eta \beta_t)\|_{L^2}^2 +\frac{1}{4} \|\del_t \frak q^\perp\|_{L^2}^2 - \e^{5/6}\Big\|\frac{\frak q^\perp}{R_\e}\Big\|^2_{L^2} + \frac{\e^{1/6}}{C}\|\frak q^\perp\|_{H^1_{slice}}^2 +\int_{N_\lambda(\mathcal Z_0)} \br\frak q, \{ \sigma_t \del_t ,   \mathcal N_t \} \frak q \kt  \\ & & + \int_{\del N_\lambda(\mathcal Z_0)}\br  -\sigma_t J \ph, \del_t \ph\kt \ dA. 
\eea

\noindent and combining the slice norm with the $\del_t\frak q^\perp$ and absorbing the $\e^{5/6}$ term yields 

\begin{eqnarray}
\|\mathcal L\frak q\|_{L^2}^2 
&\geq &\frac{1}{2} \| \del_t \frak q\|_{L^2}^2  + \frac{1}{4} \| \del_t (\eta \beta_t)\|_{L^2}^2 + \frac{\e^{1/6}}{C}\|\frak q^\perp\|_{H^1_{\e}}^2 +\int_{N_\lambda(\mathcal Z_0)} \br\frak q, \{ \sigma_t \del_t ,   \mathcal N_t \} \frak q \kt  \label{beforeL2sub}\\ & & + \int_{\del N_\lambda(\mathcal Z_0)}\br  -\sigma_t J \ph, \del_t \ph\kt \ dA. 
\end{eqnarray}

What remains is to show that the $L^2$ norm of the $\eta\beta_t$ components dominates the $H^1_\e$ norm of these. This is effectively a consequence of restricting to the high range of Fourier modes. Since $\beta_t$ is normalized in the $H^1_{slice}$-norm, one has 
 \be\|\eta(t)\beta_t\|_{H^1_{slice}}= \|\eta(t)\|_{L^2(S^1)}^2 \label{commensuratetoS1}\ee

\noindent thus we will show that \be\|\del_t(\eta \beta_t)\|^2_{L^2}\geq \frac{\e^{1/6}}{C}\|\eta\|^2_{L^2}\label{dominatedbyL2}.\ee

\noindent Using the basic relation that $|a|^2 =|(a+b)-b|^2 \leq 2|a+b|^2 + 2|b|^2$ shows

\bea
 \int_{N_\lambda(\mathcal Z_0)}| \del_t (\eta \beta_t)|^2 \ dV  &=& \int_{N_\lambda(\mathcal Z_0)}|  \dot \eta \beta_t + \eta(t)\dot \beta_t|^2 \ dV \\
 &\geq &  \frac{1}{2}\int_{S^1} |\dot \eta|^2 \int_{D_\lambda}|\beta_t|^2 \ dV -  \int_{S^1} | \eta|^2 \int_{D_\lambda}|\dot \beta_t|^2 \ dV. 
\eea 

\noindent Next, applying the bounds from Lemma \ref{betaasymptotics}  that $$\|\beta_t\|_{L^2(D_\lambda)}^2 \geq c \e^{7/6} \hspace{1cm} \|\dot \beta_t\|_{L^2(D_\lambda)}^2\leq C \e^{7/6}$$
uniformly in $t$, and use the fact that $\eta(t)$ has only Fourier modes for $|\ell| \geq \frac{1}{\e^{1/2}L_0}$ so that $$\|\dot \eta\|^2_{L^2(S^1)} \geq \frac{1}{\e L_0^2} \|\eta\|_{L^2(S^1)}^2$$ shows
\bea
 \| \del_t (\eta \beta_t)\|_{L^2}^2 &\geq &  \frac{ c \e^{7/6}}{2\e L_0^2} \|\eta\|_{L^2(S^1)}^2 - C \e^{7/6} \|\eta\|_{L^2(S^1)}^2  \\ &\geq& \frac{ \e^{1/6}}{C} \|\eta(t)\|_{L^2(S^1)}^2 
\eea
\noindent which is (\refeq{dominatedbyL2}). By (\refeq{commensuratetoS1}) we conclude  

\be  \| \del_t (\eta \beta_t)\|_{L^2}^2  \geq \frac{1}{2}\| \del_t (\eta \beta_t)\|_{L^2}^2  + \frac{\e^{1/6}}{C} \|\eta \beta_t\|^2_{H^1_{slice}}\geq \frac{\e^{1/6}}{C}\|\eta\beta_t\|_{H^1_\e}^2. \label{tosubstitute749}\ee

\medskip 
Substituting the above (\refeq{tosubstitute749}) into the integration by parts formula (\refeq{beforeL2sub}) then shows 

\bea
\|\mathcal L\frak q\|_{L^2}^2 
&\geq &\frac{\e^{1/6}}{C}\left( \|\eta \beta_t\|^2_{H^1_\e} + \|\frak q^\perp\|^2_{H^1_\e}  \right)  +\int_{N_\lambda(\mathcal Z_0)} \br\frak q, \{ \sigma_t \del_t ,   \mathcal N_t \} \frak q \kt  + \int_{\del N_\lambda(\mathcal Z_0)}\br  -\sigma_t J \ph, \del_t \ph\kt \ dA.\\ 
&\geq & \frac{\e^{1/6}}{4C} \|\frak q\|^2_{H^1_\e} -  2 \e^{1/2}\|\frak q\|_{H^1_\e}^2 + \int_{\del N_\lambda(\mathcal Z_0)}\br  -\sigma_t J \ph, \del_t \ph\kt \ dA.\\ 
\eea
where we have used Lemma \ref{anticommutatorbd} to bound the anti-commutator. Once $\e$ is sufficiently small, we conclude the bound 

\be   \|\frak q\|_{H^1_\e}^2 \leq \frac{C}{\e^{1/6}} \|\mathcal L \frak q\|_{L^2}^2 +\Big | \int_{\del N_\lambda(\mathcal Z_0)}\br  -\sigma_t J \ph, \del_t \ph\kt \ dA  \Big |. \ee

The following assertion therefore finishes Step 1: 

\begin{claim}\be  \Big | \int_{\del N_\lambda(\mathcal Z_0)}\br  -\sigma_t J \ph, \del_t \ph\kt \ dA  \Big | \leq  \frac{C}{\e^{1/2}L_0} \|\ph\|^2_{L^2(\del N_\lambda(\mathcal Z_0))}\label{claim751eq}\ee\label{claim751}\end{claim}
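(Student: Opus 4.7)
The plan is to exploit the very restrictive Fourier mode structure of the boundary values allowed by $\Pi^{\mathcal L}=0$: after the radial integration by parts, only the paired boundary modes $(\alpha_{-1}(t),\beta_0(t))\in E_{-1,0}$ of the spinor can contribute, and among these only the low Fourier frequencies in $t$ can produce a nonvanishing pairing, thanks to the Lagrangian structure of $V_t^{\Phi_0}$. Tracing the boundary integrand back through the integration by parts and applying the generalized form of Lemma~\ref{lagrangiansubspacelem}(2) with $u=\sigma_t\del_t\ph$ and $v=\ph$ identifies the spinor boundary defect as a bilinear form supported only on the $(\alpha_{-1}(t),\beta_0(t))$-modes of each slot, while the untwisted condition $\Pi_0(\zeta,\omega)=0$ ensures the form contribution vanishes identically. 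Concretely, the integrand in \eqref{claim751eq} reduces (up to a universal constant absorbed into $C$) to
\[
\int_{S^1}\Big\langle -\sigma_t J\begin{pmatrix}\alpha_{-1}(t)\\ \beta_0(t)\end{pmatrix},\ \del_t\begin{pmatrix}\alpha_{-1}(t)\\ \beta_0(t)\end{pmatrix}\Big\rangle_{E_{-1,0}}\,dt.
\]

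Using $\Pi^{\circ}_{\text{Tw}}(\ph)=0$ I would then decompose $(\alpha_{-1}(t),\beta_0(t))=\vec v(t)+c(t)w_1(t)$ with $\vec v(t)\in V_t^{\Phi_0}$ and with $c(t)=\pi^{\text{low}}(b_1(t))$ supported in Fourier modes $|\ell|\leq \tfrac{1}{L_0}\e^{-1/2}$. Expanding the integrand into four bilinear pieces, hypothesis {\bf (I)} of Lemma~\ref{hypothesisIII} together with the identity $(\sigma_tJ)^2=-I$ from Lemma~\ref{lagrangiansubspacelem}(1) yield two key structural facts: (a) $\sigma_tJV_t^{\Phi_0}\perp V_t^{\Phi_0}$, so $\langle \sigma_tJ\vec v,\del_t\vec v\rangle$ picks up only the component of $\del_t\vec v$ transverse to $V_t^{\Phi_0}$, which is controlled pointwise by the smoothly bounded $t$-derivative of the subspace (determined by $c(t), d(t)$); and (b) since $(V_t^{\Phi_0})^\perp=\sigma_tJV_t^{\Phi_0}$ for a Lagrangian, one has $\sigma_tJw_1\in V_t^{\Phi_0}$ and $\langle \sigma_tJw_1,w_1\rangle=0$, so the pure $w_1$-$w_1$ contribution collapses to $|c|^2\langle \sigma_tJw_1,\dot w_1\rangle$, pointwise bounded by $C|c|^2$. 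These three ``harmless'' contributions are uniformly bounded by $C\|(\alpha_{-1},\beta_0)\|_{L^2(T^2)}^2$ with a constant independent of $\e$.

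The main obstacle --- and the sole source of the factor $1/(L_0\e^{1/2})$ --- is the pair of cross terms of the form $\int_{S^1}\dot c(t)\,\langle \sigma_tJ\vec v(t),w_1(t)\rangle\,dt$ and its symmetric counterpart, since $\sigma_tJ\vec v$ and $w_1$ both lie in $(V_t^{\Phi_0})^\perp$ and can therefore pair nontrivially. Here I would invoke the elementary Bernstein-type inequality
\[
\|\dot c\|_{L^2(S^1)}\ \leq\ \frac{1}{L_0\e^{1/2}}\|c\|_{L^2(S^1)},
\]
valid precisely because $c$ was forced into the low-mode regime by $\pi^{\text{high}}(b_1)=0$. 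Cauchy--Schwarz then yields
\[
\Big|\int_{S^1}\dot c\,\langle \sigma_tJ\vec v,w_1\rangle\,dt\Big|\ \leq\ C\|\dot c\|_{L^2}\|\vec v\|_{L^2}\ \leq\ \frac{C}{L_0\e^{1/2}}\bigl(\|\vec v\|_{L^2}^2+\|c\|_{L^2}^2\bigr).
\]
Since $|c(t)|^2+|d(t)|^2$ is bounded below by Assumption~\ref{assumption2}, the vectors $\{v_1(t),v_2(t),w_1(t)\}$ are uniformly linearly independent in $E_{-1,0}$, so $\|\vec v\|_{L^2}^2+\|c\|_{L^2}^2\lesssim \|(\alpha_{-1},\beta_0)\|_{L^2(T^2)}^2\lesssim \|\ph\|_{L^2(\del N_\lambda)}^2$. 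Combining this with the harmless contributions (which for $\e$ small enough are absorbed into the dominant $1/(L_0\e^{1/2})$ term) gives the bound \eqref{claim751eq}.
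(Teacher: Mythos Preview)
Your approach is correct and essentially coincides with the paper's proof. Both arguments reduce the boundary integrand to the $E_{-1,0}$ modes $(\alpha_{-1}(t),\beta_0(t))$, decompose these into a $V_t^{\Phi_0}$-component and a $w_1$-component (the paper uses the explicit orthonormal basis $u_1=\sigma_1Jw_1,\,u_2,\,w_1$ while you work with the abstract splitting $\vec v+c\,w_1$), use the Lagrangian property $(\sigma_1J)V_t^{\Phi_0}\perp V_t^{\Phi_0}$ to reduce the diagonal pieces to terms bounded by $C|\ph|^2$, and exploit the low-mode constraint $|\ell|\le L_0^{-1}\e^{-1/2}$ on the $w_1$-coefficient to extract the factor $\tfrac{1}{L_0\e^{1/2}}$ from the remaining cross terms.

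One small point worth making explicit: your ``symmetric counterpart'' $\int_{S^1} c(t)\,\langle -\sigma_tJw_1,\del_t\vec v\rangle\,dt$ contains the potentially large tangential derivative $\del_t\vec v$ (its $V_t$-component is not controlled by the subspace variation), so the Bernstein inequality for $c$ alone does not directly apply. The paper handles this by writing both cross terms together as $\langle a_1,\dot b_1\rangle+\langle b_1,\dot a_1\rangle$ and expanding in Fourier modes, observing that since $b_1$ has only low modes the entire sum is supported on $|\ell|\le L_0^{-1}\e^{-1/2}$; equivalently, one can integrate your symmetric term by parts in $t$ to transfer the derivative onto $c$ (at the cost of another harmless $\dot w_1$ term), after which your Bernstein bound applies. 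With that clarification your argument is complete and matches the paper's.
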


\begin{proof}
Recall the vector $w_1 \in E_{-1,0}$  such that $ w_1 \perp V_t^{\Phi_0} $ and $w_1\in (\mu_\C^\del)^{-1}(0)$ from the definition of the mixed boundary and projection constraints (equation \ref{w1def}). Let $u_1=\sigma_1 J w_1$ and $u_2$ its orthogonal complement in $V_t^{\Phi_0}$. 

Thus we have that $u_1, u_2, w_1$ are pairwise orthogonal in the Hermitian inner product, with the first two spanning $V_t^{\Phi_0}$ and the relation that $ \sigma_t J w_1=u_1$ while $\br  \sigma_t J w_1, u_2\kt=0$ in the Hermitian inner product.  Now write the $E_{-1,0}$-component of $\ph|_{\del N_\lambda(\mathcal Z_0)}$ as $$\pi^{(-1,0)}(\ph |_{\del N_\e})= a_1(t)u_1(t)+ a_2(t)u_2(t) + b_1(t)w_1(t)$$

\noindent so that \be |a_1(t)|^2 + |a_2(t)|_{}^2 + |b_1(t)|^2\leq |\ph(t)|^2 \label{phbound}\ee

\noindent The twisted boundary conditions dictate that $b_1(t)$ has only $t$-Fourier modes with $|\ell|\leq \frac{1}{\e^{1/2}L_0}$. Additionally, we have the time derivative 

$$\pi^{(-1,0)}(\del_t \ph |_{\del N_\e})= \dot a_1u_1+ a_1\dot u_1 \ + \ \dot a_2u_2 + a_2\dot u_2 \ + \ \dot b_1w_1+ b_1\dot w_1.$$

\noindent where the dependence of each on $t$ is implicit. Evaluating the inner product on the left hand side of the expression (\refeq{claim751eq}), we have 

\bea
\br  -\sigma_t J \ph, \del_t \ph\kt \ dA  & =& \br \sigma_1 J (a_1u_1 \ + \  a_2u_2  \ + \  b_1w_1)  \ , \   \dot a_1u_1 \ + \  \dot a_2u_2  \ + \  \dot b_1w_1 \kt \\ 
& & \  + \  \br \sigma_1 J ( a_1u_1 \ + \  a_2u_2  \ + \  b_1w_1)  \ , \  a_1\dot u_1 \ + \ a_2\dot u_2  \ + \  b_1\dot w_1 \kt. \eea
\noindent Since the Lagriangian property implies $\sigma J w_1 \perp w_1$ and likewise for $u_i$ this reduces to 

\bea
& \leq &  \br \sigma_1 J (a_1u_1+ a_2u_2) , \dot b_1w_1 \kt + \br \sigma_1 J (b_1w_1) ,  \dot a_1u_1+ \dot a_2u_2 \kt  \\ & &  + C |\ph(t)|^2 \Big(|\dot u_1|   +   |\dot u_2|   +  |\dot w_1|\Big )
\eea

\noindent where we have used (\refeq{phbound}) on all the terms where the derivative hits the basis vectors. In fact, using the orthogonality conditions for the chosen basis, and the fact that their time derivatives are bounded by a constant depending only on $\Phi_0$, the above reduces to 
\bea 
& \leq &  \br a_1, \dot b_1\kt  + \br b_1, \dot a_1\kt + C |\ph(t)|^2\\ 
\eea

 Next, since $\dot b_1$ has only Fourier modes with$ |\ell|\leq \frac{1}{\e^{1/2}L_0}$, integrating the above yields we have 
\bea\int_{\del N_\lambda(\mathcal Z_0)} \br a_1, \dot b_1\kt  + \br b_1, \dot a_1\kt \ dA &\leq&C \int_{S^1_\theta}  \Big(\sum_{ |\ell|\leq \frac{1}{\e^{1/2}L_0}} |\ell| |(a_1)_\ell| |(b_1)_\ell| \Big) rd\theta \\ &\leq&  \frac{C}{\e^{1/2}L_0}\int_{S_\theta^1} \Big(\sum_{ |\ell|\leq \frac{1}{\e^{1/2}L_0}} |(a_1)_\ell|^2+ |(b_1)_\ell|^2 \Big) rd\theta \\ &\leq & \frac{C}{\e^{1/2} L_0}  \|\ph\|^2_{L^2(\del N_\lambda(\mathcal Z_0))}\eea 
and once $\e$ is sufficiently small, the additional factor of $C|\ph(t)|^2$ can be absorbed. 
\end{proof}

\noindent {\bf Step 2:} Let $K_\e$ denote the compact operator defined in (\refeq{Kedef}). The following estimate holds for $\nu\in (0,1/4)$ and in particular for, say, $|\nu-\tfrac{1}{4}|<<1$, say $\nu=\tfrac{1}{4}-10^{-6}$. 

\be
\|\mathfrak q\|^2_{H^1_{\e,\nu}}\leq C \|\mathcal L \frak q\|^2_{L^2_{\nu}} +C \|K_\e\frak q\|^2_{L^2_\nu}.
\ee

This follows readily from the Weitzenb\"ock formula and the weighted estimates of Lemma \ref{weightedestimatesdirac1} and Proposition  \ref{weightedestimatesdirac2} in the previous subsection. Expanding as in \ref{L2Weitzenb\"ock} with the cross-term kept explicit, 

\bea
\|\mathcal L\frak (\ph,a)\|^2_{L^2_\nu} &=& \|\slashed D_{A^{h_\e}}\ph\|^2_{L^2_\nu} + \|\bold d a\|^2_{L^2_\nu} + \frac{1}{\e^2} \|\gamma(a)\Phi^{h_\e}\|^2_{L^2_\nu}+\frac{1}{\e^2} \|\mu(\ph,\Phi^{h_\e})\|^2_{L^2_\nu}   \\ & &+  \int_{N_\lambda(\mathcal Z_0)}  \Big \br \begin{pmatrix} \slashed D_{A^{h_\e}}\ph  \\ \bold d a\end{pmatrix} \ , \ \begin{pmatrix}\gamma(a)\tfrac{\Phi^{h_\e}}{\e} \\ \tfrac{\mu(\ph,\Phi^{h_\e})}{\e} \end{pmatrix} \Big \kt R_\e^{2\nu}+\Big \br \begin{pmatrix}\gamma(a)\tfrac{\Phi^{h_\e}}{\e} \\ \tfrac{\mu(\ph,\Phi^{h_\e})}{\e} \end{pmatrix}   \ , \ \begin{pmatrix} \slashed D_{A^{h_\e}}\ph  \\ \bold d a\end{pmatrix} \Big \kt R_\e^{2\nu} \ dV..  \eea

\noindent Since $\bold d=-\slashed D$ up to viewing $a=(\zeta,\omega)$ under the isomorphisms of Section 7.1, Lemma  \ref{weightedestimatesdirac1} is applicable. Applying this lemma to $\bold d$ and Proposition  \ref{weightedestimatesdirac2} to $\slashed D_{A^{h_\e}}$ shows 

\bea
C\left(\|K_\e (\ph,a)\|_{L^2_\nu}+\|\mathcal L\frak (\ph,a)\|^2_{L^2_\nu}\right) &\geq& \|\nabla \ph\|^2_{L^2_\nu} + \|\nabla a\|^2_{L^2_\nu}+\Big \| \frac{\ph}{R_\e}\Big \|_{L^2_\nu} + \frac{1}{\e^2} \|\gamma(a)\Phi^{h_\e}\|^2_{L^2_\nu}+\frac{1}{\e^2} \|\mu(\ph,\Phi^{h_\e})\|^2_{L^2_\nu}   \\ &+ &  \int_{N_\lambda(\mathcal Z_0)}  \Big \br \begin{pmatrix} \slashed D_{A^{h_\e}}\ph  \\ \bold d a\end{pmatrix}  ,  \begin{pmatrix}\gamma(a)\tfrac{\Phi^{h_\e}}{\e} \\ \tfrac{\mu(\ph,\Phi^{h_\e})}{\e} \end{pmatrix} \Big \kt R_\e^{2\nu}+\Big \br \begin{pmatrix}a  \\ \ph  \end{pmatrix}    ,  \begin{pmatrix}\tfrac{\mu( \slashed D\ph,\Phi^{h_\e})}{\e}  \\ \gamma(\bold d a)\tfrac{\Phi^{h_\e}}{\e}\end{pmatrix} \Big \kt R_\e^{2\nu} \ dV \\ \\ 
&=& \|(\ph,a)\|^2_{H^1_{\e,\nu}}  +  \int_{N_\lambda(\mathcal Z_0)}  \Big \br \begin{pmatrix} \slashed D_{A^{h_\e}}\ph  \\ \bold d a\end{pmatrix}  ,  \begin{pmatrix}\gamma(a)\tfrac{\Phi^{h_\e}}{\e} \\ \tfrac{\mu(\ph,\Phi^{h_\e})}{\e} \end{pmatrix} \Big \kt R_\e^{2\nu}  dV  \\ & & + \ \int_{N_\lambda(\mathcal Z_0)}\Big \br \begin{pmatrix}a  \\ \ph  \end{pmatrix}    ,  \begin{pmatrix}\tfrac{\mu( \slashed D\ph,\Phi^{h_\e})}{\e}  \\ \gamma(\bold d a)\tfrac{\Phi^{h_\e}}{\e}\end{pmatrix} \Big \kt R_\e^{2\nu} \ dV. \\
  \eea

Integrating by parts on the first cross term, and noting that the boundary conditions imply the boundary term vanishes (up to rewriting $a=(\zeta,\omega)$ this is the same boundary term that vanishes in \ref{integrationbypartsA} and \ref{partsintB} in Section 6), and the expressions \ref{appendix1identities} from the proof of the Weitzenb\"ock formula yield 

\bea
\left(\|K_\e (\ph,a)\|_{L^2_\nu}+\|\mathcal L\frak (\ph,a)\|^2_{L^2_\nu}\right) &\geq& \frac{1}{C}\|(\ph,a)\|^2_{H^1_{\e,\nu}} + \frac{1}{\e}\br (\ph,a), \mathfrak B (\ph,a)\kt_{L^2_\nu}  \\ & & +  \int_{N_\lambda(\mathcal Z_0)}\Big \br  \begin{pmatrix} \ph \\ a \end{pmatrix} \ , \  \sigma(dr) \frac{2\nu}{R_\e}\d{R_\e}{r} \begin{pmatrix}\gamma(a)\tfrac{\Phi^{h_\e}}{\e} \\ \tfrac{\mu(\ph,\Phi^{h_\e})}{\e} \end{pmatrix} \Big \kt R_\e^{2\nu} \ dV.
\eea

Recall from (\refeq{Kedef}) that $K_\e$ is supported in the region $r\leq \e^{2/3-\gamma'}$. Restricting to this region,  Young's inequality shows  

\bea
 \int_{N_\lambda}\Big \br  \begin{pmatrix} \ph \\ a \end{pmatrix} ,  \sigma(dr) \tfrac{2\nu}{R_\e}\tfrac{d R_\e}{dr} \begin{pmatrix}\gamma(a)\tfrac{\Phi^{h_\e}}{\e} \\ \tfrac{\mu(\ph,\Phi^{h_\e})}{\e} \end{pmatrix} \Big \kt R_\e^{2\nu} \ dV &  \leq  &  4C \Big \|\frac{(\ph,a)}{R_\e} \Big \|^2_{L^2_\nu}+  \frac{1}{4C\e^2}\left(\|\gamma(a)\Phi^{h_\e}\|_{L^2_\nu}^2 + \|\mu(\ph,\Phi^{h_\e})\|^2_{L^2_\nu}\right)  \\
 &  \leq  & C\|K_\e(\ph,a)\|^2_{L^2_\nu} + \frac{1}{4C}\|(\ph,a)\|^2_{H^1_{\e,\nu}}
 \eea 
 \noindent and, 
 \bea 
  \frac{1}{\e}\br (\ph,a), \mathfrak B (\ph,a)\kt_{L^2_\nu}  &\leq & \frac{1}{4C\e^2}\|\gamma(a)\Phi^{h_\e}\|_{L^2_\nu}^2 + 4C \int_{r \leq \e^{2/3-\gamma'}} |\ph|^2 \frac{|\nabla_{A^{h_\e}}\Phi^{h_\e}|^2}{|\Phi^{h_\e}|^2} \ dV \\
    &  \leq  & \frac{1}{4C}\|(\ph,a)\|^2_{H^1_{\e,\nu}}+ C\|K_\e(\ph,a)\|^2_{L^2_\nu}. 
\eea

\noindent The last inequality follows from the inequality following inequality for the re-scaled quantities, which implies the quantity to the right of it. 

$$ \frac{|\nabla_{A^H}\Phi^H|^2}{|\Phi^H|^2}  \leq \frac{1}{R^2}  \hspace{1cm} \Rightarrow \hspace{1cm} \frac{|\nabla_{A^{h_\e}}\Phi^{h_\e}|^2}{|\Phi^{h_\e}|^2}  \leq \frac{C}{R^2_\e} $$

\noindent Indeed, $\Phi^{H}$ to $\Phi^{h_\e}$ introduces the same factor on the top and the bottom, while rescaling the covariant derivative $\nabla_{A^H}$ in $\rho$ coordinates to $\nabla_{A^{h_\e}}$ in $r$ coordinates introduces a factor of $\left(\frac{K}{\e}\right)^{4/3} \leq C \frac{R^2}{R_\e^2}$.  This shows the desired estimate for the $r\leq \e^{2/3-\gamma'}$ region. 

Proceeding to the region where $r\geq \e^{2/3-\gamma'}$, we claim that   

 \be
  \frac{1}{\e}\br (\ph,a), \mathfrak B (\ph,a)\kt_{L^2_\nu}  \leq  C \e^{3\gamma'/2} \left( \frac{1}{\e^2}\|\gamma(a)\Phi^{h_\e}\|_{L^2_\nu}^2+  \frac{1}{\e^2}\|\mu(\ph,\Phi^{h_\e})\|_{L^2_\nu}^2 + \Big \|\frac{\ph}{R_\e}\Big \|^2_{L^2_\nu}\right) \label{outsideregionbound}
\ee
\noindent here. To see this note the following things. 

First, since up to exponentially small factors, $\mathfrak B$ only sees the imaginary components of the spinor in this region, and $$\frac{1}{\e}|\nabla_{A^{h_\e}}\Phi^{h_\e}|\sim \frac{1}{r^{1/2}\e} \leq \e^{3\gamma'/2} \frac{r}{\e^2}\leq C \e^{3\gamma'/2}\frac{|\Phi^{h_\e}|^2}{\e^2}$$

\noindent where $\sim $ denotes a bound up to an exponentially small error (which are easily absorbed by the norm). 

Likewise, since $\sigma(dr)$ is a real form, $\sigma(dr)\gamma(a)$ is a purely-imaginary form, and the term arising from the derivative of the weights similarly only sees the $\ker(\mu(\_, \Phi^{h_\e}))^\perp$ components. Thus

 \bea 
 \Big \br  \begin{pmatrix} \ph \\ a \end{pmatrix} \ , \  \sigma(dr) \frac{2\nu}{R_\e}\d{R_\e}{r} \begin{pmatrix}\gamma(a)\tfrac{\Phi^{h_\e}}{\e} \\ \tfrac{\mu(\ph,\Phi^{h_\e})}{\e} \end{pmatrix} \Big \kt  & \leq & \frac{ C}{\e^{3\gamma'/2}} \frac{|(\ph^{\perp}, a)|^2}{R_\e^2}   + \e^{3\gamma'/2} \left(\frac{1}{\e^2}|\gamma(a)\Phi^{h_\e}|^2+  \frac{1}{\e^2}|\mu(\ph,\Phi^{h_\e})|^2 \right) \\  &\leq & C \e^{3\gamma'/2} \left(\frac{1}{\e^2}|\gamma(a)\Phi^{h_\e}|^2+  \frac{1}{\e^2}|\mu(\ph,\Phi^{h_\e})|^2 \right) \\
\eea
since $\frac{1}{R_\e^2}\leq \frac{\e^{2\gamma}}{\e^{4/3}}\leq \e^{3\gamma} \frac{r}{\e^2}\leq C \e^{3\gamma}\frac{|\Phi^{h_\e}|^2}{\e^2}$. Combining these yields (\refeq{outsideregionbound}). Since $3\gamma'/2>0$, it follows from (\refeq{outsideregionbound}) that for $\e$ sufficiently small, the cross-terms can therefore be absorbed in the outside region as on the inside. 

Putting the regions together again,  

$$\left(\|K_\e (\ph,a)\|_{L^2_\nu}+\|\mathcal L\frak (\ph,a)\|^2_{L^2_\nu}\right) \geq \frac{1}{C}\|(\ph,a)\|^2_{H^1_{\e,\nu}} $$
up to increasing $C$ by a constant factor. This completes Step $2$. 

\noindent {\bf Step 3:} The following estimate holds, again for $|\tfrac{1}{4}-\nu|<<1$: 

\be
\|\frak q\|^2_{H^1_{\e,\nu}} \leq C \|\mathcal L \frak q\|^2_{L^2_\nu}+ C\e^{1/6-2\gamma_2} \|\mathcal L\frak q\|^2_{L^2}.\label{toprovestep3}
\ee

\noindent where $\gamma_2<<1$ again. 

This follows from the previous steps and the boundary absorption Lemma \ref{bdabsorption}. Combining the inequalities from the previous steps. Beginning with the inequality from Step 2:  

\bea
\|\frak q\|^2_{H^1_{\e,\nu}} &\leq& C \|\mathcal L \frak q\|^2_{L^2_\nu}+ C \|K_\e\frak q\|^2_{L^2_\nu} \\
&\leq & C \|\mathcal L \frak q\|^2_{L^2_\nu}+ C\int_{r\leq \e^{2/3-\gamma'}} \frac{|\frak q|^2 }{R_\e^2} R_\e^{2\nu} \ dV \\
&\leq & C \|\mathcal L \frak q\|^2_{L^2_\nu}+ C \e^{(2/3-\gamma' )2\nu} \int_{r\leq \e^{2/3-\gamma'}} \frac{|\frak q|^2 }{R_\e^2}\ dV\\ 
&\leq &C \|\mathcal L \frak q\|^2_{L^2_\nu}+ C \e^{1/3} \e^{-\tfrac{2}{3}(\nu - \tfrac{1}{4}) - 2\gamma' \nu}  \|\frak q\|^2_{H^1_\e}\\
&\leq & C \|\mathcal L \frak q\|^2_{L^2_\nu}+ C \e^{1/3} \e^{-2\gamma_2}  \|\frak q\|^2_{H^1_\e}
\eea
where $\gamma_2 ={\tfrac{2}{3}(\nu - \tfrac{1}{4}) + \gamma' \nu}<<1$. In this, we have used the lower bound that $\frac{|\Phi^{h_\e}|^2}{\e^2}\geq \frac{C}{R_\e^2}$ (which follows from the third bullet point of Lemma \ref{desingularizedproperties}) on the form component to absorb $K_\e$ into the $H^1_\e$-norm. 

Next applying the estimate from Step 1, the above shows

\bea
\|\frak q\|^2_{H^1_{\e,\nu}}  &\leq & C \|\mathcal L \frak q\|^2_{L^2_\nu}+ C \e^{1/3} \e^{-\gamma_2}  \left( \frac{C}{\e^{1/6}} \|\mathcal L \frak q\|^2_{L^2}  + \frac{C}{\e^{1/2}L_0} \|\mathfrak q\|^2_{L^2(\del N_\lambda(\mathcal Z_0))} \right)\\
&\leq & C \left( \|\mathcal L \frak q\|^2_{L^2_\nu} + \e^{1/6-\gamma_2} \|\mathcal L \frak q\|^2_{L^2} \right) +   \frac{C\e^{1/3-\gamma_2}}{ \e^{1/2}L_0} \int_{\del N_\lambda(\mathcal Z_0)} |\frak q|^2 dA \\
&\leq & C \left( \|\mathcal L \frak q\|^2_{L^2_\nu} + \e^{1/6-\gamma_2} \|\mathcal L \frak q\|^2_{L^2} \right) +   \frac{C\e^{1/3-\gamma_2} (\e^{1/2})^{-2\nu}}{ \e^{1/2}L_0} \int_{\del N_\lambda(\mathcal Z_0)} |\frak q|^2 R_\e^{2\nu} dA \\ 
&\leq & C \left( \|\mathcal L \frak q\|^2_{L^2_\nu} + \e^{1/6-\gamma_2} \|\mathcal L \frak q\|^2_{L^2} \right) +   \frac{C\e^{1/12-\gamma_2}}{ \e^{1/2}L_0} \int_{\del N_\lambda(\mathcal Z_0)} |\frak q|^2 R_\e^{2\nu} dA \\ 
&\leq & C \left( \|\mathcal L \frak q\|^2_{L^2_\nu} + \e^{1/6-\gamma_2} \|\mathcal L \frak q\|^2_{L^2} \right) +   {C\e^{1/12-\gamma_2}} \|\frak q\|_{H^1_\e}^2 \\ 
\eea
where in the last line we have applied the boundary absorption Lemma  \ref{bdabsorption} to show

$$\int_{\del N_\lambda(\mathcal Z_0)} |\ph|^2 R_\e^{2\nu} \ rd\theta dt \leq C \e^{1/2} \int_{N_\lambda(\mathcal Z_0)} \left(|\nabla \ph|^2 +\frac{|\ph|^2 }{R_\e^2}\right) R_\e^{2\nu} \ dV$$
and recalled that $L_0$ is a universal constant independent of $\e$. Up to increasing $C$ (by a factor of 2, say), the last term may be absorbed on the left hand side once $\e$ is sufficiently small, yielding the desired estimate (\refeq{toprovestep3}) and completing Step 3.

 Since $R_\e^{2\nu}\leq \e^{\nu}< \e^{1/6-2\gamma}$ on $N_{\lambda}(\mathcal Z_0)$, the second term $\e^{1/6-\gamma_2} \|\mathcal L \frak q\|^2_{L^2}$ in the parentheses dominates the first. Taking the square root yields the desired estimate (\refeq{toprove}), completing the proof of Theorem \ref{invertibleL} in the model case. 
\end{proof}

\subsection{General Metric}
\label{genmetricssection}
To complete the proof, we extend the above result from the model case to the general case. Thus we now assume, in full generality, that in geodesic normal coordinates and a trivialization on $N_\lambda(\mathcal Z_0$), we have 

\bea
g= dt^2 + dx^2 + dy^2 + h \hspace{1cm} B_0= \text{ is a fixed smooth $SU(2)$-connection}
\eea
where $h$ is described in Definition \ref{geodesicnormal}, and that 
$$ \Phi_0= \begin{pmatrix} c(t) r^{1/2}  \ \ \ \ \  \\ d(t)r^{1/2}e^{-i\theta} \end{pmatrix}\otimes 1+\begin{pmatrix} -\overline d(t) r^{1/2}  \ \ \ \ \  \\ \overline c(t)r^{1/2}e^{-i\theta} \end{pmatrix} \otimes j+\Phi^{h.o.} \hspace{2cm}A_0=\frac{i}{2}d\theta+ \epsilon_j \tfrac{i}{2}dt$$
where $\Phi^{h.o.}$ is the higher order terms given in Proposition \ref{asymptoticexpansion}. Thus we have bounds 

\bea
\|h\|_{C^0}&\leq& C r\\
\|\nabla h\|_{C^0}&\leq& C \\ 
\|B_0+\epsilon_j \tfrac{i}{2}dt\|_{C^0}&\leq& C \\   
\|\Phi^{h.o.}\|_{C^0} & \leq & Cr^{3/2}\\
\|\nabla \Phi^{h.o.}\|_{C^0} & \leq & Cr^{1/2} 
\eea 
for constants $C$ independent of $\e$. We also re-introduce the cut-off function as in Definition \ref{scaleinvariantrho}to replace $h_\e(r)$ with $\chi_\e(r)h_\e(r)$ where $\chi_\e(r)$ is equal to $1$ on a neighborhood of radius $c \e^{1/2}$ for $c<1$ and supported in the neighborhood of radius $\e^{1/2}$.

\begin{proof}
(of Theorem \ref{invertibleL} in the case of the above). Let $\mathcal L^{\text{Euc}}$ now denote the model operator. The result of Section \ref{intbyparts} show that for $\nu< 1/4$,

$$\|(\ph,a)\|_{H^1_{\e,\nu}} \leq   C\e^{1/12-\gamma_2} \|\mathcal L^\text{Euc}(\ph,a)\|_{L^2}.$$
Thus it suffices to show that \be \|(\mathcal L^{h_\e}-\mathcal L^{\text{Euc}})(\ph,a)\|_{L^2}\leq C \e^{1/16} \|(\ph,a)\|_{H^1_{\e,\nu}}\label{smalldiff}\ee for $\e$ sufficiently small and $\nu$ sufficiently close to $1/4$. We may write 

\bea \|(\mathcal L^{h_\e}-\mathcal L^\text{Euc})(\ph,a)\|_{L^2} &\leq & \|(\slashed D_{A^{h_\e}}- \slashed D^\text{Euc}_{A^{h_\e}})\ph\|_{L^2 } +\|(\bold d-\bold d^\text{Euc})a\|_{L^2 }\\ & & + \|(\gamma-\gamma^\text{Euc})(a) \tfrac{\Phi^{h_\e}}{\e}\|_{L^2} + \|\gamma(a) \tfrac{(\Phi^{h_\e})^{h.o.}}{\e}\|_{L^2}  \\  & & \|(\mu-\mu^\text{Euc})(\ph, \tfrac{\Phi^{h_\e}}{\e})\|_{L^2}+ \|\mu(\ph, \tfrac{(\Phi^{h_\e})^{h.o.}}{\e})\|_{L^2}.  \eea

Bounding each term individually, one has 
\bea
\|(\bold d-\bold d^\text{Euc})a\|^2_{L^2 } &\leq& \int_{N_\lambda(\mathcal Z_0)} Cr^2 |\nabla a|^2 + C |a|^2 \ dV \\
&\leq & C \e  \int_{N_\lambda(\mathcal Z_0)}  |\nabla a|^2 + \frac{ |a|^2|\Phi^{h_\e}|^2}{\e^2} \ dV\\
&\leq & C \e^{2/3}  \int_{N_\lambda(\mathcal Z_0)}  \left(|\nabla a|^2 + \frac{ |a|^2|\Phi^{h_\e}|^2}{\e^2}\right) R_\e^{2\nu} \ dV \\
&\leq &C \left(\e^{1/3}\|(\ph,a)\|_{H^1_{\e,\nu}}\right)^2
\eea
and identically, for the Dirac operator with trivial connection
\bea 
\|(\slashed D-\slashed D^\text{Euc})\ph\|^2_{L^2 }& \leq & \int_{N_\lambda(\mathcal Z_0)} Cr^2 |\nabla a|^2 + C |a|^2 \ dV \\
&\leq & C \e  \int_{N_\lambda(\mathcal Z_0)}  |\nabla a|^2 + \frac{ |\ph|^2}{R_\e^2} \ dV\\
&\leq & C \left(\e^{1/3}\|(\ph,a)\|_{H^1_{\e,\nu}}\right)^2
\eea
while for the connection term
\bea
\|(\gamma-\gamma^\text{Euc})(A^{h_\e})\ph\|^2_{L^2}&\leq& \int_{N_\lambda(\mathcal Z_0)} Cr^2 |A^{h_\e}|^2 |\ph|^2 \ dV  \\
&\leq & C \int_{N_\lambda(\mathcal Z_0)} r^{3/2} \frac{|\ph|^2}{R_\e^2} R_\e^{2\nu} \ dV\\
&\leq & C \left(\e^{3/8} \|(\ph,a)\|_{H^1_{\e,\nu}}\right)^2\\
\|B_0 + \epsilon_j \tfrac{i}{2}dt\ph\|_{L^2}^2 &\leq& C(\e^{3/4}\|(\ph,a)\|_{H^1_{\e,\nu}})^2.
\eea
since $r\leq \e^{1/2}$. This completes the two diagonal terms.  

For the off-diagonal terms, we have

 \bea \|(\gamma-\gamma^\text{Euc})(a) \tfrac{\Phi^{h_\e}}{\e}\|^2_{L^2}+  \|\gamma(a) \tfrac{(\Phi^{h_\e})^{h.o.}}{\e}\|^2_{L^2}&\leq &\int_{N_\lambda(\mathcal Z_0)} Cr^2 \frac{ |a|^2 |\Phi^{h_\e}|^2}{\e^2}  + Cr^2 \frac{ |a|^2 |\Phi^{h_\e}|^2}{\e^2}  +  \ dV \\ &  \leq& C \int_{N_\lambda(\mathcal Z_0)} Cr^{3/2} \frac{ |a|^2 |\Phi^{h_\e}|^2}{\e^2}  R_\e^{2\nu}\ dV \leq C \left( \e^{3/8} \|(\ph,a)\|_{H^1_{\e,\nu}}\right)^{2}
 \eea 
\noindent  and since $|(\Phi^{h_\e})^{h.o.}| \leq Cr |\Phi^{h_\e}|\leq Cr |(\Phi^{h_\e})^{l.o.}| $. And 

\bea
 \|(\mu-\mu^\text{Euc})(\ph,  \tfrac{\Phi^{h_\e}}{\e})\|^2_{L^2}+ \|\mu(\ph,  \tfrac{(\Phi^{h_\e})^{h.o.}}{\e}\|^2_{L^2}&\leq & \int_{N_{\lambda(\mathcal Z_0)}} C r^2 |\ph|^2 \frac{|\Phi^{h_\e}|^2}{\e^2} \ dV \\
 &\leq &  \int_{N_{\lambda(\mathcal Z_0)}} C \frac{r^2 \e^{1/2}}{ {\e^2}} R_\e^{2-2\nu} \frac{|\ph|^2}{R_\e^2} R_\e^{2\nu} \ dV\\
 &\leq& C \frac{(\e^{1/2})^{4.5}}{\e^2} \|(\ph,a)\|^2_{H^1_{\e,\nu}} \leq C(\e^{1/8} \|(\ph,a)\|^2_{H^1_{\e,\nu}})^2. 
\eea 
since $r\leq \e^{1/2}$. 

Combining these estimates gives (\refeq{smalldiff}). This shows that $$\mathcal L^{h_\e}: H^1_{\e,\nu}\to L^2$$ is invertible, and the same bound$$\|(\ph,a)\|_{H^1_{\e,\nu}} \leq   C\e^{1/12-\gamma_2} \|\mathcal L^{h_\e}(\ph,a)\|_{L^2}$$
holds on the inverse as in the model case, where $H^1_{\e,\nu}$ still denotes the norm formed using the Euclidean structures and the model case. Switching the norm to the one formed using the non-model structures is essentially the same estimates, but we now only need them to be bounded by a uniform constant. Re-introducing the cut-off function $\chi_\e(r)$ clearly introduces only an exponentially small change, which is of no consequence. This completes the proof of Theorem \ref{invertibleL} in the general case. 

\end{proof}

\section{Implicit Function Theorem}
\label{section8}

In this final section we conclude the proofs of the main results Theorems \ref{main}-\ref{mainc}. The existence of the fiducial solutions advertised in Theorem \ref{main} is concluded by applying the standard Inverse Function Theorem to solve the non-linear equation (\refeq{non-lineartosolve}), which was 

\be (\mathcal L^{h_\e} + Q)(\ph_\e, a_\e) = E^{(0)}_\e.\label{nonlineartosolve2}\ee

\noindent up to decreasing the size of tubular neighborhood by a factor of $1/2$.

The following quantitative version of the Inverse Function Theorem is taken from \cite{KM} (Theorem 18.3.6). 

\begin{thm} {\bf (Inverse Function Theorem)}\label{IFT}
Let $H_1, H_2$ be Hilbert spaces, and $S: H_1\to H_2$ a continuous map between them satisfying $S(0)=0$. Suppose that $S$ has the form $$S=\mathcal L+ Q$$
\noindent where $\mathcal L$ is linear and invertible, and $Q$ is uniformly Lipschitz on the $\eta_1$ radius ball $B_{\eta_1}(H_1)\subset H_1$ with Lipschitz constant $M$, i.e. 
$$\|x_1\|_1, \|x_2\|_1 \leq \eta_1 \hspace{1cm}\Rightarrow \hspace{1cm} \|Q(x_1)- Q(x_2)\|_2 \leq M \|x_1-x_2\|_1.$$
If $M\leq 1/\|\mathcal L^{-1}\|$, then $S$ is injective on $B_{\eta_1}(H_1)$ and the image contains the ball $B_{\eta_2}(H_2)\subset H_2$ where $$\eta_2=\frac{\eta_1(1-M\|\mathcal L^{-1}\|)}{\|\mathcal L^{-1}\|}. $$
In particular, for every $y\in B_{\eta_2}(H_2)$ there is a unique $x \in B_{\eta_1}(H_1)$ satisfying $$S(x)=y.$$\qed 
\end{thm}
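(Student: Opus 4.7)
The plan is to prove the Inverse Function Theorem via the standard Banach fixed point argument: rewrite the equation $S(x) = y$ as a fixed point problem for a contraction mapping on the ball $B_{\eta_1}(H_1)$, then apply the contraction mapping principle. Concretely, I would set
\[ T_y(x) := \mathcal{L}^{-1}(y) - \mathcal{L}^{-1}(Q(x)), \]
so that $T_y(x) = x$ holds if and only if $S(x) = \mathcal{L}(x) + Q(x) = y$. The task reduces to showing that, for every $y \in B_{\eta_2}(H_2)$, the map $T_y$ is a contraction from $B_{\eta_1}(H_1)$ to itself with Lipschitz constant strictly less than $1$.

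The contraction property is immediate from the Lipschitz hypothesis on $Q$: for any $x_1, x_2 \in B_{\eta_1}(H_1)$,
\[ \|T_y(x_1) - T_y(x_2)\|_1 \leq \|\mathcal{L}^{-1}\| \cdot \|Q(x_1) - Q(x_2)\|_2 \leq M\|\mathcal{L}^{-1}\| \cdot \|x_1 - x_2\|_1, \]
and the hypothesis $M \leq 1/\|\mathcal{L}^{-1}\|$ (interpreted as strict inequality to get a nondegenerate conclusion) makes $M\|\mathcal{L}^{-1}\| < 1$. For the self-map property, note first that $Q(0) = S(0) - \mathcal{L}(0) = 0$, so that the Lipschitz bound gives $\|Q(x)\|_2 \leq M \|x\|_1$ for $x \in B_{\eta_1}(H_1)$. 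Hence for $\|y\|_2 \leq \eta_2$ and $\|x\|_1 \leq \eta_1$,
\[ \|T_y(x)\|_1 \leq \|\mathcal{L}^{-1}\|\, \eta_2 + M\|\mathcal{L}^{-1}\|\, \eta_1 = \eta_1(1 - M\|\mathcal{L}^{-1}\|) + M\|\mathcal{L}^{-1}\|\, \eta_1 = \eta_1, \]
where the precise definition of $\eta_2$ in the statement is exactly what makes this inequality close. Thus $T_y$ maps the complete metric space $B_{\eta_1}(H_1)$ into itself.

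The Banach fixed point theorem then furnishes a unique fixed point $x \in B_{\eta_1}(H_1)$ of $T_y$, which is the unique solution of $S(x) = y$ in this ball. This simultaneously yields injectivity of $S$ on $B_{\eta_1}(H_1)$ (two preimages of the same $y$ would both be fixed points of the contraction $T_y$) and the assertion that $S(B_{\eta_1}(H_1)) \supseteq B_{\eta_2}(H_2)$.

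There is no real obstacle here; this is the textbook proof. The only point requiring a line of care is the interpretation of the boundary case $M = 1/\|\mathcal{L}^{-1}\|$, where $\eta_2 = 0$ and the statement becomes vacuous; in the applications of interest in Section 8, one will have $M \|\mathcal{L}^{-1}\| \leq \tfrac{1}{2}$ or similar, obtained by combining the bound on $Q$ coming from the quadratic nature of the Seiberg--Witten nonlinearity with the inverse bound $\|\mathcal{L}^{-1}\| \leq C\e^{-1/12 - \gamma^{\mathrm{in}}}$ from Theorem \ref{invertibleL}, and checking that the initial error $E^{(0)}_\e$ from Lemma \ref{errorcalculation} lies in the ball $B_{\eta_2}$ whose radius is controlled by the same power of $\e$.
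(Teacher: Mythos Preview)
Your contraction-mapping proof is correct and is the standard argument. The paper does not supply its own proof of this statement: it is quoted verbatim from \cite{KM} (Theorem 18.3.6) and marked with a \qed, so there is no in-paper proof to compare against.
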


Let $N_{\lambda/2}(\mathcal Z_0)$ denote the tubular neighborhood of radius $\tfrac{3}{4}\lambda=\tfrac{3}{4}\e^{1/2}$, and let $\chi_1$ denote a logarithmic cut-off function equal to $1$ on $N_{\lambda/2}(\mathcal Z_0)$ and supported in $N_{\lambda}$ such that \be |d\chi_1|\leq \frac{C}{r}.\label{cutoffderiv}\ee

\noindent To solve Equation (\refeq{nonlineartosolve2}) on $N_{3\lambda/4}(\mathcal Z_0)$ it suffices to solve

\be (\mathcal L^{h_\e} + \chi_1^2Q)(\ph_\e, a_\e) = E^{(0)}_\e.\label{nonlineartosolvecutoff}\ee 

\noindent on $N_{\lambda}(\mathcal Z_0)$, since $E_\e^{(0)}$ is supported on the inner neighborhood as $\chi_\e(r)h_\e(r)=0$ for $r\geq c\e^{1/2}$. The introduction of $\chi_1$ allows us to apply Sobolev inequalities on the closed manifold with $\e$-independent constant, rather than scaling them to $N_{\lambda}(\mathcal Z_0)$. 

We have the following interpolation bound for configurations $(\ph,a)\in H^1_{\e,\nu}$: 

\begin{lm} For $0<\nu < 1/4$, 
$$\| \chi_1 (\ph,a)\|_{L^4(N_{\lambda}(\mathcal Z_0))} \leq C\e^{-\nu/6} \|(\ph,a)\|_{H^1_{\e,\nu}(N_{\lambda}(\mathcal Z_0))}$$
\label{quadraticinterpolation}
\end{lm}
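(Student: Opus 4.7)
The plan is to reduce the estimate to a standard Sobolev embedding on the closed manifold $Y$ via Hölder interpolation, then absorb the weights $R_\e^{2\nu}$ using the lower bound $R_\e \geq c\,\e^{2/3}$. The cut-off $\chi_1$ is essential because it allows us to extend $v:=\chi_1(\ph,a)$ by zero to a function on all of $Y$, so that the uniform Sobolev embedding $L^{1,2}(Y)\hookrightarrow L^6(Y)$ on the compact 3-manifold applies with an $\e$-independent constant.

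The key interpolation step is the Hölder inequality $\|v\|_{L^4}^4 \leq \|v\|_{L^2}\,\|v\|_{L^6}^3$, which combined with Sobolev gives
\[
\|v\|_{L^4(Y)} \;\leq\; C\,\|v\|_{L^2(Y)}^{1/4}\,\|v\|_{L^{1,2}(Y)}^{3/4}.
\]
I would estimate the two factors separately. For $\|v\|_{L^2}$, since $R_\e\leq C\e^{1/2}$ on $N_\lambda(\mathcal Z_0)$, one has $|v|^2\leq C\e\,|(\ph,a)|^2/R_\e^2$, and since $R_\e^{-2\nu}\leq C\e^{-4\nu/3}$ throughout $N_\lambda$, the $|\ph|^2/R_\e^2$ term in $\|(\ph,a)\|_{H^1_{\e,\nu}}^2$ absorbs this (for the $a$-component one uses instead $|a|^2/R_\e^2 \leq C|a|^2|\Phi^{h_\e}|^2/\e^2$, which follows from the lower bound $|\Phi^{h_\e}|^2/\e^2 \geq c/R_\e^2$ recorded in Proposition \ref{desingularizedproperties}). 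This gives $\|v\|_{L^2}\leq C\,\e^{1/2 - 2\nu/3}\,\|(\ph,a)\|_{H^1_{\e,\nu}}$.

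For $\|v\|_{L^{1,2}}$, I would expand $\nabla(\chi_1(\ph,a))$ via the product rule into the term with $\chi_1\,\nabla(\ph,a)$ and the term with $(d\chi_1)(\ph,a)$. The first is bounded by $\|\nabla(\ph,a)\|_{L^2}\leq C\e^{-2\nu/3}\|(\ph,a)\|_{H^1_{\e,\nu}}$ using $R_\e^{-2\nu}\leq C\e^{-4\nu/3}$. The second uses the pointwise bound $|d\chi_1|\leq C/r$ from \eqref{cutoffderiv}; since $d\chi_1$ is supported in the annulus where $r\sim \e^{1/2}\sim R_\e$, this term is dominated by $\int |(\ph,a)|^2/R_\e^2\,dV$ and is controlled in exactly the same way, yielding $\|v\|_{L^{1,2}}\leq C\,\e^{-2\nu/3}\|(\ph,a)\|_{H^1_{\e,\nu}}$.

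Combining the two bounds produces
\[
\|v\|_{L^4}\;\leq\; C\,\e^{(1/2-2\nu/3)/4}\cdot \e^{-\nu/2}\,\|(\ph,a)\|_{H^1_{\e,\nu}} \;=\; C\,\e^{1/8-2\nu/3}\,\|(\ph,a)\|_{H^1_{\e,\nu}},
\]
and the elementary inequality $1/8 - 2\nu/3 \geq -\nu/6$ for $\nu\leq 1/4$ concludes the proof. The main point to check carefully is the cut-off derivative contribution, but the logarithmic bound $|d\chi_1|\leq C/r$ is precisely tuned so that this term is comparable to the $|\ph|^2/R_\e^2$ term in $H^1_{\e,\nu}$ on the annular support of $d\chi_1$ where $r\sim R_\e\sim \e^{1/2}$; no new obstacles arise.
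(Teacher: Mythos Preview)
Your proof is correct and follows essentially the same route as the paper: both apply Gagliardo--Nirenberg interpolation (you phrase it as H\"older between $L^2$ and $L^6$ followed by the Sobolev embedding $L^{1,2}(Y)\hookrightarrow L^6(Y)$, which is equivalent), then insert the weight $R_\e^{2\nu}$ at cost $\e^{-4\nu/3}$ and use $R_\e\leq C\e^{1/2}$ to gain back a factor of $\e^{1/4}$ on the $L^2$ portion. Your bookkeeping is in fact slightly cleaner---you arrive at the sharper exponent $\e^{1/8-2\nu/3}$ and then observe $1/8-2\nu/3\geq -\nu/6$ for $\nu\leq 1/4$, whereas the paper passes through the weaker intermediate bound $\e^{-\nu/3}$ on the squared norm---but the argument is the same.
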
 

\begin{proof}
The Gagliardo-Nirenberg Interpolation inequality on $Y$ (see Equation (1.4) of \cite{NonintegerGNInterpolation} and apply this using a partition of unity) states 

$$\|u\|_{L^4(Y)}^2 \leq C\left( \|u\|_{L^2}^{1/2} \|\nabla u \|_{L^2}^{3/2} + \|u\|^2_{L^2}\right).$$

Applying this to the configuration $\chi_1 \frak q= \chi_1 (\ph,a)$ yields 
\bea\| \chi_1 \frak q \|_{L^4}^2& \lesssim&  \|\chi_1 \frak q\|_{L^2}^{1/2}  \|\nabla(\chi_1  \frak q)  \|^{3/2}_{L^2}  + \|\chi_1 \frak q\|^2_{L^2}\\
&\lesssim &  \e^{-4\nu/3}\left( \| R_\e^{\nu} \chi_1\frak q\|_{L^2}^{1/2} \  \cdot \  \| R_\e^{\nu}\nabla(\chi_1  \frak q)  \|^{3/2}_{L^2}  + \|R_\e^{\nu}\chi_1 \frak q\|^2_{L^2}\right)
\eea
since $R_\e\lesssim \e^{2/3}$. Then, as $1/R_\e \geq c\e^{-1/2}$ on $N_{\lambda}(\mathcal Z_0)$, 
\bea 
&\lesssim &  \e^{-4\nu/3} \e^{1/4 }\left( \| \tfrac{R_\e^{\nu}}{R_\e} \chi_1\frak q\|_{L^2}^{1/2} \  \cdot \  \| R_\e^{\nu}\nabla(\chi_1  \frak q)  \|^{3/2}_{L^2}  + \|\tfrac{R_\e^{\nu}}{R_\e}\chi_1 \frak q\|^2_{L^2}\right) \\
&\lesssim &  \e^{-\nu/3}\left( \| \frak q\|^{1/2}_{H^1_{\e,\nu}(N_\lambda)} \  \cdot \ \left(  \| R_\e^{\nu}\chi_1 \nabla  \frak q)  \|_{L^2} + \| R_\e^\nu d\chi_1 \frak q \|  \right)^{3/2}  + \| \frak q \|^2_{H^1_{\e,\nu}(N_\lambda)}\right) \\
\eea
and by, (\refeq{cutoffderiv}), $|d\chi_1|\leq \tfrac{C}{R_\e}$. Hence, also using that $\tfrac{1}{R_\e^2}\lesssim \tfrac{|\Phi^{h_\e}|^2}{\e^2}$ for the connection component $a$, 

\bea 
&\lesssim &   \e^{-\nu/3}\left( \| \frak q\|^{1/2}_{H^1_{\e,\nu}(N_\lambda)} \  \cdot \ \left(  \| \frak q\|_{H^1_{\e,\nu}(N_\lambda)}  +  \| \frak q\|_{H^1_{\e,\nu}(N_\lambda)}  \right)^{3/2}  + \| \frak q \|^2_{H^1_{\e,\nu}(N_\lambda)}\right)\\ 
&\lesssim &  \e^{-\nu/3} \| \frak q\|^{2}_{H^1_{\e,\nu}(N_\lambda)}.
\eea
and taking the square root completes the Lemma. 
\end{proof}

Using this to bound the quadratic term we now conclude the proofs of Theorem \ref{main}-\ref{mainc} and Corollary \ref{mainb}. The statements given in the introduction follow from the statements here after replacing $\lambda$ by $\lambda/2$. First, we apply the Inverse Function Theorem \ref{IFT} to solve Equation (\refeq{nonlineartosolve2}) and thus conclude the proof of Theorem \ref{main}. 

\begin{proof}(of Theorem \ref{main}). In the notation of the statement of the Inverse Function Theorem \ref{IFT}, set $$H_1=H^1_{\e,\nu}(N_\lambda(\mathcal Z_0)) \hspace{2cm} H_2=L^2 (N_\lambda(\mathcal Z_0))$$ with $|\tfrac14-\nu|<<1$ as before. Theorem \ref{invertibleL} shows that as a map $H_1\to H_2$ we have $$\|\mathcal L^{-1}_{(\Phi^{h_\e}, A^{h_\e}, \e)}\|\leq C_{\mathcal L} \e^{1/12-\gamma_2}$$

Set $\eta_1 = C_0(\e) \|\mathcal L^{-1}_{(\Phi^{h_\e}, A^{h_\e}, \e)}\|$ where $C_0(\e)= C_0 \e^{-\gamma}$ and $C_0$ is a fixed constant so that the error of Lemma \ref{errorcalculation} obeys $\|E^{(0)}_\e\|_{L^2}\leq \tfrac{1}{10}C_0(\e)$. For two configurations $\frak q_1=(\ph_1,a_1)$ and $\frak q_2=(\ph_2, a_2)$ we may write $$\chi_1^2 Q(\frak q_1) - \chi_1^2 Q(\frak q_2)= \chi_1 (\frak q_1 + \frak q_2) \# \chi_1(\frak q_1 -\frak q_2)$$ 
 where $\#$ denotes a pointwise quadratic map. Then using Lemma \ref{quadraticinterpolation},  $\frak q_1, \frak q_2 \in B_{\eta_1}(0)\subseteq H^1_{\e,\nu}$ implies 
 
 \bea
 \| \chi_1^2 Q(\frak q_1) - \chi_1^2 Q(\frak q_2)\|_{L^2}&\leq& \|\chi_1 (\frak q_1 + \frak q_2)\|_{L^4(N_\lambda)} \cdot  \|\chi_1 (\frak q_1 - \frak q_2)\|_{L^4(N_\lambda)} \\ 
 &\leq& \|\chi_1 (\frak q_1 + \frak q_2)\|_{L^4(N_\lambda)} \cdot  \|\chi_1 (\frak q_1 - \frak q_2)\|_{L^4(N_\lambda)}\\
 &\leq & \e^{-\nu/3} \|\frak q_1 + \frak q_2\|_{H^1_{\e,\nu}(N_\lambda)} \cdot  \|\frak q_1 - \frak q_2\|_{H^1_{\e,\nu}(N_\lambda)} \\
 &\leq & \e^{-\nu/3} 2C_0(\e) C\e^{1/12-\gamma_2} \|\frak q_1 - \frak q_2\|_{H^1_{\e,\nu}(N_\lambda)} \\
 &\leq & 2C_0(\e) C \e^{-\gamma_3}  \|\frak q_1 - \frak q_2\|_{H^1_{\e,\nu}(N_\lambda)}
 \eea
 for some $\gamma_3<<1$. Thus the Lipschitz bound is satisfied with $M= 2C_0(\e) C_{\mathcal L} \e^{-\gamma_3} \leq \tfrac{1}{C_\mathcal L \e^{1/12-\gamma_2}} = \frac{1}{\|\mathcal L^{-1}\|}$, and $M\|\mathcal L^{-1}\|< \e^{1/12-\gamma_2 -\gamma_3}<\frac{1}{2}$ once $\e$ is sufficiently small. The Inverse Function Theorem applies with $$\eta_2 = \frac{\eta_1 (1-M\|\mathcal L^{-1}\|)}{\|\mathcal L^{-1}\|}\geq  C_0(\e)(1-\tfrac{1}{2}) > \tfrac{C_0(\e)}{2},$$
 and by our choice of $C_0(\e)$, the equation 
$$ (\mathcal L_{(\Phi^{h_\e}, A^{h_\e}, \e)} + \chi_1^2Q)(\ph_\e, a_\e) = E^{(0)}_\e $$
therefore admits a unique solution $(\ph_\e, a_\e) \in H^1_{\e,\nu}(N_\lambda(\mathcal Z_0))$ which then solves the Seiberg-Witten equation on $N_{3\lambda/4}(\mathcal Z_0)$, 
 such that \be \|(\ph_\e, a_\e)\|_{H^{1}_{\e,\nu}(N_{\lambda}(\mathcal Z_0))} \leq C \e^{1/12-\gamma_2}.\label{finalperturbationsize}\ee
 
 \noindent The configurations \be (\Phi_\e, A_\e):=\Big(\frac{\Phi^{h_\e}}{\e}, A^{h_\e}\Big) + (\ph_\e, a_\e)\label{fidsoltheoremaproof}\ee are then the desired family of model solutions. This construction is local, so the proof applies independently on every component of $\mathcal Z_0$ once $\e$ is sufficiently small.   \end{proof}

Corollary \ref{mainb} is deduced directly from this using the main results of \cite{ConcentratingDirac}. The details are given in Appendix \ref{appendix2}.

\begin{proof} (of Theorem \ref{mainc}) Continuing to denote the cut-off function defined in \ref{cutoffderiv} by $\chi_1$, denote by $\mathcal L^\text{App}$ the linearization at the approximate solutions \be(\Phi^\text{App}_\e, A^\text{app}_\e):=\Big(\frac{\Phi^{h_\e}}{\e}, A^{h_\e}\Big) + \chi_1^2(\ph_\e, a_\e)\label{cutofffid}\ee (so that in the definition (\refeq{approx}) preceding the statement of Theorem \ref{mainc} one takes $\chi=\chi_1^2$). Similarly denote the linearization at the de-singularized configurations by $\mathcal L^{h_\e}$. Lemma (\ref{quadraticinterpolation}) and (\refeq{finalperturbationsize}) imply  
 
 $$\|(\mathcal L^\text{App}- \mathcal L^{h_\e})(\ph,a)\|_{L^2(N_\lambda(\mathcal Z_0))}\leq \e^{-\gamma_4} \|(\ph,a)\|_{H^1_{\e,\nu}(N_\lambda(\mathcal Z_0))} $$
 
\noindent for $\gamma_4<<1$, and as above, $\|(\mathcal L^{h_\e})^{-1}\|_{L^2 \to H^1_\nu}\leq  C_{\mathcal L}\e^{1/12-\gamma_2}$ so $$(\mathcal L^{h_\e})^{-1} \Big(\mathcal L^{h_\e} + (\mathcal L^\text{App}- \mathcal L^{h_\e})\Big)= Id + O(\e^{1/12-\gamma_2-\gamma_4}).$$

\noindent It follows that the same invertibility statement given in Theorem \ref{invertibleL} for $\mathcal L^{h_\e}$ holds for $\mathcal L^{\text{App}}$ up to possibly increasing the constants by a factor of 2. 
\end{proof}
\bigskip 

\appendix

\section{Appendix I: Bootstrapping Convergence} \label{appendix2}

In this section we use the results of \cite{ConcentratingDirac} to deduce Corollary \ref{mainb} by a straightforward (but slightly detailed) bootstrapping argument. Notice that the bounds on $\ph^\text{Re}$ stated in Corollary \ref{mainb} follow directly from Theorem \ref{invertibleL} and the definition of the $H^1_{\e,\nu}$-norm. For the $(\ph^\text{Im},a)$ components, we apply the following result found in (Appendix I of) \cite{ConcentratingDirac}. In it, we assume that \be \left(\tfrac{\Phi_0}{\e} , A_0\right) + (\ph,a)\label{perturbphi0}\ee is a solution of the two-spinor Seiberg-Witten equations near a $\Z_2$-harmonic spinor $(\mathcal Z_0,A_0, \Phi_0)$ satisfying Assumptions (\ref{assumption1})-(\ref{assumption2}).

\begin{prop}\label{concentratingmain}
There exists a $c_1>0$ such that if $K_\e \Subset N_{\lambda/2}(\mathcal Z_0)-\mathcal Z_0$ are an $\e$-parameterized family of compact subsets satisfying $\text{dist}(K_\e,\mathcal Z_0)\geq c_1 \e^{2/3-\gamma_1}$ for any $\gamma_1>0$, and one has 

\be 
\e \|\ph^\text{Re}\|_{L^{1,6}(K_\e')}\to 0 \hspace{2cm} \e^{2/3}\|\ph^\text{Re}\|_{C^0(K'_\e)}\to 0. \label{A2} \ee

\noindent  for some $K'_\e\supset K_\e$ with $\text{dist}(Y-K_\e', K_\e)\geq \tfrac{c_1}{2}\e^{2/3-\gamma_1}$,  then 

$$\|(\ph^\text{Im}_\e, a_\e)\|_{C^0(K_\e)}\leq \frac{C}{|\text{dist}(K_\e, \mathcal Z)|^{3/2}\e}\text{Exp}\left(-\frac{c}{\e}\text{dist}(K_\e,\mathcal Z)^{3/2}\right).$$
\qed 
\end{prop}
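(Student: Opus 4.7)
The plan is to establish exponential decay by a weighted $L^2$ Agmon-type estimate derived from the Weitzenb\"ock formula, followed by an elliptic bootstrap to the pointwise bound. The entire argument is driven by the observation that, on the subbundle paired with the moment map, the $L^2$ Weitzenb\"ock identity of Corollary~\ref{L2Weitzenbock} delivers a coercive ``mass'' term of size $|\Phi_0|^2/\e^2 \sim r/\e^2$, which is exactly the size needed for an Agmon weight of the form $w_\e = \operatorname{Exp}(c\, r^{3/2}/\e)$ to satisfy $|\nabla w_\e|^2/w_\e^2 \lesssim |\Phi_0|^2/\e^2$. Since the perturbation $(\ph,a)$ solves $\mathcal L_{(\Phi_0,A_0,\e)}(\ph,a) = -Q(\ph,a)$ with $Q$ pointwise quadratic, the difference between the fiducial solution and the $\Z_2$-harmonic spinor can be controlled by iterating this coercive mass against the weight.

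First I would introduce a cutoff supported in $K'_\e$ and apply Corollary~\ref{L2Weitzenbock} to $w_\e\cdot (\ph^{\text{Im}},a)$. The commutator $[\mathcal L, w_\e]$ produces a term of size $|\nabla w_\e|\cdot|(\ph^{\text{Im}},a)|$ whose square is dominated by the mass term $|\Phi_0|^2 w_\e^2/\e^2 \cdot(|a|^2 + |\Pi^{\perp}\ph|^2)$ appearing in the identity (where $\Pi^{\perp}$ is the projection to the complement of $\ker\mu(-,\Phi_0)$ identified in Lemma~\ref{splittinglem}). After absorbing and using the small $C^0$ bound in hypothesis~\eqref{A2} to handle the contribution of the quadratic nonlinearity $Q$, I am left with an inequality of the shape
\[
\int w_\e^2 \Bigl(|\nabla(\ph^{\text{Im}},a)|^2 + \tfrac{|\Phi_0|^2}{\e^2}\bigl(|a|^2 + |\Pi^{\perp}\ph|^2\bigr) \Bigr)\,dV \;\le\; C + \text{(coupling from $\ph^{\text{Re}}$)}.
\]

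The role of the two smallness conditions in \eqref{A2} is precisely to dispose of the coupling term on the right-hand side. The off-diagonal blocks of $\mathcal L$ and the cross-term $\mathfrak B$ in the Weitzenb\"ock formula couple $(\ph^{\text{Im}},a)$ to $\ph^{\text{Re}}$ through terms of the form $(1/\e)\, \ph^{\text{Re}}\cdot\nabla\Phi_0$ and similar. Applying H\"older with exponents $(2,6,3)$ (together with the 3-dimensional Sobolev embedding $L^{1,2}\hookrightarrow L^6$) pairs the $L^{1,6}$-norm of $\ph^{\text{Re}}$ --- whose $\e$-scaled smallness is exactly~\eqref{A2} --- against a quantity bounded by the coercive left-hand side. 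The $C^0$ condition in \eqref{A2} takes care of the purely quadratic part of $Q$. Closing the estimate yields $\|w_\e(\ph^{\text{Im}},a)\|_{L^2(K_\e)} \le C$, i.e.\ an $L^2$ bound of the advertised exponential form.

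Finally I would bootstrap to $C^0$ by rescaling. For each $y\in K_\e$ with $d(y)=\operatorname{dist}(y,\mathcal Z_0)\ge c_1\e^{2/3-\gamma_1}$, the background data are smooth on $B(y,d(y)/4)$; rescaling this ball to unit radius turns the equation $\mathcal L(\ph,a) = -Q(\ph,a)$ into a uniformly elliptic system, and two applications of Sobolev embedding ($L^{2,2}\hookrightarrow C^0$ in three dimensions) converts the weighted $L^2$ control to a pointwise bound whose prefactor, after undoing the rescaling, is the claimed $C/(d(y)^{3/2}\e)$; since $w_\e$ varies by a bounded factor across the ball once $c$ is small, the exponential weight passes through. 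The main obstacle is Step~3: the mass term from the Weitzenb\"ock formula does \emph{not} directly cover all spinor components paired with $(\ph^{\text{Im}},a)$ in the conclusion, so one must exploit the coupling $\gamma(a)\Phi_0/\e$ in the Dirac equation as a conduit --- once $a$ decays exponentially, the Dirac equation forces $\ph^{\text{Im}}$ to do likewise, \emph{provided} the remaining error involving $\ph^{\text{Re}}$ is tame in exactly the norms appearing in~\eqref{A2}. Tracking the powers of $\e$ through the rescaling in Step~4 without losing the exponential gain is where the precise form of the prefactor $d(y)^{-3/2}\e^{-1}$ is determined.
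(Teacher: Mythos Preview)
The paper does not prove this proposition: it is imported as a black box from Appendix~I of \cite{ConcentratingDirac} (note the sentence introducing it in Appendix~\ref{appendix2} and the $\qed$ immediately following the statement). There is therefore no argument in the present paper to compare your proposal against.

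Your Agmon-type strategy is the natural route and is almost certainly close in spirit to what \cite{ConcentratingDirac} does. The key observation --- that the Weitzenb\"ock mass $|\Phi_0|^2/\e^2 \sim r/\e^2$ dominates $|\nabla\log w_\e|^2$ for the weight $w_\e=\exp(c\,r^{3/2}/\e)$ once $c$ is small --- is exactly right, and your rescaled bootstrap to $C^0$ correctly accounts for the prefactor $d(y)^{-3/2}\e^{-1}$. Two small corrections: first, once one uses the splitting of Lemma~\ref{splittinglem} the $(\ph^{\text{Im}},a)$-subsystem decouples from $\ph^{\text{Re}}$ at the linear level (indeed $\mu(\ph^{\text{Re}},\Phi_0)=0$ and $\gamma(a)\Phi_0\in S^{\text{Im}}$), so the coupling to $\ph^{\text{Re}}$ enters only through the quadratic term $Q$ and not through $\mathfrak B$; second, since those quadratic couplings $\gamma(a)\ph^{\text{Re}}$ and $\mu(\ph^{\text{Re}},\ph^{\text{Im}})$ are bilinear with one factor already carrying the Agmon weight, the $C^0$ hypothesis in~\eqref{A2} alone absorbs them into the mass (because $\|\ph^{\text{Re}}\|_{C^0}^2 = o(\e^{-4/3}) \ll |\Phi_0|^2/\e^2$ on $K'_\e$), and the $L^{1,6}$ hypothesis is more plausibly consumed in the elliptic bootstrap than in the weighted $L^2$ step.
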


\bigskip

Suppose that $K_\e$ are a family of compact subsets satisfying $\text{dist}(K_\e,\mathcal Z_0)\geq c_1 \e^{2/3-\gamma_1}$ as in the statement of the proposition. The conclusion of the proof of Theorem \ref{main} and the estimates of Theorem \ref{invertibleL} is that the correction to $(\tfrac{\Phi^{h_\e}}{\e}, A^{h_\e})$ that yields the model solutions satisfies $\|(\ph_\e^\text{mod},a_\e^\text{mod})\|_{H^1_{\e}}\leq C\e^{-1/10}$. Since $(\Phi^{h_\e}, A^{h_\e})-(\Phi_0, A_0)$ is exponentially small on $K_\e$, then writing the model solutions in the form  (\refeq{perturbphi0}), one has 

\be \|(\ph,a)\|_{H^1_{\e}}\leq C\e^{-1/10} \label{initialbound}\ee 

\noindent as well. Thus it suffices to show this implies the bounds (\refeq{A2}) hold. Once this is shown, applying Proposition (\refeq{concentratingmain}) then yields Corollary \ref{mainb}.

The bound (\refeq{initialbound}) is shown by a standard bootstrapping argument, though a rather intricate one as if one is not careful, several applications of the elliptic estimates will pick up powers of $\e^{-1}$ too large for the desired bounds to hold.

Let $$K_\e^{(1)}= \{ \tfrac{c_1}{2}\e^{2/3-\gamma_1}\leq r \leq \tfrac{3\lambda}{4}\}$$ 
\noindent denote the closed annulus of the indicated radii. By assumption, it contains $K_\e$. In addition, let $$K_\e \Subset K_\e^{(m)} \Subset \ldots \Subset K_\e^{(1)}$$
be a nested sequence of finitely many (specifically 7) nested closed annuli. Choose them so that $$K_\e':= K_\e^{(m)}=  \{ \tfrac{3c_1}{4}\e^{2/3-\gamma_1}\leq r \leq \tfrac{5\lambda}{8}\}.$$
Additionally, for each $n=1,...,m$ let $\chi_\e^{(n)}$ denote a logarithmic cut-off function supported on $K_\e^{(n)}$ and equal to 1 on $K_\e^{(n+1)}$.  They may be chosen so that $$|d\chi_\e^{(n)}|\leq \frac{C}{r}$$ uniformly. Finally, we fix a smooth background connection extending $\text{d}$ in the chosen trivialization of $S_E$ on $N_{\lambda}(\mathcal Z_0)$ (given in Lemma \ref{localtrivialization}) with respect to which the Sobolev norms are taken.

\begin{claim} \label{claimA11}The bound  (\refeq{initialbound}) implies the following. 
\begin{itemize}
\item $\|(\ph,a)\|_{L^{6}(K_\e^{(2)})} \leq C \e^{-1/10}$
\item $ \|\gamma(a)\tfrac{\Phi_0}{\e}\|_{L^p(K_\e^{(2)})} + \|\tfrac{\mu(\ph^\text{Im},\Phi_0)}{\e}\|_{L^p(K_\e^{(2)})} \leq C \e^{-1/10}\e^{-(\tfrac{3}{2}-\tfrac{3}{p})}.$
\end{itemize}
\end{claim}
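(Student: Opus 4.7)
}
The plan is to deduce both bullets directly from the bound $\|(\ph,a)\|_{H^1_\e}\le C\e^{-1/10}$ using 3D Sobolev embedding, a pointwise comparison of weights on $K_\e^{(1)}$, and interpolation. There is nothing truly subtle — the main bookkeeping issue is that the $H^1_\e$ norm controls $\ph/R_\e$ but controls $a$ only through the weight $|\Phi^{h_\e}|/\e$, so one must check these weights are comparable on the relevant annulus.

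For the first bullet, let $\chi=\chi_\e^{(1)}$, supported in $K_\e^{(1)}$ and equal to $1$ on $K_\e^{(2)}$, with $|d\chi|\le C/r$. The embedding $L^{1,2}(Y)\hookrightarrow L^6(Y)$ gives
\[
\|(\ph,a)\|_{L^6(K_\e^{(2)})}\le \|\chi(\ph,a)\|_{L^6(Y)}\le C\bigl(\|\chi\,\nabla(\ph,a)\|_{L^2}+\||d\chi|(\ph,a)\|_{L^2}+\|\chi(\ph,a)\|_{L^2}\bigr).
\]
The first summand is bounded by $\|(\ph,a)\|_{H^1_\e}$ directly. For the middle one, note that on $K_\e^{(1)}$ we have $r\ge c\e^{2/3-\gamma_1}\ge \e^{2/3}$ once $\e$ is small enough, so $r^3\ge \e^2$, and therefore $|\Phi^{h_\e}|^2/\e^2\ge c\,r/\e^2 \ge c/r^2 \ge c/R_\e^2$. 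Hence $|d\chi\,a|^2\le C|a|^2|\Phi^{h_\e}|^2/\e^2$, while $|d\chi\,\ph|^2\le C|\ph|^2/R_\e^2$; in both cases the integral is bounded by $\|(\ph,a)\|_{H^1_\e}^2$. The third summand is even smaller since $R_\e\le C\e^{1/2}$ on $N_\lambda(\mathcal Z_0)$. Combining gives the first bullet.

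For the second bullet, both quantities are pointwise of the form $C|v||\Phi_0|/\e$ where $v\in\{a,\ph^\text{Im}\}$. The key observations are:
\begin{itemize}
\item on $K_\e^{(2)}\Subset\{r\ge \tfrac{3c_1}{4}\e^{2/3-\gamma_1}\}$ the de-singularized configuration agrees with $(\Phi_0,A_0)$ up to exponentially small errors (Proposition \ref{desingularizedproperties}), so $|\Phi^{h_\e}|$ and $|\Phi_0|$ differ by an exponentially small multiplicative factor;
\item on $K_\e^{(2)}\subset\{r\le \tfrac{5}{8}\e^{1/2}\}$ there is the pointwise bound $|\Phi_0|/\e\le Cr^{1/2}/\e\le C\e^{-3/4}$.
\end{itemize}
The first point yields the $p=2$ bound
\[
\|v\Phi_0/\e\|_{L^2(K_\e^{(2)})}\le C\|v\,\Phi^{h_\e}/\e\|_{L^2}+\mathrm{e.s.}\le C\|(\ph,a)\|_{H^1_\e}\le C\e^{-1/10},
\]
matching the target $C\e^{-1/10}\e^{-(3/2-3/2)}$. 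Combining the second point with the first bullet gives the $p=6$ bound
\[
\|v\Phi_0/\e\|_{L^6(K_\e^{(2)})}\le \|\Phi_0/\e\|_{L^\infty(K_\e^{(2)})}\|v\|_{L^6(K_\e^{(2)})}\le C\e^{-3/4}\cdot C\e^{-1/10}\le C\e^{-1/10}\e^{-1},
\]
which matches the target $C\e^{-1/10}\e^{-(3/2-1/2)}$.

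Finally, for general $p\in[2,6]$, interpolation gives $\|v\Phi_0/\e\|_{L^p}\le \|v\Phi_0/\e\|_{L^2}^{\alpha}\|v\Phi_0/\e\|_{L^6}^{1-\alpha}$ with $\tfrac1p=\tfrac{\alpha}{2}+\tfrac{1-\alpha}{6}$, i.e.\ $\alpha=\tfrac{6-p}{2p}$. Substituting the two endpoint bounds produces
\[
\|v\Phi_0/\e\|_{L^p(K_\e^{(2)})}\le C\e^{-1/10}\e^{-(1-\alpha)}=C\e^{-1/10}\e^{-\frac{3p-6}{2p}}=C\e^{-1/10}\e^{-(3/2-3/p)},
\]
completing the second bullet. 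Since every step is a direct appeal to the $H^1_\e$ bound, Sobolev embedding, or Hölder/interpolation, there is no real obstacle — the only place any care is needed is in verifying that the weight $|\Phi^{h_\e}|^2/\e^2$ in the $H^1_\e$ norm dominates $1/R_\e^2$ on $K_\e^{(1)}$, which holds precisely because the annulus sits outside the $O(\e^{2/3})$-neighborhood where the de-singularization occurs.
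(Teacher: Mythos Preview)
Your proof is correct. The first bullet is essentially identical to the paper's argument (Sobolev embedding applied to the cut-off configuration); you are a bit more explicit than the paper about why the $a$-component is controlled, namely via the pointwise comparison $|\Phi^{h_\e}|^2/\e^2\ge c/R_\e^2$ on $K_\e^{(1)}$, which the paper leaves implicit.

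For the second bullet your route differs from the paper's. The paper applies Gagliardo--Nirenberg interpolation directly to the \emph{product} $u=\gamma(\chi a)\Phi_0/\e$ (and analogously for $\mu(\ph^{\mathrm{Im}},\Phi_0)/\e$), bounding $\|u\|_{L^p}\le C\|u\|_{L^2}^{1-\alpha}\|\nabla u\|_{L^2}^{\alpha}$ with $\alpha=\tfrac32-\tfrac3p$; this requires computing $\nabla u$ via the product rule and tracking the factors $\tfrac{1}{\e}\|\nabla a\|_{L^2}$ and $\|\tfrac{\nabla(\chi\Phi_0)}{\e}a\|_{L^2}$. You instead obtain the $L^2$ and $L^6$ endpoints directly---the $L^2$ endpoint from the weighted terms in the $H^1_\e$-norm (using that $\Phi^{h_\e}$ is exponentially close to $\Phi_0\in S^{\mathrm{Re}}$ on $K_\e^{(2)}$, so that $|\mu(\ph,\Phi^{h_\e})|$ controls $|\ph^{\mathrm{Im}}||\Phi_0|$ there), and the $L^6$ endpoint from the first bullet together with the pointwise bound $|\Phi_0|/\e\le C\e^{-3/4}$---and then interpolate via H\"older. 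Your $L^6$ endpoint $\e^{-17/20}$ is actually stronger than the target $\e^{-11/10}$, so the interpolation still lands on the claimed bound. Your approach is slightly more elementary in that it avoids differentiating the product; the paper's approach is more direct but has to chase the $\e^{-1}$ coming from $\nabla a$. Both yield the same final estimate.
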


\begin{proof} Both bullet points follow from applying the global (on $Y$) Sobolev and interpolation inequalities to $\chi_\e^{(1)}(\ph,a)$. Indeed, for the first bullet point one has 

$$\|(\ph,a)\|_{L^{6}(K_\e^{(2)})} \leq  \|\chi^{(1)}_\e(\ph,a)\|_{L^{6}(K_\e^{(1)})}=  \|\chi^{(1)}_\e(\ph,a)\|_{L^{6}(Y)}\leq C \|\chi^{(1)}_\e(\ph,a)\|_{L^{1,2}(Y)} \leq C\|(\ph,a)\|_{H^1_\e(K_\e^{(1)})} $$

\noindent where we have used that $|\nabla(\chi_\e^{(1)}\ph,a)|\leq \chi_\e^{(0)}|\nabla (\ph,a)| + \tfrac{\chi_\e^{(1)}|(\ph,a)|}{R_\e}.$

For the second bullet point, we apply the Gagliardo-Nirenberg interpolation inequality on $Y$, which states that for $2<p<6$, 
\be \| u\|_{L^p(Y)}\leq C \|u\|_{L^2}^{1-\alpha} \|\nabla u\|_{L^2}^{\alpha} + \|u\|_{L^2(Y)}\hspace{1cm} 0<\alpha =\tfrac{3}{2}-\tfrac{3}{p}<1.\label{interpolation1}\ee

\noindent To derive this from the standard version for scalar functions on bounded domains in $\R^N$ (see, for instance, Equation (1.4) in \cite{NonintegerGNInterpolation}) use a partition of unity and apply the standard result to $|u|$, then invoke Kato's inequality. Applying (\refeq{interpolation1}) to $\gamma(\chi_\e^{(1)}a)\tfrac{\Phi_0}{\e}$ shows that 

\bea
\|\gamma(a)\tfrac{\Phi_0}{\e}\|_{L^p(K_\e^{(2)})}&\lesssim&  \|\gamma(a)\tfrac{\Phi_0}{\e}\|_{L^2}^{1-\alpha} \|\nabla (\gamma(\chi_\e^{(1)}a)\tfrac{\Phi_0}{\e})\|_{L^2}^{\alpha}  \\
&\leq & \left(\e^{-1/10}\right)^{1-\alpha}\left[   \|\tfrac{1}{\e}\nabla a\|_{L^2}^{\alpha}  + \|\tfrac{\nabla(\chi_\e^{(1)}\Phi_0)}{\e} a\|^{\alpha}_{L^2}\right] + \e^{-1/10}
\eea 
and $|\nabla(\chi_\e^{(1)})\Phi_0|=|( \nabla \chi_\e^{(1)})\Phi_0 + \chi_\e^{(1)}\nabla \Phi_0|\lesssim | \tfrac{1}{r}\Phi_0|$ on the support of $\chi_\e^{(1)}$ where $\tfrac{1}{r}\leq c\e^{-2/3}$, so the above is bounded by

\bea
&\leq & C\left(\e^{-1/10}\right)^{1-\alpha}\left(    \e^{-\alpha}  \e^{-\alpha/10} +  \e^{-2\alpha/3} \e^{-\alpha/10}\right).
\eea 

\noindent The same applies to $\tfrac{\mu(\ph^{\text{Im}},\Phi_0)}{\e}$. Consequently, 
\begin{eqnarray}
\|\gamma(a)\tfrac{\Phi_0}{\e}\|_{L^p(K_\e^{(2)})} &\leq& C \e^{-1/10}\e^{-\alpha}=C \e^{-1/10}\e^{-(\tfrac{3}{2}-\tfrac{3}{p})}\\
\|\tfrac{\mu(\ph^\text{Im},\Phi_0)}{\e}\|_{L^p(K_\e^{(2)})} &\leq& C \e^{-1/10}\e^{-\alpha}=C \e^{-1/10}\e^{-(\tfrac{3}{2}-\tfrac{3}{p})}.
\label{intbounds}
\end{eqnarray}

\end{proof}

With this in hand, we claim:  

\begin{claim}
The following bounds are satisfied on $K'_\e= K_\e^{(m)}$: 

\be
 \|\ph^\text{Re}\|_{L^{1,6}(K_\e)} \leq \frac{C}{\e}\hspace{3cm} \|\ph^\text{Re}\|_{C^0(K'_\e)}\leq \frac{C}{\e^{2/3}}.\label{tobootstrap} 
\ee
In particular, the hypotheses of Proposition \ref{concentratingmain} are satisfied. 
\end{claim}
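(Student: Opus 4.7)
The plan is to derive an equation for $\ph^\text{Re}$ on $K_\e'$ in which the dangerous $\e^{-1}$-coupling drops out, and then bootstrap via a finite cascade of interior elliptic estimates for $\slashed D_{A_0}$ on the nested annuli $K_\e^{(1)}\supsetneq\cdots\supsetneq K_\e^{(m)}=K_\e'$, using Claim \ref{claimA11} and Sobolev embedding as the initial input.

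The first step is to write down the Dirac equation for $\ph^\text{Re}$. Since $(\Phi_0/\e, A_0) + (\ph,a)$ solves the blown-up Seiberg--Witten equations on $K_\e'$ (where $A_0$ is smooth and $\Phi_0$ non-vanishing) and $(\Phi_0,A_0)$ is itself a $\Z_2$-harmonic spinor, subtracting the two Dirac equations and projecting onto $S^\text{Re}$ via the Haydys splitting yields
\[
\slashed D_{A_0}^\text{Re}\,\ph^\text{Re} \;=\; -\pi^\text{Re}\bigl(\gamma(a)\ph\bigr) \;=\; -\gamma(a)\,\ph^\text{Im}.
\]
The key observation is that the linear cross-term $\gamma(a)\Phi_0/\e$ exchanges $S^\text{Re}$ and $S^\text{Im}$ by Lemma \ref{splittinglem}(A), since $a$ takes values in the purely imaginary forms; hence it contributes only to the $\ph^\text{Im}$ equation, and the $\ph^\text{Re}$ equation carries a purely quadratic right-hand side with no $\e^{-1}$ prefactor. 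This is the only structural reason a polynomial bound on $\ph^\text{Re}$ (rather than an $\e^{-1}$-type bound) is possible at all.

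I would then run a finite sequence of interior elliptic estimates for $\slashed D_{A_0}$ applied to the cutoffs $\chi_\e^{(n)}\ph^\text{Re}$. On each annulus $K_\e^{(n)}$ the operator $\slashed D_{A_0}$ is smoothly elliptic with coefficients satisfying $|A_0|\lesssim 1/r \lesssim \e^{-2/3+\gamma_1}$, so standard $L^p$-regularity applies with constants that depend polynomially on $\e^{-2/3+\gamma_1}$. Claim \ref{claimA11} together with Sobolev gives $(\ph,a)\in L^6(K_\e^{(2)})$ with norm $\lesssim \e^{-1/10}$, so the quadratic right-hand side $-\gamma(a)\ph^\text{Im}$ lies in $L^3(K_\e^{(2)})$ with norm $\lesssim \e^{-1/5}$. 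Elliptic regularity upgrades $\ph^\text{Re}$ to $L^{1,3}(K_\e^{(3)})\hookrightarrow L^q(K_\e^{(3)})$ for every $q<\infty$; one more application of the same type yields $L^{1,6}(K_\e^{(m-1)})$, and the three-dimensional Sobolev embedding $L^{1,6}\hookrightarrow C^0$ then produces the supremum bound on $K_\e^{(m)}=K_\e'$.

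The main obstacle, and the reason for organizing the argument on a chain of nested annuli with logarithmic cutoffs, is the careful bookkeeping of $\e$-powers. Each elliptic step absorbs factors from $|d\chi_\e^{(n)}|\lesssim 1/r \lesssim \e^{-2/3+\gamma_1}$ and from the coefficients of $\slashed D_{A_0}$, but because only boundedly many iterations are required and the starting $H^1_\e$-norm is the small quantity $\e^{-1/10}$ rather than $\e^0$, the total negative $\e$-exponent accumulated can be kept strictly below $1$ for the $L^{1,6}$-norm and strictly below $2/3$ for the $C^0$-norm, provided $\gamma_1$ (and the auxiliary $\gamma_2$ from Theorem \ref{invertibleL}) are chosen sufficiently small. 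This yields the stated bounds with a positive $\e^\gamma$ margin, so that $\e\|\ph^\text{Re}\|_{L^{1,6}(K_\e')}\to 0$ and $\e^{2/3}\|\ph^\text{Re}\|_{C^0(K_\e')}\to 0$, verifying the hypotheses of Proposition \ref{concentratingmain} and completing the proof of Corollary \ref{mainb}.
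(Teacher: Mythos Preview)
Your key structural observation is exactly right and matches the paper: the $\ph^\text{Re}$ equation
\[
\slashed D_{A_0}^\text{Re}\ph^\text{Re}=-\gamma(a)\ph^\text{Im}
\]
has no $\e^{-1}$ coupling, because $\gamma(a)\Phi_0/\e$ lands in $S^\text{Im}$ by Lemma~\ref{splittinglem}(A). The paper writes out the full three-equation system (for $\ph^\text{Re}$, $\ph^\text{Im}$, $a$) and makes the same point.

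However, there is a genuine gap in your iteration. The right-hand side $-\gamma(a)\ph^\text{Im}$ involves only $a$ and $\ph^\text{Im}$, so improving $\ph^\text{Re}$ does nothing to improve it. From Claim~\ref{claimA11} you have $a,\ph^\text{Im}\in L^6$ with norm $\lesssim\e^{-1/10}$, whence $\gamma(a)\ph^\text{Im}\in L^3$ with norm $\lesssim\e^{-1/5}$; one elliptic step gives $\ph^\text{Re}\in L^{1,3}$ (the paper in fact has this computation, commented out). But your sentence ``one more application of the same type yields $L^{1,6}$'' fails: for $\ph^\text{Re}\in L^{1,6}$ you would need $\gamma(a)\ph^\text{Im}\in L^6$, i.e.\ $a,\ph^\text{Im}\in L^{12}$, and nothing you have done puts them there. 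The paper's remedy is to \emph{first} bootstrap $(\ph^\text{Im},a)$: using the second item of Claim~\ref{claimA11} to control the $\e^{-1}$ term $\gamma(a)\Phi_0/\e$ in $L^{12/5}$ by $\e^{-1/10-1/4}$, an elliptic estimate gives $(\ph^\text{Im},a)\in L^{1,12/5}\hookrightarrow L^{12}$ with norm $\lesssim\e^{-1/3-1/10}$. Only then does the paper run your $\ph^\text{Re}$ bootstrap, now with $\|\gamma(a)\ph^\text{Im}\|_{L^6}\le\|a\|_{L^{12}}\|\ph^\text{Im}\|_{L^{12}}\lesssim\e^{-2/3-1/5}$, which yields the $L^{1,6}$ bound.

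There is also a secondary issue with the $C^0$ bound. Even granting $\|\ph^\text{Re}\|_{L^{1,6}}\lesssim\e^{-2/3-1/5}$, the embedding $L^{1,6}\hookrightarrow C^0$ would only give $\|\ph^\text{Re}\|_{C^0}\lesssim\e^{-2/3-1/5}$, and then $\e^{2/3}\|\ph^\text{Re}\|_{C^0}\lesssim\e^{-1/5}\not\to 0$, so the hypothesis of Proposition~\ref{concentratingmain} would \emph{not} be verified. The paper instead routes the $C^0$ estimate through $L^{1,3+\delta}$: there the worst term is $\|\tfrac{1}{r}\|_{L^{6+\delta'}}\|\ph^\text{Re}\|_{L^6}\lesssim\e^{-4/9-1/10}$ (rather than $\|\tfrac{1}{r}\|_{C^0}\|\ph^\text{Re}\|_{L^6}\lesssim\e^{-2/3-1/10}$ as in the $L^{1,6}$ step), giving $\|\ph^\text{Re}\|_{C^0}\lesssim\e^{-2/3}$ as required.
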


\begin{proof}
 Let $\widetilde A$ denote a smooth background connection which extends over $\mathcal Z_0$. We have the elliptic estimates \bea \|\ph\|_{L^{k,p}(Y)}&\leq&  C_{k,p}\left(\|\slashed D_{\widetilde A} \ph\|_{L^{k-1,p}(Y)} + \|\ph\|_{L^2}\right) \\ \|a\|_{L^{k,p}(Y)}&\leq &C_{k,p} \left( \|\bold d a\|_{L^{k-1,p}(Y)} + \|a\|_{L^2(Y)}\right) \eea
for for Sobolev norms on the closed manifold $Y$.

Next, for $n=1,...,6$, applying the above estimates to $\chi_\e^{(n)}(\ph,a)$ yields 

\bea \|\ph\|_{L^{k,p}(K_\e^{(n+1)})}&\leq&  C_{k,p}\left(\|\slashed D_{\widetilde A} \ph\|_{L^{k-1,p}(K_\e^{(n)})}+ \|\gamma(d\chi^{(n)}_\e) \ph\|_{L^{k-1,p}(K_\e^{(n)})} + \|\ph\|_{L^2(K_\e^{(n)})}\right) \\ \|a\|_{L^{k,p}(K_\e^{(n+1)})}&\leq &C_{k,p} \left( \|\bold d a\|_{L^{k-1,p}(K_\e^{(n)})} +\|\sigma(d\chi^{(n)}_\e)a\|_{L^{k-1,p}(K_\e^{(n)})} +\|a\|_{L^2(K_\e^{(n)})}\right) \eea

\noindent on the nested annuli $K_\e^{(n)}$. We will apply these estimates using the fact that $(\ph^\text{Re}, \ph^\text{Im}, a)$ solve the following non-linear equations on $\Yminus\mathcal Z_0$: 

\begin{eqnarray}
\slashed D^\text{Re}_{A_0}\ph^{\text{Re}}     \    \ \hspace{1cm} \  \ + \gamma(a)\ph^{\text{Im}} &=& 0 \\ 
\slashed D^\text{Re}_{A_0}\ph^{\text{Im}} + \gamma(a)\tfrac{\Phi_0}{\e}+\gamma(a)\ph^{\text{Re}} &=&0  \\
\bold d a  + \tfrac{\mu(\ph^\text{Im},\Phi_0)}{\e}+ \mu(\ph^{\text{Im}},\ph^{\text{Re}})&=& 0.
\label{3equationversion}
\end{eqnarray}

Now we bootstrap:  apply the elliptic estimate of $\slashed D_{\widetilde A}$ for $(k,p)=(1,\tfrac{12}{5})$ to $\chi_\e^{(2)}\ph^{\text{Im}}$: 

\begin{eqnarray*} \|\ph^{\text{Im}}\|_{L^{1,12/5}(K_\e^{(3)})} & \leq &C_{1,12/5} \Big(  \|\gamma(a)\tfrac{\Phi_0}{\e}\|_{L^{12/5}(K_\e^{(2)})}+ \|(\widetilde A-A_0)\ph^{\text{Im}}\|_{L^{12/5} (K_\e^{(2)})} +  \|\gamma(a)\ph^{\text{Re}}\|_{L^{12/5}(K_\e^{(2)})} \\ & &  + \|\gamma(d\chi_\e^{(5)})\ph^{\text{Im}}\|_{L^{12/5}(K_\e^{(2)})} + \|\ph^{\text{Im}}\|_{L^2(K_\e^{(2)})} \Big)\\
&\lesssim & \left( \e^{-1/10} \e^{-(\tfrac{3}{2}-\tfrac{3}{12/5})}+  \e^{-1/3}\e^{-1/10} + \e^{-1/10}\e^{-1/10} + \e^{-1/3}\e^{-1/10}+ \e^{-1/10}\right) 
\end{eqnarray*}

\noindent where we have used the interpolation bound from the second item of  \ref{claimA11} on the first term. For the second term, we apply H\"older's inequality with exponents $p=\tfrac{5}{2}$ and $q=\tfrac{5}{3}$ to bound this by $\|\tfrac{1}{r}\|_{L^4} \|\ph\|_{L^6}$. Since $\|\tfrac{1}{r}\|_{L^4(K_\e^{(n)})}\lesssim \e^{-1/3} $, the bound on the second term above follows again using the first item of \ref{claimA11} to bound $\|\ph\|_{L^6(K_\e^{(2)})}$. For the third term, $\gamma(a)\ph^\text{Re}$, we have simply applied Cauchy-Schwartz and the fact that $L^{24/5}\hookrightarrow L^6$ then used the first bullet point of Claim \ref{claimA11} once again. The fourth term is identical to the second since $d\chi_\e^{(5)}\sim \tfrac{1}{r}$ as well, and the $L^2$-term is much smaller. 

The exact same argument applies to $a$ using the elliptic estimate for $\bold da$, but without the $\widetilde A-A_0$ term. Together, this gives

\be
 \|(\ph^{\text{Im}},a)\|_{L^{1,12/5}(K_\e^{(3)})} \leq C \e^{-1/3-1/10}\label{bootstrap2}.  
\ee

\noindent This implies, via the Sobolev embedding $L^{1,12/5}\hookrightarrow L^{12}$ that 

\be
 \|(\ph^{\text{Im}},a)\|_{L^{12}(K_\e^{(4)})} \leq C \e^{-1/3-1/10}\label{bootstrap3}.  
\ee
as well. Indeed, applying the Sobolev inequality on $Y$ to $\chi_\e^{(3)}(\ph^\text{Im},a)$ we find  $$  \|(\ph^{\text{Im}},a)\|_{L^{12}(K_\e^{(4)})} \leq \|(\ph^{\text{Im}},a)\|_{L^{1,12/5}(K_\e^{(3)})}+ \| d\chi_\e^{(3)}(\ph^{\text{Im}},a)\|_{L^{12/5}(K_\e^{(3)})}$$
and since $|d\chi_\e^{(3)}|\lesssim \tfrac{1}{r}\lesssim \tfrac{|\Phi_0|}{\e}$ on $K_\e^{(0)}$, the second bullet point of \ref{claimA11} shows the second term is strictly smaller than the first, giving (\refeq{bootstrap3}). 

\medskip

With the above bound in hand, we now apply two final elliptic estimates to yield the two inequalities asserted. First, apply the $\slashed D_{\widetilde A}$ estimate for $(k,p)=(1,6)$ to $\chi_\e^{(4)}\ph^{\text{Re}}$. Similarly to before,  

\bea \|\ph^{\text{Re}}\|_{L^{1,6}(K_\e^{(5)})} & \leq &C_{1,6} \left( \|(\widetilde A-A_0)\ph^{\text{Re}}\|_{L^{6}(K_\e^{(4)})} + \|\gamma(a)\ph^{\text{Im}}\|_{L^6(K_\e^{(4)})} + \|\gamma(d\chi_\e^{(1)})\ph^{\text{Re}}\|_{L^{6}(K_\e^{(4)})} + \|\ph^{\text{Re}}\|_{L^2(K_\e^{(4)})} \right)\\
&\leq & C_{1,3} \Big ( \|(\widetilde A-A_0) \|_{C^0(K_\e^{(4)})} \| \ph^{\text{Re}}\|_{L^{6}(K_\e^{(4)})} +\|a \|_{L^{12}(K_\e^{(4)})} \| \ph^{\text{Im}}\|_{L^{12}(K_\e^{(4)})}  \\ & &+ \|d\chi_\e^{(n)} \|_{C^0(K_\e^{(4)})} \|\ph^{\text{Re}}\|_{L^{6}(K_\e^{(4)})}  + \|\ph^{\text{Re}}\|_{L^2(K_\e^{(4)})} \Big)  \\
&\lesssim & \left(\e^{-2/3}\e^{-1/10} + (\e^{-1/10}\e^{-1/3})^2 + \e^{-2/3}\e^{-1/10}\right) \leq C \e^{-2/3-2/10}
\eea
as both $(A-A_0)$ and $|d\chi_\e^{(n)}|$ are bounded by constant multiples $\tfrac{1}{r}$ and $r\geq c\e^{2/3}$, so $\|\tfrac{1}{r}\|_{C^0(K_\e^{(1)})}\lesssim \e^{-2/3}$. In addition, we have used (\refeq{bootstrap3}) to bound the $L^{12}$ norm. Since $\e^{-2/3-2/10}\leq \e^{-1}$, the first bound asserted in (\refeq{tobootstrap}) follows.

For the second bound, we first apply the elliptic estimate to $\chi_\e^{(5)}\ph^\text{Re}$ for $(k,p)=(1, 3+\delta)$ for $\delta<<1$ on $K_\e^{(5)}$. This shows  

\begin{eqnarray} \|\ph^\text{Re}\|_{L^{1,3+\delta}(K_\e^{(6)})} & \leq &C_{1,3+\delta} \Big( \|(\widetilde A-A_0) \ph^\text{Re}\|_{L^{3+\delta}(K_\e^{(6)})}  + \|\gamma(a)\ph^\text{Im}\|_{L^{3+\delta}(K_\e^{(6)})} \label{bootstrap31} \\ & &  \ \ \  + \ \ \  \|\gamma(d\chi_\e^{(4)})\ph^\text{Re}\|_{L^{3+\delta}(K_\e^{(4)})} + \|\ph^\text{Re}\|_{L^2} \Big)\\
&\leq & C \Big ( \| \tfrac{1}{r} \|_{L^s(K_\e^{(4)})} \| \ph^\text{Re}\|_{L^{6}(K_\e^{(4)})}\label{A15} \\ & &  +\|a \|_{L^s(K_\e^{(4)})} \| \ph^\text{Im}\|_{L^{6}(K_\e^{(4)})} + \|\ph^\text{Re}\|_{L^2}\Big ).\label{bootstrapping3}\end{eqnarray}

\noindent Here, on the first and third terms we have applied H\"older's inequality with exponents $p'$ and $q'$ defiend by $6=(3+\delta)p'$ and $q'$ the H\"older conjugate so that $\tfrac{1}{q'}=1-\tfrac{3+\delta}{6}$. Because $\delta<<1$ it follows that $p',q'$ are both very close to $2$. Thus we have $s= (3+\delta)q'= 6 + \delta'$ for $\delta'<<1$. We use that $\|\tfrac{1}{r}\|_{L^s(K_\e^{(4)})}\leq \e^{-4/9+\delta''}$ and the $L^6$ bound from Claim \ref{claimA11} to bound the first and third product. For the second term $\gamma(a)\ph^\text{Im}$, the $L^{s}$-norm of $a$ for $s=6+\delta'$ is easily bounded by the $L^{12}$ norm, hence using (\refeq{bootstrap3}) again for these terms leads to the following: 

 $$ \|\ph^\text{Re}\|_{L^{1,3+\delta}(K_\e^{(6)})} \leq C (\e^{-4/9+\delta''} \e^{(-1/10)} + \e^{-1/3-1/10}\e^{(-1/10)} \leq C\e^{-2/3}.$$

\noindent Applying the Sobolev Embedding $C^{0,\beta}\hookrightarrow L^{1,3+\delta}$ on $Y$ for some $\beta<<1$ shows that 

$$ \|\ph^\text{Re}\|_{C^0(K_\e^{(7)})} \leq  \|\ph^\text{Re}\|_{L^{1,3+\delta}(K_\e^{(6)})}  +  \| (d\chi_\e^{(6)})\ph^\text{Re}\|_{L^{3+\delta}(K_\e^{(6)})} \leq C\e^{-2/3} $$ 

\noindent since final term was already bounded by $C\e^{-2/3}$ in (\refeq{A15}) above (the different cut-off function is immaterial for this estimate). This concludes the claim.  
\end{proof}

\section{Appendix II: Kernel Asymptotics} \label{appendix3}

This appendix contains the proof of Lemma \ref{betaasymptotics} that was deferred in Section 6.6. It is re-stated here for convenience. Recall that $\beta_t$ denoted the $\widehat{H}^1_\C$-normalized element whose complex span was $\ker(\widehat{\mathcal N}_t)$.

\begin{lm}
The elements $\beta_t$ have non-vanishing leading order term so that  $$\beta_t \sim \rho_t^{-1/2} $$
for $\rho_t>\!>1$. As a consequence, we have the following bounds where the constants $C,c, \kappa_1$ are independent of $\e,t$  
\begin{enumerate}
\item  $c\e^{1/2+1/12}\leq \|\beta_t\|_{L^2(D_\lambda)} \leq C \e^{1/2+1/12}$
\item If $\rho_t>\!> 1$ is sufficiently large, $|\dot \beta_t| \leq \kappa_1 |\beta_t|$ holds pointwise. 
\item ${\|\dot \beta_t\|_{L^2(D_\lambda)}}\leq \kappa_1{\|\beta_t\|_{L^2(D_\lambda)}}$  \ and \ ${\|\dot \beta_t\|_{L^2(\del D_\lambda)}}\leq \kappa_1{\|\beta_t\|_{L^2(\del D_\lambda)}}$
\end{enumerate}
\label{betaasymptoticsappendix}  
\end{lm}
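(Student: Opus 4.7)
The plan is to write $\beta_t = \beta_t^\circ + h_t\cdot\Phi^H$ as in the discussion following \eqref{kerndecomp}, where $\beta_t^\circ$ is the explicit $\C$-linear combination of the elements from Lemma \ref{ALemma} satisfying $\mu_\C^\del=0$, and $h_t\in T\mathcal G^\C$ solves the elliptic equation $(-\Delta-|\Phi^H|^2)h_t=-\mu_\R(\beta_t^\circ,\Phi^H)$ with the double-APS boundary conditions \eqref{doubleaps}. The coefficients defining $\beta_t^\circ$ depend smoothly on $c(t),d(t)$ and are nowhere vanishing by Assumption \ref{assumption2}. The entire lemma then reduces to analyzing the large-$\rho$ asymptotics of the two pieces separately.

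For the opening assertion $\beta_t\sim\rho_t^{-1/2}$, first note item 2 of Proposition \ref{Hrhoproperties} gives $e^{\pm H(\rho)}\to 1$ exponentially, so $\beta_t^\circ$ has leading order $\rho^{-1/2}$ in its $\beta$-component with vanishing $\alpha$-component. The forcing $f_t:=-\mu_\R(\beta_t^\circ,\Phi^H)$ is uniformly bounded in $\rho$ with smooth $t$-dependence, and since $|\Phi^H|^2\geq c\rho$ for large $\rho$, a parametrix construction for the Schr\"odinger-type operator $-\Delta-|\Phi^H|^2$ with growing potential (or an explicit barrier argument using super/subsolutions of the form $A/\rho$, which become sharp because $|\Phi^H|^2$ differs from $\rho$ only by exponentially small corrections) gives $h_t(\rho)=O(\rho^{-1})$ with non-vanishing leading coefficient. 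The non-vanishing is forced because $h_t\equiv 0$ would require $f_t\equiv 0$, which is excluded by $|c(t)|^2+|d(t)|^2>0$. Hence $h_t\cdot\Phi^H\sim\rho^{-1/2}$ contributes a non-vanishing $\alpha$-component to $\beta_t$, and since this cannot cancel against $\beta_t^\circ$ (which has no $\alpha$-component), the leading order $\rho^{-1/2}$ of $\beta_t$ is non-vanishing.

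Assertion 1 then follows by direct integration: in the scale-invariant disk $D_\bold{r}$ of radius $\bold{r}\sim\e^{-1/6}$ corresponding to $D_\lambda$, the asymptotic $|\beta_t|^2\sim 1/\rho$ yields $\int_{D_\bold{r}}|\beta_t|^2\,\rho\,d\rho\,d\theta\sim\bold{r}\sim\e^{-1/6}$, and reverting to the $r$-coordinate via $r=(\e/K(t))^{2/3}\rho$ introduces a factor $(\e/K(t))^{4/3}$, giving $\|\beta_t\|^2_{L^2(D_\lambda)}\sim\e^{4/3}\cdot\e^{-1/6}=\e^{7/6}$. Uniform upper and lower constants in $t$ follow from $K(t)\geq c_1>0$.

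For the $t$-derivative assertions 2 and 3, the $t$-dependence of $\beta_t^\circ$ enters through $c(t),d(t)$ (smooth with bounded derivatives) and through $H(\rho_t)$; by the chain rule
\[
\del_t H(\rho_t)=\frac{2\dot K(t)}{3K(t)}\rho_t H'(\rho_t),
\]
and $\rho H'(\rho)$ is uniformly bounded and exponentially decaying at infinity by item 4 of Proposition \ref{Hrhoproperties}, which together with Assumption \ref{assumption2} yields $|\dot\beta_t^\circ|\leq\kappa|\beta_t^\circ|$ pointwise for large $\rho$. Differentiating the defining equation for $h_t$ in $t$ gives $(-\Delta-|\Phi^H|^2)\dot h_t = h_t\,\del_t|\Phi^H|^2-\del_t f_t$, whose data admit analogous estimates relative to the original problem; applying Lemma \ref{CLemma} (uniform invertibility of $-\Delta-|\Phi^H|^2$ on $T\mathcal G^\C$) together with the same asymptotic analysis yields $|\dot h_t\cdot\Phi^H|\lesssim|h_t\cdot\Phi^H|$ for $\rho\gg 1$. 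Combining these gives assertion 2, and integrating over $D_\lambda$ (resp.~applying a trace inequality slicewise to $\del D_\lambda$) gives assertion 3. The main obstacle is the rigorous asymptotic analysis of $h_t$ at $\rho\to\infty$; all other steps are routine bookkeeping and integration.
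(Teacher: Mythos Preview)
Your approach matches the paper's: decompose $\beta_t=\beta_t^\circ+h_t\cdot\Phi^H$, obtain the $\rho^{-1}$ asymptotic for $h_t$ from the equation $(-\Delta-|\Phi^H|^2)h_t=-\mu_\R(\beta_t^\circ,\Phi^H)$, integrate for the $L^2$ bounds, and differentiate in $t$ using Lemma~\ref{CLemma}. The paper carries out the parametrix step explicitly by writing $\widetilde h = -\tfrac{d(t)e^{-i\theta}}{2K(t)\rho}e^{-2H\chi_\e}$ (cut off near the boundary to respect the double-APS conditions \eqref{doubleaps}) and then bounding the remainder $f=h-\widetilde h$ via $\|\chi\rho^{3/2}f\|_{T\mathcal G^\C}\leq C$ and Sobolev embedding, which is exactly the ``parametrix/barrier'' step you flag as the main obstacle.

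There is, however, one genuine gap in your non-vanishing argument. You claim the leading $\rho^{-1}$ coefficient of $h_t$ is nonzero because ``$h_t\equiv 0$ would require $f_t\equiv 0$''; this shows $h_t\not\equiv 0$, not that its leading coefficient survives. In fact the explicit leading term is proportional to $\mu_\R(\beta_t^\circ,\Phi^H)$ at infinity, and for particular $t$ and particular linear combinations $w_1\beta_1^\circ+w_j\beta_j^\circ$ this could in principle vanish (e.g.\ the $\mu_\R$ of $\beta_1^\circ$ alone is proportional to $d(t)$). The paper sidesteps this with an either/or: if the $\alpha$-component leading coefficient $(w_1p_1+w_jp_j)$ happens to vanish, then \emph{all} leading contributions from $h_t\cdot\Phi^H$ vanish (they share this coefficient), and $\beta_t$ reduces at leading order to $w_1\beta_1^\circ+w_j\beta_j^\circ$, which is manifestly $\sim\rho^{-1/2}$ since $(w_1,w_j)\neq 0$. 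You should replace your argument with this dichotomy; the rest of your outline is correct.
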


\begin{proof}
We omit the subscript $t$ on $\rho_t$ from the notation. Recall that up to an $\e$-independent normalization constant $\beta_t$ is a linear combination of  \bea \beta_1&=& \begin{pmatrix} 0 \\ e^{-H}\rho^{-1/2} \end{pmatrix} \otimes 1+ \begin{pmatrix}h_1(\rho) \alpha_1^H \\ -\overline h_1(\rho)\beta^H_1\end{pmatrix}\otimes 1+ \begin{pmatrix}h_1(\rho) \alpha_2^H \\ -\overline h_1(\rho)\beta^H_2\end{pmatrix}\otimes j\\ \beta_j&=& \begin{pmatrix} 0 \\ e^{-H}\rho^{-1/2} \end{pmatrix} \otimes j+ \begin{pmatrix}h_j(\rho) \alpha_1^H \\ -\overline h_j(\rho)\beta^H_1\end{pmatrix}\otimes 1+ \begin{pmatrix}h_j(\rho) \alpha_2^H \\ -\overline h_j(\rho)\beta^H_2\end{pmatrix}\otimes j\eea 
where $h_j(\rho)= h_j^t(\rho)$ is the $t$-parameterized family of solutions (subject to the boundary condition \refeq{doubleaps}) to $$(-\Delta -|\Phi^H|^2)h_1=\mu_\R( \beta^\circ_1, \Phi^H)$$
where $\beta_1^\circ=(0, e^{-H}\rho^{-1/2}) \otimes 1 $ is the first term above and likewise with $\otimes j$. The kernel elements are the ones satisfying $\mu_\C=0$.

 We claim now that for $\rho>>0$, $h^t_1, h^t_j$ and their $t$-derivatives take the form  take the form \begin{equation}h^t_1=p_1(t)\frac{1}{\rho e^{i\theta}}+\frac{1}{\rho^{3/2}}g_1 \hspace{1.5cm}\dot h^t_1=q_1(t)\frac{1}{\rho e^{i\theta}}+\frac{1}{\rho^{3/2}}g_2\label{hasymptotics}\end{equation}
for $\gamma>0$ and and where $p_1(t), q_1(t)$ are functions depending only on $\Phi_0$, and $\|g_1\|_{C^0}$, $\|g_2\|_{C^0}$ are bounded independent of $\e$. The same holds for $j$.  

Given this claim, the first statement of the lemma that $\beta_t$ has leading order $\rho^{-1/2}$ follows. To see this, write $\beta_t=w_1(t) \beta_1(t) + w_j(t)\beta_j (t)$ for the element whose complex span is the subspace defined by the condition $\mu_\C=0$.  Since the components of $\Phi^H$ are given by $\alpha_{1}^H= e^H c(t) \rho^{1/2}$ and $\alpha_2^H=-e^H \overline d(t)\rho^{1/2}$ we find the top two components of $\beta_t$ are 
\medskip  $$(w_1p_1 + w_j p_j)c(t)\rho^{-1/2} + O(\rho^{-1})\hspace{.3cm}\text{and}\hspace{.3cm}-(w_1p_1 + w_j p_j)) \overline d(t) \rho^{-1/2} + O (\rho^{-1}) $$ \noindent for the $\otimes 1$ and $\otimes j$ parts respectively. Since for each $t$ the condition $|c(t)|^2 + |d(t)|^2>0$ holds, the $\rho^{-1/2}$ order term vanishing would imply that $(w_1(t)p_1(t)+ w_j(t)p_j(t))=0$ for all $t$. But if this were the case, then the bottom two components are $w_1 \beta_1^\circ $ and $w_2 \beta_2^\circ$ respectively, which clearly has non-vanishing leading order, since we cannot have both $w_1=w_2=0$ else $\beta_t=0$.

\medskip

We now prove the claim that equation \ref{hasymptotics} holds. The same exact argument applies for $h_1, h_j$, so we prove it for $h_1$. Recall that by definition $h_1$ is the solution (subject to the boundary conditions \refeq{doubleaps}) of

\be (-\Delta - |\Phi^H|^2)h_1 = \mu_\R(\beta_1^\circ,\Phi^H)\label{h1defeq}\ee where the right-hand side is given by 
$$\mu_\R(\beta^\circ_1, \Phi^H)=( e^{-H}\rho^{-1/2}) \cdot \beta^H= e^{-2H}d(t)e^{-i\theta}$$
which is bounded by a constant independent of $r,\e$. We are going to construct the leading order term of $h_1$ by hand. 
To this end, take $\chi_\e$ to be a cutoff equal to 0 outside a region of radius $\rho=\rho_0 \e^{-1/6}$ for some $\rho_0$ independent of $\e$, and set \be \widetilde h_1= -\frac{d(t)e^{-i\theta}}{2 K(t)\rho} e^{-2H\chi_\e} \label{h1tildedef}.\ee It is $O(\rho^{-1})$. Ideally, we would want to take the definition $\widetilde h_1= -\mu(\beta_1^\circ,\Phi^H)/|\Phi^H|^2$, so that it would solve $(-\Delta -|\Phi^H|^2)\widetilde h_1= -\mu(\beta_1^\circ,\Phi^H) + \Delta \widetilde h_1$, but this may not satisfy the required boundary conditions, which is the reason for the cut-off. Notice however, that $|\Phi^H|^2 \sim 2K(t)\rho$ up to an exponentially small error, and $H$ is exponentially small, so our definition of $\widetilde h_1$ is exponentially close to the desired one. Moreover, when $\widetilde h_1$ is defined by $(\refeq{h1tildedef})$, it satisfies that boundary conditions (\refeq{doubleaps}) because $\widetilde h_1$ has only negative modes on the boundary, and $$\delbar \widetilde h_1=0$$ once $\chi_\e=0$ since $1/(\rho e^{i\theta})$ is holomorphic. 

 Then one has 

$$(-\Delta - |\Phi^H|^2)\widetilde h_1=  \mu_\R(\beta_1^\circ,\Phi^H) +E$$

\medskip 
\noindent where $E= \Delta \widetilde h_1+ O (e^{-\rho})=O(\rho^{-3})$. The exponentially small term arises from the difference $|\Phi^H|^2 - 2K(t)\rho$ and the difference $e^{-2H}- e^{-2H\chi_\e}$.  The true solution to (\refeq{h1defeq}) is therefore given by $h_1=\widetilde h_1 + f_1$ where $f_1$ is the unique solution subject to the boundary condition (\refeq{doubleaps}) of \be (-\Delta -|\Phi^H|^2) f_1 = E.\label{f1equation}\ee
\noindent and once $\rho>>0$ then $\widetilde h_1$ constitutes the leading order term asserted in the claim. 
\smallskip 

It remains to show that the remainder term coming from $f_1$ has  a $C^0$ bound as desired. Consider the function $\chi \rho^{3/2} f_1$ where $\chi$ is a radially symmetric cutoff vanishing at the origin and equal to $1$ outside the ball of radius $\rho=1$. Indeed, by Lemma \ref{CLemma} one has $\|f_1\|_{T\mathcal G^\C}\leq \|E\|_{L^2}\leq C$, where   applying the above operator to this, we have 

\be
(-\Delta - |\Phi^H|^2)  \chi \rho^{3/2}f_1= \chi \rho^{3/2} E -f_1\Delta (\chi \rho^{3/2})-\nabla(\chi\rho^{3/2-\gamma})\cdot \nabla f_1.
\label{rho3/2est}
\ee

\medskip 
\noindent Next, since $E=O({\rho^{-3}})$ then the first term lies in $L^2$. Since $\Delta(\chi \rho^{3/2})=O(\rho^{-1/2})$ and $\nabla (\chi\rho^{3/2})=O(\rho^{1/2})$, the second and third terms have $L^2$-norm bounded by $\|f_1\|_{T\mathcal G^\C}\leq C$. Thus the right hand side has $L^2$-norm bounded by a constant independent of $\e$. It follows from the invertibility of the operator $-\Delta -|\Phi^H|^2$ from Lemma  \ref{CLemma} (notice the multiplication by $\rho$ does not affect the boundary conditions) that \be \|\chi \rho^{3/2}f_1\|_{T\mathcal G^\C} \leq C\label{f1estimate}\ee where $C$ is still independent of $\e$. This implies in turn that  \be \|\chi \rho^{3/2}f_1\|_{C^0(D_\e)}\leq C. \label{c0embeddingbound}\ee

This follows from the Sobolev embedding. Indeed, notice that on the unit disk $D_1$, the Sobolev embedding $H^{1+\gamma}\hookrightarrow C^0$ and the Gagliardo-Nirenberg Interpolation inequality  (see equation (1.4) in \cite{NonintegerGNInterpolation} for the non-integer version of this inequality, and apply this to $|u|$).  yield $$\|u\|_{C^0(\overline{D_1})}\leq C \left(\|u\|_{L^2(D_1)}\right)^{\tfrac{1-\gamma}{2}}\left( \|u\|_{L^{2,2}(D_1)}\right)^{\tfrac{\gamma}{2}}$$

\noindent for any $\gamma>0$. Scaling functions on the  disk of size $\e^{-1/6}$ to the unit disk, the first norm $\|u\|_{L^2}$ scales like $\e^{1/6}$, while the $L^{2,2}$ norm scales like $\e^{-1/6}$, while the left-hand-side is independent of scaling. Thus for $\gamma<1/2$ one has  
$$\|u\|_{C^0(\overline{D_\e})}\leq C \left(\|u\|_{L^2(D_\e)}\right)^{\tfrac{1-\gamma}{2}}\left( \|u\|_{L^{2,2}(D_\e)}\right)^{\tfrac{\gamma}{2}}$$
and both these norms are bounded by the $T\mathcal G^\C$-norm on $D_\e$. Once \ref{c0embeddingbound} is established, we simply take $g=\rho^{3/2}f_1$ on the region where $\rho$ is large enough that $\chi=1$.

Now  the same argument may be repeated for $\dot f_t$. The difference is that we must add the term $\del_t |\Phi^H|^2f_t$  to the right hand side of \ref{f1equation} and observe that $\|\rho^{3/2}\del_t|\Phi^H|^2f_t\|_{L^2}\leq C$ by \ref{f1estimate}. Using this, and noting the asymptotics of $E$ are unaffected by differentiating with respect to $t$, it is not hard to show in analogy with \ref{rho3/2est} that 

$$(-\Delta - |\Phi^H|^2)  \chi \rho^{3/2}\dot f_1= \chi \rho^{3/2} \dot E +\chi \rho^{3/2} \del_t|\Phi^H|^2 f_1 -\dot f_1\Delta (\chi \rho^{3/2})-\nabla(\chi\rho^{3/2-\gamma})\cdot \nabla \dot f_1$$

\noindent has a right-hand side in $L^2$. It follows that $\dot f_t=\rho^{-3/2} g_2$ where $\|g_2\|_{C^0}\leq C$ as well. This concludes the bound on the smaller term asserted in (\refeq{hasymptotics}). For the first term of (\refeq{hasymptotics}) we note $$\d{}{t} p_1(t) \frac{1}{\rho e^{-i\theta}}= \dot p_1(t) \frac{1}{\rho e^{i\theta}} + p_1(t) \frac{1}{\rho^2 e^{i\theta}}\dot \rho=q_1(t)\frac{1}{\rho e^{i\theta}} $$ since $\dot \rho = \tfrac{2\dot K}{3K} \rho$. This completes the claim and thus the statements on the asymptotics of $\beta_t$. 

The bounds asserted in the lemma now follow readily. Let $\Lambda$ be a radius after which the $\rho^{-3/2}$ term is negligible, then$$\|\beta_t\|_{L^2(D_\e)}\geq c \left(\int_{\rho\geq \Lambda} |\rho^{-1/2}|^2  r dr\right)^{1/2}  \geq c \e^{2/3}\left(\int_{\Lambda}^{\e^{-1/3}} d\rho\right)^{1/2}\geq c\e^{1/2}$$ 

\noindent while the fact that $\beta_t$ is bounded over the origin yields $|\beta_t|\leq C\rho^{-1/2}$ and reversing the above inequalities show the upper bound.

The leading order term in each component consists of a product $p_i(t)\rho_t^{-1/2}$ where $p_i(t)$ depends only on $\Phi_0$ and the derivatives of its leading coefficients. Since $\d{\rho_t}{t}= \rho \tfrac{K'(t)}{K(t)}$ one has $$\d{}{t} p_i(t)\rho_t^{-1/2}= p_i'(t)\rho_{t}^{-1/2} - p_i(t)\tfrac{K'(t)}{2K}\rho_t^{-1/2}$$ so the leading term of the derivatives is bounded above by a constant times $\rho_t^{-1/2}$ as well. And for $\rho>>0$  the bound $\|g_2\|_{C^0}\leq C$ in \ref{hasymptotics} shows that the sub-leading order are negligible for the derivative as well. Thus there is a pointwise bound $$|\dot \beta_t|\leq C \rho_t^{-1/2}.$$ 
since we know $\dot \beta_t$ is smooth across the origin. This implies $$\|\dot \beta_t\|_{L^2}\lesssim \e^{1/2+1/12}\lesssim \|\beta_t\|_{L^2}, $$
and a pointwise bound $|\dot \beta_t| \leq C |\beta_t|$ once $\rho_t>>0$. The bound on the ratio of the integrals over $\del D_\e$ follows.

\end{proof}


{\small

\medskip

\bibliographystyle{amsplain}
\bibliography{Bibliography_edited}
\end{document}